\documentclass[11pt]{article}
\usepackage[utf8]{inputenc}
\usepackage[colorlinks=true, linkcolor=blue, urlcolor=blue, citecolor=blue]{hyperref}
\usepackage{geometry}

\usepackage{amsfonts}
\usepackage{amsfonts,latexsym}
\usepackage{amsmath}
\usepackage{amsthm}
\usepackage{amssymb}
\usepackage{mathrsfs}
\usepackage{xcolor}
\usepackage{cite}
\usepackage{verbatim}
\usepackage{enumitem}
\usepackage{amsmath, amssymb, booktabs} 
\usepackage{array}  
\usepackage{xcolor}  
\usepackage{hyperref}  
\usepackage{xcolor}  
\usepackage{hyperref}  
\hypersetup{
	allcolors = blue  
}

\newtheorem{theorem}{\indent Theorem}[section]
\numberwithin{theorem}{section}  
\newtheorem{proposition}[theorem]{\indent Proposition}
\newtheorem{definition}[theorem]{\indent Definition}
\newtheorem{lemma}[theorem]{\indent Lemma}
\newtheorem{remark}[theorem]{\indent Remark}
\newtheorem{assumption}[theorem]{\indent Assumption}
\newtheorem{corollary}[theorem]{\indent Corollary}

\numberwithin{equation}{section}

\begin{document}
\title{Lagrangian chaos for the 2D Boussinesq equations with a degenerate random forcing}
\author{ Dengdi Chen$^{1}$, Yan Zheng$^{1}$ \thanks{\emph{E-mail addresses:} yanzhengyl@163.com} \\
{\small \it $^{1}$College of Sciences, National University of Defense Technology,}\\
{\small \it Changsha, 410073,  People's Republic
of China} }
\date{}

\maketitle
\begin{abstract}
We demonstrate that Lagrangian flow for the 2D Boussinesq equations under degenerate noise exhibit chaotic behavior characterized by the strict positivity of the top Lyapunov exponent, where the degenerate noise acts only on a few Fourier modes of the temperature equation. To achieve this, we overcome difficulties arising from the degeneracy of noise and its intricate interaction with the nonlinear terms. This is accomplished by introducing a solution-dependent manifold spanning condition to establish probabilistic spectral 
bound on a cone for the Malliavin matrix associated with the extended system. Additionally, the approximate controllability of the extended system is realized by constructing smooth controls based on shear and cellular flows.\\ 
\textbf{Keywords:} Lagrangian chaos; Furstenberg’s criterion; top Lyapunov exponent; stochastic Boussinesq equations\\
\textbf{Mathematics Subject Classifications:} 37A25; 37H15; 37L30; 35R60
\end{abstract}
\tableofcontents
\section{Introduction}
In this paper, we study the stochastic flow of diffeomorphisms $\mathbf{x}_t:\mathbb{T}^2\to \mathbb{T}^2$, $t\ge 0$, defined by the random ODE
\begin{equation} \label{Lagrange1}
	\frac{d}{dt}\mathbf{x}_t=\mathbf{u}_t(\mathbf{x}_t),\quad \mathbf{x}_0=x.
\end{equation}
Here, the random velocity field $\mathbf{u}_t:\mathbb{T}^2\to \mathbb{T}^2$ at time $t\ge 0$ evolves according to the following stochastic Boussinesq equations 
\begin{equation} \label{Boussinesq}
 \begin{aligned}
 \left\{
\begin{array}{lr}
d\mathbf{u}+(\mathbf{u}\cdot \nabla\mathbf{u})dt=(-\nabla p+\nu_1\Delta\mathbf{u}+\mathbf{g}\theta)dt,\\
d\theta+(\mathbf{u}\cdot \nabla\theta)dt=\nu_2\Delta\theta dt+\sigma_\theta dW,\\
\nabla\cdot\mathbf{u}=0,
\end{array}\right.
\end{aligned}
\end{equation}
where $p$ denotes the (density-normalized) pressure and $\theta$ denotes the temperature of the viscous incompressible fluid. And the parameters $\nu_1,\nu_2>0$ are respectively the kinematic viscosity and thermal
diffusivity of the fluid and $\mathbf{g}={(0,g)}^T$ with $g\neq 0$ is the product of the gravitational
constant and the thermal expansion coefficient. The spatial variable $x=(x_1,x_2)$ belongs
to a two-dimensional torus $\mathbb{T}^2$. That is, we impose periodic boundary conditions in space.
We consider a degenerate stochastic forcing $\sigma_\theta dW$, which acts only on a few Fourier
modes and exclusively through the temperature equation.

We prove that the dynamical system defined via \eqref{Lagrange1} possesses a strictly positive top Lyapunov exponent. More precisely, we establish:
\begin{theorem} \label{L1}
With white noise acting only on the two largest standard modes of the temperature equation,
$$
\sigma_\theta dW=\alpha_1cosx_1dW^1+\alpha_2sinx_1dW^2+\alpha_3cosx_2dW^3+\alpha_4sinx_2dW^4,
$$
there exists a deterministic constant $\lambda_+>0$ for the system \eqref{Lagrange1}, depending on the Boussinesq equations \eqref{Boussinesq} such that the following limit holds:
\begin{equation} \label{lambda+}
	\lim\limits_{t \to \infty}\frac{1}{t}\mathop{\rm log}|D_x\mathbf{x}_t|=\lambda_+  \,\,\,\, \text{for} \,\,\, \,\mu^1\times \mathbb{P}-a.e.\, (\mathbf{u}_0,\theta_0, x,w),
\end{equation} 
where $D_x\mathbf{x}_t$ refers to the Jacobian matrix of $\mathbf{x}_t:\mathbb{T}^2\to \mathbb{T}^2$ taken at $x$. Here, we refer to $(\mathbf{u}_t,\theta_t,\mathbf{x}_t)$ as the Lagrangian process  associated with $(\mathbf{u}_t,\theta_t)$, and let $\mu^1$ denote its corresponding unique stationary measure. The phenomenon where Lagrangian flow $\mathbf{x}_t$ exhibits chaotic characteristics due to exponential sensitivity to initial data (i.e., $\lambda_+>0$) is sometimes referred to as Lagrangian chaos. 
\end{theorem}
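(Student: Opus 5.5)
The plan follows the Bedrossian--Blumenthal--Punshon-Smith (BBPS) framework for Lagrangian chaos. Write $z_t=(\mathbf{u}_t,\theta_t)$ and consider the Lagrangian process $(z_t,\mathbf{x}_t)$ on $H\times\mathbb{T}^2$ together with the projective process $(z_t,\mathbf{x}_t,v_t)$ on $H\times P\mathbb{T}^2$, where $v_t=D_x\mathbf{x}_t v/|D_x\mathbf{x}_t v|$.

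\textbf{Step 1: existence of the limit.} First I establish well-posedness and moment bounds in a sufficiently regular space $H\subset H^{s}$ with $s>2$, so that $\mathbf{u}_t$ is $C^1$. These should be Lyapunov-type bounds of the form $\mathbb{E}\sup_{t\le T}e^{\eta\|z_t\|^2}$. From them I obtain the integrability condition $\mathbb{E}\sup_{t\le1}\log^+|D_x\mathbf{x}_t|\in L^1(\mu^1)$. Next I prove uniqueness of the stationary measure $\mu^1$ for the Lagrangian process, via the strong Feller property plus approximate controllability. Multiplicative ergodic theory (Kingman/Oseledets) then yields the deterministic limit $\lambda_+$ in \eqref{lambda+} for $\mu^1\times\mathbb{P}$-a.e.\ initial data. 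Incompressibility gives $\det D_x\mathbf{x}_t=1$, hence $\lambda_1+\lambda_2=0$ and $\lambda_+\ge0$.

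\textbf{Step 2: Furstenberg's criterion.} To get $\lambda_+>0$, I argue by contradiction using the Furstenberg-type criterion of BBPS. If $\lambda_+=0$, then one of two alternatives holds. Either there is a measurable family of inner products on $T_x\mathbb{T}^2$ invariant under $D_x\mathbf{x}_t$, or there are finitely many measurable invariant line fields, both depending measurably on $(z,x)$. Both are ruled out once the projective process has a \emph{unique} stationary measure, and this in turn follows from strong Feller plus approximate controllability of the projective process. One must also rule out the invariant inner-product alternative, which is done via controllability of the matrix process $(z_t,\mathbf{x}_t,D_x\mathbf{x}_t)$ on $SL_2(\mathbb{R})$. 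The needed ingredients are therefore:
\begin{enumerate}[label=(\roman*)]
\item the strong Feller property for the Lagrangian, projective and matrix processes;
\item approximate controllability of these processes.
\end{enumerate}

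\textbf{Step 3: strong Feller, the main obstacle.} For strong Feller I use the Malliavin approach for SPDEs with degenerate noise in the Hairer--Mattingly style. The goal is a gradient bound $\|\nabla P_t\varphi\|\le C(z)\|\varphi\|_\infty$, which requires a probabilistic spectral bound $\mathbb{P}(\inf_{\phi\in\mathcal{C}}\langle\mathcal{M}_t\phi,\phi\rangle/\|\phi\|^2<\epsilon)\le C\epsilon^{p}$ for the Malliavin matrix $\mathcal{M}_t$ of the extended system on a cone $\mathcal{C}$ of high-regularity directions. The noise acts only on $\cos x_i,\sin x_i$ in $\theta$. Brackets with the buoyancy term $\mathbf{g}\theta$ transfer directions into $\mathbf{u}$, and repeated brackets with $\mathbf{u}\cdot\nabla$ spread them to all Fourier modes, as in Hairer--Mattingly for Boussinesq. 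The extra $(x,v)$ coordinates make constant-vector-field brackets insufficient. I would therefore impose a solution-dependent spanning condition: for each $(z,x,v)$, the span of the brackets, evaluated with the actual $\mathbf{u}$, projects onto $T_x\mathbb{T}^2\times T_vP^1$. This requires showing that the functionals $\mathbf{u}\mapsto(\mathbf{u}(x),\nabla\mathbf{u}(x)v)$ restricted to the spanned Fourier modes are nondegenerate, which uses that low modes with $\cos,\sin$ generate enough local jets. I expect the spanning condition combined with the quantitative Hörmander/Norris-lemma iteration, with constants uniform on the cone, to be the hardest step.

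\textbf{Step 4: controllability.} I add controls to the $\theta$ equation in the forced modes and steer $\mathbf{u}$ through buoyancy, using explicit shear flows ($\sin x_2$, $\sin x_1$ directions) and cellular flows. Alternating shears approximate any desired transport of $x$ and rotation/stretching of $v$, and then let $\mathbf{u}$ return near the target state. Because $\mathbf{u}$ is controlled only indirectly, I choose smooth $\theta$-controls whose buoyancy produces the desired velocity after solving the $\mathbf{u}$ equation backwards, i.e.\ $\theta$ is defined so that $\mathbf{g}\theta$ equals $\partial_t\mathbf{u}+\mathbf{u}\cdot\nabla\mathbf{u}-\nu_1\Delta\mathbf{u}+\nabla p$ for the prescribed shear/cellular $\mathbf{u}$. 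Combining Steps 1--4 with the criterion gives $\lambda_+>0$.
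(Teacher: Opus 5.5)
Your overall architecture (MET for the existence of $\lambda_+$, a Furstenberg-type criterion, a Malliavin spectral bound on a cone with a solution-dependent spanning condition, and shear/cellular controls acting through buoyancy) is the paper's. The genuine gap is that Steps 2--3 rest on the strong Feller property, and your method cannot deliver it.

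\textbf{Why strong Feller is out of reach.} With noise on only four temperature modes, $\mathcal{A}_{0,t}$ is an integral operator with continuous kernel $r\mapsto\mathcal{J}_{r,t}\sigma_j^m$. It maps $L^2([0,t];\mathbb{R}^4)$ into the infinite-dimensional space $\hat{H}^n\times T\Sigma$, so it is compact. Hence $\mathcal{M}_t$ has arbitrarily small Rayleigh quotients and cannot be inverted in all directions.

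The spectral bound one can actually prove (Theorem \ref{bounds-Malliavin}) holds on the cone $\{\|\Pi_N\mathfrak{p}\|\geq\alpha\|\mathfrak{p}\|\}$. These are directions with a non-negligible \emph{low-mode/manifold} component, not high-regularity directions. The high modes are never inverted; they are only damped by dissipation through $\mathcal{J}_{1/2,1}\Pi_{\geq N}$. So the Tikhonov-regularized control leaves a nonzero residual $\rho^\beta=\beta\mathcal{J}_{1/2,1}(\mathcal{M}_{1/2}+\beta I)^{-1}\mathcal{J}_{0,1/2}\mathfrak{p}$.

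What you obtain is $\|\nabla P_1\varphi\|\leq e^{\eta V^{n+2}}\bigl(C\sqrt{P_1|\varphi|^2}+\gamma\sqrt{P_1\|\nabla\varphi\|^2}\bigr)$. This is asymptotic strong Feller, not $\|\nabla P_t\varphi\|\leq C(z)\|\varphi\|_\infty$. It still gives uniqueness of $\mu^1$ and $\mu^P$ (Hairer--Mattingly plus weak irreducibility). It does not give the BBPS criterion, where strong Feller is exactly what upgrades the merely measurable Furstenberg family $\nu_z$ to a continuous one. Controllability only gives positive probability of entering open sets, so it cannot exclude a merely measurable invariant structure. Also, uniqueness of $\mu^P$ does not by itself rule out either alternative; controllability does that, once continuity is available.

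\textbf{The missing idea.} The paper takes the Cooperman--Rowan route (Proposition \ref{continuous projective measures}, Theorem \ref{Furstenberg}). It combines the asymptotic gradient estimate for the Jacobian process with the super-Lyapunov property and exponential moments of $\int_0^t\|\nabla\mathbf{u}_s\|_{C^\alpha}ds$. From these it builds a locally Lipschitz family $\nu:\hat{H}^4\times\Sigma^J\to\mathcal{P}(S^1)$, continuous into $W^{-1,1}$, satisfying the twisted pullback identity. If $\lambda_+=0$ this family is almost surely invariant, and the BBPS classification together with condition (C) gives the contradiction.

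\textbf{A further obstruction in Step 4.} The paper's Section \ref{Approximate controllability} does not resolve this one either. Buoyancy enters the vorticity equation only as $g\partial_1\theta$, which has no Fourier modes with $k_1=0$ when $\theta$ is periodic. So a flow whose vorticity has such modes cannot be produced by solving backwards for a periodic $\theta$. Examples are the horizontal shear $(\sin x_2,0)$, with $\omega=-\cos x_2$, and the cellular flow $(-\sin x_2,\sin x_1)$, with $\omega=\cos x_1+\cos x_2$. This is why the paper's formulas contain the non-periodic terms $x_1\sin(x_2-b_0)$ and $x_1\cos(x_2-b_1)$.

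Among the explicit shear and cellular flows, the only ones realizable with $\theta$-controls in the forced modes $\cos x_i,\sin x_i$ are the vertical shears $(0,f(t)\sin(x_1-a))$. These preserve $x_1$ and fix the vertical direction of $v$, so condition (b) cannot be reached from $v=(0,1)$. You need flows whose vorticity is supported in $k_1\neq 0$, such as diagonal shears or cellular flows with stream function $\sin(x_1-a)\sin(x_2-b)$. These require $\theta$-controls in unforced modes such as $(1,1)$. You would then need an additional nonlinear argument (e.g.\ of Agrachev--Sarychev type) showing such controls are effectively in the support of the law, since Girsanov alone handles only controls in the range of $\sigma_\theta$.
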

\begin{remark} \label{1.2}
In fact, we can present the following stronger result, which helps support findings on the mixing of passive scalars (c.f. \cite{JEMS,BBP22}): 
\begin{equation} \label{lambda+-P}
	\lim\limits_{t \to \infty}\frac{1}{t}\mathop{\rm log}|D_x\mathbf{x}_t\nu|=\lambda_+>0 \,\, \text{for} \,\,\, \mu^P\times \mathbb{P}-a.e.\, (\mathbf{u}_0,\theta_0, x,v,w),
\end{equation}
where $\mu^P$ denotes the unique stationary measure for projective process (c.f. Definition \ref{Lagrange-process-def}). 
\end{remark} 	
\subsection{Background and motivations} \label{Relevant literature}
In the early 20th century, Rayleigh \cite{JFA-79} proposed the use of the governing equations formulated by Boussinesq \cite{JFA-11} to investigate buoyancy-driven thermal convection mechanisms, with the goal of explaining the experimental observations made by Bénard \cite{JFA-7}. These equations describe convective phenomena within a fluid layer confined between two parallel plates, where the temperature $\theta$ is maintained at a fixed value $\theta_t$ at the top and $\theta_b$ at the bottom, with $\theta_t\le \theta_b$ (i.e., heated from below). 
Today, this classical Boussinesq equations serve as a fundamental framework across diverse physical domains, including climate and weather systems, plate tectonics, and stellar internal dynamics (see \cite{JFA-10, JFA-13, JFA-37, JFA-49, JFA-95} and references therein). The specific form of the Boussinesq equations considered in this work, namely \eqref{Boussinesq} supplemented with periodic boundary conditions, is sometimes termed the `homogeneous Rayleigh–Bénard' system in the physics literature. To circumvent the potential emergence of unbounded solutions within this periodic setting \cite{JFA-14}, we examine system \eqref{Boussinesq} under isothermal conditions.

We now briefly outline the motivation for introducing this type of degenerate stochastic forcing in \eqref{Boussinesq}. Firstly, as early as the 19th century, J.V. Boussinesq conjectured that turbulence could not be fully characterized by deterministic methods alone, advocating for a stochastic framework \cite{JFA-103}. Secondly, numerous researchers have proposed that the Navier-Stokes equations subjected to degenerate white noise forcing can serve as a substitute model for the idealized `general' stirring at large scales assumed in fundamental turbulence theories, and this setting is common in the turbulence literature (cf. \cite{JFA-43, JFA-92, JFA-108} and references therein).

Unlike research on well-posedness, ergodicity and large deviations etc. for the stochastic Boussinesq equations (cf. \cite{long-1,long-2,long-3,long-4,long-5,long-6,long-7,long-8,long-9,long-10,JFA} and references therein), the dynamical properties of its solutions—particularly chaotic behavior offering key insights into turbulence \cite{BJPV98}—remain poorly understood, with rigorous mathematical results notably absent.

The study of chaotic dynamics in dynamical systems has been a major focus for decades. Chaos is generally characterized by sensitive dependence on initial conditions, signified by a positive top Lyapunov exponent. In random dynamical systems, noise often induces an averaging effect that facilitates chaos analysis compared to purely deterministic settings \cite{You13, BBPS23}. 
Current research on chaos in stochastic fluid models—including chaos in the velocity field (Eulerian chaos) and chaos in particle trajectories (Lagrangian chaos)—concentrates primarily on the Navier–Stokes equations. 
For Eulerian chaos, the problem for the full 2D stochastic Navier–Stokes system remains unresolved; only its Galerkin approximation has been rigorously proven to exhibit chaotic behavior \cite{BPS24}.

The physics literature extensively investigates both Eulerian and Lagrangian chaos and their interconnections \cite{AGM96, BJPV98, CFVP91, FdCN01, GV94}, which also underscores the importance of studying Lagrangian chaos. However, rigorous mathematical results in this area remain scarce. Bedrossian et al. \cite{JEMS} provided the first rigorous proof of Lagrangian chaos for time-continuous velocity fields generated by physical fluid models (e.g., Stokes or Navier-Stokes), where the Navier-Stokes system was driven by a white-in-time noise that is non-degenerate at high Fourier modes. More precisely, the noise is assumed to be of the form $Q\dot{W}_t(x) = \sum_{k \in \mathbb{Z}\backslash\mathbf{0}} q_k e_k(x) \gamma_k \dot{W}_t^k$, where $\{e_k\gamma_k\}_{k \in\mathbb{Z}\backslash\mathbf{0}}$ forms a basis for the Hilbert space of square integrable, mean-zero, divergence-free vector fields on $\mathbb{T}^2$, and $\{q_k\}$ is a sequence of real numbers such that $c|k|^{-\alpha}\le |q_k|\le C|k|^{-\alpha}$ for $|k|\ge L$ with  some constants $c,C,L>0$ and $\alpha>5$. Crucially, this type of noise ensures the Lagrangian process's strong Feller property, which is essential to their proof: (a) Existence of $\lambda_+$. The strong Feller property enables proving ergodicity of the Lagrangian process, ensuring the existence of the exponent $\lambda_+$. Combined with the volume-preserving nature, this further yields $\lambda_+\ge 0$. (b) Positivity of $\lambda_+$. By Furstenberg's criterion \cite{Fur63, Led86, re89}, $\lambda_+=0$ implies the almost sure existence of a measurable deterministic invariant structure under the dynamics of the triplet $(u_t,x_t,\mathcal{A}_t)$ (c.f. \eqref{invariant-measure}). 
For finite-dimensional systems, if one demonstrates sufficient nondegeneracy in the distribution of $\mathcal{A}_t$, this excludes such structures and proves that $\lambda_+>0$. However, this strategy fails for the infinite-dimensional 2D Navier–Stokes system. Under the strong Feller assumption, \cite{JEMS} derived a refined Furstenberg criterion limiting possible invariant structures to two continuous types. Approximate controllability arguments subsequently excluded their existence.

Building upon this foundation, Cooperman and Rowan \cite{NS} refined the results of \cite{JEMS} by introducing a novel and ingenious method to construct continuous invariant family of projective measures. 
This approach circumvents the need for the strong Feller property and instead relies on smoothness estimates of the transition functions for the triplet matrix process. For the 2D Navier-Stokes system driven by highly degenerate white noise, these estimates were rigorously verified using Malliavin calculus techniques from \cite{HM06}, and thus Cooperman and Rowan established the existence of Lagrangian chaos. Recently, using the framework of \cite{NS}, Nersesyan et al. \cite{V2} derived an analogous Furstenberg's criterion for the 2D Navier-Stokes system driven by highly degenerate, bounded, non-Gaussian noise and similarly proved the existence of Lagrangian chaos. Moreover, based on the Furstenberg's criterion established in \cite{JEMS}, Agresti \cite{PE} similarly established Lagrangian chaos for the 3D stochastic primitive equations with nondegenerate noise.

This paper focuses on proving Lagrangian chaos for the 2D Boussinesq equations driven by highly degenerate white noise. To the best of our knowledge, no prior rigorous results exist concerning chaos in the Boussinesq equations.

\subsection{Mathematical setting and notations} \label{notaion}
Let $\mathbb{T}^2={[0,2\pi]}^2$ denote the period box. 
At numerous points throughout this paper, consideration will also be given to the equivalent vorticity formulation of equation \eqref{Boussinesq}.
Namely, if we denote $\omega:=\nabla^\bot\cdot \mathbf{u}=\partial_1u_2-\partial_2u_1$, then by a standard calculation we obtain
\begin{equation} \label{Boussinesq-w}
	\begin{aligned}
		\left\{
		\begin{array}{lr}
			d\omega+(\mathbf{u}\cdot \nabla\omega-\nu_1\Delta\omega)dt=g\partial_1\theta dt,\\
			d\theta+(\mathbf{u}\cdot \nabla\theta-\nu_2\Delta\theta)dt=\sigma_\theta dW.
		\end{array}\right.
	\end{aligned}
\end{equation}
To close the system \eqref{Boussinesq-w}, we calculate $\mathbf{u}$ from $\omega$ by Biot-Savart law, that is $\mathbf{u}=K*\omega$, where $K$ is the Biot-Savart kernel,
so that $\nabla^\bot\cdot \mathbf{u}=\omega$ and $\nabla\cdot\mathbf{u}=0$.

We next introduce a functional setting for the system \eqref{Boussinesq-w}, which represents a standard procedure. Define
$$
\hat{H}:=\left\{U:={(\omega,\theta)}^T\in {L^2(\mathbb{T}^2)}^2:\int_{\mathbb{T}^2}\omega dx=\int_{\mathbb{T}^2}\theta dx=0\right\}
$$
to be the Hilbert space of square integrable, mean-zero, divergence-free vector fields on $(\mathbb{T}^2)^2$, equipped with the norm
\begin{align}\label{JFA-2.4}
\|U\|_{\hat{H}}^2:=\frac{\lambda_1 \nu_1 \nu_2}{g^2}\|\omega\|_{L^2}^2+\|\theta\|_{L^2}^2,
\end{align}
where $\lambda_1=1$ (we omit $\lambda_1$ below) is the principal eigenvalue of $-\Delta$ on $\hat{H}$. Observe that
this norm is equivalent to the standard norm on the space ${L^2(\mathbb{T}^2)}^2$. The associated inner product on $\hat{H}$ is denoted by $\langle\cdot,\cdot\rangle_{\hat{H}}$. Note furthermore that the zero mean property in the definition of $\hat{H}$ is maintained by the flow \eqref{Boussinesq-w}. The higher order Sobolev spaces are denoted
$$
\hat{H}^s:=\left\{U:={(\omega,\theta)}^T\in {W^{s,2}(\mathbb{T}^2)}^2:\int_{\mathbb{T}^2}\omega dx=\int_{\mathbb{T}^2}\theta dx=0\right\} \quad \text{for any } s \geq 0.
$$
where $W^{s,2}(\mathbb{T}^2)$ is classical Sobolev space, which is equivalently denoted by $H^s$. And $\hat{H}^s$ is equipped with the norm
\begin{align}\label{2.3}
\|U\|_{\hat{H}^s}^2:=\frac{\nu_1 \nu_2}{g^2}\|\omega\|_{W^{s,2}}^2+\|\theta\|_{W^{s,2}}^2.
\end{align}


Additionally, in many parts of the manuscript, we also utilize the orthogonal basis in the $\hat{H}^4$ space, along with its finite-dimensional subspaces and their corresponding projection operators. Fix the trigonometric basis:
\begin{equation}\label{temp-base}
\sigma_j^m := 
\begin{cases} 
	(0, \cos(j \cdot x))^T, & m = 0, \\
	(0, \sin(j \cdot x))^T, & m = 1,
\end{cases}
\quad \text{and} \quad
\psi_j^m := 
\begin{cases} 
	(\cos(j \cdot x), 0)^T, & m = 0,\\
	(\sin(j \cdot x), 0)^T, & m = 1,
\end{cases}
\end{equation}
where \( j \in \mathbb{Z}^2 \) and \( x \in \mathbb{T}^2 \). Here, $\sigma_j^{m+m'}$ and $\psi_j^{m+m'}$ mean that $\sigma_j^{{m+m'}(\mod2)}$ and $\psi_j^{m+m'(\mod2)}$ respectively, a convention that will be frequently employed in Section \ref{smoothing estimate} and Subsection \ref{var-assum5.5}. And, we also denote
\[
\mathbb{Z}_+^2 := \left\{j = (j_1, j_2) \in \mathbb{Z}^2 : j_1 > 0 \text{ or } j_1 = 0, \, j_2 > 0 \right\}.
\]

Next, we reformulate system \eqref{Boussinesq-w} into a functional form by introducing the following abstract operators associated with each term in the equations. For $U:=(\omega,\theta)$ and $\tilde{U}:=(\tilde\omega,\tilde\theta )$, let $A : D(A) = \hat{H}^{n+2} \subset \hat{H}^n \to \hat{H}^n $ be the linear, symmetric, positive definite operator defined by
\[
A U := \left( -\nu_1 \Delta \omega,\, -\nu_2 \Delta \theta \right)^T,
\]
for any \( U \in \hat{H}^{n+2} \). Note that \( A \) is the infinitesimal generator of a semigroup \( e^{-tA} : \hat{H}^n \to \hat{H}^{n+2} \).
For the nonlinear inertial term define \( B : \hat{H}^{n+1} \times \hat{H}^{n+1} \to \hat{H}^{n} \) by
$$
B(U, \tilde{U}) := \left( (K * \omega) \cdot \nabla \tilde{\omega},\, (K * \omega) \cdot \nabla \tilde{\theta} \right)^T, 
$$
for \( U, \tilde{U} \in \hat{H}^{n+1} \). It is well known that 
$$
\| K * \omega \|_{\hat{H}^s} \leq C \| \omega \|_{\hat{H}^{s-1}}, 
$$
and since \( \hat{H}^{n+2} \hookrightarrow \hat{H}^{n} \), we indeed have that \( B(U, \tilde{U}) \in \hat{H}^{n} \). 
Finally, for the buoyancy term define \( G : \hat{H}^{n+1} \to \hat{H}^{n} \) by
$$
G U := \left( g \partial_x \theta,\, 0 \right)^T, 
$$
for \( U \in \hat{H}^{n+1} \).

Next we focus on the stochastic forcing terms appearing in \eqref{Boussinesq-w}. We introduce a finite set $\mathcal{Z}\subset \mathbb{Z}_+^2$ which represents the forced directions in Fourier space (In fact, $\mathcal{Z}=\{(1,0),\, (0,1)\}$). The driving noise process \( W := (W^{j,m})_{j \in \mathcal{Z}, m=0,1} \) is a \( d := 2 \cdot |\mathcal{Z}| \)-dimensional Brownian motion defined relative to a filtered probability space \( (\Omega, \mathcal{F}, \{\mathcal{F}_t\}_{t \geq 0}, \mathbb{P}) \). 
Denoting by \(\{e_j^m\}_{j \in \mathcal{Z}, m=0,1}\) the standard basis of \(\mathbb{R}^{2|\mathcal{Z}|}\) and considering a sequence of non-zero constants \(\{\alpha_j^m\}_{j \in \mathcal{Z}, m=0,1}\), we define a linear map \(\sigma_\theta : \mathbb{R}^{2|\mathcal{Z}|} \to \hat{H}^4\) such that
$$
\sigma_\theta e_j^m := \alpha_j^m \sigma_j^m \quad \text{for any } j \in \mathcal{Z},\, m \in \{0,1\},
$$
where \(\sigma_j^m\) are the basis elements defined in \eqref{temp-base}. Denote the Hilbert-Schmidt norm of \(\sigma_\theta\) by
$$
\|\sigma_\theta\|^2 := \|\sigma_\theta^* \sigma_\theta\| = \sum_{\substack{j \in \mathcal{Z} \\ m \in \{0,1\}}} (\alpha_j^m)^2.
$$
We consider a stochastic forcing of the form
\begin{equation}\label{noise-form}
\sigma_\theta \, dW := \sum_{\substack{j \in \mathcal{Z} \\ m \in \{0,1\}}} \alpha_j^m \sigma_j^m \, dW^{j,m}.
\end{equation}
The index \(\theta\) of \(\sigma_\theta\) indicates that \(\sigma_\theta\) attains nontrivial values only in the second component, i.e., only the \(\theta\) component is directly forced.\\
\textbf{Note.} Throughout, we fix the physical constants $\nu_1,\nu_2>0\, (\kappa:=\min\{\nu_1,\nu_2\}<2),g\neq 0$ and non-zero noise coefficients ${(\alpha_j^m)}_{j\in \mathcal{Z},m= {0,1}}$. Below analytic constants $C$ etc. may change line by line and they implicitly depend on $\nu_1,\nu_2>0,g\neq 0$, and $\alpha_j^m\neq 0$. All other parameter dependencies are indicated explicitly, e.g. ``there exists a constant $C(p)>0$ such that $\ldots$'' means the constant $C$ depends on $p,\nu_1,\nu_2,g$ and $\alpha_j^m$.

With these preliminaries in hand, the equations \eqref{Boussinesq-w} may be written as an abstract stochastic evolution equation on \( \hat{H}^n \):
\begin{equation}\label{Boussinesq-w1}
	dU_t + \left( AU_t + B(U_t,U_t) \right)dt = GU_t\,dt + \sigma_\theta dW, \quad U_0 = U,
\end{equation}
where \( U \in \hat{H}^n \). We say that \( U_t\) is a solution of \eqref{Boussinesq-w1} if it is \( \mathcal{F}_t \)-adapted,
\[
U_t \in C\big([0, \infty); \hat{H}^n\big) \cap L^2_{\mathrm{loc}}\big([0, \infty); \hat{H}^{n+1}\big) \quad \text{a.s.},
\]
and \( U_t \) satisfies \eqref{Boussinesq-w1} in the mild sense, that is,
\begin{equation}
	U_t = e^{-At}U_0 - \int_{0}^{t} e^{-A(t-s)} \big( B(U_s,U_s) - GU_s \big) \, \mathrm{d}s + \int_{0}^{t} e^{-A(t-s)} \sigma_\theta \, \mathrm{d}W_s.
\end{equation}
Observe that \( B, G : \hat{H}^{n+1} \to \hat{H}^n \), and since \( e^{-tA} \) maps \( \hat{H}^n \) to \( \hat{H}^{n+2} \), this formulation is consistent. We now introduce a simplified formulation of the equation \eqref{Boussinesq-w1}, which will be frequently employed in subsequent developments, particularly for H\"{o}rmander-type Lie bracket computations. Setting
\begin{equation}\label{F(U)}
	F(U):=-AU-B(U,U)+GU,
\end{equation}
then equation \eqref{Boussinesq-w1} can be writen as
\begin{equation*}
	dU_t = F(U_t)\,dt + \sigma_\theta dW, \quad U_0 = U.
\end{equation*}

The following proposition delineates the well-posedness and ergodicity properties pertaining to the process $(U_t)$ that we utilize in this manuscript.
\begin{proposition} \emph{{}\cite[Proposition~2.2]{JFA}\emph{}}\label{base-posedness-ergodicity}
Fix $\nu_1,\nu_2>0,g\neq 0$, and a filtered probability space \( (\Omega, \mathcal{F}, \{\mathcal{F}_t\}_{t \geq 0},\mathbb{P}) \), the following holds for any $T>0$:
\begin{enumerate}[label=(\alph*), leftmargin=3em]
	\item For all functions \( U \in \hat{H}^n \) and with probability 1, there exists a unique mild solution \( U_t \in C\big([0, T]; \hat{H}^n\big) \) with \( U_0 = U \). As a function of the noise sample \( \omega \in \Omega \), the solution \( U_t \) is measurable, \( \mathcal{F}_t \)-adapted, and belongs to \( L^p\big(\Omega; C([0, T]; \hat{H}^n)\big) \) for all \( p \geq 1 \). Lastly, \( (U_t) \) itself is a Feller Markov process on \( \hat{H}^n \).
	\item The Markov process \( U_t \) admits a unique Borel stationary measure \( \mu \) in \( \hat{H}^n \).
\end{enumerate}
\end{proposition}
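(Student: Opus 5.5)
This proposition is quoted from \cite[Proposition~2.2]{JFA}, so the natural plan is to reconstruct the standard argument for \eqref{Boussinesq-w1} in two stages: well-posedness, then existence and uniqueness of the stationary measure.

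\textbf{Well-posedness, part (a).} I would first subtract the stochastic convolution $Z_t=\int_0^t e^{-A(t-s)}\sigma_\theta\,dW_s$. Since $\sigma_\theta$ has finite rank with values in smooth Fourier modes, $Z\in C([0,T];\hat{H}^{s})$ almost surely for every $s$, and it has Gaussian moments. The remainder $V=U-Z$ then solves a random PDE with no noise term. Local existence of a mild solution in $\hat H^n$ follows from a fixed point argument using the smoothing of $e^{-tA}$ and the bilinear bound $\|B(U,\tilde U)\|_{\hat H^{n}}\le C\|U\|_{\hat H^{n}}\|\tilde U\|_{\hat H^{n+1}}$.

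Global existence comes from a priori estimates. Apply It\^o's formula to $\|\theta\|_{L^2}^2$. The transport term cancels because the velocity is divergence free, so $\mathbb{E}\|\theta_t\|^2$ and $\mathbb{E}\int_0^t\|\nabla\theta\|^2$ are controlled by $\|\sigma_\theta\|^2t$. The vorticity equation is then a forced transport-diffusion equation with forcing $g\partial_1\theta$, and the $\omega$ nonlinearity also cancels in $L^2$. This is why the weighted norm \eqref{JFA-2.4} closes: using $\lambda_1=1$ and Young's inequality,
\[
d\|U\|_{\hat H}^2+\kappa\|U\|_{\hat H^1}^2dt\le C\,dt+dM_t
\]
for some martingale $M_t$. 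Higher $\hat H^n$ norms follow by induction, using the 2D structure and Gronwall's inequality. Exponential martingale inequalities then give the $L^p(\Omega;C([0,T];\hat H^n))$ bounds. Uniqueness and continuous dependence follow from energy estimates for the difference of two solutions, and continuous dependence implies the Feller property. The Markov property comes from pathwise uniqueness.

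\textbf{Existence of a stationary measure, part (b).} The dissipative bound above gives $\sup_t\mathbb{E}\|U_t\|_{\hat H^{n+1}}^2\lesssim 1+\|U_0\|^2$ after time averaging. By compactness of $\hat H^{n+1}\hookrightarrow\hat H^n$, the time-averaged laws are tight, and Krylov--Bogoliubov yields a stationary measure $\mu$.

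\textbf{Uniqueness, the main obstacle.} The noise is highly degenerate: it acts only on four temperature modes. I would use the asymptotic strong Feller framework of Hairer--Mattingly \cite{HM06} and proceed in three steps.
\begin{enumerate}
\item \emph{H\"ormander bracket condition.} Compute the brackets of $F$ from \eqref{F(U)} with the constant fields $\sigma_j^m$. First, $[F,\sigma_j^m]$ generates, through $G$, the vorticity modes $\psi_j^{m+1}$, since $g\partial_1$ is nonzero for $j=(1,0)$. Second, the brackets $[[F,\psi_j],\sigma_k]$, which come from $B$, produce the modes $j\pm k$. An induction shows that these brackets span every finite Galerkin subspace. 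This bracket computation is the delicate step: the $(0,1)$ mode has $\partial_1\cos x_2=0$, so the $\omega$ modes must be reached through the $(1,0)$ direction first and then combined nonlinearly.
\item \emph{Malliavin matrix bound.} Use the bracket condition to obtain a probabilistic spectral lower bound for the Malliavin matrix on high-energy cones.
\item \emph{Gradient estimate and irreducibility.} Construct a control giving the gradient estimate $\|\nabla P_t\phi\|\le C(\|U\|)(\|\phi\|_\infty+\delta_t\|\nabla\phi\|_\infty)$ with $\delta_t\to0$, which is the asymptotic strong Feller property. Finally, show that $0$ lies in the support of every invariant measure, by contraction toward $0$ when the noise is small.
\end{enumerate}
These last two properties together imply uniqueness of $\mu$.
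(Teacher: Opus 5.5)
Your overall architecture matches the one behind \cite[Proposition~2.2]{JFA}, which the paper only cites. It has three pieces: pathwise well-posedness after removing the stochastic convolution, Krylov--Bogoliubov for existence, and uniqueness via the asymptotic strong Feller property of \cite{HM06} together with weak irreducibility at $0$. Part (a) and the existence half of (b) are fine as sketched.

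The gap is in your step~1, and step~2 inherits it. The brackets are not constant vector fields: by \eqref{JFA-5.9},
\[
[F(U),\sigma_j^m]=\nu_2|j|^2\sigma_j^m+B(U,\sigma_j^m)+(-1)^m g j_1\psi_j^{m+1},
\]
where $B(U,\sigma_j^m)=(0,(K*\omega)\cdot\nabla\sigma_j^m)$ is a solution-dependent temperature field carrying all Fourier modes of $\omega$.

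Also, $\psi_j$ is not in the Lie algebra generated by $F$ and the $\sigma_k^l$, so $[[F,\psi_j],\sigma_k]$ is not an available bracket. The available one is $[Z_j^m,\sigma_k^l]$ with $Z_j^m=[F,Y_j^m]$, see \eqref{JFA-5.12}. It does give constant temperature directions. The vorticity directions, however, are reached only in the form $\psi_j^m+J_{j,m}(U)$ of \eqref{Jm}, and for $j_1=0$ only through the further brackets $[Z_j^m,Y_k^l]$.

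Hence the brackets do \emph{not} span any finite Galerkin subspace. For any fixed finite family of brackets, the lower bound on a cone $\{\|\Pi_N\phi\|\ge\alpha\|\phi\|\}$ degenerates as $\|U\|\to\infty$. The reason is that the high-mode tails of $J_{j,m}(U)$ grow with $U$ and can cancel the low-mode part of $\phi$. So the uniform spanning input that the argument of \cite{HM06} feeds into the Malliavin estimate is not available.

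The missing idea is the solution-dependent (generalized) H\"ormander condition of \cite{JFA}. It proceeds as follows.
\begin{enumerate}
\item Since $J_{j,m}(U)$ lies entirely in the temperature component, its modes $|k|\le\tilde N$ can be removed using the directions $\sigma_k^l$, $|k|\le\tilde N$, which the iteration already controls.
\item This leaves $\psi_j^m+Q_{\tilde N}J_{j,m}(U)$. Its tail is small by Lemma~\ref{JFA-lemma5.6}, \emph{provided} $\|U_T\|$ is controlled.
\item One therefore chooses $\tilde N=\tilde N(\epsilon)$ and works on a high-probability event $\{1+\|U_T\|^2\le h(\epsilon)\}$.
\item One must also ensure $\epsilon<\epsilon_1(\tilde N(\epsilon))$. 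Otherwise the iterated Norris-type implications, whose exponents $\gamma_{\tilde N}$ degenerate with $\tilde N$, are no longer valid.
\end{enumerate}
This is exactly the argument reproduced in Proposition~\ref{NS-prop5.21}. The resulting cone bound has only an arbitrarily slow rate $f(\epsilon)\to0$ rather than a polynomial one, which still suffices for the gradient estimate.

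A minor point: the cones needed are those with substantial \emph{low-mode} content, not ``high-energy'' cones.
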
 


Next, we introduce the definitions of the base process and the Markov process of interest considered in this manuscript. Here, the Markov property of these processes is ensured by verifying the independent increments hypothesis {\textbf{(H1)}} in Subsection \ref{rds}.
\begin{definition} \label{base-process-def}
The base process refers to the Markov process \(\{U_t\}_{t\geq 0}\) on \(\hat{H}^n\) which solves the (vorticity-form) incompressible stochastic Boussinesq equation \eqref{Boussinesq-w1} with initial data given by \(U_0\). As previously mentioned, we write $\mathbf{u}_t := \nabla^\perp \Delta^{-1} \omega_t$ to refer to the velocity field induced by the vorticity \(\omega_t\), and equivalently we may refer to \(\{(\mathbf{u}_t,\theta_t)\}_{t\geq 0}\) as the base process as well.
\end{definition}

\begin{definition} \label{Lagrange-process-def}
Given a geodesically complete Riemannian manifold \(({\Sigma},\hat{g})\) and an associated vector field map \(\Theta: H^{n+1}(\mathbb{T}^2;\mathbb{R}^2) \to \mathcal{X}_\mathrm{loc}^1(\Sigma)\) (where \(\mathcal{X}_\mathrm{loc}^1({\Sigma})\) denotes the set of \(C_{\mathrm{loc}}^{1}\) vector fields on \({\Sigma}\))-associating to any velocity field \(\mathbf{u}_t\) a vector field \(\Theta_{\mathbf{u}_t}\) on \(\Sigma\)-we define the associated Markov process \(\{\mathbf{u}_{t},\theta_t,\mathbf{p}_{t}\}_{t\geq 0}\) where \((\mathbf{u}_{t},\theta_t)\) is the base process and \(\mathbf{p}_{t}\) solves
\begin{equation} \label{Lagrange-process-eq}
	\dot{\mathbf{p}}_{t} = \Theta_{\mathbf{u}_{t}}(\mathbf{p}_{t}),
\end{equation}
with initial condition given by \(\mathbf{p}_{0} \in \Sigma\). There are several special processes\footnote{We note that one could simply pass all the results through the natural projection \(S^{1} \to R P^{1}\) and consider the `true' projective process on \(R P^{1}\) throughout.} that we will focus on listed in Table~1. 
\end{definition}
\begin{table}[ht]
	\centering
	\caption{Several associated Markov processes of interest}
	\label{tab:processes}
	\begin{tabular}{llll}
		\toprule
		\textbf{Process} & \textbf{Form} & \textbf{Manifold} & \textbf{Vector Field} \\
		\midrule
		Lagrangian & 
		\( (\mathbf{u}_t,\theta_t,\mathbf{x}_t) \)& 
		\( \Sigma^1 := \mathbb{T}^2 \) & 
		\( \Theta_{\mathbf{u}}^{1}(x) := \mathbf{u}(x) \)  \\
		
		
		\addlinespace[0.3em]
		Tangent & 
		\( (\mathbf{u}_t,\theta_t,\mathbf{x}_t,\tau_t)\)& 
		\( \Sigma^T := \mathbb{T}^2 \times (\mathbb{R}^2 \setminus \{0\}) \) & 
		\( \Theta_{\mathbf{u}}^{T}(x,\tau) := (\mathbf{u}(x),\tau \cdot \nabla \mathbf{u}(x)) \)  \\
		
		\addlinespace[0.3em]
		Projective & 
		\( (\mathbf{u}_t,\theta_t,\mathbf{x}_t,v_t)\)& 
		\( \Sigma^P := \mathbb{T}^2 \times S^1 \) & 
		\( \Theta_{\mathbf{u}}^{P}(x,v) := \left(\mathbf{u}(x),\ v \cdot \nabla \mathbf{u}(x) \cdot v^{\perp}v^{\perp}\right) \)  \\
		
		\addlinespace[0.3em]
		Jacobian & 
		\( (\mathbf{u}_t,\theta_t,\mathbf{x}_t,A_t)\) & 
		\( \Sigma^J := \mathbb{T}^2 \times \mathrm{SL}_2(\mathbb{R}) \) & 
		\( \Theta_{\mathbf{u}}^{J}(x,A) := (\mathbf{u}(x), A\nabla \mathbf{u}(x)) \) \\
		\bottomrule
	\end{tabular}
\end{table}

\begin{remark} \label{L-process-wp}
	Note that when \( n=4 \), the flow \( \mathbf{p}^t \) is a well-defined diffeomorphism since the velocity field \( \mathbf{u}_t \) belongs to ${H}^5(\mathbb{T}^2)$ (so it is at least \( C^2 \) by Sobolev embedding). This gives rise to an \( \mathcal{F}_t \)-adapted, Feller Markov process \( (\mathbf{u}_t,\theta_t, \mathbf{p}_t) \) on \( H^5(\mathbb{T}^2) \times H^5(\mathbb{T}^2) \times \Sigma\) defined by \eqref{Lagrange-process-eq}. Therefore, we only need \( n=4\).
 
	It is noteworthy, however, that while well-posedness and the Markov property require only $n=4$, the subsequent establishment of the asymptotic gradient estimates for the Markov semigroup (c.f. Theorem \ref{unit-time-n})--which is crucial for proving the ergodicity of the Lagrangian process--necessitates the initial data to belong to $\hat{H}^6(\mathbb{T}^2)$, i.e., $n=6$. Apart from this specific step, all other aspects of the analysis are developed and considered exclusively for $n=4$.
\end{remark}
\begin{remark} \label{conservative}
It is imperative to emphasize that the Lagrangian flow for the incompressible stochastic fluid equations is conservative. This fundamental fact underpins all subsequent investigations into Lagrangian chaos. While the proof is straightforward, we provide it below to ensure the mathematical rigor of this paper.

The Jacobian matrix $A_t$ satisfies $\dot{A}_{t} = D\mathbf{u}_t(\mathbf{x}_t)A_t$, furthermore, its determinant satisfies 
$$
\dot \det(A_{t}) = \det(A_{t}) \cdot tr(A_t^{-1}\dot A_{t})=\det(A_{t}) \cdot tr(A_t^{-1}\cdot D\mathbf{u}_t(\mathbf{x}_t)\cdot A_t)=\det(A_{t}) \cdot tr(D\mathbf{u}_t)=0.
$$
The last inequality employs the incompressibility condition $\nabla\cdot\mathbf{u}=0$, from which we obtain $\det(A_{t})=1$, implying that the Lagrangian flow is conservative.
\end{remark}

\subsection{Outline of the proof and contributions}
Below is a fairly detailed exposition of the proof of Theorem \ref{L1}, the main result of this paper. This exposition also highlights several contributions of the present manuscript. The proof of Theorem \ref{L1} is primarily inspired by \cite{JEMS, NS, re89}, and proceeds in three steps:\\
\textbf{Step1: Verification of framework for random dynamical systems and assumptions.} Classical well-posedness theory and regularity estimates (see e.g. \cite{JFA-71,JFA-99,long-7,long-3}) establish that the base, Lagrangian, projective, tangent and Jacobian processes are all Markovian, arise as continuous random dynamical systems in the framework of Section \ref{rds}, and the derivative cocycle associated with the Lagrangian flow admits an integrability condition \eqref{integrability}. Unlike the Navier-Stokes equations, establishing regularity estimates for the Boussinesq equation requires distinct weighted treatments for the vorticity and temperature equations.\\
\textbf{Step2: Ergodicity of the Lagrangian process and projective process.} We now focus on the ergodicity of processes $(\mathbf{u}_t,\theta_t,\mathbf{x}_t)$ and $(\mathbf{u}_t,\theta_t,\mathbf{x}_t,v_t)$ (c.f. Corollary \ref{erogdicty}). The existence of stationary measures follows from the compactness of manifolds $\Sigma^1$, $\Sigma^P$ and the super-Lyapunov property \eqref{Super-Lyapunov-property}, via the Krylov-Bogolyubov theorem.  For uniqueness, the degenerate nature of noise \eqref{noise-form} leads us to employ the asymptotically strong Feller framework established by Martin Hairer etc. \cite{HM06,HM11a}. 
Therefore, it suffices to prove that the extended system satisfies the time asymptotic gradient estimate and weak irreducibility required (c.f. Corollary \ref{long-time} and Proposition \ref{irreducibility}, respectively) by the framework above. By manifold compactness, the former reduces to establishing a unit-time asymptotic gradient estimate on the Markov semigroup (compared to \cite{JFA}, this yields substantial simplification—particularly in constructing the control process $v_{t}^{\beta}$, see \eqref{v11})—crucially contingent on obtaining a nondegenerate bound on a cone for the associated Malliavin matrix $\mathcal{M}_T$ (c.f. Theorem \ref{bounds-Malliavin}). The latter follows directly from an approximate controllability argument.

It should be emphasized that, unlike previous studies on the long-time behavior of the 2D stochastic Boussinesq equations (e.g. \cite{JFA,long-5,long-9} etc.), we need to additionally consider $\mathbf{x}_t$ and $v_t$ induced by the base process $(\mathbf{u}_t,\theta_t)$. Crucially, equation \eqref{Lagrange-process-eq} contains no explicit noise term—that is, no direct noise source—yet randomness propagates from the temperature $\theta_t$ (directly noise-driven) to the velocity field $\mathbf{u}_t$, and subsequently to the vector field $\mathbf{p}_t$ on manifold $\Sigma$, see equations \eqref{Boussinesq} and \eqref{Lagrange-process-eq}. Consequently, 
our case exhibits stronger degeneracy and requires further accounts for the geometric specifics on manifold $T\Sigma$.

The ergodicity of the Lagrangian process, combined with integrability conditions, guarantees the existence of Lyapunov exponents through the multiplicative ergodic theorem (c.f. Theorem \ref{MET}). Specifically, for $\mu^1$-almost every $(\mathbf{u}_0,\theta_0,\mathbf{x}_0)$, the limit $\lim\limits_{t \to \infty}\frac{1}{t}\mathop{\rm log}|D_x\mathbf{x}_t|=\lambda_+$ exists. Furthermore, the ergodicity of the projective process combined with the random multiplicative ergodic theorem  (c.f. \cite[Theorem~III.1.2]{PE-50}) yields a stronger result that holds $\mu^P$-almost surely. 
Here, $\mu^1$ and $\mu^P$ denote the unique stationary measures for the Lagrangian process and the projective process, respectively.\\
\textbf{Step3: Furstenberg’s criterion and approximate controllability.} Next, we establish the positivity of the top Lyapunov exponent $\lambda_+$. As noted earlier, this is typically proven via the Furstenberg's criterion. This criterion indicates that for conservative systems, an alternative scenario to the positivity of the top Lyapunov exponent is the existence of an almost surely invariant structure under the dynamics of the quadruplet $(\mathbf{u}_t,\theta_t,\mathbf{x}_t,A_t)$. Therefore, we require two conditions: (i) The family of invariant projective measures possesses suitable continuity; (ii) The system exhibits approximate controllability. The former enables the application and refinement of the Furstenberg's criterion, restricting possible invariant structures to two continuous types, while the latter precludes the existence of these two types to ensure the positivity of the top Lyapunov exponent. 

Regarding condition (i), continuity can be achieved through two distinct approaches: (a) Smoothing estimates of transition semigroups of the extended system (e.g., strong Feller property \cite{JEMS,PE}, asymptotic strong Feller property \cite{NS}, Hölder-type smoothing estimates \cite{V2}) for the Lagrangian and projective processes; (b) Weak continuity via compactness of the phase space and uniform mixing in total variation distance \cite{Bou88,BCZG23}. 

Analysis in Step 2 reveals that for the Boussinesq equations, the Lagrangian process 
$(\mathbf{u}_t,\theta_t,\mathbf{x}_t)$ and projective process $(\mathbf{u}_t,\theta_t,\mathbf{x}_t,v_t)$ satisfy merely the asymptotic strong Feller property and mix exclusively in the weaker dual-Lipschitz metric. We therefore adopt the framework of \cite{NS} to establish continuity of the family of invariant projective measures (c.f. Proposition \ref{continuous projective measures}), thereby deriving an analogous Furstenberg's criterion (c.f. Theorem \ref{Furstenberg}). Thus, only condition (ii) requires verification. We verify the approximate controllability condition (c.f. Definition \ref{def:controllability}) by constructing smooth controls, which completes the proof of Theorem \ref{L1}. Notably, compared with the literature considering only the process 
$(\mathbf{u}_t,\theta_t,\mathbf{x}_t)$ \cite{JFA} or the Navier-Stokes equations \cite{NS, V2}, the verification of controllability here becomes more subtle and intricate due to heightened degeneracy and increased equation complexity.

It is evident that the principal challenges in this work lie in establishing the probabilistic spectral bound on a cone for Malliavin matrix associated with the extended system and verifying its approximate controllability condition. We now outline these difficulties and our corresponding resolution strategies.

(1) 
Substantial modifications to the computations in \cite[Sections 5–6]{JFA} are required to adapt them to the extended system, with analysis conducted from both the $\hat{H}^n$-component and $T\Sigma$-component perspectives. We first state that for any \( \mathfrak{p} \in \hat{H}^n\times T_{\mathbf{p}_{\scriptscriptstyle T}}\Sigma \) and on the set \( \Omega^* \) with $\mathbb{P}({(\Omega^*)}^c) \leq C\exp(\eta V^{n+2}(U_0)) o_\epsilon(1)$ one has,
\begin{align}\label{JFA-2.18}
	\langle \mathcal{M}_T\mathfrak{p}, \mathfrak{p} \rangle \leq \epsilon \|\mathfrak{p}\|^2 \Longrightarrow
	\begin{cases}
		\max\limits_{|j|\le N, m\in \{0,1\}}\sup\limits_{t\in [T/2,T]}|\langle \mathcal{J}^*_{t,T}\mathfrak{p}, \sigma_j^m \rangle| \leq o_\epsilon(1) \|\mathfrak{p}\|,\\
		\max\limits_{|j|\le N, m\in \{0,1\}}|\langle  \psi_j^m + J_{j,m}^N(U_T),\mathfrak{p} \rangle| \leq o_\epsilon(1) \|\mathfrak{p}\|,
	\end{cases}
\end{align}
where $V^{n+2}(U_0)$ is the $n+2$-order version of the super Lyapunov function as in Definition \ref{Vndef}, $\mathcal{M}_T$ is the Malliavin matrix, $\mathcal{J}^*_{t,T}$ is the adjoint of the linearization of process $(U_t,\mathbf{p}_t)$, and $J_{j,m}^N(U_T)$ is defined as in \eqref{Jm}. Using \eqref{JFA-2.18}, we prove that with high probability the Sobolev norm of the $\hat{H}^n$-component of $\mathfrak{p}$ does not grow excessively, thereby controlling its {\textbf{overlap in the $\hat{H}^n$ direction}}. 
However, unlike the Navier-Stokes equations (c.f. \cite{NS}), it is crucial to note that the estimates \eqref{JFA-2.18} generated by H\"{o}rmander-type Lie brackets now explicitly depend on the stochastic process $U_T$. Consequently, we must carefully balance the control of the energy of $U_T$ with ensuring the validity of Proposition \ref{NS-prop5.18}.

Then, we also wish to establish an analogous implication relationship for $T\Sigma$-component. However, 
we can only obtain the following expression on the set $\Omega^*$:
\begin{align*}
	&\langle \mathcal{M}_T\mathfrak{p}, \mathfrak{p} \rangle \leq \epsilon \|\mathfrak{p}\|^2\\ &\Longrightarrow
	\max_{|j|\le N, m\in \{0,1\}}\left|\left\langle [F(\overline U_T),Z_j^m(\overline U_T)]-\Big(\Theta_{Z_j^m(\overline U_T)}(\mathbf{p}_T)+\Theta_{[U_T,Y_j^m(\overline U_T)]_x}(\mathbf{p}_T)\Big),\mathfrak{p}\right\rangle\right| \leq o_\epsilon(1) \|\mathfrak{p}\|,
\end{align*}
where $\overline{U}:=U-\sigma_\theta W$, $Y_j^m(U):=[F(U),\sigma_j^m]$ and $Z_j^m(U):=[F(U),Y_j^m(U)]$, $[\cdot,\cdot]$ and $[\cdot,\cdot]_x$ denote the Lie bracket operations with respect to $U$ and $x$ respectively, as explicitly defined in \eqref{U}. Using the overlap of $\mathfrak{p}$ in the $\hat{H}^n$ direction and Assumption \ref{assum5.5} (i.e. `solution-dependent manifold spanning condition')--$\Theta_{Z_j^m(\overline U_T)}+\Theta_{[U_T,Y_j^m(\overline U_T)]_x}$ uniformly spans $T\Sigma$, we can further control the {\bf{overlap of $\mathfrak{p}$ in the $T\Sigma$ direction}}. Consequently, it suffices to verify Assumption \ref{assum5.5}. We note that, unlike in reference \cite{NS} where only 
$\Theta_{e_k}$ (with $e_k$ being the real-valued Fourier basis) needed to be considered, the vector fields associated with the tangent bundle $T\Sigma$ here exhibit significantly higher complexity and strongly depend on the stochastic process $U_T$ satisfying equation \eqref{Boussinesq-w1}. Therefore, the spanning property holds only almost everywhere. Furthermore, this necessitates a more delicate treatment in selecting appropriate vector fields when verifying the spanning condition, while carefully balancing the selection of $N$ and the control of the bound on $U_T$.

(2) To construct the required smooth controls under the constraints of degenerate noise and the system's inherent complexity, we develop specialized flows to guide the system's evolution, adapting and extending the `shear flow' and `cellular flow' methods from \cite[Section 7]{JEMS} to the Boussinesq system. The resulting smooth control directly affects only the temperature equation, yet achieves controllability of the velocity equation through the coupling mechanism. Due to the equation structure, these controls inevitably possess spatial dependence—specifically, they take the form of $\sum_{\substack{k \in \hat{\mathcal{Z}}, l \in \{0,1\}}}\sigma_k^l(x)h_k^l(x,t)$. This constitutes a fundamental distinction from the spatially homogeneous $Qh(t)$-type controls employed in prior works \cite{NS, V2, PE}. When establishing the approximation properties essential for verifying approximate controllability, this spatial dependence introduces significant additional complexity (see Proposition \ref{V2-thm5.2}).

Finally, to facilitate focused application of the developed techniques to Lagrangian chaos studies in stochastic incompressible fluid models—such as stochastic incompressible magnetohydrodynamics (MHD) equations driven by degenerate noise—we formalize generalizable structural properties while concretizing some existing conditions.
\begin{itemize}
\item The base, Lagrangian, projective, tangent and Jacobian processes are all almost surely globally well-posed and Markovian, arising as continuous random dynamical systems.  Moreover, the derivative cocycle associated with the Lagrangian process admits an integrability condition. (c.f. \eqref{integrability}).
\item The Lagrangian and projective processes are  asymptotic strong Feller and weak irreducible. For the former specifically, this requires: the essential bounds on the dynamics of the extended system, a `generalized Hörmander condition' and a `solution-dependent manifold spanning condition' (c.f. Section \ref{smoothing estimate} for more details).
\item The projective and Jacobian processes exhibit approximate control condition. Specifically for fluid models with Euler-type nonlinearity (e.g. Navier-Stokes equations, primitive equations and Boussinesq equations etc.), approximate controllability is achieved by constructing smooth controls via shear and cellular flows.
\end{itemize}

The manuscript is organized as follows: In Section \ref{SEC2}, we review fundamental concepts in random dynamical systems theory, establishes the key Furstenberg's criterion, and reduces the proof of the main theorem to establishing the ergodicity of the Lagrangian and projective processes and verifying the approximate controllability of the extended system. Section \ref{SEC3} provides necessary a priori moment estimates for the equations and simplifies the proof of ergodicity for the Lagrangian and projective processes to verifying weak irreducibility and establishing the unit-time asymptotic gradient estimate. In Section \ref{smoothing estimate}, we reduce the unit-time asymptotic gradient estimate to establishing the probabilistic spectral bound on a cone for the Malliavin matrix via a control problem, demonstrating that these bounds hold under Assumptions \ref{assum5.4}–\ref{assum5.5}. In Section \ref{var-assum5.4-5}, we verify that the extended system satisfies Assumptions \ref{assum5.4}–\ref{assum5.5}. Section \ref{Approximate controllability} verifies the nonlinear approximate controllability of the extended system associated with the Boussinesq system \eqref{Boussinesq} and thereby proves the weak irreducibility of the Lagrangian and projective processes. The Appendix supplies proofs for the a priori moment estimates required in Section \ref{SEC3} and selected results used in the paper.

\section{Fundamental mathematical framework and main results}\label{SEC2}
In this section, we primarily introduce the random dynamical systems (RDS) framework, which is the principal abstract framework underpinning our research. Here, we focus specifically on RDS equipped with the independent increments hypothesis and linear cocycles satisfying an integrability condition. We also present the Multiplicative Ergodic Theorem and Furstenberg's criterion within this framework. Detailed proofs for many specific results can be found in the references \cite{Kif86,KS12,Ose68,JEMS}. 
\subsection{The RDS framework and the Multiplicative Ergodic Theorem}
As previously stated, the proof of Theorem \ref{L1} requires establishing two key properties:\\
(A) that the limit defining the Lyapunov exponent $\lambda_+$ exists and is constant almost
surely,\\
(B) that this exponent satisfies $\lambda_+>0$.

We first employ tools from random dynamical systems theory to prove (A). To this end, this section briefly reviews foundational concepts and results in RDS relevant to our work. Specifically, we verify that the base, Lagrangian, projective, and tangent processes are all continuous random dynamical systems, and that they satisfy the required independent increments hypothesis and integrability condition.
\subsubsection{Basic setup for random dynamics system}\label{rds}
Let $(\Omega, \mathcal{F}, \mathbb{P})$ be a probability space and let $(\theta^t)$ be a measure-preserving semiflow on $\Omega$, i.e., $\theta: [0, \infty) \times \Omega \to \Omega$, $(t, w) \mapsto \theta^t w$ is a measurable mapping satisfying
\begin{itemize}
	\item[(i)] $\theta^0 w \equiv w$ for all $w \in \Omega$,
	\item[(ii)] $\theta^t \circ \theta^s = \theta^{t+s}$ for all $s, t \geq 0$, and
	\item[(iii)] $\mathbb{P} \circ (\theta^t)^{-1} = \mathbb{P}$ for all $t \geq 0$.
\end{itemize}


Suppose that $\Omega$ is a Borel subset of a Polish space, and $\mathcal{F}$ is the set of Borel subsets of $\Omega$. Let $(Z, d)$ be a separable, complete metric space. A \emph{random dynamical system} or RDS on $Z$ is an assignment to each $w \in \Omega$ of a mapping $\mathcal{T}_w: [0, \infty) \times Z \to Z$ satisfying the following basic properties:
\begin{itemize}
	\item[(i)] (Measurability) The mapping $\mathcal{T}: [0, \infty) \times \Omega \times Z \to Z$, $(t, w, z) \mapsto \mathcal{T}^t_w z$, is measurable with respect to $\text{Bor}([0, \infty)) \otimes \mathcal{F} \otimes \text{Bor}(Z)$ and $\text{Bor}(Z)$.
	\item[(ii)] (Cocycle property) For all $w \in \Omega$, we have $\mathcal{T}^0_w = \text{Id}_Z$ (the identity mapping on $Z$), and for $s, t \geq 0$, we have $\mathcal{T}^{s+t}_w = \mathcal{T}^t_{\theta^s w} \circ \mathcal{T}^s_w$.
	\item[(iii)] (Continuity) For all elements $w \in \Omega$, the mapping $\mathcal{T}_w: [0, \infty) \times Z \to Z$ belongs to $C_{u,b}([0, \infty) \times Z, Z)$.
\end{itemize}

Here, for metric spaces \( V, W \), space \( C_{u,b}(V, W) \) is the the space of continuous maps $E: V\to W$ such that for every bounded set $O\subseteq V$, both the restriction $E|_{O}$ is uniformly continuous and the image $E(O)$ is a bounded subset of $W$ hold.

\begin{definition} \label{CRDS-def}
We refer to \(\mathcal{T}\) satisfying (i)-(iii) above as a \emph{continuous RDS} on \(Z\).
\end{definition}

In addition to (i)-(iii) above, we will almost always assume that the RDS \(\mathcal{T}\) satisfies the usual \emph{independent increments assumption}.\\
{\textbf{(H1)}} For all \( s, t > 0 \), we have that \(\mathcal{T}_{w}^{t}\) is independent of \(\mathcal{T}_{\theta^t w}^s\). That is, the \(\sigma\)-subalgebra \(\sigma(\mathcal{T}_{w}^{t}) \subset \mathcal{F}\) generated by the \(C_{u,b}(Z, Z)\)-valued random variable \(w \mapsto \mathcal{T}_{w}^{t}\) is independent of the \(\sigma\)-subalgebra \(\sigma(\mathcal{T}_{\theta^t w}^s)\) generated by \(w \mapsto \mathcal{T}_{\theta^t w}^s\).
\begin{remark} \label{L-process-RDS}
Based on the well-posedness results and ergodicity results in \cite{JFA}, and following the approach elaborated in \cite[Section A.1]{JEMS}, we obtain the corresponding mapping $\mathcal{U}:[0,\infty)\times \Omega\times\hat{H}^4\to\hat{H}^4, (t,w,U)\mapsto \mathcal{U}_w^t(U)$ for the base process $(U_t)$ is a continuous random dynamical systems satisfying the independent increment assumption, the corresponding mapping $\mathfrak{P}:[0,\infty)\times \Omega\times\hat{H}^4\times \mathbb{T}^2\to\hat{H}^4\times \mathbb{T}^2$ for the Lagrangian process $(U_t,\mathbf{x}_t)$ also is a continuous random dynamical systems satisfying the independent increment assumption, with analogous results holding for projective process, tangent process and jacobian process.
\end{remark}
We write \( P_t : \hat{H}^4 \times \Sigma \to \mathcal{P}(\hat{H}^4 \times \Sigma) \) to denote the time \( t \) transition kernel, where \( \mathcal{P}(X) \) denotes the space of probability measures on \( X \). We also let \( P_t \) act adjointly on observables \( \varphi : \hat{H}^4 \times \Sigma \to X \) by pulling back:
\[
P_t \varphi(U_0,\mathbf{p}_0) = \int \varphi(U_t,\mathbf{p}_t) \, P_t(U_0,\mathbf{p}_0; \mathrm{d}U_t,\mathrm{d}\mathbf{p}_t).
\]

\subsubsection{Linear cocycles over RDS and the Multiplicative Ergodic Theorem}
In this section, we formalize the concept of linear cocycles over random dynamical systems and rigorously state the multiplicative ergodic theorem (Theorem \ref{MET}), which provides the foundational framework for establishing the proof of \emph{property (A)}.
\begin{definition} \label{cocycle-def}
Let \(\mathcal{T}\) be a continuous RDS as in Section \ref{rds}, referred to below as the base RDS, and let \((\tau^t)\) be its associated skew product. A \(d\)-dimensional linear cocycle \(\mathcal{A}\) over the base RDS \(\mathcal{T}\) is a mapping \(\mathcal{A}:\Omega \to C_{u,b}([0,\infty) \times Z,\, M_{d \times d}(\mathbb{R}))\) with the following properties:
\begin{itemize}
	\item[(i)] The evaluation mapping \(\Omega \times [0,\infty) \times Z \to M_{d \times d}(\mathbb{R})\) sending \((\omega,t,z) \mapsto \mathcal{A}_{w,z}^t\) is \(\mathcal{F} \otimes \text{Bor}([0,\infty)) \otimes \text{Bor}(Z)\)-measurable.
	\item[(ii)] The mapping \(\mathcal{A}\) satisfies the cocycle property: for any \(z \in Z, w \in \Omega\) we have \(\mathcal{A}_{w,z}^0 = \mathrm{Id}_{\mathbb{R}^d}\), the \(d \times d\) identity matrix, and for \(s,t \geq 0\) we have
	\[
	\mathcal{A}_{w,z}^{s+t} = \mathcal{A}_{\tau^t(w,z)}^s \circ \mathcal{A}_{w,z}^t.
	\]
\end{itemize}
\end{definition}

The following is a version of the multiplicative ergodic theorem (MET) in \cite{Ose68}.
\begin{theorem}\label{MET}
Let \(\mathcal{T}\) be a continuous RDS as in Section \ref{rds} satisfying independent increments assumption. Let \(\mu \in \mathcal{P}(Z)\) be an ergodic stationary measure for \(\mathcal{T}\), and let \(\mathcal{A}\) be a linear cocycle over \(\mathcal{T}\) satisfying the following integrability condition: 
\begin{equation} \label{integrability}
\mathbb{E} \int_{Z} \left( \sup_{0 \leq t \leq 1} \log^+ |\mathcal{A}_{w,z}^t| \right) d\mu(z),\ 
\mathbb{E} \int_{Z} \left( \sup_{0 \leq t \leq 1} \log^+ |(\mathcal{A}_{w,z}^t)^{-1}| \right) d\mu(z) < \infty,
\end{equation}
where \(\log^+(x) := \max\{0, \log(x)\}\) for \(x > 0\) and \(\mathbb{E}\) is the expectation with respect to \(\mathbb{P}\). Then there are \(r \in \{1, \ldots, d\}\) deterministic real numbers
\[
\lambda_r < \cdots < \lambda_1
\]
and for \(\mathbb{P} \times \mu\)-a.e. \((w, z)\), a flag of subspaces
\[
\{0\} =: F_{r+1} \subsetneq F_r(w, z) \subsetneq \cdots \subsetneq F_2(w, z) \subsetneq F_1 := \mathbb{R}^d
\]
such that
\[
\lambda_i = \lim_{t \to \infty} \frac{1}{t} \log |\mathcal{A}_{w,z}^t v|, \quad v \in F_i(w, z) \setminus F_{i+1}(w, z).
\]
Moreover, for any \(i \in \{1, \ldots, r\}\), the mapping \((w, z) \mapsto F_i(w, z)\) is measurable and \(\dim F_i(w, z)\) is constant for \(\mathbb{P} \times \mu\)-a.e. \((w, z)\). 
\end{theorem}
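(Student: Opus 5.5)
The statement is the multiplicative ergodic theorem for linear cocycles over a continuous RDS with independent increments, so my plan is to reduce it to the classical discrete-time Oseledets theorem of \cite{Ose68} on a suitable ergodic skew product, and then pass from integer to continuous times.

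\textbf{Step 1: an invariant ergodic measure.} I would work with the skew product $\tau^t(w,z)=(\theta^t w,\mathcal{T}^t_w z)$ on $\Omega\times Z$. Under \textbf{(H1)}, stationarity of $\mu$ (that is, $\mu P_t=\mu$) implies that $\mathbb{P}\times\mu$ is $\tau^t$-invariant. The reason is that $\mathcal{T}^t_w$ is independent of $\mathcal{F}$-information generated at later increments. If the shift only produces a one-sided Markov structure, I would pass to the canonical path space of increments, where $\theta^t$ is the shift.

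Ergodicity of $\mu$ as a stationary measure also yields ergodicity of $\mathbb{P}\times\mu$ for $\tau^1$. This is the standard Kifer argument \cite{Kif86}: a $\tau$-invariant set $E$ has sections $E_w$. The function $z\mapsto\mathbb{P}(w: z\in E_w)$ is $P_1$-invariant in $L^1(\mu)$, and is therefore $\mu$-a.e.\ constant. By independence of increments, the invariance then transfers to $\mathbb{P}\times\mu$. Some care is needed because $\tau^1$-ergodicity rather than flow ergodicity is required. I would handle this by applying the same argument with $P_1$, using that ergodicity of $\mu$ for the semigroup implies ergodicity for $P_1$ up to a finite rotation factor. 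Alternatively, I would simply work with the time-$t_0$ discretization and use that the limits are shift-invariant.

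\textbf{Step 2: the discrete Oseledets theorem.} The map $(w,z)\mapsto \mathcal{A}^1_{w,z}$ is a measurable matrix cocycle over $\tau^1$, by the cocycle property (ii) of Definition \ref{cocycle-def}. The integrability condition \eqref{integrability} with $t=1$ gives $\log^+|\mathcal{A}^1|,\ \log^+|(\mathcal{A}^1)^{-1}|\in L^1(\mathbb{P}\times\mu)$. Oseledets' theorem then yields the following:
\begin{itemize}
\item deterministic exponents $\lambda_r<\dots<\lambda_1$, deterministic by ergodicity;
\item a measurable filtration $F_i(w,z)$ with constant dimensions;
\item the limits $\frac1n\log|\mathcal{A}^n_{w,z}v|\to\lambda_i$ along integers, for $v\in F_i\setminus F_{i+1}$.
\end{itemize}

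\textbf{Step 3: continuous-time interpolation.} This is the main technical point. For $t\in[n,n+1]$, the cocycle property gives $\mathcal{A}^t=\mathcal{A}^{t-n}_{\tau^n(w,z)}\mathcal{A}^n$. Hence
\[
\bigl|\log|\mathcal{A}^t v|-\log|\mathcal{A}^n v|\bigr|\le g(\tau^n(w,z)),
\]
where
\[
g:=\sup_{0\le s\le1}\bigl(\log^+|\mathcal{A}^s|+\log^+|(\mathcal{A}^s)^{-1}|\bigr).
\]
Measurability of $g$ follows from continuity in $t$. By \eqref{integrability}, $g\in L^1(\mathbb{P}\times\mu)$. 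Birkhoff's theorem applied to $g$ shows that $\frac1n g\circ\tau^n\to0$ almost surely, since $\frac1n\sum_{k<n} g\circ\tau^k$ converges. Therefore the continuous-time limits coincide with the integer-time ones, which completes the proof.

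I expect the main obstacle to be Step 1, namely the rigorous passage from ergodicity of the stationary measure to ergodicity of the skew product, and in particular its validity for the time-one map. The rest of the argument is the classical theorem applied with the bookkeeping above.
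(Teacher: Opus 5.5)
Your plan is essentially what the paper relies on. The paper gives no proof of Theorem~\ref{MET} beyond citing Oseledets' theorem, and the footnote to Remark~\ref{L-process-ergodic} supplies exactly your Step~1 (invariance and ergodicity of $\mathbb{P}\times\mu$ for the skew product under (H1)); your Steps~2--3 are then the standard discrete-time Oseledets theorem plus the $g\circ\tau^n/n\to 0$ interpolation.

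One correction to Step~1. Ergodicity of $\mu$ for the semigroup does not give $P_1$-ergodicity ``up to a finite rotation factor'': the deterministic unit-speed rotation of $\mathbb{R}/\mathbb{Z}$ has the identity as its time-one map. So use your second option instead:
\begin{enumerate}
\item apply Oseledets to $\tau^1$ without assuming ergodicity;
\item observe, via $\mathcal{A}^{t+s}_{x}=\mathcal{A}^{t}_{\tau^s x}\mathcal{A}^{s}_{x}$ and the a.e.\ invertibility of $\mathcal{A}^s_x$, that the exponents and their multiplicities are $\tau^s$-invariant for every $s\ge 0$;
\item conclude that they are constant by ergodicity of $\mathbb{P}\times\mu$ for the semiflow.
\end{enumerate}
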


In particular, \(\lambda_1\) in the above theorem is precisely the top Lyapunov exponent of interest. By Kingman’s subadditive ergodic theorem (see \cite{Kin73}), \(\lambda_1\) equals \(\lambda_+\) defined by the limit in \eqref{lambda+}, and  
\[
\lambda_{\Sigma} := \sum_{i=1}^{r} m_i \lambda_i = \lim_{t \to \infty} \frac{1}{t} \log \left|\det\left(\mathcal{A}_{w,z}^t\right)\right|.
\]  
Hence, \(\lambda_{\Sigma} = 0\) if \(\det\left(\mathcal{A}_{w,z}^t\right) = 1\). In particular, for the Lagrangian flow considered in this paper, we have \(\lambda_{\Sigma} = 0\).

\begin{remark} \label{tangent-process-cocycle}
	The cocycle \( \mathcal{A}^t_{w, z} := D_z\mathcal{T}^t_w \), \( z \in Z \), \( t \geq 0 \), is often referred to as the \emph{derivative cocycle} for \(\mathcal{T}\). Building on the theoretical framework established above, in this paper we investigate the linear cocycle
	\[
	\mathcal{A}: [0, \infty) \times \Omega \times \hat{H}^4 \times \mathbb{T}^2 \rightarrow M_{2 \times 2}(\mathbb{R})
	\]
	defined by  
	\[
	\mathcal{A}_{w, U, x}^t = D_x \mathbf{x}_{\scriptstyle{w, U}}^t.
	\]  
	For the Boussinesq equations, moment estimates analogous to those established in \cite{JEMS, JEMS-38, KS12} can be similarly established. Consequently, the derivative cocycle associated with the Lagrangian process satisfies the integrability condition \eqref{integrability}. Furthermore, the independent increments condition for the projective RDS asscoiated to the cocycle $\mathcal{A}$ is equivalent to condition (H1) for the \((U_t, x_t, A_t)\) process.
\end{remark}

\begin{remark} \label{L-process-ergodic}
If the Lagrangian process admits an unique stationary measure $\mu^1$, then given that the integrability condition holds, it follows from the MET that the top Lyapunov exponent of this linear cocycle exists and is $\mathbb{P}\times\mu^1$ almost surely constant. \footnote{Indeed, for a random dynamical system $\mathcal{T}$ satisfying the independent increments assumption (H1), if $\mu$ is the stationary measure of the transition semigroup $P_t$, then $\mathbb{P}\times \mu$ is an invariant measure of the skew product $\tau_t$ associated with $\mathcal{T}$. Furthermore, if the semigroup admits a unique invariant measure $\mu$, then $\mathbb{P}\times \mu$ is an ergodic invariant measure of the random dynamical system $\mathcal{T}$.} Therefore, to verify property (A), it suffices to prove that the Lagrangian process admits a unique stationary measure, which will be established in Corollary \ref{erogdicty}.
\end{remark}

\subsection{Positivity of the top Lyapunov exponent}
Now, we focus in this subsection on verifying \emph{property (B)} using Furstenberg's criterion, achieved by ruling out invariant structures for the projective measure that possess certain continuity.
\subsubsection{Furstenberg’s criterion with families of unconditionally continuous measures}
We begin by presenting a standard version of the Furstenberg’s criterion, directly adapted from \cite{Led86}:
\begin{theorem}
If \(\lambda_1 = \lambda_r\), then for each \(z \in Z\) there is a Borel measure \(\nu_z\) on \(P^{1}\) such that
\begin{enumerate}
    \item[(i)] the assignment \(z \mapsto \nu_z\) is measurable, and
    \item[(ii)] for each \(t \in [0, \infty)\) and \((\mathbb{P} \times \mu)\)-almost all \((w, z) \in \Omega \times Z\) (perhaps depending on \(t\)), we have that
    \begin{equation}\label{invariant-measure}
    (\mathcal{A}_{w,z}^t)_*\nu_z = \nu_{\mathcal{T}_{w,z}^t}.
    \end{equation}
\end{enumerate}
\end{theorem}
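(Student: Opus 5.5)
This is the classical Furstenberg–Ledrappier argument, and I would prove it by the Baxendale/Ledrappier route in the RDS setting of Section \ref{rds} with the projective extension of the cocycle $\mathcal{A}$. Let $\hat{\mathcal{T}}^t_w(z,v) := (\mathcal{T}^t_w z, \mathcal{A}^t_{w,z}v/|\mathcal{A}^t_{w,z}v|)$ be the induced continuous RDS on $Z\times P^1$; it inherits (H1) because $\mathcal{A}$ is a cocycle over $\mathcal{T}$. Since $P^1$ is compact and $\mu$ is stationary for the base, a Krylov--Bogolyubov argument produces stationary measures $\hat\mu$ on $Z\times P^1$ projecting to $\mu$. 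We disintegrate them as $d\hat\mu(z,v)=d\nu_z(v)\,d\mu(z)$; this gives measurability of $z\mapsto\nu_z$, i.e. (i).

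For each such $\hat\mu$ we use the cocycle $\Phi^t(w,z,v):=\log|\mathcal{A}^t_{w,z}v|$. Stationarity and the integrability condition \eqref{integrability} give
\[
\int \mathbb{E}\,\Phi^t\, d\hat\mu = t\,\gamma(\hat\mu), \qquad \gamma(\hat\mu)=\int\mathbb{E}\,\Phi^1\,d\hat\mu .
\]
By Theorem \ref{MET}, each $\gamma(\hat\mu)$ lies in $[\lambda_r,\lambda_1]$. The hypothesis $\lambda_1=\lambda_r$ means all directions grow at the same exponential rate, so $\log|\mathcal{A}^t v|-\log|\mathcal{A}^t|$ is $o(t)$ uniformly in $v$, $\mathbb{P}\times\mu$-a.s.; equivalently, $\log\|\mathcal{A}^t\| = o(t)$ for the normalized ($\det=1$ in our setting) cocycle.

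The key step is to pass to the two-sided setting. I would work on the natural extension, a bi-infinite noise space, and consider the past-conditioned measures
\[
\nu_{w,z} := \lim_{t\to\infty} (\mathcal{A}^t_{\theta^{-t}w,\cdot})_*\nu ,
\]
which exist by the martingale convergence theorem. These measures are invariant in the strong sense $(\mathcal{A}^t_{w,z})_*\nu_{w,z}=\nu_{\tau^t(w,z)}$. Ledrappier's entropy/Jensen argument then goes as follows. The quantity
\[
\int \mathbb{E}\log \frac{d(\mathcal{A}^1)_*\nu_z}{d\nu_{\mathcal{T}^1 z}}
\]
is bounded by $\lambda_1-\lambda_r$ (via the Furstenberg formula $\lambda_1+\lambda_r = \dots$ and the $\det$ term). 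Vanishing of this quantity forces equality in Jensen, hence $\nu_{w,z}$ does not depend on the future noise. By (H1), $\nu_{w,z}$ is measurable with respect to the past and also independent of it after conditioning, so it equals its conditional expectation $\nu_z$. This yields \eqref{invariant-measure} for fixed $t$ and a.e. $(w,z)$, which is (ii).

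The main obstacle is this last step: making the Jensen-equality argument rigorous in a noncompact, infinite-dimensional base. I would follow \cite{Led86} essentially verbatim, replacing the i.i.d. matrix structure by (H1) applied at time $t$, and would use \eqref{integrability} to justify the uniform integrability needed for the martingale limits and the relative-entropy identities.
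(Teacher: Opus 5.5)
The paper gives no proof of this statement; it is imported from \cite{Led86}, in the RDS form used in \cite{JEMS}. So deferring to Ledrappier is also the paper's route. Your preliminary steps are fine: Krylov--Bogolyubov on $Z\times P^1$ (tightness is free because every $Z$-marginal is $\mu$ and $P^1$ is compact), the disintegration $d\hat\mu=\nu_z\,d\mu$ giving (i), and $\gamma(\hat\mu)\in[\lambda_r,\lambda_1]$. The problems are in the part of the sketch that is supposed to carry the argument.

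\emph{The key inequality is asserted, not proved.} Your quantity is
$I=\int\mathbb{E}\int\log\frac{d(\mathcal{A}^1_{w,z})_*\nu_z}{d\nu_{\mathcal{T}^1_wz}}\,d(\mathcal{A}^1_{w,z})_*\nu_z\,d\mu(z)$.
This is a relative entropy, a priori in $[0,\infty]$. Stationarity only says that $\nu_{z'}$ is the average of the measures $(\mathcal{A}^1_{w,z})_*\nu_z$ over the fibre $\{\mathcal{T}^1_wz=z'\}$. An average need not dominate its components, so the density need not even exist.

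The bound $I\le\lambda_1-\lambda_r$ you have in mind comes from a change of variables. If $\nu_z=\rho_z\,dv$, the projective action of $A$ has Jacobian $|\det A|\,|Av|^{-2}$. Cancelling the $\log\rho$ terms by stationarity then gives $I=2\gamma(\hat\mu)-\lambda_\Sigma\le\lambda_1-\lambda_r$. That computation needs $\nu_z\ll\mathrm{Leb}$ with $\log\rho_z$ integrable, and nothing of the kind is known for the projective stationary measures here, which are produced abstractly. Removing exactly this regularity assumption is the content of \cite{Led86}. Your sketch presupposes the conclusion of that theorem rather than supplying a proof of it.

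\emph{The two-sided step runs the wrong way.} The past-conditioned measures $\nu_{w,z}=\lim_t(\mathcal{A}^t_{\theta^{-t}w,z_{-t}})_*\nu_{z_{-t}}$ are past-measurable by construction. So the statement that $\nu_{w,z}$ does not depend on the future noise carries no information. Meanwhile, independence from the past after conditioning on $z$ is precisely what has to be shown. What is needed is that $\nu_{w,z}$ is measurable with respect to $\sigma(z)$ together with the \emph{future} noise, i.e.\ it does not see the past beyond the present state. Only then does (H1) give $\nu_{w,z}=\mathbb{E}[\nu_{w,z}\mid z]=\nu_z$, and hence (ii).

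Notice also that if you really had $I=0$, then (ii) would follow at once. Zero relative entropy forces $(\mathcal{A}^1_{w,z})_*\nu_z=\nu_{\mathcal{T}^1_wz}$ almost surely, and the same argument works for every $t$. The natural extension would then be superfluous; as written, the one-sided and two-sided halves of your argument do not connect.

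A minor point: since $\mathcal{T}_w$ is not invertible, the natural extension must carry the past states $z_{-t}$, not just bi-infinite noise.
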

Although the classical Furstenberg criterion implies that the positivity of the top Lyapunov exponent would follow by simply ruling out any family of projective measures \(\{\nu_z\}_{z \in \operatorname{supp} \mu}\) satisfying the invariant structure \eqref{invariant-measure}, the mere measurability of such a family is insufficient to exclude these invariant structures.
\cite[Section 4]{Bou88} and \cite[Proposition 2.10]{BCZG23} circumvent this by imposing a stronger assumption-namely, that the stationary measure \(\mu\) is mixing under the total variation norm-which enables the selection of a weakly continuous family of measures \(\{\nu_z\}\). However, these results do not extend to Lagrangian trajectories governed by the Boussinesq system, where degenerate noise induces mixing only under the weaker dual-Lipschitz metric. To address this limitation, we employ a refined version of the Furstenberg's criterion proposed in \cite[Proposition 3.4]{NS}. 
\begin{proposition}\emph{\cite[Proposition 3.1]{NS}}\label{NS-prop3.1}
Let \( P(z, dy) \) be a Markov kernel on a Radon space \((Z, \mathcal{M}, \mu)\) where \(\mu\) is stationary under \(P\), that is,
\[
\int P(z, dy) \, \mu(dz) = \mu(dy).
\]
Let \(\mathcal{A}\) be a cocycle on \(Z\), identified with a measurable \(\mathcal{A}: Z \to \mathrm{SL}_2(\mathbb{R})\). We also write \(\mathcal{A}_z\) to denote the induced map \(\mathcal{A}_z: S^{1} \to S^{1}\) given by \(\mathcal{A}_z: \nu \mapsto \frac{\mathcal{A}(\nu)}{|\mathcal{A}(\nu)|}\).

Suppose \(\nu: Z \to \mathcal{P}(S^{1})\) is a measurable family of probability measures on the sphere \(S^{1}\) satisfying the twisted pullback condition
\[
\nu_z = (\mathcal{A}(z)^{-1})_* \int \nu_y \, P(z, dy),
\]
where \(\mathcal{A}_*\nu\) denotes the pushforward measure of \(\nu\) under the map \(\mathcal{A}\).

Let \(\lambda_+\) be the top Lyapunov exponent of the cocycle, then
\[
\lambda_+ :=\lim_{n \to \infty} \frac{1}{n} \mathbb{E} \log \|\mathcal{A}(z_{n-1}) \cdots \mathcal{A}(z_1)\mathcal{A}(z_0)\|,
\]
where \(z_0\) is distributed according to \(\mu\) and the remaining \(z_j\) are distributed according to the trajectory of the Markov process started from \(z_0\).  If $\lambda_+=0$, then the family is almost surely invariant:
$$
\mathcal{A}(z_{0})_*\nu_{z_0}=\nu_{z_1}, \,\,\, P(z_0,dz_1)\mu(dz_0)-a.s..
$$
\end{proposition}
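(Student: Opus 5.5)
Let $\mathcal{A}_j:=\mathcal{A}(z_j)$ and view the process on the enlarged space $Z\times S^1$. Define the Markov kernel $\hat P((z,v),\cdot)$ by drawing $y\sim P(z,dy)$ and moving $v\mapsto \mathcal{A}_z v$. Let $\hat\mu(dz,dv):=\mu(dz)\nu_z(dv)$. The twisted pullback condition says exactly that $\hat\mu$ is stationary for $\hat P$. Indeed, the $z_1$-marginal of $\hat\mu\hat P$ is $\mu$, and the conditional law of $v_1=\mathcal{A}_{z_0}v_0$ given $z_1$ is obtained by averaging $(\mathcal{A}_{z_0})_*\nu_{z_0}$ against the reverse kernel. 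After disintegration, the twisted condition $(\mathcal{A}_{z_0})_*\nu_{z_0}=\int\nu_y P(z_0,dy)$ is the statement that the pair $(z_0,\mathcal{A}_{z_0}v_0)$ has law $\mu(dz_0)\int \nu_y P(z_0,dy)$. I would first set this up carefully on the Radon space, using disintegration to handle measurability.

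The proof is an entropy (Furstenberg–Ledrappier) argument. Consider the cocycle $\Phi(z,v):=\log|\mathcal{A}_z v|$. Stationarity of $\hat\mu$ and the standard Furstenberg formula give
\[
\int \Phi\,d\hat\mu=\lim_n\frac1n\mathbb{E}\log|\mathcal{A}_{n-1}\cdots\mathcal{A}_0 v_0|,
\]
and this is at most $\lambda_+$. Since $\det=1$, the same bound with the inverse cocycle gives a lower bound by $-\lambda_+$. If $\lambda_+=0$, then $\int\Phi\,d\hat\mu=0$.

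The key identity is this: for $A\in \mathrm{SL}_2(\mathbb{R})$ acting on $S^1$, the Jacobian of the pullback is $d(A^{-1})_*\text{Leb}/d\,\text{Leb}(v)=|Av|^{-2}$. I would compare $\eta_0:=(\mathcal{A}_{z_0})_*\nu_{z_0}$ with $\eta_1:=\nu_{z_1}$ through the relative entropy $H(\eta_0\,|\,\mathbb{E}[\nu_{z_1}\mid z_0])$. Convexity (Jensen) of $-\log$ applied along the kernel gives a nonnegative quantity,
\[
\int\!\!\int \mathrm{KL}\big((\mathcal{A}_{z_0})_*\nu_{z_0}\,\big\|\,\nu_{z_1}\big)\,P(z_0,dz_1)\mu(dz_0)\ge 0.
\]
The twisted condition together with the Jacobian formula should bound this quantity above by $2\int\Phi\,d\hat\mu$, or more precisely by a telescoping expression that, by stationarity, reduces to a multiple of the Lyapunov sum. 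This is Ledrappier's "$\lambda_+\ge \tfrac12 h$" inequality. Hence $\lambda_+=0$ forces the KL divergence to vanish almost surely, which gives $(\mathcal{A}_{z_0})_*\nu_{z_0}=\nu_{z_1}$ for $P(z_0,dz_1)\mu(dz_0)$-a.e. pair.

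The main obstacle is making the entropy argument rigorous when $\nu_z$ may be singular with respect to Lebesgue measure, so that KL relative to Lebesgue is infinite. I would follow Ledrappier's approach and use conditional entropy of the partition generated by $v_1$ relative to $z_1$ under the two-step measure $\mu(dz_0)\nu_{z_0}(dv_0)P(z_0,dz_1)$. This avoids densities: one computes the fibre entropy $h=\int \mathrm{KL}\big((\mathcal{A}_{z_0})_*\nu_{z_0}\,\|\,\nu_{z_1}\big)$, which is well defined because $\int\nu_{z_1}P(z_0,dz_1)=(\mathcal{A}_{z_0})_*\nu_{z_0}$ gives absolute continuity of each conditional. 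Then, as in the references cited, one shows $h\le 2\lambda_+$ by smoothing $\nu$ with a Lebesgue component $\epsilon\,\text{Leb}+(1-\epsilon)\nu$, applying the Jacobian bound, and passing to the limit $\epsilon\to0$. Since $\lambda_+=0$ this gives $h=0$, which is exactly the almost sure invariance claimed.
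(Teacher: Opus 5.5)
\textbf{There are genuine gaps.} Note that the paper gives no proof of this proposition; it only cites \cite{NS}. The two problems are the claimed forward stationarity in your first step, and the entropy step, which has the wrong orientation and does not treat singular $\nu_z$.

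\textbf{Your first step is false.} Under $\hat\mu\hat P$, the new direction $\mathcal{A}_{z_0}v_0$ is, given $z_0$, independent of $z_1$, with law $(\mathcal{A}_{z_0})_*\nu_{z_0}=\int\nu_y\,P(z_0,dy)$. Its conditional law given $z_1$ is therefore an average of $\nu$'s, not $\nu_{z_1}$. So the twisted condition does not make $\hat\mu=\mu(dz)\nu_z(dv)$ stationary for the forward skew product.

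In fact, forward stationarity together with the twisted condition would give the conclusion without using $\lambda_+$ at all. For a test function $\varphi$, set $X=\nu_{z_1}(\varphi)$ and $Y=((\mathcal{A}_{z_0})_*\nu_{z_0})(\varphi)$. The twisted condition says $Y=\mathbb{E}[X\mid z_0]$, and forward stationarity says $X=\mathbb{E}[Y\mid z_1]$. Hence $\mathbb{E}X^2=\mathbb{E}XY=\mathbb{E}Y^2$, so $X=Y$ almost surely. Your claim would thus make the proposition trivial. It fails, for example, for i.i.d.\ products of two generic hyperbolic matrices with $\nu_i=(\mathcal{A}_i^{-1})_*\rho$, where $\rho$ is stationary for the inverse walk.

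What the twisted condition actually gives is stationarity of $\hat\mu$ for the backward chain $(z_1,v_1)\mapsto(z_0,\mathcal{A}_{z_0}^{-1}v_1)$. Drawing $v_n\sim\nu_{z_n}$ and pulling back along the path, you still get $\mathbb{E}\log|\mathcal{A}(z_{n-1})\cdots\mathcal{A}(z_0)v_0|=n\int\Phi\,d\hat\mu$. So the conclusion $\int\Phi\,d\hat\mu=0$ when $\lambda_+=0$ can be recovered, but not by the argument you give.

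\textbf{The entropy step has the wrong orientation and skips the real difficulty.} The quantity that telescopes is $h:=\mathbb{E}\,\mathrm{KL}\big(\nu_{z_1}\,\|\,(\mathcal{A}_{z_0})_*\nu_{z_0}\big)$. With densities $f_z$, the terms $\mathbb{E}\int\log f_{z_1}\,d\nu_{z_1}$ and $\mathbb{E}\int\log f_{z_0}\,d\nu_{z_0}$ cancel by stationarity only in this orientation, leaving $h=-2\int\Phi\,d\hat\mu$. In your orientation, the cross term $\mathbb{E}\int\log f_{z_1}\,d(\mathcal{A}_{z_0})_*\nu_{z_0}$ does not telescope. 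The twisted condition also gives no absolute continuity in either direction: a random measure need not be almost surely absolutely continuous with respect to its conditional mean.

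Most importantly, for singular $\nu_z$ (the whole content of the proposition) your device does not work. The mixture $\epsilon\,\mathrm{Leb}+(1-\epsilon)\nu_z$ still has a singular part, so the Jacobian formula still cannot be applied. The mixture also no longer satisfies the twisted condition.

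\textbf{The missing idea is to avoid densities and work at finite resolution over $n$ steps.} Set $\mathcal{A}^{(n)}:=\mathcal{A}(z_{n-1})\cdots\mathcal{A}(z_0)$. The twisted condition says exactly that $M_n:=((\mathcal{A}^{(n)})^{-1})_*\nu_{z_n}$ is a measure-valued martingale. Let $D_\delta$ be the partition of $S^1$ into arcs of length $\delta$, let $H_\sigma(D)$ be Shannon entropy, and let $\mathrm{KL}_D$ be the divergence restricted to $\sigma(D)$. The martingale property gives $\mathbb{E}\,\mathrm{KL}_{D_\delta}(M_{k+1}\|M_k)=\mathbb{E}[H_{M_k}(D_\delta)-H_{M_{k+1}}(D_\delta)]$. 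Summing and using stationarity,
\[
\sum_{k<n}\mathbb{E}\,\mathrm{KL}_{D_\delta}(M_{k+1}\|M_k)=\mathbb{E}\big[H_{\nu_{z_n}}(D_\delta)-H_{\nu_{z_n}}(\mathcal{A}^{(n)}D_\delta)\big]\le\mathbb{E}\log\big(\|\mathcal{A}^{(n)}\|^2+2\big),
\]
because each arc of $\mathcal{A}^{(n)}D_\delta$ has length at most $\|\mathcal{A}^{(n)}\|^2\delta$. Now let $\delta\to0$ by monotone convergence. Note that $\mathrm{KL}(M_{k+1}\|M_k)=\mathrm{KL}(\nu_{z_{k+1}}\|(\mathcal{A}_{z_k})_*\nu_{z_k})$ has expectation $h$ by stationarity. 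This gives $nh\le 2\,\mathbb{E}\log\|\mathcal{A}^{(n)}\|+\log 3$, so $h\le 2\lambda_+$. If $\lambda_+=0$, then $h=0$, which is exactly the asserted almost-sure invariance.
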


Analogous to \cite[Proposition 3.3]{NS} by using the priori estimates \eqref{Super-Lyapunov}-\eqref{C-alpha} for the Boussinesq equations and the unit-time asymptotic gradient estimate for the Lagrangian and tangent processes (c.f. Proposition \ref{unit-time}), we derive a family of unconditionally continuous projective measures.

\begin{proposition}\label{continuous projective measures}
	There exists a locally Lipschitz family of probability measures
	\[
	\nu : \hat{H}^4 \times \Sigma^{J} \to \mathcal{P}(S^1)
	\]
	such that
	\begin{align}\label{projective measures}
	\nu_{\scriptscriptstyle{U_0,\mathbf{x}_0,A_0}} = (A_0)^{-1}_* \int \nu_{\scriptscriptstyle{U,\mathbf{x},A}} \, P_1(U_0, \mathbf{x}_0, I_d, dU, d\mathbf{x}, dA),
	\end{align}
	where we metrize \(\mathcal{P}(S^1)\) with the \(W^{-1,1}\) norm.
\end{proposition}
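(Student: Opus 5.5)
We follow the construction of \cite[Proposition 3.3]{NS}: build the family as a limit of Cesàro averages of pushed-forward projective laws. The continuity of these averages will come from the asymptotic gradient estimate together with the ergodicity of the projective process.

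\textbf{Construction.} For $(U_0,\mathbf{x}_0,A_0)$ we consider the Jacobian process $(U_t,\mathbf{x}_t,A_t)$ started from $(U_0,\mathbf{x}_0,I_d)$. For $v\in S^1$ we set
\[
\nu^{T}_{U_0,\mathbf{x}_0,A_0}:=(A_0)^{-1}_*\,\frac{1}{T}\int_0^T \mathbb{E}\,\big[(A_t^{-1})_*\,\pi_{S^1}\mu^P_{U_t,\mathbf{x}_t}\big]\,dt .
\]
Here $\mu^P_{U,x}$ is the disintegration of the unique stationary measure $\mu^P$ of the projective process (Corollary \ref{erogdicty}) over the Lagrangian marginal $\mu^1$. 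A more robust variant, which avoids disintegrating, replaces $\mu^P_{U_t,\mathbf{x}_t}$ by the uniform measure on $S^1$ evolved backward; in either case we define $\nu^T$ through an expectation over the projective process, evaluated on test functions $\varphi\in W^{1,\infty}(S^1)$.

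\textbf{Twisted pullback identity.} By the Markov property (H1) and the cocycle property $A_{t+1}=A^{\theta^1}_t A_1$, the identity \eqref{projective measures} holds for $\nu^T$ up to an error of order $O(1/T)$ from the shifted time window. Passing to the limit $T\to\infty$ along a subsequence, using compactness of $\mathcal P(S^1)$ in $W^{-1,1}$, then yields \eqref{projective measures} exactly.

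\textbf{Local Lipschitz bound, uniform in $T$.} This is the key step. For a test function $\varphi$ with $\|\varphi\|_{W^{1,\infty}}\le 1$, the quantity $\int\varphi\, d\nu^T$ can be written as $\frac1T\int_0^T P_t\Phi\,dt$, where $\Phi$ is an observable on the projective or tangent process. We then apply the unit-time asymptotic gradient estimate (Proposition \ref{unit-time}, Theorem \ref{unit-time-n}) iteratively; its long-time version (Corollary \ref{long-time}) gives
\[
\|\nabla P_t\Phi(U,\mathbf{p})\|\le C\exp(\eta V(U))\big(\|\Phi\|_\infty+e^{-\gamma t}\|\nabla\Phi\|_\infty\big).
\]
The growth of the gradient with respect to the initial conditions is controlled by the super-Lyapunov estimates \eqref{Super-Lyapunov}--\eqref{C-alpha}, which bound moments of $|A_t|$ and $|A_t^{-1}|$. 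Dependence on $A_0$ enters only through the smooth action $(A_0)^{-1}_*$, which is Lipschitz in $W^{-1,1}$ with constant $\lesssim|A_0|^2$ on compact sets of $\mathrm{SL}_2(\mathbb R)$.

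\textbf{Expected obstacle.} The delicate point is to make the Cesàro average converge, or at least to stay equicontinuous, using only dual-Lipschitz mixing rather than total-variation mixing. As in \cite{NS}, we would handle this with an Arzel\`a--Ascoli argument on bounded sets $\{V\le R\}\times\Sigma^J$. The exponential contraction term $e^{-\gamma t}$ lets one absorb the $\|\nabla\Phi\|_\infty$ contribution, which may grow like $|A_t|^2$, provided the moments of $|A_t|$ are integrated against $\exp(\eta V)$. Checking that the weighted moment bounds for the Boussinesq system (vorticity and temperature treated with different weights) close at this level is where we expect the main work to lie.
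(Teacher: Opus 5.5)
Your proposal is correct and takes essentially the same route as the paper, which simply defers to \cite[Proposition 3.3]{NS} and uses as inputs the super-Lyapunov bound, the $C^\alpha$ moment bound \eqref{C-alpha} on $|A_t|^{\pm1}$, and the unit-time gradient estimate (Proposition \ref{unit-time}). Two caveats: only your ``uniform measure'' variant $\frac1T\int_0^T\mathbb{E}\,(A_t^{-1})_*m\,dt$ works, because the disintegration $\mu^P_{U,x}$ is only $\mu^1$-a.e.\ defined and has no regularity in $(U,x)$ to start the gradient induction; and the uniform-in-$T$ Lipschitz bound should come from iterating the unit-time estimate for the Jacobian (or tangent) process one step at a time, absorbing the $|A_1|^{2}$-type factors via moment bounds and the super-Lyapunov property, rather than from Corollary \ref{long-time}, which is only stated for compact fibers.
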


Then combining with the cocycle property of the Jacobian process, we obtain:
\begin{theorem}[Furstenberg's criterion]\label{Furstenberg}
Consider the Jacobian process \(\{(U_t, \mathbf{x}_t, A_t)\}_{t \geq 0}\) with initial data \((U_0, \mathbf{x}_0)\) distributed according to the stationary measure of the Lagrangian process and \(A_0 = I_d\). If the top Lyapunov exponent  
	\[
	\lambda_+ := \lim_{t \to \infty} t^{-1} \mathbb{E}\left[\log \|A_t\|\right]
	\]
	is zero, then there is a continuous map \(\nu: \hat{H}^4 \times \Sigma^1 \to W^{-1,1}(S^{1})\) satisfying the invariance formula  
	\begin{align}\label{NS-3.16}
	(A_t)_*\nu_{\scriptscriptstyle{U_t,\mathbf{x}_t}} = \nu_{\scriptscriptstyle{U_0,\mathbf{x}_0}} 
	\end{align}
	\(\mu^1\)-almost surely for \(t \in \mathbb{N}\), where \(\mu^1\) is the unique stationary measure for the Lagrangian process.
\end{theorem}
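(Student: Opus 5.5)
Our plan is to derive Theorem \ref{Furstenberg} by combining Proposition \ref{continuous projective measures} with Proposition \ref{NS-prop3.1}, applied to the time-one Markov kernel of the Lagrangian process. We work on $Z:=\hat{H}^4\times\Sigma^1$ with $\mu=\mu^1$, the unique stationary measure of the Lagrangian process; existence and uniqueness come from Corollary \ref{erogdicty}. We take $P:=P_1$ on $Z$. Since the Jacobian cocycle is not a function of the endpoint $z_1$ alone, we enlarge the state: the kernel on $Z\times\mathrm{SL}_2(\mathbb{R})$ sends $(U_0,\mathbf{x}_0,A)$ to the law of $(U_1,\mathbf{x}_1,A_1)$, where $A_1=D_x\mathbf{x}_1$ is the time-one Jacobian started from $I_d$. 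The cocycle is then $\mathcal{A}(U,\mathbf{x},A):=A$, which lies in $\mathrm{SL}_2(\mathbb{R})$ by Remark \ref{conservative}. The process $z_k=(U_k,\mathbf{x}_k,\tilde A_k)$, with $\tilde A_k$ the one-step Jacobian on $[k-1,k]$, is Markov by the independent increments hypothesis (H1) of Remark \ref{L-process-RDS}. A stationary law for it is $\mu^1$ glued with the law of the one-step Jacobian. With this choice the product $\mathcal{A}(z_n)\cdots\mathcal{A}(z_1)$ equals $A_n$ by the cocycle property.

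We next define the family $\nu_z$ for $z=(U,\mathbf{x},\tilde A)$ by $\tilde\nu_z:=\tilde A_*\nu_{U,\mathbf{x},I_d}$, using the family $\nu$ of Proposition \ref{continuous projective measures}, or some equivalent normalization. The key check is that the twisted pullback condition of Proposition \ref{NS-prop3.1} becomes exactly identity \eqref{projective measures}. To see this, note that $\nu_{U,\mathbf{x},A}=A^{-1}_*\nu_{U,\mathbf{x},I_d}$, which follows from the cocycle property of the Jacobian process: starting from $A_0$ gives $A_t=D\mathbf{x}_t A_0$. Substituting this into \eqref{projective measures} gives
\[
\nu_{U_0,\mathbf{x}_0,I_d}=\int (A_1^{-1})_*\nu_{U_1,\mathbf{x}_1,I_d}\,P_1(U_0,\mathbf{x}_0,I_d;dU_1,d\mathbf{x}_1,dA_1),
\]
which is the twisted pullback relation for the family $z\mapsto\nu_{U,\mathbf{x},I_d}$ with cocycle $A_1$.

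We then apply Proposition \ref{NS-prop3.1}. The top exponent appearing there coincides with $\lim_t t^{-1}\mathbb{E}\log\|A_t\|$ along integer times. The integrability condition \eqref{integrability} of Remark \ref{tangent-process-cocycle}, together with the subadditive ergodic theorem, gives the limit along all $t$. So if $\lambda_+=0$, we obtain almost sure invariance $(A_1)_*\nu_{U_0,\mathbf{x}_0}=\nu_{U_1,\mathbf{x}_1}$, where $\nu_{U,\mathbf{x}}:=\nu_{U,\mathbf{x},I_d}$. Iterating with the Markov property and the cocycle property gives the same relation at every $t\in\mathbb{N}$. Inverting then yields \eqref{NS-3.16}, after reconciling the direction of pushforward with the convention in \eqref{NS-3.16}: we either replace $\nu$ by the corresponding family for $A^{-1}$, or equivalently use $A_t^{-1}$. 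Continuity of $(U,\mathbf{x})\mapsto\nu_{U,\mathbf{x}}$ into $W^{-1,1}(S^1)$ is inherited from the local Lipschitz property in Proposition \ref{continuous projective measures}, by restricting to $A=I_d$.

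The main obstacle is the bookkeeping in the enlarged Markov chain. We must verify that the chain $(U_k,\mathbf{x}_k,\tilde A_k)$ has the stationary measure required by Proposition \ref{NS-prop3.1}, and that its Lyapunov exponent equals the one in Theorem \ref{L1}. We must also handle the measurability of the family on the Radon space $\hat{H}^4\times\Sigma^1\times\mathrm{SL}_2(\mathbb{R})$, and get the pushforward orientation consistent with \eqref{projective measures}. The substantive analytic input, namely the continuity of $\nu$, is already contained in Proposition \ref{continuous projective measures}.
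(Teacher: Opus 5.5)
Your route is the one the paper intends. The paper's proof is just ``combine Proposition~\ref{continuous projective measures} with Proposition~\ref{NS-prop3.1} and the cocycle property'', and your enlarged chain that restarts the matrix at $I_d$, its stationary law, and the identification of the exponent are fine.

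\textbf{Checking the hypothesis of Proposition~\ref{NS-prop3.1}.} This step is garbled as written.
The family $\tilde\nu_z:=\tilde A_*\nu_{U,\mathbf{x},I_d}$ has the wrong orientation and does not satisfy the twisted pullback condition.
The relation you then derive,
$\nu_{U_0,\mathbf{x}_0,I_d}=\int (A_1^{-1})_*\nu_{U_1,\mathbf{x}_1,I_d}\,P_1(U_0,\mathbf{x}_0,I_d;dU_1,d\mathbf{x}_1,dA_1)$,
puts the matrix inside the integral and attaches it to the endpoint. That is not the form $\nu_z=(\mathcal{A}(z)^{-1})_*\int\nu_y\,P(z,dy)$, which needs $\mathcal{A}$ to be a function of the current state.

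No detour is needed. Take $Z=\hat{H}^4\times\Sigma^J$, $P((U,\mathbf{x},A),\cdot):=P_1(U,\mathbf{x},I_d;\cdot)$, $\mathcal{A}(U,\mathbf{x},A):=A$, and the family $\nu$ of Proposition~\ref{continuous projective measures} itself. Then \eqref{projective measures} is verbatim the twisted pullback condition.

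The identity $\nu_{U,\mathbf{x},A}=(A^{-1})_*\nu_{U,\mathbf{x},I_d}$ is used only afterwards. It turns the conclusion $\mathcal{A}(z_0)_*\nu_{z_0}=\nu_{z_1}$ into $(A_1)_*\nu_{U_0,\mathbf{x}_0,I_d}=\nu_{U_1,\mathbf{x}_1,I_d}$. It follows directly from \eqref{projective measures}, because the integral there does not depend on $A_0$; it does not come from the cocycle property.

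\textbf{The final step.} Take your convention $A_t=D_x\mathbf{x}_t$, whose products accumulate on the left as in $\mathcal{A}(z_{n-1})\cdots\mathcal{A}(z_0)$.
The argument then gives the forward relation $(A_t)_*\nu_{U_0,\mathbf{x}_0}=\nu_{U_t,\mathbf{x}_t}$. This is the form the subsequent classification theorem actually uses.
Inverting it gives $(A_t^{-1})_*\nu_{U_t,\mathbf{x}_t}=\nu_{U_0,\mathbf{x}_0}$, which is not \eqref{NS-3.16} as displayed.

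The displayed form is the self-consistent one for the right-multiplicative convention $\dot A=A\nabla\mathbf{u}$ of Table~\ref{tab:processes}, i.e.\ $A_t=(D_x\mathbf{x}_t)^{\top}$. In that convention it follows from the forward relation once you replace $\nu$ by its image under the rotation $J$ by $\pi/2$. This works because $M^{-\top}=JMJ^{-1}$ for $M\in\mathrm{SL}_2(\mathbb{R})$.

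Fix one convention from the start. The order of multiplication is not cosmetic: reversing the order of a stationary product of $\mathrm{SL}_2$ matrices can change its top exponent.
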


\subsubsection{Contradicting Furstenberg’s criterion}
This section directly follows the arguments of \cite[Sections 4.3 and 7]{BBPS22a}, and using our Theorem \ref{Furstenberg}
in place of the strong Feller assumption. We begin by citing a highly useful result that establishes two types of continuity structures for the linear cocycle $A$ preserving the family of invariant measures 
$\nu$ in Proposition \ref{continuous projective measures}. This refined result allows for the exclusion of invariant structures directly via the approximate controllability condition (c.f. Definition \ref{def:controllability}), thereby yielding the positivity of the top Lyapunov exponent.

\begin{theorem} \emph{{(Classification of invariant measure families, \cite[Theorem 4.7]{JEMS})}}
Let $(U,\mathbf{x}) \mapsto \nu_{\scriptscriptstyle{U,\mathbf{x}}}$ be the continuous family of probability measures satisfying the invariance formula \eqref{NS-3.16} almost surely. Then one of the following alternatives holds.
\begin{enumerate}
	\item[(a)] There is a continuously-varying inner product $\langle\cdot,\cdot\rangle_{\scriptscriptstyle{U,\mathbf{x}}}$ on $\mathbb{R}^2$ with the property that for any $(U_0,\mathbf{x}_0) \in \hat{H}^4 \times \mathbb{T}^2$ and $A_0 := I_d$, the map $A_t : (\mathbb{R}^2,\langle\cdot,\cdot\rangle_{\scriptscriptstyle{U_0,\mathbf{x}_0}}) \to (\mathbb{R}^2,\langle\cdot,\cdot\rangle_{\scriptscriptstyle{U_t,\mathbf{x}_t}})$ is an isometry almost surely, where $(U_t,\mathbf{x}_t,A_t)$ is distributed according to the transition probability of the Jacobian process.
	\item[(b)] For some $p \in \mathbb{N}$, there are measurably-varying assignments from $z_0 \in \hat{H}^4 \times \mathbb{T}^2$ to one-dimensional linear subspaces $E^i_{z_0} \subsetneq \mathbb{R}^2$, for each $1 \leq i \leq p$, with the property that, if $z_t$ is distributed according to the transition measure $P^J_t(z_0)$, then $A_t E^i_{z_0} = E^{\pi(i)}_{z_t}$ for some permutation $\pi$. Furthermore, the collection $(E^i_z)$ is locally (on small neighborhoods) continuous up to re-labelling.
\end{enumerate}
\end{theorem}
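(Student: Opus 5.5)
The plan is to reduce the dichotomy to a statement about single probability measures on $S^1$ (equivalently on $P^1=\partial\mathbb{H}^2$) acted on by $\mathrm{SL}_2(\mathbb{R})$. We then glue these pointwise statements together using the continuity of $(U,\mathbf{x})\mapsto\nu_{U,\mathbf{x}}$ from Theorem \ref{Furstenberg} and the ergodicity of the Lagrangian process from Corollary \ref{erogdicty}. First we pass from $S^1$ to $P^1$ by pushing forward under the double cover, since antipodal directions give the same line. The key structural fact is a classical lemma of Furstenberg. Let $\mathcal{D}$ be the set of probability measures on $P^1$ having no atom of mass $\ge 1/2$. Then for $m\in\mathcal{D}$ the stabilizer $\{A\in \mathrm{SL}_2:A_*m=m\}$ is compact. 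Moreover there is the Douady--Earle conformal barycenter $\beta:\mathcal{D}\to\mathbb{H}^2$. It is $\mathrm{SL}_2$-equivariant, $\beta(A_*m)=A\cdot\beta(m)$, and continuous for the weak topology. Since points of $\mathbb{H}^2\cong \mathrm{SL}_2/\mathrm{SO}(2)$ are exactly inner products on $\mathbb{R}^2$ up to scaling, the natural normalization being determinant one, $\beta$ assigns an inner product to each measure.

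The first step is to define the invariant set $S:=\{(U,\mathbf{x}):\nu_{U,\mathbf{x}}\notin\mathcal{D}\}$. Invariance \eqref{NS-3.16} intertwines the measures by linear maps, and atom masses are preserved under such maps. Hence $S$ is almost surely invariant under the Lagrangian dynamics, and by uniqueness (ergodicity) of $\mu^1$ we get $\mu^1(S)\in\{0,1\}$.

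\emph{Case $\mu^1(S)=0$.} Set $\langle\cdot,\cdot\rangle_{U,\mathbf{x}}:=\beta(\nu_{U,\mathbf{x}})$. Equivariance together with \eqref{NS-3.16} shows that $A_t$ carries $\beta(\nu_{U_0,\mathbf{x}_0})$ to $\beta(\nu_{U_t,\mathbf{x}_t})$, i.e.\ $A_t$ is an isometry between the two inner products. Continuity follows from continuity of $\beta$ composed with that of $\nu$. Passing from $\mu^1$-a.e.\ to every initial point requires two further ingredients. One is that $\mathcal{D}$ is open, because atom mass is upper semicontinuous under weak convergence. The other is that $\operatorname{supp}\mu^1$ is large; this comes from the approximate controllability and irreducibility results in Section \ref{Approximate controllability}, and is in any case only needed on $\operatorname{supp}\mu^1$ for the application.

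\emph{Case $\mu^1(S)=1$.} Let $m^*(z)$ be the maximal atom mass of $\nu_z$, which is at least $1/2$. Take $E^1_z,\dots,E^p_z$ to be the atoms achieving $m^*(z)$; there are at most $2$ of them, so $p\le 2$. The maps $A_t$ send atoms to atoms and preserve their masses. Hence they permute this finite family, and $m^*$ is invariant, so $m^*$ and $p$ are a.s.\ constant by ergodicity. Measurability of the assignment is routine. Local continuity up to relabelling follows from weak continuity of $\nu$. Near $z$, a small neighbourhood of each $E^i_z$ must carry mass close to $m^*$. Upper semicontinuity of atoms together with constancy of $m^*$ then forces the maximal atoms of $\nu_{z'}$ to lie in these neighbourhoods.

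The main obstacle I expect is the continuity and equivariance of the barycenter map and its interplay with the measure topology used in Theorem \ref{Furstenberg}, namely $W^{-1,1}$ convergence rather than weak convergence. We will check that $W^{-1,1}$ convergence on $S^1$ implies weak convergence of probability measures, which holds since Lipschitz functions are dense in $C(S^1)$. With that in hand, the standard Douady--Earle continuity applies. A secondary delicate point is handling the boundary case of an atom of mass exactly $1/2$, which we deliberately place in alternative (b).
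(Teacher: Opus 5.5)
The paper gives no proof of this statement; it imports it from \cite{JEMS}, so your argument must stand on its own. Its architecture is sound and gives a self-contained route in $d=2$. The threshold ``no atom of mass $\ge 1/2$'' is exactly the domain of the Douady--Earle barycenter. Its equivariance and weak continuity turn \eqref{NS-3.16} directly into alternative (a), with a \emph{canonical} inner product; compactness of stabilizers alone would not give a coherent choice across different orbits. On the complementary set, the at most two heaviest atoms give (b). The $0$--$1$ law for $S$, the upper semicontinuity of atom masses, and the reduction from $W^{-1,1}$ to weak convergence are all fine.

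The step that fails as written is the passage from $\mu^1$-a.e.\ statements to every point of $\operatorname{supp}\mu^1$. The application needs this, because Theorem \ref{posivie-lambda} is run at the specific points of Definition \ref{def:controllability}.
\begin{itemize}
\item Appealing to the size of $\operatorname{supp}\mu^1$ via controllability cannot do it. When $\mu^1(S)=0$, $S$ is a closed $\mu^1$-null set, and such a set may meet $\operatorname{supp}\mu^1$ however large the support is. At those points $\beta(\nu_z)$ is undefined.
\item The same defect appears in case (b). Your local-continuity argument uses $m^*(z')=m^*(z)$ and the same number $p$ of heavy atoms at \emph{every} nearby $z'$. Ergodicity only gives this a.e.
\end{itemize}
The missing ingredient is a closed-invariant-set argument, in two steps.
\begin{itemize}
\item \emph{Invariance on the whole support.} Let $d$ be a bounded metric for weak convergence on $\mathcal{P}(P^1)$. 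The map $z_0\mapsto\mathbb{E}_{z_0}\,d\big((A_n)_*\nu_{z_0},\nu_{z_n}\big)$ is continuous, by continuity of $\nu$ and the Feller property of the Jacobian process. It vanishes $\mu^1$-a.e., hence on $\operatorname{supp}\mu^1$. So \eqref{NS-3.16} extends to every $z_0\in\operatorname{supp}\mu^1$.
\item \emph{Closed invariant sets.} Let $F$ be closed, and suppose that for $z_0\in F\cap\operatorname{supp}\mu^1$ one has $z_n\in F$ a.s.\ for all $n\in\mathbb{N}$. Then the Ces\`aro averages of $P_n(z_0,\cdot)$ give $F$ full mass. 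Since they converge weakly to $\mu^1$, Portmanteau gives $\mu^1(F)=1$.
\end{itemize}
Apply this to three closed sets: $F=S$; $F=\{m^*\ge m'\}$ for $m'$ above the a.e.\ value, which is closed by upper semicontinuity; and $\{z:\nu_z=\tfrac12(\delta_a+\delta_b)\text{ for some }a,b\in P^1\}$. Combined with the fact that a closed set of full measure contains the support, this gives $S\cap\operatorname{supp}\mu^1=\emptyset$ in case (a). In case (b) it gives $m^*$ and $p$ constant on all of $\operatorname{supp}\mu^1$.

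Three smaller points remain.
\begin{itemize}
\item Since \eqref{NS-3.16} holds only for $t\in\mathbb{N}$, both the ergodicity you use for $\mu^1(S)\in\{0,1\}$ and the Ces\`aro convergence must be for the time-one chain $P_1$. This follows from the same asymptotic strong Feller and weak irreducibility argument as Corollary \ref{erogdicty}, but it is not literally that corollary's statement.
\item For $p=2$, add a mass count to get continuity up to relabelling. Each small neighbourhood of $E^i_z$ carries $\nu_{z'}$-mass close to $1/2$, so one neighbourhood cannot contain both heavy atoms of $\nu_{z'}$.
\item Off $\operatorname{supp}\mu^1$ nothing can be said, because \eqref{NS-3.16} is only known $\mu^1$-a.s. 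As you observe, the statement should be read on the support.
\end{itemize}
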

We now state the main result of this section, Theorem \ref{posivie-lambda}, and complete the proof of the paper's central conclusion: the top Lyapunov exponent associated with the Lagrangian flow $\mathbf{x}_t$ is positive.
\begin{definition}\label{def:controllability}
	We say that the cocycle \( \mathcal{A} \) satisfies the \textit{approximate controllability condition} (C) if there exist \( z, z' \in \operatorname{supp} \mu \) such that \( z' \) belongs to the support of the measure \( P_{t_0}(z, \cdot) \) for some \( t_0 > 0 \), and we have each of the following for all $t>0$.
	\begin{enumerate}
		\item[(a)] For any \(\mathbf{x}\in \mathbb{T}^2,\,\varepsilon, M > 0\),
		\[
		\mathbb{P}\left((U_t,\mathbf{x}_t,A_t)\in B_\varepsilon(z') \times B_\varepsilon(\mathbf{x}')\times \{A \in \mathrm{SL}_2(\mathbb{R}) : |A| > M\}|(U_0,\mathbf{x}_0,A_0)=(z,\mathbf{x},\mathrm{Id}) \right)> 0.
		\]
		\item[(b)] For any \( (\mathbf{x},v) \in \mathbb{T}^2\times S^{1} \), open set \( V \subset S^{1} \), and \(\varepsilon > 0\),
		\[
		\mathbb{P}\left((U_t,\mathbf{x}_t,v_t)\in B_\varepsilon(z') \times B_\varepsilon(\mathbf{x}') \times V |(U_0,\mathbf{x}_0,v_0)=(z,\mathbf{x},v)\right)> 0.
		\]
	\end{enumerate}
\end{definition}

\begin{theorem}\label{posivie-lambda}
	Let $\mu$ be an ergodic stationary measures
	for which the approximate controllability condition (C) holds. Then there exists \(\lambda_+ > 0\) such that, for every initial \((U_0, x, v_0) \in \hat{H}^4 \times \Sigma^P\) and \(A_0 := \mathrm{Id}\),
	\begin{equation}\label{lambda>0}
		\lambda_+ = \lim_{t \to \infty} t^{-1} \mathbb{E}\left[\log |A_t|\right] \nonumber \\
		= \lim_{t \to \infty} t^{-1} \mathbb{E} \left[ \int_0^t v_s \cdot \nabla \mathbf{u}_s(\mathbf{x}_s) v_s \, ds \right]. 
	\end{equation}
\end{theorem}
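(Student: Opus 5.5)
We argue by contradiction: assume the approximate controllability condition (C) holds and the limit in \eqref{lambda>0} is zero. Since $\det A_t=1$ (Remark \ref{conservative}), $\lambda_\Sigma=0$, so the MET (Theorem \ref{MET}), applied with the integrability condition \eqref{integrability} from Remark \ref{tangent-process-cocycle}, gives $\lambda_+\ge 0$, and $\lambda_+=0$ forces $\lambda_1=\lambda_r$.

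\textbf{Step 1 (invariant family and its two types).} From $\lambda_+=0$, Theorem \ref{Furstenberg} (built on Propositions \ref{NS-prop3.1} and \ref{continuous projective measures}) gives a continuous family $\nu_{U,\mathbf{x}}$ satisfying \eqref{NS-3.16} $\mu^1$-a.s. for $t\in\mathbb{N}$. The classification theorem \cite[Theorem 4.7]{JEMS} then yields alternative (a), a continuous invariant inner product, or alternative (b), a finite, locally continuous, permuted family of lines. Continuity lets us upgrade the almost-sure invariance to hold for every point in the support of the transition measures, and by the semigroup property for all $t>0$.

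\textbf{Step 2 (ruling out (a) and (b)).} Take $z,z'$ as in Definition \ref{def:controllability}.
\begin{itemize}
\item[(a)] If (a) held, then on the event $(U_t,\mathbf{x}_t)\in B_\varepsilon(z')\times B_\varepsilon(\mathbf{x}')$ the matrix $A_t$ is an isometry between two inner products that vary continuously over a compact neighbourhood. Hence $|A_t|\le C$ uniformly on that event, which contradicts (C)(a) with $M>C$.
\item[(b)] If (b) held, then on the same event $A_t$ maps the finitely many lines $E^i_{z_0}$ into lines $E^{\pi(i)}_{z_t}$, which lie within $o_\varepsilon(1)$ of the finitely many lines $E^j_{z'}$ by local continuity. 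Apply (C)(b) with $v\in E^1_{z_0}$ and an open $V\subset S^1$ bounded away from all the $E^j_{z'}$. This produces positive probability of the contradictory event $v_t\in V$.
\end{itemize}
One subtlety is that $z,z'\in\operatorname{supp}\mu$ but may not be $\mu$-typical. Since the family is continuous, we pass the invariance to these points by approximation, and this also handles the dependence of $E^i$ on $\mathbf{x}$.

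\textbf{Step 3 (the formula).} The time-averaged expression follows from $\partial_t\log|A_tv_0|=v_t\cdot\nabla\mathbf{u}_t(\mathbf{x}_t)v_t$, which comes from the projective equation in Table \ref{tab:processes}. Integrate this identity and take expectations; the top exponent is the same for every $(U_0,x,v_0)$ by ergodicity of the projective process (Remark \ref{1.2}), together with dominated convergence supplied by the moment bounds.

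The main obstacle is the passage in Step 1 from $\mu^1$-a.s. invariance at integer times to pointwise invariance at the specific control points $z,z'$ at the time $t_0$. I would handle it with the local Lipschitz continuity of $\nu$ together with Feller continuity of $P_t$, following \cite[Sections 4.3, 7]{BBPS22a}.
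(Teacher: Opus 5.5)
Your proposal is correct and follows the route the paper takes by deferring to \cite[Theorem 3.6]{NS} and \cite[Sections 4.3 and 7]{BBPS22a}: assume $\lambda_+=0$, obtain the continuous invariant family from Theorem \ref{Furstenberg}, split into the two alternatives of \cite[Theorem 4.7]{JEMS}, and contradict them with (C)(a) and (C)(b) respectively. Two loose ends do not affect the argument: the extension of the invariance from $t\in\mathbb{N}$ to all $t>0$ ``by the semigroup property'' is not justified, but it is also not needed, since (C) holds for all $t>0$ and can be applied at an integer time; and in Step 3, independence of the initial condition should come from the Ces\`aro convergence in Corollary \ref{erogdicty} together with uniform integrability, rather than from the almost-everywhere statement of Remark \ref{1.2}, with $\mathbb{E}\log|A_t|$ compared to $\mathbb{E}\log|A_tv_0|$ by averaging $v_0$ over $S^1$.
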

\begin{proof}
	The proof follows an analogous approach to that of \cite[Theorem 3.6]{NS}.
\end{proof}
\begin{proof}[Proof of Theorem \ref{L1}]
In Section \ref{Approximate controllability}, we complete the verification of approximate controllability condition $(C)$, which, combined with Theorem \ref{posivie-lambda}, immediately yields Theorem \ref{L1}.
\end{proof}
\begin{proof}[Proof of Remark \ref{1.2}]
	By utilizing the existence of a unique stationary measure for the projective process (Corollary \ref{erogdicty}) and the random multiplicative Ergodic Theorem ( \cite[Theorem~III.1.2]{PE-50}), we can complete the proof.
\end{proof}
\section{Priori moment estimates and ergodicity}\label{SEC3}
In this section, we present the a priori moment estimates and the super-Lyapunov function for the stochastic Boussinesq equations, which are frequently employed throughout the manuscript. Detailed proofs of these results are deferred to Appendix \ref{priori}. Furthermore, this section establishes ergodicity results for the Markov processes of interest (c.f. Table 1), thereby ensuring the validity of property (A). 


\subsection{Priori moment estimates and super-Lyapunov function for stochastic Boussinesq equations}

\begin{proposition}\label{priori-eatimates}
	Suppose that \(U_t\) is a solution to \eqref{Boussinesq-w1}, $\kappa:=min\left\{\nu_1,\nu_2\right\}$. Then there exists \(C > 0\) such that for all \(\eta \leq C^{-1}\) and \(0 \leq s \leq t\),
	\begin{equation}\label{L2-eatimates}
		\mathbb{E} \exp \left( \eta \sup_{s \leq r \leq t} \left( \| U_r \|_{\hat{H}}^2 + \frac{\kappa}{2} \int_s^r \| U_a \|_{\hat{H}^1}^2 \, da \right) \right) \leq Ce^{C(t-s)} \exp \left( e^{-C^{-1}s} \eta \| U_0 \|_{\hat{H}}^2 \right).
	\end{equation}
	Additionally, there exists \(C(n) > 0\) such that for all \(0 \leq \eta \leq C^{-1}\) and \(0 \leq s \leq t\),
	\begin{align}\label{Hn-eatimates1}
		\mathbb{E} \exp \left( \eta \sup_{s \leq r \leq t} \left( \| U_r \|_{\hat{H}^n}^2 + \frac{\kappa}{2} \int_s^r \| U_a \|_{\hat{H}^{n+1}}^2 \, da \right)^{\frac{1}{n+2}} \right) 
		\leq Ce^{Ct} \exp \left( e^{-C^{-1}s} \eta \| U_0 \|_{\hat{H}^n}^{\frac{2}{n+2}} + C \eta \| U_0 \|_{\hat{H}}^2 \right),
	\end{align}
	and
	\begin{equation}\label{Hn-eatimates2}
		\mathbb{E} \exp \left( \eta \sup_{s \leq r \leq t} \left( \| U_r \|_{\hat{H}^n}^2 + \frac{\kappa}{2} \int_s^r \| U_a \|_{\hat{H}^{n+1}}^2 \, da \right)^{\frac{1}{n+2}} \right) \leq Ce^{Cs^{-1}} e^{Ct} \exp \left( C \eta \| U_0 \|_{\hat{H}}^2 \right).
	\end{equation}
\end{proposition}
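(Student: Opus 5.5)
We prove \eqref{L2-eatimates} with a weighted energy estimate and the exponential martingale inequality, and then obtain \eqref{Hn-eatimates1}--\eqref{Hn-eatimates2} by induction on $n$, using interpolation to absorb the nonlinearity.

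\textbf{$L^2$ estimate.} Apply It\^o's formula to $\|U_t\|_{\hat H}^2=\frac{\nu_1\nu_2}{g^2}\|\omega\|_{L^2}^2+\|\theta\|_{L^2}^2$. The transport terms vanish because $\langle \mathbf{u}\cdot\nabla f,f\rangle=0$. The only coupling term is the buoyancy contribution $2\frac{\nu_1\nu_2}{g}\langle \partial_1\theta,\omega\rangle$. By Cauchy--Schwarz, Young's inequality and Poincar\'e ($\lambda_1=1$), it is bounded by
\[
\frac{\nu_1\nu_2}{g^2}\nu_1\|\nabla\omega\|^2+\nu_2\|\nabla\theta\|^2 .
\]
This is exactly why the weight $\nu_1\nu_2/g^2$ is chosen: the coupling is absorbed into half of the dissipation. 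We obtain
\[
d\|U\|_{\hat H}^2+\kappa\|U\|_{\hat H^1}^2\,dt\le \|\sigma_\theta\|^2dt+dM_t,
\]
with martingale $dM_t=2\langle\theta,\sigma_\theta dW\rangle$ and quadratic variation $d\langle M\rangle_t\le 4\|\sigma_\theta\|^2\|U\|_{\hat H}^2dt\le C\|U\|^2_{\hat H^1}dt$. Using $e^{\gamma t}$ weights, Poincar\'e gives exponential decay of the initial datum, which produces the factor $e^{-C^{-1}s}$. For $\eta\le C^{-1}$ we write $\eta M_r-\frac{\eta^2}{2}\langle M\rangle_r\cdot C'$, absorb the quadratic variation into $\frac{\kappa}{2}\int\|U\|^2_{\hat H^1}$, and apply the exponential martingale inequality $\mathbb P(\sup_r(M_r-\frac{\alpha}{2}\langle M\rangle_r)>K)\le e^{-\alpha K}$. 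Integrating the tail over $K$ then yields \eqref{L2-eatimates}.

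\textbf{$\hat H^n$ estimates.} Proceed by induction on $n$ and apply It\^o's formula to $\|U\|_{\hat H^n}^2$.
\begin{itemize}
\item The buoyancy term is handled as before using the weighted norm \eqref{2.3}.
\item The nonlinear terms $\langle \partial^n(\mathbf{u}\cdot\nabla f),\partial^n f\rangle$ are commutator terms. They are bounded by $C\|\mathbf u\|_{W^{n,\cdot}}\|f\|_{\hat H^{n}}\|f\|_{\hat H^{n+1}}$-type expressions.
\item Gagliardo--Nirenberg interpolation between $\hat H$ and $\hat H^{n+1}$ then gives
\[
\frac{\kappa}{2}\|U\|_{\hat H^{n+1}}^2+C\|U\|_{\hat H}^{2}\,\|U\|_{\hat H^n}^{2}\cdot(\text{lower-order}),
\]
or, alternatively, a bound with a power structure such as $\|U\|_{\hat H^{n+1}}^{2-\delta}\|U\|_{\hat H}^{\cdots}$.
\end{itemize}
To close the estimate, consider the functional $\Phi=(\|U\|^2_{\hat H^n}+\dots)^{1/(n+2)}$. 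The exponent $1/(n+2)$ is chosen so that, after It\^o's formula for $\Phi$, the nonlinear growth term is dominated by a multiple of $\|U\|_{\hat H}^2\,\|U\|_{\hat H^1}^2$-type quantities. Those quantities are already exponentially integrable by \eqref{L2-eatimates}. The It\^o correction and the martingale quadratic variation are lower order because of the power $1/(n+2)<1$. Repeating the exponential martingale argument and using Cauchy--Schwarz to split off the $L^2$ contribution gives \eqref{Hn-eatimates1}, with the extra factor $\exp(C\eta\|U_0\|^2_{\hat H})$.

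\textbf{Smoothing estimate \eqref{Hn-eatimates2}.} Use parabolic smoothing: multiply the functional by the time weight $t^{n}$, or iterate over $n$ using $\int_0^s\|U\|_{\hat H^{n}}^2$ from the previous level to pick a good intermediate time. This removes the dependence on $\|U_0\|_{\hat H^n}$ at the cost of a factor $e^{Cs^{-1}}$.

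\textbf{Main obstacle.} The delicate step is the nonlinear term at level $n$. One must check that the specific power $\frac{1}{n+2}$ makes the commutator bound closable uniformly and exponentially in time. In particular, the two components with different viscosities have to be coupled through the weighted norm, and the $\partial_1\theta$ forcing in the vorticity equation costs one derivative; that derivative is absorbed by the $\theta$ dissipation at the same level. I would first try the standard route of treating the $\theta$-equation at level $n$ and the $\omega$-equation at level $n$ jointly, exactly with the weight in \eqref{2.3}.
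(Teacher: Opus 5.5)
Your $L^2$ step is correct and essentially the paper's. The weight $\nu_1\nu_2/g^2$ absorbs the buoyancy term into half the dissipation, and the exponential martingale bound together with the decay estimate (Lemma \ref{A.3}) gives \eqref{L2-eatimates}.

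\textbf{The gap.} It sits in the $\hat{H}^n$ step, where you explain why the power $\frac{1}{n+2}$ closes the estimate.
\begin{itemize}
\item Interpolating the commutator between $\hat{H}$ and $\hat{H}^{n+1}$, as you propose, gives $\frac{\kappa}{2}\|U\|_{\hat{H}^{n+1}}^2+C\|U\|_{\hat{H}}^{2n+4}$; this is \eqref{NSF}.
\item If you then apply It\^o's formula to $\Phi=(1+\|U\|_{\hat{H}^n}^2)^{1/(n+2)}$, this term is multiplied by $\frac{1}{n+2}\Phi^{-(n+1)}$.
\item The only available lower bound is $\Phi^{n+2}\ge\|U\|_{\hat{H}}^2$. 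It leaves a drift of size $C\|U\|_{\hat{H}}^{2n+4-2(n+1)/(n+2)}$, a superquadratic power of $\|U\|_{\hat{H}}$. Its exponential moments are not controlled by \eqref{L2-eatimates}.
\item Your fallback claim does not rescue this. \eqref{L2-eatimates} controls $\exp(\eta\sup_r\|U_r\|_{\hat{H}}^2)$ and $\exp(\eta\int\|U_r\|_{\hat{H}^1}^2\,dr)$. But $\int\|U_r\|_{\hat{H}}^2\|U_r\|_{\hat{H}^1}^2\,dr$ is bounded only by their product, which is not exponentially integrable in general.
\item It\^o's formula for $\Phi$ also produces $\int\Phi^{-(n+1)}\|U\|_{\hat{H}^{n+1}}^2$, not the term $\frac{\kappa}{2}\int\|U\|_{\hat{H}^{n+1}}^2$ inside the root that the statement requires.
\end{itemize}

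\textbf{What the paper does instead.} It never differentiates the fractional power. It runs the exponential martingale argument on the linear quantity
\[
\|U_t\|_{\hat{H}^n}^2+\tfrac{\kappa}{2}\int_s^t\|U_r\|_{\hat{H}^{n+1}}^2dr-C\int_s^t\|U_r\|_{\hat{H}}^{2n+4}dr-C(t-s)-\|U_s\|_{\hat{H}^n}^2\le M^s_t-C^{-1}\langle M^s,M^s\rangle_t .
\]
Only afterwards does it take the $(n+2)$-th root, using $(a+b)^{1/(n+2)}\le a^{1/(n+2)}+b^{1/(n+2)}$, and $\eta x^{1/(n+2)}\le\eta^{n+2}x+1$ on the positive part. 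The compensator then becomes
\[
\Big(C\int_s^t\|U_r\|_{\hat{H}}^{2n+4}dr\Big)^{1/(n+2)}\le C(t-s)^{1/(n+2)}\sup_r\|U_r\|_{\hat{H}}^2,
\]
which is exactly what \eqref{L2-eatimates} controls. The identity $2n+4=2(n+2)$ is the reason for the exponent. Cauchy--Schwarz separates the two factors. The factor $e^{-C^{-1}s}$ on $\|U_0\|_{\hat{H}^n}^{2/(n+2)}$ comes from Lemma \ref{A-55}, used in Lemma \ref{A.5}.

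\textbf{If you want to keep It\^o's formula for $\Phi$.} You would need a sharper commutator estimate, of the form $\frac{\kappa}{4}\|U\|_{\hat{H}^{n+1}}^2+C\|U\|_{\hat{H}^1}^2\|U\|_{\hat{H}^n}^{2(n+1)/(n+2)}$. This comes from interpolating between $\hat{H}^1$ and $\hat{H}^{n+1}$, not between $\hat{H}$ and $\hat{H}^{n+1}$. With it, the drift of $\Phi$ is bounded by $C\|U\|_{\hat{H}^1}^2$. You would also need a separate treatment of the integral term inside the root. Neither is in your proposal. The induction on $n$ is unnecessary.
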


\begin{definition}[Super-Lyapunov function for the base process]\label{Vdef}
	For \(U \in \hat{H}^4\), we define
	\begin{equation}\label{Super-Lyapunov}
		V(U) := \iota \left( \| U \|_{\hat{H}}^2 + \delta \| U \|_{\hat{H}^4}^{1/3} \right),
	\end{equation}
	where \(\delta, \iota > 0\) are some fixed small constants, chosen according to Proposition \ref{priori-eatimates} so that Corollary \ref{Super} holds for all \(\eta \in (0, 2)\).
\end{definition}
Following the proof in \cite[Corollaries 2.5-2.6]{NS}, we can similarly establish two useful corollaries toward the proof of Proposition \ref{continuous projective measures}.
\begin{corollary}[Super-Lyapunov property]\label{Super}
	There exists \(\beta \in (0,1)\) and a constant \(C > 0\) such that for all \(\eta \in (0,2)\) and \(t \geq 1\),
	\begin{equation}\label{Super-Lyapunov-property}
		\mathbb{E} \exp(\eta V(U_t)) \leq C \exp(\beta \eta V(U_0)).
	\end{equation}
\end{corollary}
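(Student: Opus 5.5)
We obtain \eqref{Super-Lyapunov-property} by handling the two pieces of $V(U)=\iota(\|U\|_{\hat H}^2+\delta\|U\|_{\hat H^4}^{1/3})$ separately with the estimates of Proposition \ref{priori-eatimates} and then recombining them with Hölder's inequality. Note that $1/3=2/(n+2)$ with $n=4$, which is exactly the power appearing in \eqref{Hn-eatimates1}. Throughout we fix $t\ge1$ and $\eta\in(0,2)$.

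\emph{Step 1: the $L^2$ part.} Take $s=t$ in \eqref{L2-eatimates}, so that the supremum is just the value at time $t$. This gives
\[
\mathbb{E}\exp\big(\eta'\|U_t\|_{\hat H}^2\big)\le C\exp\big(e^{-C^{-1}t}\eta'\|U_0\|_{\hat H}^2\big),
\]
valid for $\eta'\le C^{-1}$. Since $t\ge1$, the factor $e^{-C^{-1}t}\le e^{-C^{-1}}=:\beta_0<1$. The prefactor $Ce^{C(t-s)}$ reduces to $C$ because $t-s=0$.

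\emph{Step 2: the $\hat H^4$ part.} Take $s=t$ and $n=4$ in \eqref{Hn-eatimates1}, bounding the supremum below by its value at time $t$. This yields
\[
\mathbb{E}\exp\big(\eta'\|U_t\|_{\hat H^4}^{1/3}\big)\le Ce^{Ct}\exp\big(e^{-C^{-1}t}\eta'\|U_0\|_{\hat H^4}^{1/3}+C\eta'\|U_0\|_{\hat H}^2\big).
\]
The factor $e^{Ct}$ grows in $t$, so this bound is only useful on a bounded time window.

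\emph{Step 3: combination.} By Cauchy–Schwarz,
\[
\mathbb{E}e^{\eta V(U_t)}\le \big(\mathbb{E}e^{2\eta\iota\|U_t\|^2_{\hat H}}\big)^{1/2}\big(\mathbb{E}e^{2\eta\iota\delta\|U_t\|^{1/3}_{\hat H^4}}\big)^{1/2}.
\]
Choose $\iota$ so small that $4\iota\le C^{-1}$. Then both exponents $2\eta\iota$ and $2\eta\iota\delta$ are admissible for every $\eta<2$. Applying Steps 1 and 2 gives, for $t\in[1,2]$,
\[
\mathbb{E}e^{\eta V(U_t)}\le C\exp\Big(\eta\iota\big[(\beta_0+C\delta)\|U_0\|_{\hat H}^2+\beta_0\delta\|U_0\|_{\hat H^4}^{1/3}\big]\Big).
\]
The cross term $C\delta\eta\iota\|U_0\|^2_{\hat H}$ is where the smallness of $\delta$ enters. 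Choose $\delta$ small so that $\beta:=\max(\beta_0+C\delta,\beta_0)<1$. This gives $\mathbb{E}e^{\eta V(U_t)}\le Ce^{\beta\eta V(U_0)}$ for $t\in[1,2]$.

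\emph{Step 4: extension to all $t\ge1$.} Write $t=t_0+k$ with $t_0\in[1,2)$ and iterate the estimate using the Markov property. Conditioning on $\mathcal F_{t-1}$ gives
\[
\mathbb{E}e^{\eta V(U_t)}\le C\,\mathbb{E}e^{\beta\eta V(U_{t-1})}\le C^{2}e^{\beta^2\eta V(U_{t-2})}\le\cdots
\]
Here each step uses the one-step bound with the smaller parameter $\beta^j\eta\in(0,2)$, which is admissible.

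\emph{Main obstacle.} The constants $C^k$ pile up under iteration, so the plain recursion does not give a bound uniform in $t$. To fix this, we absorb the constants using $\beta<1$. Concretely, we split according to whether $V(U_{\cdot})$ is large or small, via the elementary bound
\[
e^{\beta x}\le e^{R}+e^{\beta' x}\quad\text{with }\beta<\beta'<1,
\]
so that the additive constant does not compound. Alternatively, we run the argument with exponent $\beta^{1/2}$ in place of $\beta$ and use a geometric-series bound on the accumulated constants, since $\sum_j\beta^j$ converges.

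We expect this absorption of the prefactor $Ce^{Ct}$ into a uniform-in-time constant to be the delicate step. This is exactly where the proof of \cite[Corollary 2.5]{NS} is followed, and we will adapt its iteration verbatim, with the norms weighted as in \eqref{2.3}.
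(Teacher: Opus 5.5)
Your Steps 1--3 are the intended argument. You evaluate \eqref{L2-eatimates} and \eqref{Hn-eatimates1} at $s=t$, split by Cauchy--Schwarz, and take $\iota,\delta$ small so that the cross term $C\eta\iota\delta\|U_0\|_{\hat H}^2$ is absorbed, which gives $\beta=e^{-C^{-1}}+C\delta<1$.

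\textbf{The gap is Step 4.} Your recursion gives $\mathbb{E}e^{\eta V(U_t)}\le C^{k+1}e^{\beta^{k+1}\eta V(U_0)}$, and neither remedy you sketch removes the factor $C^{k+1}$.
\begin{itemize}
\item The splitting $e^{\beta x}\le e^{R}+e^{\beta' x}$ is true for any $R\ge 0$, but it only adds a constant. The one-step constant $C$ still multiplies the remaining exponential with coefficient $1$ at every step, so the constants still compound.
\item Summing $\sum_j\beta^j$ only helps if the one-step constant at parameter $\eta$ has size $C^{O(\eta)}$, and you never establish this.
\end{itemize}
The missing line is Jensen's inequality. For $0<\eta\le\eta_0<2$,
\[
P_1e^{\eta V}\le\big(P_1e^{\eta_0 V}\big)^{\eta/\eta_0}\le C^{\eta/\eta_0}e^{\beta\eta V},
\]
and for $\eta\in[\eta_0,2)$ the same bound holds trivially once $C\ge1$. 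Iterating with parameters $\eta,\beta\eta,\beta^2\eta,\dots$, the constants multiply to at most $C^{\eta(1+\beta+\beta^2+\cdots)/\eta_0}\le C^{2/((1-\beta)\eta_0)}$. Since $V\ge0$, also $e^{\beta^{k+1}\eta V(U_0)}\le e^{\beta\eta V(U_0)}$. With this inserted, your proof closes.

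A correct version of your "large/small" idea would instead compare $Ce^{\beta\eta V}$ with $e^{\eta V}$ itself. On $\{V>R\}$ with $R=R(\eta)$ large one has $Ce^{\beta\eta V}\le\frac12 e^{\eta V}$, which gives a drift inequality that iterates without compounding.

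\textbf{The iteration can be avoided altogether.} The factor $e^{Ct}$ in \eqref{Hn-eatimates1} is not sharp. Lemmas \ref{A.3} and \ref{A.5} in Appendix \ref{priori} give single-time bounds with $t$-independent constants; in particular
\[
\mathbb{E}\exp\big(\eta\|U_t\|_{\hat H^n}^{2/(n+2)}\big)\le C\exp\big(e^{-C^{-1}t}\eta\|U_0\|_{\hat H^n}^{2/(n+2)}+C\eta\|U_0\|_{\hat H}^2\big).
\]
With this, your Step 3 holds verbatim for every $t\ge1$. That one-shot argument is the route the paper intends, so there is no iteration to import.

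If you prefer to stay within Proposition \ref{priori-eatimates}, handle $t\ge2$ as follows.
\begin{itemize}
\item Condition on $\mathcal{F}_{t-1}$ and apply \eqref{Hn-eatimates2} on the unit interval $[t-1,t]$. This bounds $\mathbb{E}e^{2\eta\iota\delta\|U_t\|_{\hat H^4}^{1/3}}$ by $C\,\mathbb{E}e^{C\eta\iota\delta\|U_{t-1}\|_{\hat H}^2}$.
\item Then apply \eqref{L2-eatimates} at time $t-1$. The total exponent becomes $\eta\iota\big(e^{-C^{-1}t}+C\delta e^{-C^{-1}(t-1)}\big)\|U_0\|_{\hat H}^2\le\beta\eta V(U_0)$, with no $\hat H^4$ norm of $U_0$ appearing at all.
\end{itemize}
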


\begin{corollary}[Exponential moments of \( C^\alpha \) norms]
	There exists \(\alpha > 0\) such that for all \(K, \eta, t > 0\) there exists \(C(K, \eta, t) > 0\) such that
	\begin{equation}\label{C-alpha}
		\mathbb{E} \exp \left( K \int_0^t \|\nabla u_s\|_{C^\alpha} \, ds \right) \leq C \exp \left( \eta V(U_0) \right).
	\end{equation}
\end{corollary}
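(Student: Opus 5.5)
We prove \eqref{C-alpha} by Sobolev embedding, a Jensen/H\"older step in time, and the exponential moment bound \eqref{Hn-eatimates1}, following \cite[Corollary 2.6]{NS}.

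\emph{Step 1: reduction to a Sobolev norm.} Since $\mathbf{u}=K*\omega$, we have $\|\nabla \mathbf{u}\|_{H^{s}}\le C\|\omega\|_{H^{s}}$. By Sobolev embedding in two dimensions, $H^{1+\alpha'}\hookrightarrow C^{\alpha}$ for $0<\alpha<\alpha'$. Fix any $\alpha\in(0,1)$ and use the crude bound
$$\|\nabla \mathbf{u}_s\|_{C^\alpha}\le C\|U_s\|_{\hat{H}^{2}}\le C\|U_s\|_{\hat H^4}.$$
It therefore suffices to bound $\mathbb{E}\exp\big(K\int_0^t \|U_s\|_{\hat H^4}\,ds\big)$ by $C\exp(\eta V(U_0))$.

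\emph{Step 2: making the exponent sublinear.} The moment bound \eqref{Hn-eatimates1} with $n=4$ only controls exponentials of
$$X:=\sup_{0\le r\le t}\Big(\|U_r\|_{\hat H^4}^2+\tfrac{\kappa}{2}\int_0^r\|U_a\|_{\hat H^5}^2\,da\Big)^{1/6}.$$
We have $\int_0^t\|U_s\|_{\hat H^4}\,ds\le t\,X^3$. The power $X^3$ is too large to be handled by Young's inequality directly, so the exponent has to be lowered. Interpolating $\|U\|_{\hat H^2}\le \|U\|_{\hat H}^{1/2}\|U\|_{\hat H^4}^{1/2}$ gives
$$\int_0^t\|\nabla \mathbf{u}_s\|_{C^\alpha}\,ds\le C\,t\,\sup_r\|U_r\|_{\hat H}^{1/2}\,X^{3/2}.$$
This is still superlinear in $X$. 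A better route is to pick the interpolation exponent so that the $\hat H^4$ power is small: choose $\alpha'$ small with
$$\|U\|_{\hat H^{1+\alpha'}}\le \|U\|_{\hat H^1}^{1-\theta}\|U\|_{\hat H^4}^{\theta},\qquad \theta=\alpha'/3.$$
Then Young's inequality gives
$$K\|U_s\|_{\hat H^1}^{1-\theta}\|U_s\|_{\hat H^4}^{\theta}\le \tfrac{\eta'\kappa}{4}\|U_s\|_{\hat H^1}^2+C(K,\eta')\|U_s\|_{\hat H^4}^{2\theta/(1+\theta)}.$$
The time integral of the first term is absorbed by the dissipation term in \eqref{L2-eatimates}. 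For the second term, choose $\theta$ small enough that $2\theta/(1+\theta)<1/3$. Then $\|U_s\|_{\hat H^4}^{2\theta/(1+\theta)}\le \epsilon X^{2}+C_\epsilon$, and $X^2$ is still too big; what is actually needed is a power at most $X^{1}$. So require $2\theta/(1+\theta)\le 1/6\cdot 1$, i.e.\ exponent at most $1/6$ relative to $\|U\|_{\hat H^4}^2$'s $1/6$-power. This holds for $\alpha'$ small, and then
$$\|U_s\|_{\hat H^4}^{2\theta/(1+\theta)}\le \epsilon X + C_\epsilon .$$

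\emph{Step 3: combining.} Apply Cauchy--Schwarz to split the exponential into an $L^2$ factor and an $\hat H^4$ factor. The first factor is bounded via \eqref{L2-eatimates} by $Ce^{Ct}\exp(C\eta'\|U_0\|^2_{\hat H})$. The second is bounded via \eqref{Hn-eatimates1} by
$$Ce^{Ct}\exp\big(C\epsilon t\|U_0\|_{\hat H^4}^{1/3}+C\epsilon t\|U_0\|_{\hat H}^2\big).$$
Choosing $\eta'$ and $\epsilon$ small depending on $K,t,\eta,\delta,\iota$ makes the total exponent at most $\eta\iota(\|U_0\|^2_{\hat H}+\delta\|U_0\|^{1/3}_{\hat H^4})=\eta V(U_0)$. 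All constants are absorbed into $C(K,\eta,t)$.

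\emph{Main obstacle.} The main obstacle is the exponent bookkeeping in Step 2. The $\hat H^4$-moment bound only controls the $1/6$ power of the norm, and only with a small prefactor. The interpolation with a tiny $\theta$ is exactly what reconciles this with an arbitrary large $K$. The cost is that $\alpha$ must be taken small, namely $\alpha<\alpha'$.
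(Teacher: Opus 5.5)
Your argument is correct and follows essentially the paper's route. The paper gives no separate proof and defers to \cite[Corollary~2.6]{NS}, which uses the same mechanism: bound $\|\nabla \mathbf{u}\|_{C^\alpha}$ by interpolating between the $\hat H^1$ dissipation and a small power of the $\hat H^4$ norm, split with Young's inequality, then close with Cauchy--Schwarz and the exponential moment bounds \eqref{L2-eatimates} and \eqref{Hn-eatimates1}. Tidy the bookkeeping, though. The reduction in Step~1 to $\int_0^t\|U_s\|_{\hat H^4}\,ds$ is not valid, though you never use it. Since $X\ge \|U_s\|_{\hat H^4}^{1/3}$, the condition $2\theta/(1+\theta)<1/3$ already gives $\|U_s\|_{\hat H^4}^{2\theta/(1+\theta)}\le \epsilon X+C_\epsilon$, so the ``$X^2$'' remark is a slip, and $\epsilon$ must be chosen after $\eta'$ so that the coefficient $C(K,\eta')\,\epsilon\,t$ in front of $X$ is small enough.
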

We now present an arbitrary-order version \( V^n \) of the super Lyapunov function \( V \). Noted that \( V^n \) remains essentially identical to \( V \), and considering arbitrary orders does not significantly increase the complexity while slightly enhancing the generality of the framework.
\begin{definition}\label{Vndef}
	Let
	\[
	V^n(U) := \iota_n \left( \| U \|_{\hat{H}}^2 + \| U \|_{\hat{H}^n}^{\frac{2}{n+2}} \right),
	\]
	where \(\iota_n > 0\) is chosen so that we can apply Proposition \ref{priori-eatimates} to
	\[
	e^{\iota_n \eta \| U \|_{\hat{H}}^2} \quad \text{and} \quad e^{\iota_n \eta \| U \|_{\hat{H}^n}^{\frac{2}{n+2}}}
	\]
	for all \(\eta \leq 2\).
\end{definition}

\begin{remark}
	Note that the higher-order version of \( V_n \) controls its lower-order counterparts in the sense that for any \( 1 \leq k \leq n \), there exists \( C(k,n) \) such that $V^k \leq C V^n$.
\end{remark}
\subsection{Ergodicity of the Markov processes}
We have established that to prove (A), namely the existence of the top Lyapunov exponent $\lambda_+$, it is necessary to demonstrate the existence and uniqueness of stationary measures for the base process, Lagrangian process, and projective process. Furthermore, to use Furstenberg's criterion (Theorem \ref{Furstenberg}), the asymptotic gradient estimate for the semigroup over unit-time (Proposition \ref{unit-time}) is also necessary.
	
In this subsection, we state Proposition \ref{unit-time} and Corollary \ref{long-time}, which provide key regularity estimates for the processes under study. In particular, Corollary \ref{long-time} serves as a sufficient condition for the asymptotic strong Feller property introduced in \cite{HM06}, from which the uniqueness of the stationary probability measure for the corresponding process follows directly. Proposition \ref{unit-time} is a special case of Theorem \ref{unit-time-n}, which generalizes \cite[Proposition 2.6]{JFA} to the setting of stochastic Boussinesq equations coupled with vector fields on manifolds.
We note that below we take norms of \(\nabla \varphi (U, \mathbf{p}) \in \hat{H}^4\times T\Sigma\), denoted by \(\| \nabla \varphi (U, \mathbf{p}) \|_{\hat{H}^4}\). By this, we mean that we take the sum of the \(\hat{H}^4\) norm in the \(\hat{H}^4\) component and the norm induced by the Riemannian metric in the \(T\Sigma\) component.
\begin{proposition}[unit-time asymptotic gradient estimate]\label{unit-time}
Consider either the base, Lagrangian, tangent, projective, or Jacobian process. Then for each \(\eta, \gamma \in (0,1)\) there is \(C(\mathbf{p}, \eta, \gamma) > 0\), locally bounded in \(\mathbf{p} \in \Sigma\), such that for each Fréchet differentiable \(\varphi : \hat{H}^4 \times \Sigma \to \mathbb{R}\),
\begin{equation}\label{unit-time-eq}
\| \nabla P_1 \varphi(U, \mathbf{p}) \|_{\hat{H}^4} \leq \exp(\eta V^6(U)) \left( C \sqrt{P_1 |\varphi|^2 (U, \mathbf{p})} + \gamma \sqrt{P_1 \| \nabla \varphi \|^2_{\hat{H}^4}(U, \mathbf{p})} \right).
\end{equation}
\end{proposition}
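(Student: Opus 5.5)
We will follow the Hairer--Mattingly Malliavin-calculus scheme for asymptotic strong Feller estimates, adapted to the extended system as in \cite{NS} and \cite[Section 5]{JFA}. Write $X_t=(U_t,\mathbf{p}_t)$ for the extended process on $\hat{H}^4\times\Sigma$, $\mathcal{J}_{s,t}$ for its linearization, and $\mathcal{A}_{s,t}:L^2([s,t];\mathbb{R}^d)\to\hat{H}^4\times T_{\mathbf{p}_t}\Sigma$ for the Malliavin derivative. The noise enters only through $\sigma_\theta$ in the $\theta$-component. Fix a direction $\xi\in\hat{H}^4\times T_{\mathbf{p}}\Sigma$. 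Following the standard scheme, we look for a control $v\in L^2([0,1];\mathbb{R}^d)$, not necessarily adapted, such that the residual $\rho_1:=\mathcal{J}_{0,1}\xi-\mathcal{A}_{0,1}v$ is small. The integration by parts formula $\langle\nabla P_1\varphi,\xi\rangle=\mathbb{E}[\varphi(X_1)\int_0^1 v\,dW]+\mathbb{E}[\langle\nabla\varphi(X_1),\rho_1\rangle]$ then gives \eqref{unit-time-eq} by Cauchy--Schwarz, provided that
\[
\mathbb{E}\Big|\int_0^1 v\,dW\Big|^2\le C e^{2\eta V^6(U)}\|\xi\|^2, \qquad \mathbb{E}\|\rho_1\|^2\le \gamma^2e^{2\eta V^6(U)}\|\xi\|^2 .
\]
The Skorokhod integral term is handled with the Malliavin derivative of $v$. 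This is where $n=6$ enters: two extra derivatives are needed to control $\mathcal{D}v$ through the $\hat{H}^{n+2}$-level Lyapunov function.

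Because we only need a unit-time estimate, we use a single-step regularized Tikhonov control rather than the iterated controls of \cite{HM06,JFA}. The control acts only on $[1/2,1]$ and is given by
\[
v=\mathcal{A}_{1/2,1}^*(\mathcal{M}_{1/2,1}+\beta I)^{-1}\mathcal{J}_{0,1}\xi .
\]
With this choice $\rho_1=\beta(\mathcal{M}+\beta)^{-1}\mathcal{J}_{0,1}\xi$ and $\|v\|\lesssim\beta^{-1/2}\|\mathcal{J}_{0,1}\xi\|$. The residual is split using a spectral projection. Let $\Pi_N$ project onto low Fourier modes in $\hat{H}^4$ together with the full $T\Sigma$ component, which is finite-dimensional. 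High modes are damped by the dissipation, since $\|(I-\Pi_N)\mathcal{J}_{1/2,1}\|$ is small for large $N$, with moments bounded via Proposition \ref{priori-eatimates} and \eqref{C-alpha}. The low-mode and cone part is controlled by the spectral bound on the Malliavin matrix.

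The key input, and the main obstacle, is a probabilistic spectral bound on a cone: for every $\epsilon>0$ and $N$,
\[
\mathbb{P}\Big(\inf_{\mathfrak{p}\in\mathcal{C}_N,\|\mathfrak{p}\|=1}\langle\mathcal{M}_{1/2,1}\mathfrak{p},\mathfrak{p}\rangle<\epsilon\Big)\le C e^{\eta V^{6}(U)}o_\epsilon(1),
\]
where $\mathcal{C}_N=\{\mathfrak{p}:\|\Pi_N\mathfrak{p}\|\ge\frac12\|\mathfrak{p}\|\}$. We prove it by contradiction through iterated Lie brackets along the lines of \eqref{JFA-2.18}. Smallness of $\langle\mathcal{M}\mathfrak{p},\mathfrak{p}\rangle$ forces $\langle\mathcal{J}^*_{t,1}\mathfrak{p},\sigma_j^m\rangle$ to be small uniformly in $t$. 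Norris-type lemmas then transfer this smallness to $Y_j^m=[F,\sigma_j^m]$, which acts through the buoyancy $G$ into the $\omega$-component, and further to $Z_j^m$ and the brackets generating $\psi_j^m+J_{j,m}^N(U_1)$, giving control of the $\hat{H}$-component of $\mathfrak{p}$ on low modes. For the $T\Sigma$-component we need the solution-dependent spanning condition: the vector fields $\Theta_{Z_j^m(\overline U)}+\Theta_{[U,Y_j^m(\overline U)]_x}$ span $T_{\mathbf{p}}\Sigma$ uniformly on a high-probability set. Because this set depends on $U_1$, we must choose $N$ large relative to bounds on $\|U_1\|$. The exceptional set $\{\|U_1\|_{\hat{H}^{6}}>R\}$ is paid for by $e^{\eta V^6}$ through \eqref{Hn-eatimates1}, and the local dependence on $\mathbf{p}$ enters only through the Riemannian geometry of $\Sigma$ (compact for the Lagrangian and projective processes), giving constants locally bounded in $\mathbf{p}$.

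Combining these pieces, on the good event the cone part of $\rho_1$ is at most $\sqrt{\beta/\epsilon}\,\|\xi\|$ and the off-cone part is small by high-mode damping. The bad event contributes $e^{\eta V^6}o_\epsilon(1)\|\xi\|$ after using $\|\rho_1\|\le\|\mathcal{J}_{0,1}\xi\|$ and moment bounds on $\mathcal{J}$. We choose $N$, then $\epsilon$, then $\beta$ in that order so that the total residual is below $\gamma$. The constant $C$ in \eqref{unit-time-eq} then comes from $\beta^{-1/2}$ together with the $\mathcal{D}v$ bounds.
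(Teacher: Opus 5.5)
Your architecture (Malliavin integration by parts with one Tikhonov-regularized control, a low/high-mode split, and a cone spectral bound from the bracket implications plus the solution-dependent spanning condition) is the paper's. However, the final residual estimate does not close as written.

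You place the control on $[1/2,1]$, so $\rho_1=\beta(\mathcal{M}_{1/2,1}+\beta I)^{-1}y$ with $y=\mathcal{J}_{0,1}\xi$, and no linearization acts after the regularized inverse. Smallness of $\|\Pi_{\ge N}\mathcal{J}_{1/2,1}\|$ makes the high modes of the \emph{input} $y$ small, but says nothing about those of the \emph{output}. So ``the off-cone part is small by high-mode damping'' has nothing behind it. What you actually have, for $r:=\beta(\mathcal{M}_{1/2,1}+\beta I)^{-1}\Pi_N y$, is $\langle\mathcal{M}_{1/2,1}r,r\rangle+\beta\|r\|^2=\beta\langle\Pi_N y,\Pi_N r\rangle$. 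When $r\notin\mathcal{C}_N$ this only gives $\|r\|\le\frac12\|\Pi_N y\|$, and the bound is essentially attained in general: a null direction of $\mathcal{M}$ whose low-mode component is less than half its norm is not excluded by a spectral bound on $\mathcal{C}_N$. With the aperture frozen at $\frac12$, no choice of $N$, then $\epsilon$, then $\beta$ brings your bound on $\mathbb{E}\|\rho_1\|^2$ below order $\frac14\mathbb{E}\|\Pi_N\mathcal{J}_{0,1}\xi\|^2$, so arbitrary $\gamma$ is out of reach. The paper's placement would hit the same obstacle, since off the cone $\|\Pi_N r\|<\frac12\|r\|$ is not small either.

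The missing ingredient is a cone of arbitrarily small aperture. Theorem \ref{bounds-Malliavin} holds on $\{\|\Pi_N\mathfrak{p}\|\ge\alpha\}$ for every $\alpha\in(0,1)$, and the identity above then gives $\|r\|\le\max(\alpha,\beta/\epsilon)\|y\|$ on the good event. So $\alpha$ must be fixed in terms of $\gamma$ before $\epsilon$ and $\beta$. With that change, and with $\|\Pi_{\ge N}y\|$ small by Lemma \ref{NS5.27}, your placement also works.

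The paper instead puts the control on $[0,1/2]$, so that $\rho^\beta=\mathcal{J}_{1/2,1}\,\beta(\mathcal{M}_{1/2}+\beta I)^{-1}\mathcal{J}_{0,1/2}\mathfrak{p}$. The high modes of the regularized output are then genuinely damped by $\|\mathcal{J}_{1/2,1}\Pi_{\ge N}\|\lesssim N^{-1}$ in $L^q(\Omega)$ (Lemma \ref{NS5.27}). Only $\Pi_N$ of the output needs the thin-cone bound (Lemma \ref{NS5.28}). That placement also applies the spectral bound from the deterministic point $(U_0,\mathbf{p}_0)$. Yours needs it for $\mathcal{M}_{1/2,1}$ started from the random point $(U_{1/2},\mathbf{p}_{1/2})$. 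For the non-compact tangent and Jacobian manifolds this costs an extra truncation in $\mathbf{p}_{1/2}$, since constants are only locally bounded in $\mathbf{p}$, plus a Lyapunov bound for $V^{n+2}(U_{1/2})$.

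A minor point: in the paper the loss from $V^4$ to $V^6$ comes from the H\"older-in-time bracket estimates feeding the spectral bound, not from $\mathcal{D}v$. The Skorokhod term (Proposition \ref{v}) only costs $e^{\eta V^n}$.
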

We will discuss the proof of this proposition in next subsection and complete the proof in Section \ref{smoothing estimate}. Moreover, when \(\Sigma\) is compact, the super-Lyapunov property enables us to directly derive the following time asymptotic gradient estimate from the unit-time asymptotic gradient estimate in Proposition \ref{unit-time}, following a proof strategy analogous to \cite[Lemma 2.10]{NS}.
\begin{corollary}[time asymptotic gradient estimate] \label{long-time}
Consider either the base, Lagrangian, or projective process. For every \(\eta, \gamma \in (0,1)\) there is \(C(\eta, \gamma) > 0\) such that for each Fréchet differentiable observable \(\varphi: \hat{H}^4\times \Sigma \to \mathbb{R}\) and \(t \geq 1\),
\begin{equation}
	\|\nabla P_t \varphi\|_{\eta V} \leq C \|\varphi\|_{\eta V} + \gamma^t \|\nabla \varphi\|_{\eta V},
\end{equation}
where $\| \varphi \|_{\eta V} := \mathop{\rm sup}\limits_{u \in \hat{H}^4} \exp(-\eta V^6(u)) | \varphi(u,\mathbf{p}) |$.
\end{corollary}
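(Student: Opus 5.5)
We derive Corollary \ref{long-time} from Proposition \ref{unit-time} by iterating over unit time steps with the Markov property, $P_t = P_1 P_{t-1}$. The super-Lyapunov property (Corollary \ref{Super}) controls the exponential weights. Compactness of $\Sigma$ (for the base, Lagrangian and projective processes, $\Sigma$ is a point, $\mathbb{T}^2$, or $\mathbb{T}^2\times S^1$) makes the constant $C(\mathbf{p},\eta,\gamma)$ of Proposition \ref{unit-time} uniform in $\mathbf{p}$. This is the step that fails for the tangent and Jacobian processes, which is why they are excluded.

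\textbf{Step 1: a one-step recursion in weighted norms.} Fix $\eta,\gamma$ and apply \eqref{unit-time-eq} to $\psi:=P_{t-1}\varphi$ with parameters $\eta'=\eta(1-\beta)/2$ (say) and $\gamma'$ to be chosen. Using $|\psi(U_1,\mathbf p_1)|\le \|\psi\|_{\eta V}e^{\eta V^6(U_1)}$, we get
\[
\sqrt{P_1|\psi|^2(U,\mathbf p)}\le \|\psi\|_{\eta V}\bigl(\mathbb E\, e^{2\eta V^6(U_1)}\bigr)^{1/2}\le C\|\psi\|_{\eta V}e^{\beta\eta V^6(U)} .
\]
Here the last inequality is Corollary \ref{Super}, applied with exponent $2\eta<2$ (for $V^6$ in place of $V$, which holds by the same proof, Definition \ref{Vndef}). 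The gradient term is bounded the same way, with $\|\nabla\psi\|_{\eta V}$ in place of $\|\psi\|_{\eta V}$. Multiplying by $e^{-\eta V^6(U)}$ and using $e^{\eta' V^6}e^{\beta\eta V^6}\le e^{\eta V^6}$ yields
\[
\|\nabla P_1\psi\|_{\eta V}\le C_1\|\psi\|_{\eta V}+\gamma' C_2\|\nabla\psi\|_{\eta V},
\]
where $C_2$ depends only on the constant of Corollary \ref{Super}. Choosing $\gamma'$ with $\gamma' C_2\le\gamma$ gives the contraction factor $\gamma$.

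\textbf{Step 2: iteration.} Since $|P_s\varphi|\le (P_s e^{\eta V^6})\|\varphi\|_{\eta V}$, the bound $\mathbb E e^{\eta V^6(U_s)}\le Ce^{\beta\eta V^6(U_0)}$ for $s\ge1$ (and a trivial bound for $s\in[0,1]$ from Proposition \ref{priori-eatimates}) gives $\|P_s\varphi\|_{\eta V}\le C_3\|\varphi\|_{\eta V}$ uniformly in $s\ge0$. Applying Step 1 to integer steps first, then handling the fractional part $t-\lfloor t\rfloor$ by one application with $\psi=P_{t-1}\varphi$, we obtain
\[
\|\nabla P_t\varphi\|_{\eta V}\le C_1C_3\sum_{k\ge0}\gamma^k\|\varphi\|_{\eta V}+\gamma^{\lfloor t\rfloor}\,\|\nabla P_{t-\lfloor t\rfloor}\varphi\|_{\eta V}.
\]
For the last term we need a short-time bound $\|\nabla P_s\varphi\|_{\eta V}\le C(\|\varphi\|_{\eta V}+\|\nabla\varphi\|_{\eta V})$ for $s\in[0,1]$. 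This follows from the pathwise derivative bound $\|\mathcal J_{0,s}\|\le \exp(C\int_0^s\|U_r\|^{q}_{\hat H^{k}}dr)$ together with the exponential moments \eqref{Hn-eatimates1}. Alternatively, we can avoid it by running the recursion on $t=1+(t-1)$ and applying \eqref{unit-time-eq} at the last step. Finally we replace $\gamma$ by a smaller value so that the constants are absorbed into $\gamma^t$.

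\textbf{Main obstacle.} The delicate point is the bookkeeping of the weights. The unit-time estimate loses a factor $e^{\eta' V^6(U)}$, and Cauchy--Schwarz doubles the exponent inside $P_1$. The contraction $\beta<1$ of the super-Lyapunov property must absorb both, without the constant in front of $\|\nabla\psi\|$ ruining the factor $\gamma$. This is why we choose $\eta'$ proportional to $(1-\beta)\eta$ and $\gamma'$ small only after fixing $\eta'$. Uniformity in $\mathbf p$ is then immediate from compactness and the local boundedness in Proposition \ref{unit-time}.
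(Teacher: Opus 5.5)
Your integer-time argument is the route the paper takes; the paper itself only points to \cite[Lemma 2.10]{NS}. You apply \eqref{unit-time-eq} to $P_{t-1}\varphi$ with $\eta'+\beta\eta\le\eta$, turn $\sqrt{P_1|\cdot|^2}$ into weighted sup-norms via Corollary \ref{Super}, use compactness of $\Sigma$ for uniformity in $\mathbf p$, and iterate geometrically. That part is fine.

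\textbf{The gap is in the non-integer times.} Your recursion bottoms out at $P_r\varphi$ with $r=t-\lfloor t\rfloor\in[0,1)$. You then need both $\|P_r\varphi\|_{\eta V}\le C\|\varphi\|_{\eta V}$ and $\|\nabla P_r\varphi\|_{\eta V}\le C(\|\varphi\|_{\eta V}+\|\nabla\varphi\|_{\eta V})$, with the \emph{same} $\eta$ on both sides. The tools you cite do not give this:
\begin{itemize}
\item For $r<1$ there is no contraction $\beta<1$, since Corollary \ref{Super} requires $t\ge1$.
\item \eqref{Hn-eatimates1} carries the non-decaying term $C\eta\|U_0\|_{\hat H}^2$.
\item Any H\"older splitting of $\mathbb E[e^{\eta V^6(U_r)}\|\mathcal J_{0,r}\|]$ costs an extra factor $e^{\eta_0 V^n(U_0)}$, with $\eta_0>0$, from \eqref{assum5.4:line2}.
\end{itemize}
At best you get $\|\nabla P_r\varphi\|_{\eta_2 V}\lesssim\|\nabla\varphi\|_{\eta V}$ for some $\eta_2>\eta$. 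This is strictly weaker than what you insert after the factor $\gamma^{\lfloor t\rfloor}$, because $\|f\|_{\eta_2 V}\le\|f\|_{\eta V}$. Your ``alternative'' does not avoid the problem: writing $P_{1+r}=P_1P_r$ and applying \eqref{unit-time-eq} still produces $\nabla P_r\varphi$ and $P_r\varphi$, and Proposition \ref{unit-time} is stated only at time $1$.

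\textbf{Two ways to repair it.} First, you can spend the slack from $\beta<1$ at the innermost step only. There, bound $\sqrt{P_1|P_r\varphi|^2}$ and $\sqrt{P_1\|\nabla P_r\varphi\|^2}$ through the weight $e^{\eta_2 V^6}$, choosing $\eta<\eta_2$ with $\eta'+\beta\eta_2\le\eta$ (for example $\eta'=(1-\beta)\eta/2$ and $\eta_2=(1+\beta)\eta/(2\beta)$). Then you only need short-time bounds with a controlled weight loss $\eta\to\eta_2$. H\"older, Proposition \ref{priori-eatimates} and \eqref{assum5.4:line2} do deliver these once the weight carries a small $\delta$ as in \eqref{Super-Lyapunov}. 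Second, you can run the Malliavin argument behind Theorem \ref{unit-time-n} on $[0,T]$ for $T\in[1,2]$ with uniform constants, and make the innermost step of length $t-\lfloor t\rfloor+1$. Then every semigroup bound you use is at time $\ge1$, where Corollary \ref{Super} applies. For the asymptotic strong Feller application, integer $t$ already suffices.

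A smaller point, which you share with the paper: Corollary \ref{Super} is stated for $V$ in \eqref{Super-Lyapunov}, and it is the small $\delta$ there that absorbs the $C\eta\|U_0\|_{\hat H}^2$ term of \eqref{Hn-eatimates1}. For $V^6$ as written in Definition \ref{Vndef}, the ``same proof'' goes through only if such a $\delta$ is inserted.
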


To adapt to the asymptotic strong Feller framework in \cite[Corollary 3.17]{HM06}, it is also necessary to prove the following weak irreducibility properties, proved in Section \ref{Approximate controllability} below.
\begin{proposition}[weak irreducibility]\label{irreducibility}
	For Systems \eqref{Boussinesq} and \eqref{Lagrange-process-eq}, we have the following:
	\begin{enumerate}
		\item[(1)] The support of any stationary measure for the Lagrangian process \((\mathbf{u}_t,\theta_t, \mathbf{x}_t)\) on \(H^5(\mathbb{T}^2) \times H^5(\mathbb{T}^2) \times \mathbb{T}^2\) must contain the set \(\{\mathbf{0}\} \times \{0\} \times \mathbb{T}^2\).
		
		\item[(2)] The support of any stationary measure for the projective process \((\mathbf{u}_t, \theta_t, \mathbf{x}_t, v_t)\) on \(H^5(\mathbb{T}^2) \times H^5(\mathbb{T}^2) \times \mathbb{T}^2 \times S^1\) must contain \(\{\mathbf{0}\} \times \{0\} \times \mathbb{T}^2 \times S^1\).
	\end{enumerate}
\end{proposition}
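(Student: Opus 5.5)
We will prove Proposition \ref{irreducibility} by the standard route: show that the zero state is accessible from everywhere for the base process, then use approximate controllability of the position/direction variables while staying near zero, and finally use stationarity. Fix a stationary measure $\mu^1$ for the Lagrangian process (resp. $\mu^P$ for the projective process). Its $\hat H^4$-marginal is stationary for the base process, so by Proposition \ref{base-posedness-ergodicity}(b) it equals $\mu$. By the super-Lyapunov property (Corollary \ref{Super}) there is an $R$ with $\mu(\{\|U\|_{\hat H^4}\le R\})\ge 1/2$. Since
\[
\mu^1(\mathcal O)=\int P_t(z,\mathcal O)\,\mu^1(dz),
\]
it suffices to show the following for any target point $(0,0,\mathbf{x}')$ (resp. $(0,0,\mathbf{x}',v')$) and any neighbourhood $\mathcal O$ of it: there is a time $t$ such that $\inf P_t(z,\mathcal O)>0$, the infimum taken over $z$ with $\|U\|_{\hat H^4}\le R$ and arbitrary $\mathbf{p}\in\Sigma$. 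Strictly, it is enough to have positivity locally uniformly and then cover the compact set $\Sigma$ together with the relevant $\hat H^4$-ball by finitely many neighbourhoods. This lower bound comes from lower semicontinuity in the initial data, which follows from continuity of the RDS (Remark \ref{L-process-RDS}) and the Feller property.

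The positivity itself will come from a control argument via the support theorem. The Itô map $W\mapsto (U_t,\mathbf{p}_t)$ is continuous from $C([0,t];\mathbb R^4)$ (with $W$ entering additively through $\sigma_\theta W$ after the usual shift $\overline U=U-\sigma_\theta W$). Hence it suffices to find a smooth control $h$ such that the controlled system
\[
\dot U=F(U)+\sigma_\theta \dot h,\qquad \dot{\mathbf p}=\Theta_{\mathbf u}(\mathbf p)
\]
drives $(U,\mathbf p)$ into $\mathcal O$. Because Wiener measure charges every uniform neighbourhood of $h$, the event then has positive probability. The control will be built in three stages:
\begin{enumerate}
\item[(i)] Set $h=0$ on $[0,T_1]$. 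The energy estimate behind \eqref{L2-eatimates} without noise, combined with parabolic smoothing, gives $\|U_{T_1}\|_{\hat H^4}\le\delta$ uniformly on $\{\|U_0\|_{\hat H^4}\le R\}$. During this stage $\mathbf p$ moves to some uncontrolled point.
\item[(ii)] Starting near zero, steer $\mathbf x$ (and $v$) to the target while keeping $U$ small. Here we use the constructions of Section \ref{Approximate controllability}: we first force $\theta$ with the low modes $\cos x_i,\sin x_i$, so that buoyancy $g\partial_1\theta$ generates the vorticity modes $\sin x_1,\cos x_1$. This gives a shear flow in the $x_2$-direction. The $x_1$-shear must come from the nonlinear/bracket mechanism of \eqref{JFA-2.18}, i.e. controls of the form $\sum\sigma_k^l h_k^l$ as indicated in the outline. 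Alternating shears in the two directions, with small amplitude and long duration, translates $\mathbf x$ to any point of $\mathbb T^2$. Cellular flows then rotate $v$ to any open set $V\subset S^1$.
\item[(iii)] Switch the control off and let $U$ decay back to a $\delta$-neighbourhood of $0$. Because $\mathbf u$ is small in $C^1$ during this stage, $\mathbf x$ and $v$ move by $O(\delta)$.
\end{enumerate}
Since $\mathbb T^2$ and $S^1$ are compact, $(0,0,\mathbf x',v')$ lies in the support for every $(\mathbf x',v')$. This proves items (1) and (2).

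The main obstacle is stage (ii): the noise acts only on four temperature modes. Producing velocity fields capable of transporting $\mathbf x$ in both directions therefore requires either time-dependent controls that exploit the coupling $g\partial_1\theta$ (which only yields $x_1$-dependent vorticity), or spatially modulated controls. Keeping $U$ uniformly small in $\hat H^4$ while achieving an $O(1)$ displacement of $\mathbf x$ also needs an approximation estimate of the type of Proposition \ref{V2-thm5.2}. I would first attempt to realize $\mathbf u(x)\approx (\epsilon\,c(t),\,\epsilon\sin x_1)$ via fast oscillating controls whose averaged effect produces the missing direction, and control the error in $L^\infty_t\hat H^4$ by a Gronwall argument.
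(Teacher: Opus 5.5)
Your reduction is the same skeleton the paper invokes: the $\hat H^4$-marginal of any stationary measure is $\mu$, stage (i) uses dissipative decay, and the rest uses Feller lower semicontinuity and the support of Wiener measure for the additive noise. But stage (ii) is a genuine gap, and it is the real content of the proposition.

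In your own framework, which is the correct one, the control is $h\colon[0,t]\to\mathbb{R}^4$, i.e.\ forcing only in the temperature modes $(1,0)$ and $(0,1)$. What this yields explicitly is the invariant family of vertical shears $\mathbf{u}=(0,u_2(x_1,t))$, produced by buoyancy from the $(1,0)$ modes. Along these, the first coordinate of $\mathbf{x}_t$ is constant. The projective flow also preserves the sign of the first component of $v\in S^1$. So neither horizontal transport nor the cellular rotation of $v$ is available without exploiting the nonlinearity, and that is exactly what you leave as something you \emph{would first attempt}.

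Your candidate $\mathbf{u}\approx(\epsilon c(t),\epsilon\sin x_1)$ cannot be reached, even approximately. The field $\mathbf{u}=K*\omega$ has zero mean, and $\int_{\mathbb{T}^2}\mathbf{u}\,dx$ is conserved by \eqref{Boussinesq} because $\int_{\mathbb{T}^2}\theta\,dx=0$.

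Nor can you fill the step by borrowing the constructions of Section \ref{Approximate controllability}, on which the paper's proof rests. The horizontal shear $f(t)(\cos(y_2-b_0),0)$ requires $g\partial_1\theta=(f'+\nu_1 f)\sin(x_2-b_0)$. The right-hand side has nonzero $x_1$-average, so no periodic $\theta$ solves this; that is why the non-periodic $x_1\sin(x_2-b_0)$ appears in Proposition \ref{V2-thm5.1}. The resulting forcing also contains modes such as $\sin 2x_2$ that lie outside the range of $\sigma_\theta$. So the Girsanov/support step of Proposition \ref{V2-thm5.2} cannot be applied to it.

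What is needed is a genuine nonlinear approximate-controllability result with controls in the range of $\sigma_\theta$. One route is an Agrachev--Sarychev-type saturation argument. Use large, rapidly oscillating controls in the $(1,0)$ and $(0,1)$ temperature modes. Their averaged effect, through $g\partial_1\theta$ followed by $\mathbf{u}\cdot\nabla\theta$, should be an effective forcing in the temperature modes $(1,\pm1)$; this is the control analogue of the bracket \eqref{JFA-5.12}.

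Since $B(\zeta,\cdot)=0$ for a pure temperature field $\zeta$, the quadratic term $B(\zeta,\zeta)$ that drives the standard one-step averaging vanishes. The vorticity must therefore first be produced through $G$, and the averaging carried out at the next level.

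Once forcing in, say, the $(1,1)$ mode is approximately available, fields $\omega,\theta$ depending only on $x_1+x_2$ give exact oblique shear flows along $(1,-1)$, since both transport terms vanish. Use these in alternation with the vertical shears, choosing phases so the particle sits at a zero of $\mathbf{u}$ where $\nabla\mathbf{u}\neq0$ whenever you rotate $v$. Then:
\begin{itemize}
\item the two shear directions move $\mathbf{x}$ anywhere on $\mathbb{T}^2$;
\item their two nilpotent gradients generate $\mathrm{SL}_2(\mathbb{R})$, so $v$ reaches every open $V\subset S^1$;
\item vorticity modes with $k_1=0$ are never needed.
\end{itemize}

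A smaller point: the $\hat H^4$-ball is not compact. Your finite-cover and lower-semicontinuity step must therefore be run on $B_\delta(0)\times\Sigma$ after stage (i), not on the ball itself.
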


Corollary \ref{long-time} establishes that the Markov semigroup is asymptotic strong Feller. Combining this with the weak irreducibility from Proposition \ref{irreducibility}, we conclude through \cite[Corollary 3.17]{HM06} that there can exist at most one stationary measure. The existence follows from the tightness of the sequence of averaged measures, which is guaranteed by Corollary \ref{Super} and the compactness of \( \Sigma^1  \), \( \Sigma^P\). Subsequently, any subsequence of averaged measures (along a further subsequence) converges to the unique stationary measure, and therefore the original sequence of averaged measures must also converge. This establishes the following ergodicity result:
\begin{corollary}\label{erogdicty}
The base, Lagrangian, and projective processes each have a unique stationary measure, denoted \(\mu\), \(\mu^1\), and \(\mu^P\) respectively. Furthermore, for each initial probability measure \(\nu \in \mathcal{P}({H}^5(\mathbb{T}^2)\times {H}^5(\mathbb{T}^2)\times \Sigma)\), the averaged measures  
\[
	t^{-1} \int_0^t P_s \nu \, ds
\]  
converge weakly to the stationary measure as \( t \to \infty \).
\end{corollary}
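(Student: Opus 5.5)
The plan is to prove Corollary \ref{erogdicty} in two parts, existence and uniqueness, treating the base, Lagrangian and projective processes together. The only difference between them is the manifold $\Sigma\in\{\text{point},\Sigma^1,\Sigma^P\}$, and each such $\Sigma$ is compact.

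\emph{Existence.} Fix an initial law $\nu$, first assumed to have $\int e^{\eta V(U)}\,d\nu<\infty$; general $\nu$ are handled afterwards by truncation. By Corollary \ref{Super}, $\mathbb{E}\exp(\eta V(U_t))\le C\exp(\beta\eta V(U_0))$ for $t\ge1$. Hence the averages $\bar\nu_t:=t^{-1}\int_0^tP_s\nu\,ds$ satisfy $\sup_{t\ge 2}\int e^{\eta V}\,d\bar\nu_t<\infty$, with the window $s\in[0,1]$ controlled by Proposition \ref{priori-eatimates}. Since $V(U)\gtrsim\|U\|_{\hat H^4}^{1/3}$, the sublevel sets $\{V\le R\}$ are bounded in $\hat H^4$, hence precompact in $\hat H^{4-\epsilon}$. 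To get tightness in the actual state space $H^5\times H^5$ of the velocity--temperature pair, I would run the same argument with $V^6$ (Definition \ref{Vndef}), which controls $\|U\|_{\hat H^6}$ and therefore gives compact sublevel sets in $\hat H^4$. Combined with the compactness of $\Sigma$, Markov's inequality then yields tightness of $\{\bar\nu_t\}$. Because $P_t$ is Feller (Proposition \ref{base-posedness-ergodicity} and Remark \ref{L-process-wp}), the Krylov--Bogolyubov argument shows that every weak limit point is stationary.

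\emph{Uniqueness.} Corollary \ref{long-time} states that for every $\gamma\in(0,1)$, $\|\nabla P_t\varphi\|_{\eta V}\le C\|\varphi\|_{\eta V}+\gamma^t\|\nabla\varphi\|_{\eta V}$. On bounded sets, $e^{\eta V^6}$ is bounded, so this is exactly the hypothesis of \cite[Proposition 3.12]{HM06} with $t_n=n$ and $\delta_n=\gamma^n\to0$. Hence $P_t$ is asymptotically strong Feller at every point. Now suppose $\mu_1\ne\mu_2$ are stationary. By ergodic decomposition we may take them ergodic, and then mutually singular. By \cite[Corollary 3.17]{HM06}, two distinct ergodic invariant measures of an asymptotically strong Feller semigroup have disjoint supports. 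But Proposition \ref{irreducibility} shows that every stationary measure contains the common point $(\mathbf 0,0,\mathbf{p})$ in its support for any fixed $\mathbf{p}\in\Sigma$, which is a contradiction. So the stationary measure is unique: $\mu$, $\mu^1$ or $\mu^P$ according to the process.

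\emph{Convergence of averages.} Since $\{\bar\nu_t\}$ is tight and every limit point is stationary, uniqueness forces every subsequence to have a further subsequence converging to the same limit. Hence the whole family converges weakly. For a general $\nu$, I would decompose $\nu=\nu|_{\{V\le R\}}+\nu|_{\{V>R\}}$. The second piece has mass $o_R(1)$ uniformly in $t$, so an $\epsilon/3$-argument on bounded Lipschitz test functions gives the result, since $P_s$ preserves total mass.

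\emph{Main obstacle.} I expect the main obstacle to be the topological bookkeeping. Corollary \ref{long-time} is stated with $V^6$, so the asymptotic strong Feller property and the tightness must be established on a space where the sublevel sets of the Lyapunov function are compact and the semigroup is Feller. I would handle this by working with initial data in $\hat H^6$ (as in Remark \ref{L-process-wp}), using the instantaneous smoothing estimate \eqref{Hn-eatimates2} to see that $P_1$ maps any initial law into measures with finite $e^{\eta V^6}$-moments, and then transferring the stationary measures back to $\hat H^4$.
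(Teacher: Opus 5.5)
Your proposal is correct and follows the paper's route. Existence comes from Krylov--Bogolyubov using the super-Lyapunov bound (Corollary \ref{Super}) and compactness of $\Sigma$; uniqueness comes from the asymptotic strong Feller property (Corollary \ref{long-time}) together with weak irreducibility (Proposition \ref{irreducibility}) via \cite[Corollary 3.17]{HM06}; and the whole family of averages converges by the subsequence argument. Your extra bookkeeping (passing to $V^6$ and $\hat H^6$ data so that Lyapunov sublevel sets are compact and the gradient bound is locally uniform) fills in points the paper leaves implicit.

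One small correction there: \eqref{Hn-eatimates2} gives finite $e^{\eta V^6}$-moments for $P_1\nu$ only when $\int e^{C\eta\|U\|_{\hat H}^2}\,d\nu<\infty$, not for every initial law. The transfer step, however, only needs $P_1\nu(\hat H^6\times\Sigma)=1$, and that does hold for every $\nu$.
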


\section{Malliavin calculus and probabilistic spectral bound}\label{smoothing estimate}
The objective of this section is to adapt the techniques from \cite{JFA, HM06, HM11a} to our specific setting. Here, we consider an arbitrary manifold \( \Sigma \) and its associated vector field mapping \( \Theta \), as described in Definition \ref{Lagrange-process-def}. Recall that \( \Theta \) and \( \mathbf{u}_t \) induce a extended process \( (\omega_t,\theta_t, \mathbf{p}_t) \), where \( \mathbf{p}_t \in \Sigma \). A central idea in the methodology of \cite{HM06,JFA,HM11a} lies in the differential treatment of such processes. Consequently, our primary focus will be on the linearization of the process \( (\omega_t, \theta_t, \mathbf{p}_t) \).
\begin{definition} \label{J}
We denote the derivative of the process \((\omega_t, \theta_t, \mathbf{p}_t)\)—viewed as a (random) function of its previous value \((\omega_s, \theta_s, \mathbf{p}_s)\)—by the (random) linear operator \(\mathcal{J}_{s,t} \colon \hat{H}^n \times T_{\mathbf{p}_s} \Sigma \to \hat{H}^n \times T_{\mathbf{p}_t} \Sigma\). We note that \(\mathcal{J}_{s,t}\) solves an equation in \(t\), that is
	\[
	\begin{cases}
		\frac{d}{dt} \mathcal{J}_{s,t} = L_t \mathcal{J}_{s,t} \\
		\mathcal{J}_{s,s} = \mathrm{id},
	\end{cases}
	\]
where \( L_t := L(\omega_t, \theta_t, \mathbf{p}_t) \colon \hat{H}^6 \times T_{\mathbf{p}_t}\Sigma \to \hat{H}^4 \times T_{\mathbf{p}_t}\Sigma \) is given by
\[
L_t \begin{pmatrix} \psi \\ \mathbf{q} \end{pmatrix} := 
\begin{pmatrix}
	-A \psi_t - B(\psi_t,U_t)-B(U_t,\psi_t)+G\psi_t \\
	q_t \cdot \nabla \Theta_{U_t}(\mathbf{p}_t) + \Theta_{\psi_t}(\mathbf{p}_t)
\end{pmatrix}.
\]
where \( A \), \( B(U, \tilde{U}) \) and $G$ are as defined in Section \ref{notaion}, \( \psi:=(\psi^1,\psi^2) \), and the specific form of \( L_t \) is given as follows:
\[
L_t \begin{pmatrix} \psi^1 \\ \psi^2 \\ \mathbf{q} \end{pmatrix} := 
\begin{pmatrix}
	\nu_1 \Delta \psi_t^1 - \nabla^\perp \Delta^{-1} \psi_t^1 \cdot \nabla \omega_t - \nabla^\perp \Delta^{-1} \omega_t \cdot \nabla \psi_t^1+g\partial_x \psi_t^2 \\
	\nu_2 \Delta \psi_t^2 - \nabla^\perp \Delta^{-1} \psi_t^1 \cdot \nabla \theta_t - \nabla^\perp \Delta^{-1} \omega_t \cdot \nabla \psi_t^2\\
	q_t \cdot \nabla \Theta_{\omega_t}(\mathbf{p}_t) + \Theta_{\psi_t^1}(\mathbf{p}_t)
\end{pmatrix}.
\]
In the remainder of this paper, for notational simplicity, we shall occasionally identify \( \Theta_{\psi_t^1}(\mathbf{p}_t) \)\textup{(}\( \Theta_{\omega_t}(\mathbf{p}_t) \)\textup{)} with \( \Theta_{\psi_t}(\mathbf{p}_t) \)\textup{(}\( \Theta_{U_t}(\mathbf{p}_t) \)\textup{)} without explicitly emphasizing that the vector fields on the manifold are induced by the velocity field. 

We denote the second derivative of the same process \( (\omega_t,\theta_t, \mathbf{p}_t) \)—again viewed as a function of \( (\omega_s,\theta_s, \mathbf{p}_s) \)—by the (random) bilinear form 
\[
\mathcal{J}_{s,t}^2 \colon (\hat{H}^n \times T_{\mathbf{p}_s}\Sigma) \otimes (\hat{H}^n \times T_{\mathbf{p}_s}\Sigma) \to \hat{H}^n \times T_{\mathbf{p}_t}\Sigma.
\]
The precise value of \( n \) for which these are defined is determined by Assumption \ref{assum5.4} below. In fact, setting $n=4$ suffices.
\end{definition}



We now give the essential bounds on the dynamics we need to prove our hypoellipticity result, compare to \cite[Lemma A.3]{JFA}. We note that below we use the notation \(\|T\|_{\hat{H}^{n} \to \hat{H}^{k}}\) for the operator norm of \(T \colon \hat{H}^{n} \times T\Sigma \to \hat{H}^{k} \times T\Sigma\), with the \(T\Sigma\) component normed by the Riemannian metric.
\begin{assumption}[essential bounds on the dynamics ]\label{assum5.4}
	There exists \( n \in \mathbb{N} \) such that for all \( T, q > 0, \eta \in (0, 1) \), there is \( C(\mathbf{p}_0, \eta, q, T) > 0 \), locally bounded in \( \mathbf{p}_0 \in \Sigma \), such that
	\begin{align}
		\mathbb{E} \sup_{0 \leq t \leq T} \sup_{\substack{\psi \in \hat{H}^{n}}} \|{ \Theta_\psi(\mathbf{p}_t) }\|^q &\leq C\mathbb{E}\|\psi\|_{\hat{H}^n}e^{\eta V^n(U_0)}  \label{assum5.4:line1}\\
		\mathbb{E} \sup_{0 \leq s \leq t \leq T} \|{ \mathcal{J}_{s,t} }\|_{\hat{H}^{n} \to \hat{H}^n}^q &\leq Ce^{\eta V^n(U_0)}  \label{assum5.4:line2}\\
		\mathbb{E} \|{ \mathcal{J}_{T/2, 3T/4} }\|_{\hat{H} \to \hat{H}^{n+1}}^q &\leq Ce^{\eta V^n(U_0)} \label{assum5.4:line3}\\
		\mathbb{E} \sup_{0 \leq s \leq t \leq T} \|{ \mathcal{J}_{s,t}^2 }\|_{\hat{H}^n \otimes \hat{H}^{n} \to \hat{H}^n}^q &\leq Ce^{\eta V^n(U_0)}  \label{assum5.4:line4}\\
		\mathbb{E} \sup_{T/2 \leq t \leq T} \|{ L_t }\|_{\hat{H}^{n+2} \to \hat{H}^n}^q &\leq Ce^{\eta V^n(U_0)} \label{assum5.4:line5}
	\end{align}
\end{assumption}

It is important to note that the gradient estimates for the Markov semigroup we aim to prove involve not only the base process but also the vector fields on the manifold. Therefore, to obtain the probabilistic spectral bound on a cone for the Malliavin matrix, we must additionally impose a spanning condition on the vector fields over the tangent bundle $T\Sigma$.
Specifically, at every point \( \mathbf{p} \in \Sigma \), there should exist some \( N > 0 \) such that
$$\left\{ \Theta_{Z_j^m(\overline U)}(\mathbf{p})+\Theta_{[U,Y_j^m(\overline U)]_x}(\mathbf{p}) : |j| \leqslant N,m\in \{0,1\} \right\} $$
spans \( T_\mathbf{p} \Sigma \). Here, $\overline{U}:=U-\sigma_\theta W$, 
\begin{align}\label{JFA-5.9}
Y_j^m(U):=[F(U),\sigma_j^m]=\nu_2|j|^2\sigma_j^m+B(U,\sigma_j^m)+(-1)^mgj_1\psi_j^{m+1},
\end{align}
and 
\begin{align}\label{JFA-5.11}
Z_j^m(U):&=[F(U),Y_j^m(U)]\notag\\
&=B\bigl(F(U), \sigma_j^m\bigr) + \nu_2^2 \lvert j \rvert^4 \sigma_j^m + (-1)^m (\nu_1 + \nu_2) g j_1 \lvert j \rvert^2 \psi_j^{m+1} + A\bigl(B(U, \sigma_j^m)\bigr) \notag\\
&\quad + (-1)^m g j_1 B\bigl(\psi_j^{m+1}, U\bigr) - B\Bigl(U, -\nu_2 \lvert j \rvert^2 \sigma_j^m + (-1)^{m+1} g j_1 \psi_j^{m+1}\Bigr) \notag\\
&\quad + B\bigl(U, B(U, \sigma_j^m)\bigr) - G B\bigl(U, \sigma_j^m\bigr),
\end{align}
where $F(U):=-AU-B(U,U)+GU$ is the drift of equation \eqref{Boussinesq-w1}. And \([X,Y]\) is the Lie bracket of two vector fields \(X,Y\), defined for each \(U \in \hat{H}^4\) by
\begin{equation}\label{U}
[X(U), Y({U})] = \nabla Y({U}) X(U) - \nabla X(U) Y({U}).
\end{equation}
When not taking the directional derivative with respect to $U$, we distinguish the Lie brackets by adding subscripts, such as $[X,Y]_x:=\nabla Y({x}) X(x) - \nabla X(x) Y({x})$.
During the proof of Proposition \ref{NS-prop5.22} later in the text, we observe that the spanning covering alone is insufficient--it additionally requires this spanning property to satisfy uniform quantitative moment bounds.
\begin{assumption}[solution-dependent manifold spanning condition]\label{assum5.5}
There exists \( n \in \mathbb{N} \) and \( N > 0 \) such that for all \( T, q > 0, \eta \in (0, 1) \), there exists \( C(\mathbf{p}_0, \eta, q, T) > 0 \), locally bounded in \( \mathbf{p}_0 \), such that
\[
	\mathbb{E} \left[ \sup_{\substack{v \in T_{\mathbf{p}}\Sigma, \|v\| = 1}} \left( \max_{|j| \leq N,m\in\{0,1\}} \left|\hat{g}\big(\Theta_{Z_j^m(\overline U)}(\mathbf{p}_T)+\Theta_{[U_t,Y_j^m(\overline U)]_x}(\mathbf{p}_T), v\big)\right| \right)^{-q} \right] \leq Ce^{\eta V^n (U_0)}.
\]
Here, we recall that $\hat{g}$ denotes the Riemannian metric on the manifold \( \Sigma \) as defined in Definition \ref{Lagrange-process-def}. For the case that \( \Sigma \) is compact, we get a much simpler bound, that
\[
	\mathbb{E} \left[\sup_{\substack{v \in T_{\mathbf{p}}\Sigma, \|v\| = 1}} \left( \max_{|j| \leq N,m\in\{0,1\}} \left|\hat{g}\big(\Theta_{Z_j^m(\overline U)}(\mathbf{p}_T)+\Theta_{[U_t,Y_j^m(\overline U)]_x}(\mathbf{p}_T), v\big)\right| \right)^{-1}\right] \leq C.
\]
\end{assumption}
\begin{remark}
It is important to note that, unlike in \emph{\cite{NS}} where only  $\Theta_{e_k}$ needs to be considered (with ${e_k}$ being the real-valued Fourier basis), the vector fields associated with $T\Sigma$ here are strongly dependent on the stochastic process $U$ satisfying equation \eqref{Boussinesq-w1}. Hence, the spanning property holds only almost everywhere, and additional difficulties arise when subsequently verifying the required assumptions.
\end{remark}
The following proposition verifies that the relevant processes satisfy the two assumptions stated above. Its detailed proof, which requires lengthy and involved computations, will be deferred to Subsections \ref{var-assum5.4} and \ref{var-assum5.5}.
\begin{proposition}\label{prop:5.6}
	\textit{The Lagrangian, projective, tangent, and Jacobian processes for Systems \eqref{Boussinesq} and \eqref{Lagrange-process-eq} each satisfy Assumptions \ref{assum5.4} and \ref{assum5.5}.}
\end{proposition}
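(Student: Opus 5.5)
The proof splits according to the two assumptions. For both, the Lagrangian, projective, tangent and Jacobian processes all have the form $\Theta_{\mathbf{u}}(\mathbf{p})$, built from $\mathbf{u}(x)$ and $\nabla\mathbf{u}(x)$ evaluated along the trajectory. So I will run the argument once for a generic $\Theta$ whose value and $x$-derivative are controlled by $\|\mathbf{u}\|_{C^2}$, and then specialize.

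\textbf{Assumption \ref{assum5.4}.} I take $n=4$ and use the Sobolev embedding $H^{5}(\mathbb{T}^2)\hookrightarrow C^{2}$.
\begin{itemize}
\item[(i)] Bound \eqref{assum5.4:line1} follows from $\|\Theta_\psi(\mathbf{p})\|\le C(1+|\mathbf{p}|)\|\psi\|_{\hat H^4}$. The factor $|\mathbf{p}|$ appears only for the tangent component, and there Gronwall gives $|\tau_t|\le|\tau_0|\exp\int_0^t\|\nabla\mathbf{u}_s\|_\infty ds$. Its moments are controlled by \eqref{C-alpha}.
\item[(ii)] For \eqref{assum5.4:line2} and \eqref{assum5.4:line4}, I write $\mathcal{J}_{s,t}$ in block-triangular form. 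The $\hat H^n$ block is the linearized Boussinesq flow. Energy estimates in the weighted norm \eqref{2.3} give
\[
\|\mathcal{J}_{s,t}\psi\|_{\hat H^n}\le \exp\Bigl(C\int_s^t\|U_r\|_{\hat H^{n+1}}^{\frac{2}{n+2}}\,dr+C\int_s^t\|U_r\|_{\hat H^1}^2\,dr\Bigr)\|\psi\|_{\hat H^n},
\]
following \cite[Lemma A.3]{JFA}. The weights are needed so that the coupling term $g\partial_1\psi^2$ is absorbed by the dissipation. The $T\Sigma$ block solves a linear ODE forced by $\Theta_{\psi}(\mathbf{p}_t)$, so variation of constants together with \eqref{C-alpha} bounds it.
\item[(iii)] The second derivative $\mathcal{J}^2$ solves the same linear equation with a quadratic forcing built from $B(\mathcal{J}\cdot,\mathcal{J}\cdot)$ and $\nabla^2\Theta$. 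It is handled identically.
\item[(iv)] The smoothing bound \eqref{assum5.4:line3} uses parabolic regularization over the interval $[T/2,3T/4]$, iterated one derivative at a time.
\item[(v)] Bound \eqref{assum5.4:line5} is immediate from the form of $L_t$.
\end{itemize}
In every case the exponential moments are converted into $e^{\eta V^n(U_0)}$ using Proposition \ref{priori-eatimates}, choosing $\eta$ small via \eqref{Hn-eatimates1}.

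\textbf{Assumption \ref{assum5.5}.} This is the substantive part. I first compute the velocity field attached to the $\omega$-component of
\[
Z_j^m(\overline U)+[U,Y_j^m(\overline U)]_x .
\]
From \eqref{JFA-5.11}, the only deterministic term in the $\omega$-component is $(-1)^m(\nu_1+\nu_2)gj_1|j|^2\psi_j^{m+1}$, which is nonzero whenever $j_1\neq0$. The remaining terms are polynomial (at most quadratic) in $\overline U$, $U$ and $F(\overline U)$. For small $|j|$ the deterministic modes $\psi_j^{m+1}$ generate velocity fields $\nabla^\perp\Delta^{-1}\psi_j^{m+1}$. These are shear and cellular modes, and already at $|j|\le 2$ their $\Theta$-lifts span $T_{\mathbf{p}}\Sigma$ at every point. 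This is the Navier–Stokes computation of \cite[Section 5]{NS}, where one checks that $\{e_k,\nabla e_k\}$ span; here I use $j=(1,0)$, $(1,1)$, $(1,-1)$, $(2,1)$ and their $\sin/\cos$ partners. The mode $(0,1)$ is degenerate because $j_1=0$, so I either avoid it or recover it by taking $N$ larger.

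I therefore split the spanning vector field as
\[
D_j^m(\mathbf{p}) + R_j^m(U,\overline U,\mathbf{p}),
\]
where $D_j^m$ is the deterministic part and $R_j^m$ collects the remainder. The deterministic parts give a uniform lower bound
\[
\inf_{\mathbf{p}}\ \inf_{\|v\|=1}\ \max_{j,m}|\hat g(D_j^m,v)| \ge c_0>0 ,
\]
by compactness of $\Sigma^1$ and $\Sigma^P$. For the tangent and Jacobian cases the bound holds locally in $\mathbf{p}$, after homogeneity scaling in $\tau$ or $A$.

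The main obstacle is that $R_j^m$ is not small: it grows with $\|U_T\|_{\hat H^5}$ and with $W_T$. My first attempt is to exploit the scaling in $j$. The deterministic part grows like $j_1|j|^2$, while the remainder grows like $|j|\,(\|U\|^2_{C^3}+\|U\|_{C^3})$. If the spanning family is enlarged to include dilates $kj$ with $k\le N$, then on the event $\{\|U_T\|_{\hat H^5}+|W_T|\le cN\}$ the deterministic part dominates and gives the lower bound $c_0N^3/2$. On the complement, the maximum is still a nonzero function that is real-analytic in the Gaussian variables. Its reciprocal moment can be controlled by a Nualart-type or Carbery–Wright anti-concentration bound for polynomials of the noise.

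A simpler alternative, which may suffice, is a deterministic choice of $N=N(\mathbf{p},\|U_T\|)$ combined with the tail bound
\[
\mathbb{P}(\|U_T\|_{\hat H^5}>R)\le C e^{\eta V^n(U_0)-cR^{2/7}} ,
\]
which follows from Proposition \ref{priori-eatimates}. Then
\[
\mathbb{E}\bigl[(\max)^{-q}\bigr]\le \sum_R \mathbb{P}(\|U_T\|\sim R)\, c(R)^{-q},
\]
and this sum converges provided the lower bound $c(R)$ decays at most polynomially in $R$. Balancing the growth of $N$ against the control of the energy of $U_T$ is exactly the delicate point flagged in the introduction, and I expect it to take most of the work.
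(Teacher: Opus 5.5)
Your treatment of Assumption \ref{assum5.4} follows essentially the paper's route: weighted energy estimates so that $g\partial_1\psi^2$ is absorbed, Gronwall for the $T\Sigma$ block, and exponential moments from Proposition \ref{priori-eatimates}. There is one caveat. A Gronwall factor containing $C\int_s^t\|U_r\|_{\hat H^1}^2dr$, or $C\int_s^t\|U_r\|_{\hat H^{n+1}}^{2/(n+2)}dr$, with a large constant $C$ is not controlled by \eqref{L2-eatimates}--\eqref{Hn-eatimates1}. Those estimates only allow a small prefactor $\eta$, so you would not get all $q$-moments in \eqref{assum5.4:line2}. The paper avoids this in two ways. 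It obtains the sub-quadratic exponent $\|U_r\|_{\hat H^1}^{4/3}$ at the $\hat H$ level (Lemma \ref{NS-lemmaA.7}), which can be absorbed with an arbitrarily small prefactor. At higher order it has only polynomial dependence on $U$ (Lemma \ref{NS-lemmaA.11}).

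The genuine gap is Assumption \ref{assum5.5}. You identify the right obstacle but do not overcome it, and neither mechanism you propose works as stated.

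\textbf{The dilation argument.} It needs $N\gtrsim\|U_T\|_{\hat H^5}+|W_T|$. Assumption \ref{assum5.5}, however, requires one deterministic $N$. That $N$ feeds into Propositions \ref{NS-prop5.18} and \ref{NS-prop5.22}, whose bounds carry $\epsilon^{\gamma_N}$ with $\gamma_N=q_2^N$ and the threshold $\epsilon_1(N,T)$, neither of which is uniform in $N$. So the event $\{\|U_T\|_{\hat H^5}>cN\}$ always has positive probability, and it is exactly where the lower bound is needed.

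\textbf{The ``simpler alternative''.} Taking $N=N(\mathbf p,\|U_T\|)$ is again a random $N$. It also presupposes the polynomial lower bound $c(R)$, which is the whole content of the claim.

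\textbf{Anti-concentration on the bad event.} Carbery--Wright or Nualart-type bounds are not available there. The maximum depends on $U_T$, $\overline U_T$ and $\mathbf p_T$, which are nonlinear functionals of the entire Brownian path, not polynomials in finitely many Gaussians. A small-ball bound for their joint law at the random point $\mathbf p_T$ is itself a nondegeneracy statement of the kind the Malliavin argument is trying to prove. Even the claim that the maximum is nonzero there is unproved. For fixed $N$ and large random coefficients, you must exclude cancellation against the deterministic term for all $|j|\le N$ simultaneously.

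What is missing is the structural computation the paper uses instead. Formula \eqref{V} shows that the velocity field of $Z_j^m(\overline U_T)+[U_T,Y_j^m(\overline U_T)]_x$ depends on the noise only through finitely many pointwise quantities. These are $v_T=u_T+\overline{u}_T$ and $\nabla v_T$, with one more derivative after lifting to $T\Sigma$. They enter with explicit coefficients alongside the deterministic term $-(\nu_1+\nu_2)gj_1j^\perp\widetilde\psi_j^m$.

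The paper then checks spanning by an explicit mode-by-mode analysis in Lemmas \ref{NS-lemmaA.17}, \ref{NS-lemmaA.18}, \ref{NS-lemmaA.20} and \ref{NS-lemmaA.21}. It chooses wavevectors such as $(1,0)$, $(1,\pm1)$ and $(R_1,R_1)$, and solves for the components of the test vector one at a time. The random coefficients are controlled through \eqref{JFA-A.5}.

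To complete your proof you need a verification of this kind for a fixed finite family of modes. It must be quantitative in the random coefficients, including the regime where they are large. Treating everything beyond the deterministic term as an uncontrolled remainder is not enough.
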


For the remainder of this section, we fix \( n \) and suppose that Assumptions \ref{assum5.4} and \ref{assum5.5} hold for this \( n \). We then use a bracket to denote the inner product with respect to \( \hat{H}^n \times T_\mathbf{p}\Sigma \), that is
\[
\left\langle{\begin{pmatrix} \varphi \\ \mathbf{p} \end{pmatrix}},{\begin{pmatrix} \psi \\ \mathbf{q} \end{pmatrix}}\right\rangle := \hat{g}(\mathbf{q},\mathbf{p}) + \left\langle{\varphi},{\psi}\right\rangle_{\hat{H}^n}.
\]

All adjoints are taken with respect to this inner product, so if \( J: \hat{H}^n \times T\Sigma \to \hat{H}^n \times T\Sigma \), then \( J^* : \hat{H}^n \times T\Sigma \to \hat{H}^n \times T\Sigma \), with
\[
\left\langle{\mathfrak{q}},{J \mathfrak{p}}\right\rangle = \left\langle{J^* \mathfrak{q}},\mathfrak{p}\right\rangle,
\]
where we will throughout use \( \mathfrak{p} \) to denote an element of \( \hat{H}^n \times T\Sigma \). We also use (unsubscripted) norms to refer to the norm on this space:
$$
\| \mathfrak{p} \| := \| \mathfrak{p} \|_{\hat{H}^n} = \sqrt{\left\langle{\mathfrak{p}},\mathfrak{p}\right\rangle}.
$$
\subsection{Unit-time asymptotic gradient estimate for the Markov semigroup}\label{S-Gradient}
In this section, we utilize the integration-by-parts formula from Malliavin calculus to transform the estimation problem concerning the gradient operator $\nabla P_1\varphi$ in expression \eqref{unit-time-eq} into a control-theoretic framework. This transformation prompted us to undertake a deeper investigation of the Malliavin covariance matrix $\mathcal{M}$—a crucial object linking the existence of an ideal control to the Lie bracket properties of Hölder continuous vector fields (defined on the space $\hat{H}^n$) intrinsically associated with equation \eqref{Boussinesq-w1}.

Building on Proposition \ref{prop:5.6}, Theorem \ref{bounds-Malliavin} establishes the probabilistic spectral bound on a cone for the Malliavin covariance matrix $\mathcal{M}_T$, with its proof deferred to Subsection \ref{Spectral bounds}. Crucially, although Theorem \ref{bounds-Malliavin} superficially resembles corresponding results in \cite{JFA-89, HM06, JFA-3, JFA-56, NS, JFA}, its proof exhibits substantial differences. These arise from the distinctive nonlinear structure inherent in equation \eqref{Boussinesq} and its coupling of vector fields over the manifold $\Sigma$ (c.f. equations \eqref{Lagrange-process-eq}), constituting a primary mathematical innovation of this work. 

Within this subsection, we demonstrate how these spectral bounds integrate with the control system constructed around $\mathcal{M}_T$ to complete the proof of Theorem \ref{unit-time-n}. We note that Proposition \ref{unit-time} is a special case of Theorem \ref{unit-time-n}. Using the compactness of $\Sigma$, this result further establishes that the base, Lagrangian, and projective processes all satisfy the asymptotic strong Feller property. Consequently, Proposition \ref{continuous projective measures} follows, guaranteeing the existence of a family of unconditionally continuous projective measures satisfying \eqref{projective measures}.
\begin{theorem} \label{unit-time-n}
For all \(\eta > 0\), \(\mathbf{p}_0 \in \Sigma\), \(\gamma \in (0,1)\), there exists \(C(\eta, \gamma, \mathbf{p}_0) > 0\), bounded locally uniformly in \(\mathbf{p}_0\), such that for all Fréchet differentiable observables \(\varphi: \hat{H}^n \times \Sigma \to \mathbb{R}\),
\begin{equation}\label{unit-time-n-eq}
	\| \nabla P_1 \varphi (U_0, \mathbf{p}_0) \|_{\hat{H}^4} \leq e^{\eta V^{n+2}(U_0)} \left( C \sqrt{P_1 |\varphi|^2 (U_0, \mathbf{p}_0)} + \gamma \sqrt{P_1 \| \nabla \varphi \|_{\hat{H}^n}^2 (U_0, \mathbf{p}_0)} \right).
\end{equation}
\end{theorem}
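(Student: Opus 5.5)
Following the Hairer–Mattingly strategy of \cite{HM06,HM11a}, as adapted in \cite{JFA,NS}, the proof rests on the Malliavin integration-by-parts formula. Fix a direction $\xi\in\hat{H}^n\times T_{\mathbf{p}_0}\Sigma$ with $\|\xi\|=1$. For any adapted control $v\in L^2([0,1];\mathbb{R}^{d})$, write $\mathcal{A}_{0,1}v=\int_0^1\mathcal{J}_{s,1}\sigma_\theta v_s\,ds$ for the Malliavin derivative of $(U_1,\mathbf{p}_1)$ in the direction $v$. Set $\rho_t:=\mathcal{J}_{0,t}\xi-\mathcal{A}_{0,t}v_{0,t}$. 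The chain rule then gives
\[
\langle\nabla P_1\varphi(U_0,\mathbf{p}_0),\xi\rangle=\mathbb{E}\big[\varphi(U_1,\mathbf{p}_1)\textstyle\int_0^1 v_s\cdot dW_s\big]+\mathbb{E}\big[\langle\nabla\varphi(U_1,\mathbf{p}_1),\rho_1\rangle\big],
\]
where the Skorokhod integral is used if $v$ is not adapted. By Cauchy–Schwarz, \eqref{unit-time-n-eq} follows once we construct $v$ with
\[
\mathbb{E}\Big|\int_0^1 v\,dW\Big|^2\le C^2e^{2\eta V^{n+2}(U_0)}\quad\text{and}\quad \mathbb{E}\|\rho_1\|^2\le\gamma^2e^{2\eta V^{n+2}(U_0)}.
\]
For the control I will use the regularized Tikhonov choice from \cite{HM06}, as modified in \cite[Section 5]{NS}. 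On $[0,1/2]$ take $v\equiv0$. On $[1/2,1]$ take
\[
v_t=\big(\mathcal{A}_{1/2,1}^*(\mathcal{M}_{1/2,1}+\beta)^{-1}\mathcal{J}_{0,1}\xi\big)(t),
\qquad \mathcal{M}_{1/2,1}=\mathcal{A}_{1/2,1}\mathcal{A}_{1/2,1}^*,
\]
so that $\rho_1=\beta(\mathcal{M}+\beta)^{-1}\mathcal{J}_{0,1}\xi$. Because the time horizon is a single unit and $\Sigma$ enters only through the locally bounded constants, no iteration over successive intervals is needed. This is the simplification relative to \cite{JFA} that the paper mentions in connection with \eqref{v11}.

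The argument then splits into two estimates.

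\textbf{(i) Skorokhod term.} I will bound $\mathbb{E}|\int v\,dW|^2$ by $\beta^{-1}$ times moments of $\mathcal{J}$, $\mathcal{J}^2$ and $\mathcal{A}$, using the standard Skorokhod isometry together with the Malliavin derivatives of $\mathcal{M}$ and $\mathcal{J}$. The required bounds follow from \eqref{assum5.4:line2} and \eqref{assum5.4:line4} of Assumption~\ref{assum5.4}, each costing at most a factor $e^{\eta V^n(U_0)}$.

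\textbf{(ii) Residual term.} I will bound $\mathbb{E}\|\rho_1\|^2$ by splitting $\mathcal{J}_{0,1}\xi$ into a low-mode part and a high-mode part, using the projection $\Pi_N$ onto modes $|j|\le N$ together with the full $T\Sigma$ component.
\begin{itemize}
\item The high-mode part is small, of order $N^{-1}$, by the smoothing estimate \eqref{assum5.4:line3}: $\mathcal{J}_{1/2,3/4}$ maps $\hat{H}$ into $\hat{H}^{n+1}$.
\item For the low-mode part, $\|\beta(\mathcal{M}+\beta)^{-1}\mathfrak{p}\|$ is small unless $\langle\mathcal{M}\mathfrak{p},\mathfrak{p}\rangle\le\epsilon\|\mathfrak{p}\|^2$. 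I will control this through the probabilistic spectral bound on the cone $\{\|\Pi_N^\perp\mathfrak{p}\|\le\alpha\|\mathfrak{p}\|\}$ given by Theorem~\ref{bounds-Malliavin}:
\[
\mathbb{P}\Big(\inf_{\mathfrak{p}\in\text{cone}}\langle\mathcal{M}\mathfrak{p},\mathfrak{p}\rangle/\|\mathfrak{p}\|^2<\epsilon\Big)\le C e^{\eta V^{n+2}(U_0)}\,o_\epsilon(1).
\]
\end{itemize}
With this bound, the argument of \cite[Theorem 5.5]{HM11a} applies: choose $N$ large, then $\epsilon$ small, then $\beta$ small. This gives $\mathbb{E}\|\rho_1\|^2\le\gamma^2e^{2\eta V^{n+2}}$. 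The resulting dependence on $\beta$ is absorbed into $C$ in (i).

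The main obstacle is Theorem~\ref{bounds-Malliavin}, that is, the cone spectral bound for the extended system, whose proof is deferred. There I would follow the chain of implications \eqref{JFA-2.18}. Smallness of $\langle\mathcal{M}\mathfrak{p},\mathfrak{p}\rangle$ gives smallness of $\langle\mathcal{J}^*_{t,T}\mathfrak{p},\sigma_j^m\rangle$ on $[T/2,T]$. Iterated Lie brackets, with Hölder/Norris-type lemmas, then propagate this to $Y_j^m$ and $Z_j^m$, which controls the $\hat{H}^n$ overlap of $\mathfrak{p}$. Finally, the $T\Sigma$ overlap is controlled through the solution-dependent spanning condition of Assumption~\ref{assum5.5}. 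I would invoke Proposition~\ref{prop:5.6} to supply both assumptions. The delicate point is that the bracket expressions depend on $U_T$, so the exceptional sets must be estimated with $e^{\eta V^{n+2}(U_0)}$ weights via Proposition~\ref{priori-eatimates}. This is what raises the index from $V^n$ to $V^{n+2}$ in \eqref{unit-time-n-eq}.
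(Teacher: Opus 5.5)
Your argument is correct in outline, but it runs the Hairer--Mattingly scheme in the opposite order from the paper. In \eqref{v11} the paper puts the Tikhonov control on $[0,1/2]$ and sets $v=0$ on $[1/2,1]$, so that
\[
\rho^\beta=\beta\,\mathcal{J}_{1/2,1}(\mathcal{M}_{1/2}+\beta I)^{-1}\mathcal{J}_{0,1/2}\mathfrak{p}.
\]
The high modes of the \emph{output} of $\beta(\mathcal{M}_{1/2}+\beta I)^{-1}$ are removed by the later smoothing $\|\mathcal{J}_{1/2,1}\Pi_{\geq N}\|$ (Lemma~\ref{NS5.27}). Only $\|\Pi_N\beta(\mathcal{M}_{1/2}+\beta I)^{-1}\mathcal{J}_{0,1/2}\mathfrak{p}\|$ then has to be small (Lemma~\ref{NS5.28}), and Theorem~\ref{bounds-Malliavin} gives exactly this for $\mathcal{M}_{0,1/2}$ started at the deterministic point $(U_0,\mathbf{p}_0)$. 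You wait first and control last, so $\rho_1=\beta(\mathcal{M}_{1/2,1}+\beta)^{-1}\mathcal{J}_{0,1}\xi$, and you remove the high modes of the \emph{input} via $\|\Pi_{\geq N}\mathcal{J}_{0,1}\|$. This also works and needs no smoothing after the control. It shares the single-interval simplification but is not the control \eqref{v11}, and it needs two steps you have not supplied.

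\textbf{First: the low-mode step.} The mechanism you state is false vector by vector. $\|\beta(\mathcal{M}+\beta)^{-1}\mathfrak{p}\|$ need not be small when $\langle\mathcal{M}\mathfrak{p},\mathfrak{p}\rangle\geq\epsilon\|\mathfrak{p}\|^2$: put half of $\mathfrak{p}$ in $\ker\mathcal{M}$ and half on a large eigenvalue. What does hold is an operator bound. On the event that $\langle\mathcal{M}\mathfrak{p},\mathfrak{p}\rangle\geq\epsilon\|\mathfrak{p}\|^2$ for \emph{all} $\mathfrak{p}$ with $\|\Pi_N\mathfrak{p}\|\geq\alpha\|\mathfrak{p}\|$, one has
\[
\|\Pi_N\beta(\mathcal{M}+\beta)^{-1}\|\leq\|\Pi_N\beta^{1/2}(\mathcal{M}+\beta)^{-1/2}\|\leq\max(\alpha,\sqrt{\beta/\epsilon}).
\]
Since $\mathcal{M}$ is self-adjoint and $\Pi_N$ is an orthogonal projection, taking adjoints gives the same bound for $\|\beta(\mathcal{M}+\beta)^{-1}\Pi_N\|$, which is what your input-side splitting actually uses. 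This needs the wide cone with $\alpha$ small; your $\{\|\Pi_N^\perp\mathfrak{p}\|\leq\alpha\|\mathfrak{p}\|\}$ would need $\alpha$ close to $1$. The parameter $\alpha$ must also be added to your choice ``$N$ large, $\epsilon$ small, $\beta$ small''.

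\textbf{Second: the Malliavin matrix.} Yours is $\mathcal{M}_{1/2,1}$, and $\mathcal{M}_{1/2,1}\leq\mathcal{M}_{0,1}$ as quadratic forms. So Theorem~\ref{bounds-Malliavin}, stated for $\mathcal{M}_{0,T}$, does not apply as stated. You have two options. You can observe that its proof only uses the window $[T/2,T]$: replace $\int_0^t$ by $\int_{T/2}^t$ in Proposition~\ref{JFA-Lemma6.5}, and the same bound follows for $\mathcal{M}_{T/2,T}$ with $T=1$. Alternatively, condition on $\mathcal{F}_{1/2}$ and integrate $e^{\eta V^{n+2}(U_{1/2})}$ using Proposition~\ref{priori-eatimates}. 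In that case you must also localize $\mathbf{p}_{1/2}$ when $\Sigma$ is non-compact (tangent and Jacobian processes), using \eqref{C-alpha}, because $f$, $\epsilon^*$ and $C$ are only locally uniform in $\mathbf{p}$.

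The paper's ordering avoids both the duality step and this re-indexing.
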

	
Motivated by the properties of Malliavin derivative 
, we consider the following objects.
\begin{definition}\label{A}
For \(0 \leq s \leq t\), we define the operator \(\mathcal{A}_{s,t} : L^2\left([s, t], \mathbb{R}^{2|\mathcal{Z}|}\right) \to \hat{H}^n \times T_{\mathbf{p}_t}\Sigma\)
\[
	\mathcal{A}_{s,t}v := \sum_{j \in \mathcal{Z},m\in  \{0,1\}} \int_s^t \alpha_j^m v_r^{j,m} \mathcal{J}_{r,t}\sigma_j^m \, dr.
\]
\end{definition}

Having completed these preparatory steps, we proceed to calculations related to the gradient operator $\nabla P_1\varphi$. By using the Malliavin chain rule and integration by parts formulas 
, we deduce that for any $t\ge 0$ and any admissible (Skorokhod-integrable) test process $v\in L^2\left([s, t], \mathbb{R}^{2|\mathcal{Z}|}\right)$,
\begin{align}
	\nabla P_1 \varphi(U_0,\mathbf{p}_0) \cdot \mathfrak{p} &= \mathbb{E}\left( \nabla \varphi(U_t,\mathbf{p}_t) \cdot \left( \mathcal{A}_{0,1} v + \mathcal{J}_{0,1} \mathfrak{p} - \mathcal{A}_{0,1} v \right) \right) \notag\\
	&= \mathbb{E}\left( \varphi(U_t,\mathbf{p}_t) \int_0^1 v \cdot dW \right) + \mathbb{E}\left( \nabla \varphi(U_t,\mathbf{p}_t) \cdot \left( \mathcal{J}_{0,1} \mathfrak{p} - \mathcal{A}_{0,1} v \right) \right)\notag\\
	&\leq \left( \mathbb{E} {\delta(v)}^2 \right)^{1/2} \sqrt{P_1 |\varphi|^2 (U_0, \mathbf{p}_0)} + \left( \mathbb{E} \|\rho\|_{\hat{H}^n}^2 \right)^{1/2} \sqrt{P_1 \| \nabla \varphi \|_{\hat{H}^n}^2 (U_0, \mathbf{p}_0)},
\end{align}
for any $\mathfrak{p}\in \hat{H}^n\times T_{\mathbf{p}_0}\Sigma$, where $\delta(v):=\int_0^1 v \cdot dW$ denotes the Skorokhod integral of $v$ and $\rho:=\mathcal{J}_{0,1} \mathfrak{p}-\mathcal{A}_{0,1}v$ denotes the error. As such, \eqref{unit-time-n-eq} has been translated to the following control problem: for each \( \gamma, \eta > 0 \) and each element \( \mathfrak{p} \in \hat{H}^n\times T_{\mathbf{p}_0}\Sigma \) with $\|\mathfrak{p}\|=1$, find a (locally Skorokhod integrable) \( v = v(\mathfrak{p}) \in L^2(\Omega; L_{\mathrm{loc}}^2([0, \infty), \mathbb{R}^{2\cdot |\mathcal{Z}|})) \) such that
\begin{equation*}
\sup_{\|\mathfrak{p}\|=1} \mathbb{E} \| \rho( \mathfrak{p}, v) \|_{\hat{H}^n}^2 \leq \gamma\exp(\eta V^{n+2}(U_0))
\end{equation*}
and
\begin{equation*}
\sup_{\|\mathfrak{p}\|=1} \mathbb{E} \left| \int_{0}^{1} v \cdot dW \right|^2 \leq C \exp(\eta V^{n+2}(U_0)),
\end{equation*}
where \( C = C(\gamma, \eta) \) is independent of \( t \). 

In infinite-dimensional systems, due to the highly degenerate nature of the noise, the invertibility of the Malliavin covariance matrix is challenging to establish and generally fails to hold. Following the methodology developed in \cite[Section 3]{JFA}, \cite[Section 5]{JFA-56} etc., we address this issue by implementing Tikhonov regularization on the Malliavin matrix. This approach enables the construction of a control process   $v$ and its corresponding adjoint process $\rho$, ensuring their compliance with the aforementioned control requirements.

To make this more precise we first define several random operators. For any \( s < t \), let  
$\mathcal{A}_{s,t}^* : \hat{H}^n\times T_{\mathbf{p}_t}\Sigma \to L^2([s,t]; \mathbb{R}^{2\cdot|\mathcal{Z}|})$
be the adjoint of \( \mathcal{A}_{s,t} \) defined in Definition \ref{A}. We then define the Malliavin matrix 
\[
\mathcal{M}_{s,t} := \mathcal{A}_{s,t} \mathcal{A}_{s,t}^* :  \hat{H}^n\times T_{\mathbf{p}_t}M\to \hat{H}^n\times T_{\mathbf{p}_t}M,
\]
and for \(t\ge 0\), define the quadratic form \(\hat{H}^n\times T_{\mathbf{p}_t}\Sigma \to [0, \infty)\) given by
\begin{align}\label{recall1}
	\langle \mathfrak{p}, \mathcal{M}_t \mathfrak{p} \rangle := \langle \mathcal{A}_{0,t}^* \mathfrak{p}, \mathcal{A}_{0,t}^* \mathfrak{p} \rangle &= \sum_{j \in \mathcal{Z},m\in  \{0,1\}} {(\alpha_j^m)}^2 \int_0^t \left|\langle \mathfrak{p}, \mathcal{J}_{r,t} \sigma_j^m \rangle\right|^2 \, dr\notag\\ &= \sum_{j \in \mathcal{Z},m\in  \{0,1\}} {(\alpha_j^m)}^2 \int_0^t \left|\langle \sigma_j^m, \mathcal{J}_{r,t}^* \mathfrak{p} \rangle\right|^2 \, dr.
\end{align}
Since \( \mathcal{M}_{s,t} \) is positive definite but generally non-invertible, we regularize it by addition of a multiple of identity. It is noteworthy that compared to the control process $v$ defined in \cite{JFA}, our framework here only requires consideration of semigroup gradient estimates over finite time intervals. This allows for the construction of a significantly simplified control 
$v$, which in turn substantially streamlines the proof procedure while preserving rigor. Let $\mathfrak{p}\in \hat{H}^n\times T_{\mathbf{p}_0}\Sigma$ with $\|\mathfrak{p}\|=1$, then for any $\beta>0$ and $t\in [0,1/2]$, we set 
\begin{align}\label{v11}
v_{t}^{\beta} := \mathcal{A}_{0,1/2}^{*} \left( \mathcal{M}_{1/2} + \beta I \right)^{-1} \mathcal{J}_{0,1/2} \, \mathfrak{p}.
\end{align}
We extend \( v_t^\beta \) by $0$ on \([1/2,1]\). Note then that for any \(\beta > 0\), \(v_t^\beta \in L^2([0,1], \mathbb{R}^{2\cdot|\mathcal{Z}|})\), and set
$\rho^\beta := \mathcal{J}_{0,1}\mathfrak{p} - \mathcal{A}_{0,1}v^\beta.$ We then claim the following two propositions-refinements to the two aforementioned control problem formulations.
\begin{proposition}\label{error}
For all \(\eta, \gamma > 0\) there exists \(\beta_0(\eta, \mathbf{p}_0, \gamma) > 0\), locally lower bounded in \(\mathbf{p}_0\), such that for all \(0 < \beta \leq \beta_0\),  
	\[
	\mathbb{E} \| \rho^\beta \|_{\hat{H}^n}^2 \leq \gamma e^{(\eta V^{n+2}(U_0))}.
	\]
\end{proposition}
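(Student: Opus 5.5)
The plan is to express the error through the regularized resolvent and then reduce the whole estimate to the probabilistic spectral bound on a cone for $\mathcal{M}_{1/2}$ (Theorem \ref{bounds-Malliavin}, assumed from Subsection \ref{Spectral bounds}). Since $v^\beta$ vanishes on $[1/2,1]$, the cocycle property gives $\mathcal{A}_{0,1}v^\beta=\mathcal{J}_{1/2,1}\mathcal{A}_{0,1/2}v^\beta$. Substituting \eqref{v11},
\[
\rho^\beta=\mathcal{J}_{1/2,1}\Big(\mathcal{J}_{0,1/2}\mathfrak{p}-\mathcal{M}_{1/2}(\mathcal{M}_{1/2}+\beta I)^{-1}\mathcal{J}_{0,1/2}\mathfrak{p}\Big)=\beta\,\mathcal{J}_{1/2,1}(\mathcal{M}_{1/2}+\beta I)^{-1}\mathcal{J}_{0,1/2}\mathfrak{p}.
\]
Set $\mathfrak{q}:=\mathcal{J}_{0,1/2}\mathfrak{p}$ and $R_\beta:=\beta(\mathcal{M}_{1/2}+\beta I)^{-1}$. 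Then $\|R_\beta\|\le 1$, so $\|\rho^\beta\|\le\|\mathcal{J}_{1/2,1}\|\,\|\mathfrak{q}\|$. By Cauchy--Schwarz and \eqref{assum5.4:line2} it therefore suffices to show that $\mathbb{E}\|R_\beta\mathfrak{q}\|^4$ (or a truncated version of it) is small as $\beta\to0$, with the dependence on $U_0$ at most $e^{\eta V^{n+2}(U_0)}$.

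The key step is to split $\mathfrak{q}$ according to a high-frequency cutoff. Let $\Pi_N$ be the projection onto $\{\psi_j^m,\sigma_j^m:|j|\le N\}\times T\Sigma$, and let $\Pi_N^\perp$ be its complement in $\hat{H}^n$. For the high part, we have $\|R_\beta\Pi_N^\perp\mathfrak{q}\|\le\|\Pi_N^\perp\mathcal{J}_{0,1/2}\mathfrak{p}\|\le N^{-1}\|\mathcal{J}_{0,1/2}\|_{\hat{H}^n\to\hat{H}^{n+1}}$. Its moments are controlled through the smoothing bound \eqref{assum5.4:line3}, rescaled in time and combined with \eqref{assum5.4:line2} through the Markov property and Corollary \ref{Super}. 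This makes the high part $O(N^{-1})e^{\eta V^n(U_0)}$.

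For the part lying in the cone $\{\|\Pi_N\mathfrak{r}\|\ge \tfrac12\|\mathfrak{r}\|\}$, we use the spectral bound. Write $\mathfrak{r}:=R_\beta\mathfrak{q}$. Then
\[
\beta\langle\mathcal{M}_{1/2}\mathfrak{r},\mathfrak{r}\rangle+\beta^2\|\mathfrak{r}\|^2=\beta\langle\mathfrak{q},R_\beta\mathfrak{q}\rangle\le\beta\|\mathfrak{q}\|\|\mathfrak{r}\|.
\]
Hence $\langle\mathcal{M}_{1/2}\mathfrak{r},\mathfrak{r}\rangle\le\|\mathfrak{q}\|\|\mathfrak{r}\|$. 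Suppose $\|\mathfrak{r}\|\ge\delta$ and $\mathfrak{r}$ is in the cone. Theorem \ref{bounds-Malliavin} says that, off an exceptional set $\Omega_{\epsilon,N}$, we have $\langle\mathcal{M}_{1/2}\mathfrak{r},\mathfrak{r}\rangle\ge\epsilon\|\mathfrak{r}\|^2$, with $\mathbb{P}(\Omega_{\epsilon,N})\le C(N)e^{\eta V^{n+2}(U_0)}o_\epsilon(1)$. On this good set the quadratic identity gives $\beta\epsilon\|\mathfrak{r}\|^2+\beta^2\|\mathfrak{r}\|^2\le\beta\|\mathfrak{q}\|\|\mathfrak{r}\|$, which is too weak by itself. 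The sharper use is to apply the spectral inequality to $\mathfrak{r}$ directly: $\|\mathfrak{r}\|=\beta\|(\mathcal{M}+\beta)^{-1}\mathfrak{q}\|\le\beta\epsilon^{-1}\|\mathfrak{q}\|$ in the cone directions. So on the good set this part is at most $\beta\epsilon^{-1}\|\mathfrak{q}\|$. If $\mathfrak{r}$ is not in the cone, then $\|\mathfrak{r}\|\le 2\|\Pi_N^\perp\mathfrak{r}\|$, and we control $\|\Pi_N^\perp\mathfrak{r}\|$ through $\|\Pi_N^\perp\mathfrak{q}\|$ plus a commutator-free estimate. I expect this to need care and would try first the argument of \cite[Section 5]{NS}, which bounds $\mathfrak{r}$ via $\mathcal{J}_{1/4,1/2}$ smoothing.

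To assemble the estimate, we have
\[
\mathbb{E}\|\rho^\beta\|^2\lesssim e^{\eta V}\Big(N^{-1}+\beta^2\epsilon^{-2}+\mathbb{P}(\Omega_{\epsilon,N})^{1/2}\Big).
\]
On the exceptional set we used only the trivial bound $\|R_\beta\|\le1$ together with Hölder's inequality. We first choose $N$ large, then $\epsilon$ small so that $C(N)o_\epsilon(1)$ is small, and finally $\beta_0$ so that $\beta_0\epsilon^{-1}$ is small. Local lower boundedness of $\beta_0$ in $\mathbf{p}_0$ follows because every constant from Assumptions \ref{assum5.4}–\ref{assum5.5} is locally bounded. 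The main obstacle is the cone step: the exceptional-set probability from Theorem \ref{bounds-Malliavin} must be uniform over random directions $\mathfrak{r}$, which depend on $\omega$. This is exactly why that theorem is stated as a bound on $\inf$ over the cone, and we will invoke it in that form.
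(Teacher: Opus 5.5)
Your reduction to showing that $\mathbb{E}\|R_\beta\mathfrak q\|^4$ is small is legitimate. Your cone branch is also fine once the identity is written correctly. It should read $\langle \mathcal{M}_{1/2}\mathfrak r,\mathfrak r\rangle+\beta\|\mathfrak r\|^2=\beta\langle\mathfrak q,\mathfrak r\rangle$; your version has a spurious factor $\beta$ on the left. With the correct identity, $\|\mathfrak r\|\le\beta\epsilon^{-1}\|\mathfrak q\|$ follows at once for $\mathfrak r$ in the cone.

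The genuine gap is the non-cone branch. It is never closed, and its contribution is missing from your final bound $N^{-1}+\beta^2\epsilon^{-2}+\mathbb{P}(\Omega_{\epsilon,N})^{1/2}$. Having discarded $\mathcal{J}_{1/2,1}$, you need the high-frequency part $\Pi_N^\perp\mathfrak r=\Pi_N^\perp R_\beta\mathfrak q$ to be small. Smallness of $\Pi_N^\perp\mathfrak q$ does not give this, because $R_\beta$ does not commute with $\Pi_N$. Moreover, $\mathcal{M}_{1/2}$ is compact (indeed trace class), so $R_\beta$ is essentially the identity along its eigendirections with eigenvalues $\ll\beta$. Nothing you wrote prevents it from carrying low-mode mass of $\mathfrak q$ into high modes. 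With the aperture fixed at $\alpha=1/2$, the non-cone alternative only gives $\|\mathfrak r\|<2\|\Pi_N^\perp\mathfrak r\|\le 2\|\mathfrak q\|$, which is $O(1)$. The ``commutator-free estimate'' meant to fix this is never supplied.

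The paper never needs $\Pi_{\ge N}\mathfrak r$ to be small. It keeps the trailing evolution and splits the \emph{output} of the resolvent: $\rho^\beta=\mathcal{J}_{1/2,1}\Pi_{\ge N}\mathfrak r+\mathcal{J}_{1/2,1}\Pi_N\mathfrak r$.
\begin{itemize}
\item The first piece is bounded by $\|\mathcal{J}_{1/2,1}\Pi_{\ge N}\|\,\|\mathcal{J}_{0,1/2}\|$, which is $O(N^{-1})$ in moments by Lemma \ref{NS5.27}. This is exactly why $v^\beta$ is switched off on $[1/2,1]$.
\item The second piece is handled by Lemma \ref{NS5.28}, i.e.\ the cone argument with a \emph{small} aperture. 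On the good event of Theorem \ref{bounds-Malliavin}, either $\|\Pi_N\mathfrak r\|\ge\alpha\|\mathfrak r\|$, and then $\|\mathfrak r\|\le\beta\epsilon^{-1}\|\mathfrak q\|$; or $\|\Pi_N\mathfrak r\|<\alpha\|\mathfrak q\|$.
\end{itemize}
One then chooses $\alpha$ and $N$, then $\epsilon$, then $\beta_0$.

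If you prefer to keep your split of the input $\mathfrak q$, the missing step comes from self-adjointness of $R_\beta$ and $\Pi_N$: $\|R_\beta\Pi_N\|=\|\Pi_N R_\beta\|$. Applying the same dichotomy to $R_\beta\mathfrak s$ with $\|\mathfrak s\|=1$ gives $\|\Pi_N R_\beta\|\le\max\{\alpha,\beta\epsilon^{-1}\}$ on the good event, again with $\alpha\to0$ rather than $\alpha=1/2$. Combined with your bound on $\|\Pi_N^\perp\mathcal{J}_{0,1/2}\|$, this closes the argument. It uses forward smoothing of $\mathcal{J}_{0,1/2}$ in place of the paper's smoothing by $\mathcal{J}_{1/2,1}$.
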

\begin{proposition}\label{v}
For all \(\beta > 0\), there exists \(C(\beta, \eta, \mathbf{p}_0) > 0\), locally bounded in \(\mathbf{p}_0\), such that  
\[
\mathbb{E} \delta (v^\beta)^2 \leq C e^{\eta V^n (U_0)}.
\]
\end{proposition}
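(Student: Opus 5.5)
The plan is to follow the Hairer–Mattingly treatment of Skorokhod integrals of regularized Malliavin controls, as adapted in \cite{NS, JFA}. Since $v^\beta$ is not adapted, I would start from the Skorokhod isometry
\[
\mathbb{E}\,\delta(v^\beta)^2 \le \mathbb{E}\int_0^{1}|v^\beta_t|^2\,dt + \mathbb{E}\int_0^1\!\!\int_0^1 |\mathcal{D}_s v^\beta_t|^2\,ds\,dt .
\]
This reduces the statement to two bounds on moments: one for $v^\beta$ itself and one for its Malliavin derivative. Each has to be controlled by $Ce^{\eta V^n(U_0)}$, with $C$ allowed to depend on $\beta$. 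This $\beta$-dependence is what makes the statement soft: no spectral information on $\mathcal{M}_{1/2}$ is needed here, only the regularization.

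\textbf{Step 1: the $L^2$ part.} Write $v^\beta=\mathcal{A}^*_{0,1/2}(\mathcal{M}_{1/2}+\beta I)^{-1}\mathcal{J}_{0,1/2}\mathfrak{p}$. From $\mathcal{M}_{1/2}=\mathcal{A}_{0,1/2}\mathcal{A}_{0,1/2}^*$ and functional calculus,
\[
\|\mathcal{A}^*_{0,1/2}(\mathcal{M}_{1/2}+\beta)^{-1}\|\le \tfrac12\beta^{-1/2}.
\]
Hence $\int_0^{1}|v^\beta_t|^2\,dt\le \beta^{-1}\|\mathcal{J}_{0,1/2}\|^2$. By \eqref{assum5.4:line2}, its expectation is at most $C\beta^{-1}e^{\eta V^n(U_0)}$.

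\textbf{Step 2: the Malliavin derivative.} For $s\le 1/2$, differentiate each factor of $v^\beta_t$ in the direction $\sigma_k^l$ injected at time $s$:
\begin{itemize}
\item $\mathcal{D}_s\mathcal{J}_{r,1/2}=\mathcal{J}^2_{r\vee s,1/2}(\mathcal{J}_{s,r\vee s}\,\cdot\,,\ \mathcal{J}_{s\vee r, \cdot}\sigma_k^l)$, in the standard form;
\item $\mathcal{D}_s\mathcal{A}^*_{0,1/2}$ is obtained from the same expression by taking adjoints;
\item $\mathcal{D}_s(\mathcal{M}_{1/2}+\beta)^{-1}=-(\mathcal{M}_{1/2}+\beta)^{-1}(\mathcal{D}_s\mathcal{M}_{1/2})(\mathcal{M}_{1/2}+\beta)^{-1}$, with $\mathcal{D}_s\mathcal{M}_{1/2}=(\mathcal{D}_s\mathcal{A})\mathcal{A}^*+\mathcal{A}(\mathcal{D}_s\mathcal{A})^*$.
\end{itemize}
Each term is then bounded by combining three ingredients: the operator estimates $\|(\mathcal{M}+\beta)^{-1}\|\le\beta^{-1}$ and $\|\mathcal{A}^*(\mathcal{M}+\beta)^{-1}\|\le \beta^{-1/2}$; the bound $\|\mathcal{A}_{0,1/2}\|\le C\sup\|\mathcal{J}_{r,1/2}\|$; and $\|\mathcal{D}_s\mathcal{A}_{0,1/2}\|\le C\sup\|\mathcal{J}^2\|\,\sup\|\mathcal{J}\|^2$. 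The result is a pathwise estimate
\[
|\mathcal{D}_s v^\beta_t|\le C\beta^{-3/2}\Big(1+\sup_{r\le r'\le 1/2}\|\mathcal{J}_{r,r'}\|\Big)^{4}\Big(1+\sup\|\mathcal{J}^2_{r,r'}\|\Big).
\]
Taking expectations, Hölder's inequality, \eqref{assum5.4:line2} and \eqref{assum5.4:line4} with higher $q$ give $C(\beta)e^{\eta V^n(U_0)}$. Here $\eta$ is replaced by $\eta/k$ for a finite number $k$ of Hölder factors, which is allowed since Assumption \ref{assum5.4} holds for every $\eta\in(0,1)$. The $T\Sigma$ components are handled identically, because the assumption bounds the full operators on $\hat H^n\times T\Sigma$. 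The constants are locally bounded in $\mathbf{p}_0$ because those in Assumption \ref{assum5.4} are.

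\textbf{Main obstacle.} The delicate step is making Step 2 rigorous. This means justifying that $v^\beta$ lies in the domain of $\delta$, i.e.\ in $\mathbb{D}^{1,2}$, and that the chain rule applies to the inverse $(\mathcal{M}_{1/2}+\beta)^{-1}$ on the infinite-dimensional space $\hat H^n\times T\Sigma$. I would handle this as in \cite[Section 4]{HM11a}. First show that $\mathcal{J}$ and $\mathcal{J}^2$ are Malliavin differentiable with the stated representations, using the same moment bounds. Then note that $X\mapsto (X+\beta)^{-1}$ is smooth on positive operators, with derivative bounded by $\beta^{-2}$. Finally, use a Galerkin or finite-rank approximation of $\mathfrak{p}$ if needed and pass to the limit by closedness of $\mathcal{D}$.
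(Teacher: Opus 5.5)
Your proposal is correct and follows the route the paper invokes. The paper's own proof consists of deferring to \cite[Section 4.6]{HM06} and \cite[Section 5.2.2]{NS}: the Skorokhod isometry, the $\beta$-dependent bounds $\|(\mathcal{M}_{1/2}+\beta I)^{-1}\|\le\beta^{-1}$ and $\|\mathcal{A}_{0,1/2}^*(\mathcal{M}_{1/2}+\beta I)^{-1}\|\le\beta^{-1/2}$, and the moment bounds \eqref{assum5.4:line2}, \eqref{assum5.4:line4} of Assumption \ref{assum5.4} for $\mathcal{J}$ and $\mathcal{J}^2$. One correction, which does not change your pathwise estimate: your formula for the Jacobian's Malliavin derivative has its two arguments mixed up. Up to the factor $\alpha_k^l$, it should read $\mathcal{D}_s\mathcal{J}_{r,1/2}\xi=\mathcal{J}^2_{r\vee s,1/2}(\mathcal{J}_{s,r\vee s}\sigma_k^l,\mathcal{J}_{r,r\vee s}\xi)$.
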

Building upon the preceding analysis, we note that Theorem \ref{unit-time-n} follows directly from Propositions \ref{error} and \ref{v}. The proof of Proposition \ref{v} closely follows the arguments in \cite[Section 4.6]{HM06} and \cite[Section 5.2.2]{NS}, which can be similarly established using Hypothesis \ref{assum5.4}, the $L^2$-isometry of Skorokhod integration and Malliavin derivatives. It therefore remains only to prove Proposition \ref{error}. We shall establish this estimate by separately addressing the low-mode and high-mode components, beginning with the high-mode analysis.

\begin{lemma} \label{NS5.27}
	For all \(\eta, q, N > 0\) there exists \(C(\eta, q, N) > 0\) such that  
	\[
	\mathbb{E} \left\| \Pi_{\geq N} \mathcal{J}_{1/2, 1} \right\|_{\hat{H}^n \to \hat{H}^n}^q + \mathbb{E} \left\| \mathcal{J}_{1/2, 1} \Pi_{\geq N} \right\|_{\hat{H}^n \to \hat{H}^n}^q \leq C N^{-q} e^{\eta V^n (U_0)},
	\]
where $\Pi_{\geq N}: \hat{H}^n\times T\Sigma \to \hat{H}^n$ be the orthogonal projection onto the span of Fourier modes with wavenumber \( j \) such that \(|j| \geq N\).
\end{lemma}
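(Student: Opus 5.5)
The plan is to exploit parabolic smoothing of the linearized Boussinesq dynamics over the interval $[1/2,1]$. Concretely, I will trade one derivative for a factor $N^{-1}$ via the elementary inequality
\[
\|\Pi_{\ge N} f\|_{\hat H^n}\le N^{-1}\|f\|_{\hat H^{n+1}}.
\]
This reduces the lemma to a moment bound
\[
\mathbb{E}\,\|\mathcal{J}_{1/2,1}\|^q_{\hat H^n\to\hat H^{n+1}}\le C e^{\eta V^n(U_0)}.
\]
The $T\Sigma$ component plays no role in the projection, since $\Pi_{\ge N}$ annihilates it. Hence only the $\hat H^n$-component of $\mathcal{J}_{1/2,1}\mathfrak p$ matters, and that component is driven by the $\hat H^n$-component of $\mathfrak p$ alone. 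This is because the first two rows of $L_t$ do not involve $\mathbf q$.

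For the first term, $\Pi_{\ge N}\mathcal{J}_{1/2,1}$, I split $\mathcal{J}_{1/2,1}=\mathcal{J}_{3/4,1}\mathcal{J}_{1/2,3/4}$. By the time-homogeneous Markov property, I can apply \eqref{assum5.4:line3} to the shifted interval, with initial datum $U_{1/4}$ in place of $U_0$. The gain $\hat H\to \hat H^{n+1}$ on a quarter-length interval is then bounded by $e^{\eta V^n(U_{1/4})}$ in moments. The super-Lyapunov property, Corollary \ref{Super} in its $V^n$ form via Proposition \ref{priori-eatimates}, gives $\mathbb{E}e^{\eta V^n(U_{1/4})}\le Ce^{\eta V^n(U_0)}$. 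Composing $\hat H^n\to\hat H^{n+1}$ on $[1/2,3/4]$ with $\hat H^{n+1}\to\hat H^{n+1}$ on $[3/4,1]$ therefore suffices. The latter bound is the analogue of \eqref{assum5.4:line2} at regularity $n+1$. Alternatively, I can avoid it by ordering the factors as $\mathcal{J}_{3/4,1}\mathcal{J}_{1/2,3/4}$, with the smoothing step last and \eqref{assum5.4:line2} first; that is the version I will use. Hölder's inequality combines the moments, with $\eta$ halved in each factor.

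For the second term, $\mathcal{J}_{1/2,1}\Pi_{\ge N}$, I pass to the adjoint. The quantity $\|\mathcal{J}_{1/2,1}\Pi_{\ge N}\|=\|\Pi_{\ge N}\mathcal{J}^*_{1/2,1}\|$, and $\mathcal{J}^*_{s,t}$ solves the backward adjoint linear equation. Its principal part is again $-A$, and its lower-order terms are the adjoints of $B(\cdot,U)+B(U,\cdot)-G$ and of the coupling $\Theta_\psi(\mathbf p)$. That coupling is a finite-rank, smooth map into $\hat H^n$ (point evaluation of $\nabla^\perp\Delta^{-1}$ paired with $\mathbf q$), so it poses no regularity loss. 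An energy estimate in $\hat H^{n+1}$ for the backward equation follows the same route as \eqref{assum5.4:line3}. It uses the $\int\|U\|^2_{\hat H^{n+1}}$ integrability from Proposition \ref{priori-eatimates} in a Gronwall exponent, controlled via exponential moments, and yields the same smoothing bound for $\mathcal{J}^*_{1/2,1}$. A duality-free alternative is to write $\mathcal{J}_{1/2,1}\Pi_{\ge N}\psi$ via Duhamel: $e^{-A/2}\Pi_{\ge N}$ has norm at most $e^{-\kappa N^2/2}$, and the remainder integral $\int_{1/2}^{1}e^{-A(1-s)}(\text{lower order})\,\mathcal{J}_{1/2,s}\Pi_{\ge N}\psi\,ds$ is controlled by an $\hat H^{n-1}$ bound on $\mathcal{J}_{1/2,s}\Pi_{\ge N}\psi$, which gains $N^{-1}$ from $\|\Pi_{\ge N}\psi\|_{\hat H^{n-1}}\le N^{-1}\|\psi\|_{\hat H^n}$, followed by parabolic smoothing back to $\hat H^n$.

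The main obstacle is this second, right-projection term. Here the gain must come from low regularity of the input rather than from output smoothing. The difficulty is making the lower-order terms (notably $B(\psi,U)$ and the $G$ coupling) bounded $\hat H^{n-1}\to\hat H^{n-2}$ with the correct random constants. I would try the Duhamel argument first and close it with Gronwall, then use Proposition \ref{priori-eatimates} to absorb $\exp(C\int\|U\|_{\hat H^{n}})$ into $e^{\eta V^n(U_0)}$.
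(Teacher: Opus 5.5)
Your argument for $\Pi_{\ge N}\mathcal{J}_{1/2,1}$ uses the intended mechanism. The paper's own proof simply defers to \cite[Lemma 5.27]{NS} and credits the smoothing (compactness) of the solution operator, i.e.\ the bounds of Assumption~\ref{assum5.4}. The Markov shift to $U_{1/4}$, however, is an unnecessary detour that costs you something. Conditioning on $\mathcal{F}_{1/4}$ produces the constant $C(\mathbf{p}_{1/4},\eta,q,T)$, which is only locally bounded. For the non-compact tangent and Jacobian manifolds you therefore have no moment control on it, and Corollary~\ref{Super} is only stated for $t\ge 1$. Since Assumption~\ref{assum5.4} holds for every $T>0$, take $T=4/3$. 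Then \eqref{assum5.4:line3} controls $\|\mathcal{J}_{2/3,1}\|_{\hat{H}\to\hat{H}^{n+1}}$ and \eqref{assum5.4:line2} controls $\mathcal{J}_{1/2,2/3}$. Combined with $\|\Pi_{\ge N}\|_{\hat{H}^{n+1}\to\hat{H}^n}\le N^{-1}$ and H\"older's inequality, this finishes the first term.

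The genuine gap is the right-projection term, which you leave as a plan that does not close as written. In your Duhamel sketch, the transport term $B(U,\psi)$ and the buoyancy term $G\psi$ are first order. An $\hat{H}^{n-1}$ bound on $\psi_s$ therefore only puts the forcing in $\hat{H}^{n-2}$, and $\|e^{-A(1-s)}\|_{\hat{H}^{n-2}\to\hat{H}^n}\sim(1-s)^{-1}$ is not integrable at $s=1$. The sketch also ignores the $T\Sigma$-component of $\mathcal{J}_{1/2,1}\Pi_{\ge N}\mathfrak{p}$, which is driven by $\Theta_{\psi_t}(\mathbf{p}_t)$ and must be $O(N^{-1})$ as well. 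Finally, Proposition~\ref{priori-eatimates} does not let you absorb $\exp(C\int\|U\|_{\hat{H}^n})$ into $e^{\eta V^n(U_0)}$. It only gives exponential moments of small multiples of $\|U\|_{\hat{H}}^2$ and of the sublinear powers $(\cdot)^{1/(n+2)}$ of higher norms.

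The missing observation is that \eqref{assum5.4:line3} is already a smoothing estimate out of the rough space $\hat{H}$. That is exactly the ``gain from low regularity of the input'' you were trying to rebuild. Since $\|\Pi_{\ge N}\mathfrak{p}\|_{\hat{H}}\le N^{-n}\|\mathfrak{p}\|$, you get
\[
\|\mathcal{J}_{1/2,1}\Pi_{\ge N}\|_{\hat{H}^n\to\hat{H}^n}\le \|\mathcal{J}_{3/4,1}\|_{\hat{H}^n\to\hat{H}^n}\,\|\mathcal{J}_{1/2,3/4}\|_{\hat{H}\to\hat{H}^{n+1}}\,N^{-n}.
\]
H\"older with \eqref{assum5.4:line2}--\eqref{assum5.4:line3} at $T=1$ then gives the bound. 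The $T\Sigma$-component is automatically handled, because it is included in these operator norms. Your adjoint route collapses to the same thing by duality, since $\|\pi_{\hat{H}^n}\mathcal{J}^*_{1/2,1}\mathfrak{q}\|_{\hat{H}^{n+1}}\le\|\mathfrak{q}\|\,\|\mathcal{J}_{1/2,1}\|_{\hat{H}^{n-1}\to\hat{H}^n}$. No backward energy estimate is needed.
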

\begin{proof}
Analogous to \cite[Lemma 5.27]{NS}, this lemma can be proved. Its foundation stems fundamentally from the compactness of the solution operator associated with the operator $A$.
\end{proof}

The following lemma is instrumental in controlling the low-mode part, the proof of which hinges on the spectral bounds of the Malliavin covariance matrix established in Section \ref{Spectral bounds}, i.e., Theorem \ref{bounds-Malliavin}.
\begin{lemma} \label{NS5.28}
For all \( \eta, \gamma, N > 0 \) there exists \( \beta_0 (\eta, \mathbf{p}_0, \gamma, N) > 0 \), locally lower bounded in \( \mathbf{p}_0 \), such that for all \( 0 < \beta \leq \beta_0 \),
\[
	\mathbb{E} \left\| \beta \Pi_N (\mathcal{M}_{1/2} + \beta I)^{-1} \mathcal{J}_{0,1/2} \mathfrak{p} \right\|^4_{\hat{H}^n} \leq \gamma e^{(\eta V^{n+2}(U_0))},
\]
where $\Pi_{N}: \hat{H}^n\times T\Sigma \to \hat{H}^n\times T\Sigma$ be the orthogonal projection such that \(\Pi_N|_{T\Sigma} = \operatorname{id}\) (the identity map), and \(\Pi_N|_{\hat{H}^n}\) is the orthogonal projection onto the span of \((\psi_j^m)_{|j| \leq N,m\in \{0,1\}}\).	
\end{lemma}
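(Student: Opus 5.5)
Set $\mathfrak{q}:=\mathcal{J}_{0,1/2}\mathfrak{p}$ and $R_\beta:=\beta(\mathcal{M}_{1/2}+\beta I)^{-1}$. Since $\mathcal{M}_{1/2}\ge 0$ is self-adjoint, $R_\beta$ is self-adjoint with $0\le R_\beta\le I$. Hence $\|R_\beta\mathfrak{q}\|\le\|\mathfrak{q}\|$ deterministically. Moreover, putting $\mathfrak{r}:=R_\beta\mathfrak{q}$ we have $\langle\mathcal{M}_{1/2}\mathfrak{r},\mathfrak{r}\rangle\le\beta\|\mathfrak{q}\|\,\|\mathfrak{r}\|$. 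The plan is to play this smallness of the Malliavin quadratic form against the probabilistic spectral bound on a cone, Theorem \ref{bounds-Malliavin}. For fixed $N$ I expect that theorem to say the following. Take a cone $\mathcal{C}_{N,\alpha}:=\{\mathfrak{r}:\|\Pi_N\mathfrak{r}\|\ge\alpha\|\mathfrak{r}\|\}$ (with $\Pi_N$ including the $T\Sigma$ component). Then for every $\alpha,\eta>0$ there is $\epsilon_0$ such that
\[
\mathbb{P}\Big(\inf_{\mathfrak{r}\in\mathcal{C}_{N,\alpha}}\tfrac{\langle\mathcal{M}_{1/2}\mathfrak{r},\mathfrak{r}\rangle}{\|\mathfrak{r}\|^2}<\epsilon\Big)\le r(\epsilon)\,e^{\eta V^{n+2}(U_0)},\qquad \epsilon\le\epsilon_0,
\]
with $r(\epsilon)\to0$ as $\epsilon\to0$, uniformly for $\mathbf{p}_0$ in compacts. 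This is the bound proved from the implication \eqref{JFA-2.18} together with Assumptions \ref{assum5.4}--\ref{assum5.5}.

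Fix $\alpha\in(0,1)$ small and $\epsilon>0$, and split $\Omega$ into three events. \emph{Event 1:} $\mathfrak{r}\notin\mathcal{C}_{N,\alpha}$. Then $\|\Pi_N\mathfrak{r}\|\le\alpha\|\mathfrak{r}\|\le\alpha\|\mathfrak{q}\|$. \emph{Event 2:} $\mathfrak{r}\in\mathcal{C}_{N,\alpha}$ and the spectral bound holds, i.e.\ $\langle\mathcal{M}_{1/2}\mathfrak{r},\mathfrak{r}\rangle\ge\epsilon\|\mathfrak{r}\|^2$. Combining with the smallness of the form gives $\epsilon\|\mathfrak{r}\|^2\le\beta\|\mathfrak{q}\|\|\mathfrak{r}\|$, so $\|\Pi_N\mathfrak{r}\|\le\|\mathfrak{r}\|\le(\beta/\epsilon)\|\mathfrak{q}\|$. \emph{Event 3:} the exceptional event $E_\epsilon$ of probability at most $r(\epsilon)e^{\eta V^{n+2}(U_0)}$, on which we only have $\|\Pi_N\mathfrak{r}\|\le\|\mathfrak{q}\|$. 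Putting these together,
\[
\mathbb{E}\|\Pi_N\mathfrak{r}\|^4\le(\alpha^4+(\beta/\epsilon)^4)\,\mathbb{E}\|\mathfrak{q}\|^4+\big(\mathbb{E}\|\mathfrak{q}\|^8\big)^{1/2}\,\mathbb{P}(E_\epsilon)^{1/2}.
\]
By \eqref{assum5.4:line2} with $\|\mathfrak{p}\|=1$, $\mathbb{E}\|\mathfrak{q}\|^8\le Ce^{\eta' V^n(U_0)}$ for any $\eta'>0$, and $V^n\le CV^{n+2}$. So the right side is at most
\[
C\big(\alpha^4+(\beta/\epsilon)^4+r(\epsilon)^{1/2}\big)\,e^{\eta V^{n+2}(U_0)},
\]
after choosing the $\eta'$'s small relative to $\eta$.

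The parameters are then chosen in order. First pick $\alpha$ so that $C\alpha^4<\gamma/3$. Next pick $\epsilon$ so that $Cr(\epsilon)^{1/2}<\gamma/3$. Finally set $\beta_0:=\epsilon(\gamma/3C)^{1/4}$. Local lower boundedness of $\beta_0$ in $\mathbf{p}_0$ is inherited from the local uniformity of the constants in Assumption \ref{assum5.4} and Theorem \ref{bounds-Malliavin}.

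The main obstacle is not this bookkeeping but the cone spectral bound itself. There one must convert smallness of $\langle\mathcal{M}\mathfrak{r},\mathfrak{r}\rangle$ into smallness of $\Pi_N\mathfrak{r}$ in both components. For the $\hat H^n$ part this uses \eqref{JFA-2.18} and the brackets $\psi_j^m+J^N_{j,m}(U_T)$, which depend on the solution; for the $T\Sigma$ part it uses the solution-dependent spanning Assumption \ref{assum5.5} with its negative moments. One technical point needs care: the constant in that bound carries $e^{\eta V^{n+2}(U_0)}$, so the $\eta$'s must be split consistently. Another is that $\mathfrak{r}$ is random and $\mathcal{F}_{1/2}$-measurable, so the bound must hold uniformly over the cone, which is why it is stated with an infimum over $\mathfrak{r}$.
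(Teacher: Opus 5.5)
Your proposal is correct and follows essentially the same route the paper takes by deferring to \cite[Lemma 5.28]{NS}. The key steps match: the resolvent identity gives $\langle \mathcal{M}_{1/2}\mathfrak{r},\mathfrak{r}\rangle\le\beta\|\mathfrak{q}\|\,\|\mathfrak{r}\|$; the cone bound of Theorem \ref{bounds-Malliavin} at $T=1/2$ controls $\Pi_N\mathfrak{r}$ off the exceptional set; Cauchy--Schwarz with the polynomial moment bound \eqref{assum5.4:line2} handles the exceptional set; and the parameters are chosen in the order $\alpha$, then $\epsilon$, then $\beta_0$.
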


\begin{proof}
	In the proof of this lemma, it should also be noted that the stochastic integrability here is worse.  We locally utilize the probabilistic spectral bound on a cone for the Malliavin matrix (c.f. Theorem \ref{bounds-Malliavin}) while simultaneously employing the $O(1)$ polynomial moment bounds to control the probability tail. With the help of Assumption \ref{assum5.4}, the proof proceeds closely following that of \cite[Lemma 5.28 ]{NS}.
\end{proof}

\begin{proof}[Proof of Proposition \ref{error}]
We first note that
\begin{align*}
	\rho^\beta &= \mathcal{J}_{0,1}\mathfrak{p} - \mathcal{A}_{0,1}v^\beta \\
	&= \mathcal{J}_{1/2,1}\left(\mathcal{J}_{0,1/2} - \mathcal{A}_{0,1/2}\mathcal{A}_{0,1/2}^*(\mathcal{M}_{1/2} + \beta)^{-1}\mathcal{J}_{0,1/2}\right)\mathfrak{p} \\
	&= \beta \mathcal{J}_{1/2,1}(\mathcal{M}_{1/2} + \beta I)^{-1}\mathcal{J}_{0,1/2}\mathfrak{p} .
\end{align*}
Then compute that, for \(N > 1\),
\begin{align*}
	\|\rho^\beta\|_{\hat{H}^n} &= \left\| \beta \mathcal{J}_{1/2,1}(\mathcal{M}_{1/2} + \beta I)^{-1}\mathcal{J}_{0,1/2}\mathfrak{p} \right\|_{\hat{H}^n} \\
	&\leq \left\| \beta \mathcal{J}_{1/2,1}\Pi_{\geq N}(\mathcal{M}_{1/2} + \beta I)^{-1}\mathcal{J}_{0,1/2}\mathfrak{p} \right\|_{\hat{H}^n} + \left\| \beta \mathfrak{p}_{1/2,1}\Pi_N(\mathcal{M}_{1/2} + \beta I)^{-1}\mathcal{J}_{0,1/2}\mathfrak{p} \right\|_{\hat{H}^n} \\
	&\leq \left\| \mathcal{J}_{1/2,1}\Pi_{\geq N} \right\|_{\hat{H}^n \to \hat{H}^n} \left\| \mathcal{J}_{0,1/2} \right\|_{\hat{H}^n \to \hat{H}^n}\left\| \beta (\mathcal{M}_{1/2} + \beta I)^{-1} \right\|_{\hat{H}^n \to \hat{H}^n} \\
	&\quad+ \left\| \mathcal{J}_{1/2,1} \right\|_{\hat{H}^n \to \hat{H}^n} \left\| \beta \Pi_N(\mathcal{M}_{1/2} + \beta I)^{-1}\mathcal{J}_{0,1/2}\mathfrak{p} \right\|_{\hat{H}^n}.
\end{align*}
It is observed that, following the proof in \cite[Lemma A.6]{JFA}, we obtain for $0<s<t$ that
$$
\left\| (\mathcal{M}_{s,t} + \beta I)^{-1} \right\|_{\hat{H}^n \to \hat{H}^n}\le \beta^{-1}.
$$
Then combining this with H\"older’s inequality, Lemma \ref{NS5.27} where setting $N=\gamma^{-1}$, Lemma \ref{NS5.28} and the bound \eqref{assum5.4:line2} of Assumption \ref{assum5.4}, we obtain that for \(\beta \leq \beta_0 (\mathbf{p}_0, \gamma, \eta)\), with \(\beta_0\) locally bounded below in \(\mathbf{p}_0\),
\[
\mathbb{E} \left\| \rho^\beta \right\|_{\hat{H}^n}^2 \leq C \gamma e^{(\eta V^{n+2}(U_0))},
\]
where \(C\) does not depend on \(\gamma\). We then conclude after redefining \(\gamma\).

\end{proof}

\subsection{Probabilistic spectral bounds for the Malliavin matrix}\label{Spectral bounds}
The objective of this subsection is to prove Theorem \ref{bounds-Malliavin} under Hypotheses \ref{assum5.4} and \ref{assum5.5}. This theorem establishes probabilistic spectral bounds for the Malliavin matrix $\mathcal{M}_{T}$, demonstrating its quantitative non-degeneracy over finite-dimensional subspaces. This conclusion is pivotal for proving Proposition \ref{v}. In Section \ref{S-Gradient}, the gradient estimate \eqref{unit-time-n-eq} was established via Proposition \ref{v}. Compared to \cite[Theorem 5.8]{NS} and \cite[Theorem 4.1]{JFA}, the stochastic integrability in Theorem \ref{bounds-Malliavin} is significantly weaker, where the arbitrary rate $f(\varepsilon)$ is expressed in terms of $r(\varepsilon)$ and $g(\varepsilon)$. The subsequent proof will explicitly clarify the origin of this discrepancy. 

\begin{theorem}\label{bounds-Malliavin}
Recall that the definition of $\Pi_N$ in Lemma \ref{NS5.28}. Then for any $N\ge 1$, \(T > 0\), \(\eta, \alpha \in (0,1)\), and \(\mathbf{p}_0 \in \Sigma\), there exists a positive constant \( \epsilon^* := \epsilon^*(T, \alpha, \eta, N, \mathbf{p}_0) > 0 \) such that for any \( 0 < \epsilon < \epsilon^* \),  
\[
	\mathbb{P} \left( \inf_{\begin{subarray}{c} 
			\|\mathfrak{p}\|=1, \|\Pi_N \mathfrak{p}\| \geq \alpha 
	\end{subarray}} \langle \mathfrak{p}, \mathcal{M}_T \mathfrak{p} \rangle < \varepsilon \right) \leq f(\varepsilon) e^{(\eta V^{n+2}(U_0))},
\]
where \( f = f(T, \alpha, \eta, N, \mathbf{p}_0) \) is a nonnegative decreasing function such that \(\lim_{\varepsilon \to 0} f(\varepsilon) = 0\) locally uniformly in \(\mathbf{p}_0\).
\end{theorem}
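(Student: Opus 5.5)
We argue by contradiction on a ``bad'' event of small probability: we show that outside such an event, if $\langle \mathfrak{p},\mathcal{M}_T\mathfrak{p}\rangle<\varepsilon$ for some unit $\mathfrak{p}$, then $\|\Pi_N\mathfrak{p}\|<\alpha$. The argument follows the H\"ormander-bracket iteration of \cite[Section 5]{NS} and \cite[Sections 5--6]{JFA}, adapted to the extended space $\hat{H}^n\times T_{\mathbf{p}_T}\Sigma$. By \eqref{recall1}, smallness of the quadratic form gives $\int_0^T|\langle \mathcal{J}^*_{t,T}\mathfrak{p},\sigma_j^m\rangle|^2dt\lesssim\varepsilon$ for the forced directions $j\in\mathcal{Z}$. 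On $[T/2,T]$ we then apply a quantitative Norris-type lemma (interpolation between $L^2$ smallness and H\"older/Lipschitz bounds of the time derivatives) to the functions $t\mapsto\langle \mathcal{J}^*_{t,T}\mathfrak{p},X(U_t)\rangle$. Their derivatives produce $\langle \mathcal{J}^*_{t,T}\mathfrak{p},[F,X](U_t)\rangle$ plus a martingale part coming from the noise directions. The derivative bounds are controlled in $L^q$ by Assumption \ref{assum5.4} (bounds \eqref{assum5.4:line2}, \eqref{assum5.4:line4}, \eqref{assum5.4:line5}) together with the moment estimates of Proposition \ref{priori-eatimates}. We define the bad set $\Omega^*$ as the event where these H\"older norms exceed $\varepsilon^{-r}$; Chebyshev's inequality then gives $\mathbb{P}((\Omega^*)^c)\le C\varepsilon^{rq}e^{\eta V^{n+2}(U_0)}$.

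\textbf{Step 1 ($\hat{H}^n$-component).} We iterate brackets in the order $\sigma_j^m\to Y_j^m=[F,\sigma_j^m]\to Z_j^m$ and further brackets, working with the shifted variable $\overline{U}=U-\sigma_\theta W$ so that these are polynomial in $\overline U$. Via the buoyancy term $GU$ and the nonlinearity $B$, this generates the directions $\psi_j^m$ and $\sigma_j^m$ for all $|j|\le N$ modulo solution-dependent remainders $J^N_{j,m}(U_T)$. The outcome is \eqref{JFA-2.18}: each $|\langle\psi_j^m+J^N_{j,m}(U_T),\mathfrak{p}\rangle|$ and $|\langle\sigma_j^m,\mathfrak{p}\rangle|$ is at most $o_\varepsilon(1)$, with $o_\varepsilon(1)$ a power $\varepsilon^{c(N)}$. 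To remove the dependence on $U_T$ we intersect with the event $\{\|U_T\|_{\hat{H}^{n+2}}\le g(\varepsilon)\}$, where $g(\varepsilon)\to\infty$ slowly. Its complement has probability at most $e^{-c g(\varepsilon)^{2/(n+4)}}e^{\eta V^{n+2}(U_0)}$, by Proposition \ref{priori-eatimates} and Definition \ref{Vndef}. On this event, a triangular (lower-order-in-$|j|$) inversion then yields $\|\Pi_N^{\hat H}\mathfrak{p}\|\le C(N)g(\varepsilon)^{k}\varepsilon^{c}$, provided $g$ grows slowly enough.

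\textbf{Step 2 ($T\Sigma$-component).} The same derivative computation at the level of $Z_j^m$ also picks up the third component of $L_t$, namely $\Theta_{\psi}(\mathbf{p}_t)$ and the term $q\cdot\nabla\Theta_U$. This gives the implication
\[
\Big|\Big\langle [F(\overline U_T),Z_j^m(\overline U_T)]-\big(\Theta_{Z_j^m(\overline U_T)}(\mathbf{p}_T)+\Theta_{[U_T,Y_j^m(\overline U_T)]_x}(\mathbf{p}_T)\big),\mathfrak{p}\Big\rangle\Big|\le o_\varepsilon(1).
\]
The $\hat{H}^n$ part of this pairing is already small by Step 1, up to a factor polynomial in $\|U_T\|$. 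Hence the $T\Sigma$-part $\mathbf{q}$ of $\mathfrak{p}$ satisfies $\max_{j,m}|\hat g(\Theta_{Z_j^m}+\Theta_{[U_T,Y_j^m]_x},\mathbf{q})|\le o_\varepsilon(1)$. Assumption \ref{assum5.5} gives a moment bound on the inverse spanning constant. Intersecting with the event that this constant exceeds $h(\varepsilon)^{-1}$ (the complement has probability at most $Ch(\varepsilon)^q e^{\eta V^n(U_0)}$ by Markov), we obtain $\|\mathbf{q}\|\le o_\varepsilon(1)/h(\varepsilon)$.

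\textbf{Conclusion and main obstacle.} Combining the two steps, $\|\Pi_N\mathfrak{p}\|<\alpha$ for all $\varepsilon<\epsilon^*$ off a set of probability at most $f(\varepsilon)e^{\eta V^{n+2}(U_0)}$, where $f$ is built from $\varepsilon^{rq}$, the $g$-tail and $h(\varepsilon)^q$. Local uniformity of $f$ in $\mathbf{p}_0$ follows from the local boundedness of the constants in the assumptions. The main obstacle is Step 1 together with its interface with Step 2. Because the brackets depend on $U_T$, the rates $g(\varepsilon)$ and $h(\varepsilon)$ must be tuned against the polynomial losses in $\|U_T\|$ and the exponents from the Norris lemma. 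This tuning is exactly why only an abstract rate $f(\varepsilon)\to0$, rather than a power of $\varepsilon$, is obtained. We would first try $g(\varepsilon)=h(\varepsilon)^{-1}=|\log\varepsilon|$.
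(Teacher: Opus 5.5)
Your architecture matches the paper's: bracket iteration yielding the three implications of Proposition \ref{NS-prop5.18}, then separate control of the $\hat{H}^n$- and $T\Sigma$-components, the latter via Assumption \ref{assum5.5}. However, Step 1 as you state it fails, and Step 2 inherits the defect. The problem is the remainder $J^N_{j,m}(U_T)=Q_NJ_{j,m}(U_T)$. It collects the Fourier modes $|k|>N$ of $B(U_T,\sigma_j^{m+1})$ (or of the affine maps $H$ when $j_1=0$), which an iteration at the fixed level $N$ cannot absorb into the controlled $\sigma_k$-directions. Hence $\langle J^N_{j,m}(U_T),\mathfrak{p}\rangle$ only sees the high-mode part of $\mathfrak{p}$, which nothing controls. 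It is bounded only by $C(N)(1+\|U_T\|)$, which does not tend to zero as $\varepsilon\to0$. On $\{\|U_T\|\le g(\varepsilon)\}$ with $g\to\infty$ you get $C(N)g(\varepsilon)$, not $C(N)g(\varepsilon)^k\varepsilon^c$. No triangular inversion in $|j|$ helps, because the remainders lie outside the span of the directions you control.

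The paper's fix (Proposition \ref{NS-prop5.21}) runs the iteration at a level $\tilde N(\varepsilon)\to\infty$. By Lemma \ref{JFA-lemma5.6}, the remainder $Q_{\tilde N}J_{j,m}(U_T)$ is then of size $\lesssim N^{c}\tilde N^{-1/2}(1+\|U_T\|_{\hat{H}^4})$, which is small on $\{1+\|U_T\|^2_{\hat{H}^4}\le h(\varepsilon)\}$. The growth of $\tilde N(\varepsilon)$ and $h(\varepsilon)$ must be balanced against three losses: the exponent $\gamma_{\tilde N}=q_2^{\tilde N}$, which degenerates super-exponentially; the shrinking admissibility threshold $\epsilon_1(\tilde N)$; and the prefactor $\tilde N^q$ in the bad-set probability. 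This forces $h(\varepsilon)=\log\log\log\varepsilon^{-1}$ and is the real source of the abstract rate $f$. Your $o_\varepsilon(1)$ is therefore not a power of $\varepsilon$, and $g(\varepsilon)=|\log\varepsilon|$ grows far too fast.

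Step 2 has the same gap. The field $[F(\overline U_T),Z_j^m(\overline U_T)]$ has infinitely many Fourier modes. Making its pairing with $\pi_{\hat{H}^n}\mathfrak{p}$ small therefore requires control of all modes of $\pi_{\hat{H}^n}\mathfrak{p}$, not just of $\Pi_N\mathfrak{p}$ for a fixed $N$. The paper gets this in a weaker norm by showing $\|\pi_{\hat{H}^n}\mathfrak{p}\|_{\hat{H}^{n-2}}\le g(\varepsilon)\to0$. Modes above $N$ cost at most $N^{-4}$ in the squared $\hat{H}^{n-2}$ norm because $\|\mathfrak{p}\|=1$, and the low modes are handled by the growing-level argument above. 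It then bounds the pairing by $C(1+\|\overline U_T\|^3_{\hat{H}^6})\|\pi_{\hat{H}^n}\mathfrak{p}\|_{\hat{H}^{n-2}}$ on an event where $\|\overline U_T\|$ is controlled. Only after this does Assumption \ref{assum5.5} give $\|\pi_{T\Sigma}\mathfrak{p}\|\to0$. The conclusion then uses that on low modes the $\hat{H}^{n-2}$ and $\hat{H}^{n}$ norms are equivalent up to $C(N)$. So $\|\Pi_N\mathfrak{p}\|\ge\alpha$ forces either $\|\pi_{T\Sigma}\mathfrak{p}\|\ge\alpha/2$ or $\|\pi_{\hat{H}^n}\mathfrak{p}\|_{\hat{H}^{n-2}}\ge C^{-1}\alpha$.
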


We first introduce some notational conventions that will be frequently used in the sequel, along with several lemmas (Lemmas \ref{JFA6.1}, \ref{JFA-lemma 6.2}) and theorem (Theorem \ref{JFA-Theorem6.4}) that will be frequently employed in the subsequent analysis. For any \( a < b \), \(\beta \in \mathbb{R}\) and \(\alpha \in (0, 1]\), define the semi-norms
\[
\|U\|_{C^{\alpha}([a,b]; \hat{H}^{\beta})} := 
\sup_{\begin{subarray}{c} 
		t_1 \neq t_2 \\ 
		t_1, t_2 \in [a,b]
\end{subarray}} 
\frac{\|U_{t_1} - U_{t_2}\|_{\hat{H}^{\beta}}}{|t_1 - t_2|^{\alpha}}.
\]
If \( a = T/2 \) and \( b = T \), we will write \( \| \cdot \|_{C^\alpha \hat{H}^\beta} \) instead of \( \| \cdot \|_{C^\alpha ([T/2,T],\hat{H}^\beta)} \) and denote
\[
\| U \|_{C^0 \hat{H}^\beta} := \sup_{t \in [T/2,T]} \| U_t \|_{\hat{H}^\beta}.
\]
Similar notations will be employed for the Hölder spaces \( C^\alpha([a,b]), C^{1,\alpha}([a,b]) \), etc. . Recalling the notation of Lie bracket, we have
\[
[X(U), Y(\widetilde{U})] := \nabla Y(\widetilde{U}) X(U) - \nabla X(U) Y(\widetilde{U}),
\]
for all suitably regular \( X, Y : \hat{H}^n \to \hat{H}^n \) and \( U, \widetilde{U} \in \hat{H}^n \). Below we often consider \(\overline{U} = U - \sigma_\theta W\) which satisfies the shifted equation (cf. \eqref{Boussinesq-w1}):
\begin{align}\label{overline-U}
\partial_t \overline{U} = F(U) = F(\overline{U} + \sigma_\theta W), \quad \overline{U}(0) = U_0.
\end{align}
Note that, in contrast to \( U \), \(\overline{U}\) is \( C^{1,\alpha} \) in time for any \( \alpha < 1/2 \). Additionally, we will invoke the following standard result within the framework of evolution operator theory,
\begin{equation}\label{L*}
\frac{d}{dt} \mathcal{J}_{t,T}^*=-L_t^* \mathcal{J}_{t,T}^*.
\end{equation}

We next present two auxiliary lemmas which summarize the process of constructing \([F, E]\)-type brackets through time differentiation, along with the introduction of new vector fields on the manifold $T\Sigma$.
\begin{lemma}\label{JFA6.1}
Suppose \( E : \hat{H}^n\times T_{\mathbf{p}_t}\Sigma \to \hat{H}^n\times T_{\mathbf{p}_t}\Sigma \), ${\mathfrak{q}}\mapsto E({\mathfrak{q}})$ is Fréchet differentiable and decomposes as $E({\mathfrak{q}})=E^1(\overline{U})+E^2_{\overline{U}}(\mathbf{p}_t)$, \(\mathfrak{p} \in \hat{H}^n\times T_{\mathbf{p}_T}\Sigma\), \(U\) solves \eqref{Boussinesq-w1},  \(\overline{U}\) is defined by \eqref{overline-U} and $\mathbf{p}_t$ is defined by \eqref{Lagrange-process-eq}. Then for any \( p \geq 1 \) and any \( \eta \in (0,1) \), we have that
\begin{align}\label{3.11}
	&\mathbb{E} \sup_{t \in [T/2,T]} \left| \partial_t \langle \mathcal{J}^*_{t,T}\mathfrak{p}, E^1(\overline{U})+E^2_{\overline{U}}(\mathbf{p}_t) \rangle \right|^p \notag\\
	&\leq C \|\mathfrak{p}\|^p \exp(\eta V^n(U_0)) \left( \mathbb{E} \sup_{t \in [T/2,T]} \left\| [F(U),E^1(\overline{U}) ]-\Theta_{E^1(\overline{U})}(\mathbf{p}_t)+[\Theta_{\omega_t}(\mathbf{p}_t),E^2_{\overline{U}}(\mathbf{p}_t)]_x \right\|_{\hat{H}^n}^{2p} \right)^{1/2}, 
\end{align}
where \( C = C(\eta, p, T,\mathbf{p}_0) \), locally bounded in \( \mathbf{p}_0 \in \Sigma \). Moreover, for any \(\alpha \in (0, 1]\),
\begin{align}\label{3.12}
	&\mathbb{E} \left( \left\| \partial_t \langle \mathcal{J}^*_{t,T}\mathfrak{p}, E^1(\overline{U})+E^2_{\overline{U}}(\mathbf{p}_t) \rangle \right\|_{C^\alpha}^p \right) \notag\\
	&\leq C \|\mathfrak{p}\|^p \exp(\eta V^{n+2}(U_0)) \notag\\
	&\quad \cdot \Bigg[ \left( \mathbb{E} \sup_{t \in [T/2,T]} \left\| [F(U),E^1(\overline{U}) ]-\Theta_{E^1(\overline{U})}(\mathbf{p}_t)+[\Theta_{\omega_t}(\mathbf{p}_t),E^2_{\overline{U}}(\mathbf{p}_t)]_x \right\|_{\hat{H}^{n}}^{2p} \right)^{1/2} \notag\\
	&\quad\quad+ \left( \mathbb{E} \left\| [F(U),E^1(\overline{U}) ]-\Theta_{E^1(\overline{U})}(\mathbf{p}_t)+[\Theta_{\omega_t}(\mathbf{p}_t),E^2_{\overline{U}}(\mathbf{p}_t)]_x \right\|_{C^\alpha \hat{H}^{n}}^{2p} \right)^{1/2} \Bigg], 
\end{align}
with \( C = C(\eta, p, T, \mathbf{p}_0) \), locally bounded in \( \mathbf{p}_0 \in \Sigma \).
\end{lemma}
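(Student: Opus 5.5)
The plan is to compute the time derivative explicitly and then bound each factor by H\"older's inequality together with Assumption \ref{assum5.4}. Set $\mathcal{E}_t:=E^1(\overline U_t)+E^2_{\overline U_t}(\mathbf{p}_t)$. Since $\overline U$ is $C^{1,\alpha}$ in time with $\partial_t\overline U=F(U)$, and $\dot{\mathbf{p}}_t=\Theta_{\omega_t}(\mathbf{p}_t)$, the chain rule gives
\[
\partial_t\mathcal{E}_t=\nabla E^1(\overline U)F(U)+\nabla_{\overline U}E^2_{\overline U}(\mathbf{p}_t)F(U)+\nabla_x E^2_{\overline U}(\mathbf{p}_t)\,\Theta_{\omega_t}(\mathbf{p}_t).
\]
Combining this with \eqref{L*}, we obtain
\[
\partial_t\langle\mathcal{J}^*_{t,T}\mathfrak{p},\mathcal{E}_t\rangle=\langle\mathcal{J}^*_{t,T}\mathfrak{p},\,\partial_t\mathcal{E}_t-L_t\mathcal{E}_t\rangle.
\]
From the block form of $L_t$ in Definition \ref{J}, $L_t\mathcal{E}_t$ consists of two pieces. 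The first is the $\hat H^n$ piece $\nabla F(U)E^1(\overline U)$; it is the linearization of $F$ at $U$, not at $\overline U$, and this is exactly why the bracket is written as $[F(U),E^1(\overline U)]$. The second is the $T\Sigma$ piece, namely $\Theta_{E^1(\overline U)}(\mathbf{p}_t)+E^2\cdot\nabla\Theta_{\omega_t}(\mathbf{p}_t)$.

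The next step is to regroup the terms. The piece $\nabla E^1F-\nabla F\,E^1$ gives $[F(U),E^1(\overline U)]$. Then $-\Theta_{E^1}(\mathbf{p}_t)$ appears directly. Finally, $\nabla_xE^2\,\Theta_{\omega}-\nabla_x\Theta_\omega\,E^2=[\Theta_{\omega_t},E^2]_x$. The leftover term $\nabla_{\overline U}E^2F(U)$ is either absorbed by interpreting the $x$-bracket as the total derivative along the flow (the convention implicit in the statement), or bounded by the same quantity. Having identified the integrand $R_t$ with the bracket expression $\mathcal{B}_t$ in \eqref{3.11}, Cauchy--Schwarz gives
\[
|\partial_t\langle\cdot\rangle|\le\|\mathcal{J}^*_{t,T}\|\,\|\mathfrak{p}\|\,\|\mathcal{B}_t\|_{\hat H^n}.
\]
Taking the supremum over $t\in[T/2,T]$, raising to the power $p$, and applying Cauchy--Schwarz in $\omega$, we bound $(\mathbb{E}\sup\|\mathcal{J}_{t,T}\|^{2p})^{1/2}$ by $Ce^{\eta V^n(U_0)}$ using \eqref{assum5.4:line2} (with $\eta$ halved). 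Note that $\|\mathcal{J}^*\|=\|\mathcal{J}\|$. This proves \eqref{3.11}.

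For \eqref{3.12}, the plan is to split the H\"older seminorm of the product using
\[
\|fg\|_{C^\alpha}\le\|f\|_{C^0}\|g\|_{C^\alpha}+\|f\|_{C^\alpha}\|g\|_{C^0},
\]
with $f=\mathcal{J}^*_{t,T}\mathfrak{p}$ and $g=\mathcal{B}_t$. The terms involving $g$ produce the two bracket expectations in \eqref{3.12} after Cauchy--Schwarz. The main obstacle is bounding $\|\mathcal{J}^*_{\cdot,T}\mathfrak{p}\|_{C^\alpha\hat H^n}$. For this I would use \eqref{L*} to write
\[
\mathcal{J}^*_{t_1,T}\mathfrak{p}-\mathcal{J}^*_{t_2,T}\mathfrak{p}=\int_{t_1}^{t_2}L_s^*\mathcal{J}^*_{s,T}\mathfrak{p}\,ds.
\]
Pairing this against the smooth $g$ moves $L_s$ onto $\mathcal{B}$, which costs two derivatives and is handled by \eqref{assum5.4:line5}; together with the smoothing estimate \eqref{assum5.4:line3}, this gives a Lipschitz (hence $C^\alpha$) bound in time. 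The moments are then controlled by $Ce^{\eta V^{n+2}(U_0)}$. The loss of two derivatives is precisely why $V^{n+2}$ replaces $V^n$. Alternatively, one can interpolate between the $\hat H^n$ and $\hat H^{n+2}$ norms of $U$ using Proposition \ref{priori-eatimates}. All constants depend on $\mathbf{p}_0$ only through the locally bounded constants of Assumption \ref{assum5.4}, which gives the claimed local boundedness in $\mathbf{p}_0$.
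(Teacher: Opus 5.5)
Your derivation of \eqref{3.11} is the paper's argument. You differentiate with \eqref{L*} and \eqref{overline-U}, identify $\partial_t\mathcal{E}_t-L_t\mathcal{E}_t$ with the bracket expression, and conclude with Cauchy--Schwarz and \eqref{assum5.4:line2}. You are right that the chain rule also produces $\nabla_{\overline U}E^2_{\overline U}(\mathbf{p}_t)\,F(U)$, a term the paper's computation \eqref{JFA-6.8} silently drops. Your fallback ``or bounded by the same quantity'' is not justified, though: nothing controls that term by the displayed bracket. It has to be kept in the integrand, i.e.\ the bracket in \eqref{3.11}--\eqref{3.12} must be read as containing it.

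The genuine gap is in \eqref{3.12}.
\begin{itemize}
\item \textbf{The product-rule split needs a false bound.} It requires $\mathbb{E}\|\mathcal{J}^*_{\cdot,T}\mathfrak{p}\|^{2p}_{C^\alpha\hat H^n}\lesssim\|\mathfrak{p}\|^{2p}$, and no bound of that form holds. Already for $L_t=-A$ one has $\mathcal{J}^*_{t,T}\mathfrak{p}=e^{-(T-t)A}\mathfrak{p}$, whose $C^\alpha([T/2,T];\hat H^n)$ seminorm is $\gtrsim|k|^{2\alpha}$ for a normalized mode at wavenumber $k$.
\item \textbf{Your repair moves the loss onto the bracket.} Writing $\langle\mathcal{J}^*_{t_1,T}\mathfrak{p}-\mathcal{J}^*_{t_2,T}\mathfrak{p},\mathcal{B}_{t_1}\rangle=\int_{t_1}^{t_2}\langle\mathcal{J}^*_{r,T}\mathfrak{p},L_r\mathcal{B}_{t_1}\rangle\,dr$ and using \eqref{assum5.4:line5} is the right idea. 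But the two lost derivatives land on $\mathcal{B}$: you get $|t_1-t_2|\,\|\mathfrak{p}\|\sup_r\|\mathcal{J}_{r,T}\|\sup_r\|L_r\|_{\hat H^{n+2}\to\hat H^n}\sup_t\|\mathcal{B}_t\|_{\hat H^{n+2}}$. After H\"older this is a prefactor $e^{\eta V^n(U_0)}$ times $(\mathbb{E}\sup_t\|\mathcal{B}_t\|^{2p}_{\hat H^{n+2}})^{1/2}$.
\item \textbf{That term cannot be removed.} It does not appear on the right of \eqref{3.12}. Passing from $V^n$ to $V^{n+2}$, or interpolating norms of $U$, does not help, because the derivatives fall on $\mathcal{B}$, not on $U_0$. \eqref{assum5.4:line3} is also irrelevant: it is forward smoothing on $[T/2,3T/4]$ and says nothing about $\mathcal{J}^*_{t,T}$ as $t\to T$.
\item \textbf{The $\hat H^n$-only inequality actually fails.} Take the caricature $L_t=-A$ and $E\equiv A^{-1}\mathfrak{p}$ with $\mathfrak{p}$ a normalized mode. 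Then $\mathcal{B}\equiv\mathfrak{p}$, so the right side of \eqref{3.12} is $O(1)$. Meanwhile $\partial_t\langle\mathcal{J}^*_{t,T}\mathfrak{p},E\rangle=e^{-\lambda_k(T-t)}$ has $C^\alpha$ seminorm $\gtrsim\lambda_k^\alpha\sim|k|^{2\alpha}$.
\end{itemize}

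The paper's proof breaks at the same point in a different guise. It asserts $\mathbb{E}\sup_t\|\partial_t\mathcal{J}^*_{t,T}\mathfrak{p}\|^p_{\hat H^n}\lesssim e^{\eta V^{n+2}(U_0)}\|\mathfrak{p}\|^p$ from $\|A\rho^*\|_{\hat H^n}\le\|\rho^*\|_{\hat H^{n+2}}$. That would require $\mathcal{J}^*_{t,T}\mathfrak{p}\in\hat H^{n+2}$ uniformly up to $t=T$, which fails for $\mathfrak{p}\in\hat H^n$.

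What your pairing argument does prove is \eqref{3.12} with $\sup_t\|\cdot\|_{\hat H^{n+2}}$ in place of $\sup_t\|\cdot\|_{\hat H^{n}}$ in the first bracket term. Equivalently, estimate $\partial_t\mathcal{J}^*_{t,T}\mathfrak{p}$ in $\hat H^{n-2}$, where $\|A\rho\|_{\hat H^{n-2}}\lesssim\|\rho\|_{\hat H^n}$. This matches the structure of the $L^2$-based original in \cite{JFA}, whose sup term carries an $H^2$ norm. The weaker version suffices for Propositions \ref{JFA-Lemma6.8}--\ref{JFA-Lemma6.9}: the brackets there are polynomial in $U$, and \eqref{JFA-A.5} bounds all moments of higher Sobolev norms of $U$ on $[T/2,T]$.
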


\begin{remark}
Notice that if, for the $E^2_{\overline{U}}(\mathbf{p}_t)$ in Lemma \ref{JFA6.1}, there exists an $F_{\overline{U}}$ such that 
$E^2_{\overline{U}}(\mathbf{p}_t)=\Theta_{F_{\overline{U}}}(\mathbf{p}_t)$, then we have
\begin{align}\label{JFA-6.6}
	&\mathbb{E} \sup_{t \in [T/2,T]} \left| \partial_t \langle \mathcal{J}^*_{t,T}\mathfrak{p}, E^1(\overline{U})+\Theta_{F_{\overline{U}}}(\mathbf{p}_t) \rangle \right|^p \notag\\
	&\leq C \|\mathfrak{p}\|^p \exp(\eta V^n(U_0)) \left( \mathbb{E} \sup_{t \in [T/2,T]} \left\| [F(U),E^1(\overline{U}) ]-\Theta_{E^1(\overline{U})}(\mathbf{p}_t)+\Theta_{\big[U_t,F_{\overline{U}}\big]_x}(\mathbf{p}_t) \right\|_{\hat{H}^n}^{2p} \right)^{1/2}, 
\end{align}
where \( C = C(\eta, p, T,\mathbf{p}_0) \), locally bounded in \( \mathbf{p}_0 \in \Sigma \). And, for any \(\alpha \in (0, 1]\),
	\begin{align}\label{JFA-6.7}
		&\mathbb{E} \left( \left\| \partial_t \langle \mathcal{J}^*_{t,T}\mathfrak{p}, E^1(\overline{U})+\Theta_{F_{\overline{U}}}(\mathbf{p}_t) \rangle \right\|_{C^\alpha}^p \right) \notag\\
		&\leq C \|\mathfrak{p}\|^p \exp(\eta V^{n+2}(U_0)) \notag\\
		&\quad \cdot \Bigg[ \left( \mathbb{E} \sup_{t \in [T/2,T]} \left\| [F(U),E^1(\overline{U}) ]-\Theta_{E^1(\overline{U})}(\mathbf{p}_t)+\Theta_{\big[U_t,F_{\overline{U}}\big]_x}(\mathbf{p}_t) \right\|_{\hat{H}^{n}}^{2p} \right)^{1/2} \notag\\
		&\quad\quad+ \left( \mathbb{E} \left\| [F(U),E^1(\overline{U}) ]-\Theta_{E^1(\overline{U})}(\mathbf{p}_t)+\Theta_{\big[U_t,F_{\overline{U}}\big]_x}(\mathbf{p}_t) \right\|_{C^\alpha \hat{H}^{n}}^{2p} \right)^{1/2} \Bigg], 
	\end{align}
	with \( C = C(\eta, p, T, \mathbf{p}_0) \), locally bounded in \( \mathbf{p}_0 \in \Sigma \).
\end{remark}

\begin{proof}
Since \( \mathcal{J}^*_{t,T} \mathfrak{p}\) solves \eqref{L*} and \( \overline{U} \) satisfies \eqref{overline-U}, we have
\begin{align}\label{JFA-6.8}
		&\partial_t \langle \mathcal{J}^*_{t,T}\mathfrak{p}, E^1(\overline{U})+E^2_{\overline{U}}(\mathbf{p}_t)\rangle \notag\\
		&= \langle \partial_t \mathcal{J}^*_{t,T}\mathfrak{p}, E^1(\overline{U})+E^2_{\overline{U}}(\mathbf{p}_t)\rangle + \langle\mathcal{J}^*_{t,T}\mathfrak{p}, \nabla E^1(\overline{U}) \cdot \partial_t \overline{U}+\nabla E^2_{\overline{U}} \cdot \partial_t \mathbf{p}_t \rangle \notag\\
		&= -\langle \mathcal{J}^*_{t,T}\mathfrak{p}, L_t\left(E^1(\overline{U})+E^2_{\overline{U}}(\mathbf{p}_t)\right)\rangle+\langle\mathcal{J}^*_{t,T}\mathfrak{p}, \nabla E^1(\overline{U}) \cdot \partial_t \overline{U}+\nabla E^2_{\overline{U}} \cdot \partial_t \mathbf{p}_t \rangle\notag\\
		&= \langle \mathcal{J}^*_{t,T}\mathfrak{p},\, AE^1(\overline{U})+\nabla B(U)E^1(\overline{U})-GE^1(\overline{U})-\Theta_{E^1(\overline{U})}(p_t)-\nabla\Theta_{U_t}(\mathbf{p}_t)\cdot E^2_{\overline{U}}(\mathbf{p}_t)\rangle\notag\\
		&\quad+\langle\mathcal{J}^*_{t,T}\mathfrak{p},\, \nabla E^1(\overline{U}) \cdot \partial_t \overline{U}+\nabla E^2_{\overline{U}}\cdot \Theta_{U_t}(\mathbf{p}_t)  \rangle\notag\\
		&= -\langle \mathcal{J}^*_{t,T}\mathfrak{p}, \Theta_{E^1(\overline{U})}(\mathbf{p}_t)\rangle-\langle \mathcal{J}^*_{t,T}\mathfrak{p}, \nabla F(U)E^1(\overline{U})\rangle+\langle\mathcal{J}^*_{t,T}\mathfrak{p}, \nabla E^1(\overline{U}) \cdot F(U) \rangle\notag\\
		&\quad-\langle \mathcal{J}^*_{t,T}\mathfrak{p}, \nabla\Theta_{U_t}(\mathbf{p}_t)\cdot E^2_{\overline{U}}(\mathbf{p}_t)\rangle+\langle\mathcal{J}^*_{t,T}\mathfrak{p}, \nabla E^2_{\overline{U}}\cdot \Theta_{U_t}(\mathbf{p}_t) \rangle \notag\\
		&= \langle\mathcal{J}^*_{t,T}\mathfrak{p},\,[F(U),E^1(\overline{U})]+[\Theta_{U_t}(\mathbf{p}_t),E^2_{\overline{U}}(\mathbf{p}_t)]_x-\Theta_{E(\overline{U})}(\mathbf{p}_t)\rangle.
\end{align}
Now, \eqref{3.11} immediately follows from H\"{o}lder's inequality, \eqref{assum5.4:line2} and duality. To prove \eqref{3.12}, we use that for any \( \alpha \in (0, 1) \), \( s, s' \in \mathbb{R} \), and any suitably regular \( A, B \), one has
\begin{align}\label{JFA-6.9}
		\| \langle A, B \rangle \|_{C^\alpha} 
		&:= \sup_{\substack{t \neq s \\ s, t \in [T/2, T]}} \left| \frac{\langle A(t), B(t) \rangle - \langle A(s), B(s) \rangle}{|s - t|^\alpha} \right| \notag\\
		&= \sup_{\substack{t \neq s \\ s, t \in [T/2, T]}} \left| \frac{\langle A(t) - A(s), B(t) \rangle + \langle A(s), B(t) - B(s) \rangle}{|s - t|^\alpha} \right| \notag\\
		&\leq \| A \|_{L^\infty \hat{H}^{n}} \| B \|_{C^\alpha \hat{H}^n} + \| A \|_{C^\alpha \hat{H}^{n}} \| B \|_{L^\infty \hat{H}^{n}}. 
\end{align}
Combining \eqref{JFA-6.9} with \eqref{JFA-6.8}, and using H\"{o}lder's inequality,
\begin{align*}
		&\mathbb{E} \left( \left\| \partial_t \langle \mathcal{J}^*_{t,T} \mathfrak{p}, E(\overline{U}) \rangle \right\|_{C^\alpha}^p \right)\\
		&\leq C \left( \mathbb{E} \left( \sup_{t \in [T/2, T]} \| \mathcal{J}^*_{t,T} \mathfrak{p} \|_{\hat{H}^{n}}^{2p} \right)\right)^{1/2} \left( \mathbb{E} \left( \| [F(U),E^1(\overline{U})]+[\Theta_{U_t}(\mathbf{p}_t),E^2_{\overline{U}}(\mathbf{p}_t)]_x-\Theta_{E(\overline{U})}(\mathbf{p}_t) \|_{C^\alpha \hat{H}^{n}}^{2p} \right)^{1/2} \right) \\
		&\quad + C \left( \mathbb{E} \left( \| \mathcal{J}^*_{t,T} \mathfrak{p} \|_{C^\alpha \hat{H}^{n}}^{2p} \right)\right)^{1/2} \left( \mathbb{E} \left( \sup_{t \in [T/2, T]} \| [F(U),E^1(\overline{U})]+[\Theta_{U_t}(\mathbf{p}_t),E^2_{\overline{U}}(\mathbf{p}_t)]_x-\Theta_{E(\overline{U})}(\mathbf{p}_t)\|_{\hat{H}^{n}}^{2p} \right)^{1/2} \right).
\end{align*}
Now it suffices to estimate $\mathbb{E} \big( \| \mathcal{J}^*_{t,T} \mathfrak{q} \|_{C^\alpha \hat{H}^{n}}^{2p} \big)$. Noted that $\rho^*:=\mathcal{J}^*_{t,T} \mathfrak{q}$ solves
$$
\partial_s \rho^* =A\rho^*+(\nabla B(U(s)))^*\rho^*-(\nabla G)*\rho^*,\, \rho^*(t)=\mathfrak{q}.
$$
Since $\|A\rho^*\|_{\hat{H}^n}\le \|\rho^*\|_{\hat{H}^{n+2}}$ and
\begin{align*}
	&\|{(\nabla B(U))^*\rho^* - (\nabla G(U))^*\rho^*}\|_{\hat{H}^n} \\
	&\leq \sup_{\|{\phi}\|_{\hat{H}^{-n}} \leq 1} \bigl( |\langle{(\nabla B(U))^*\rho^*},{\phi}\rangle| + |\langle{(\nabla G(U))^*\rho^*},{\phi}\rangle| \bigr) \\
	&\leq \sup_{\|\phi\|_{\hat{H}^{-n}} \leq 1} \bigl( |\langle{\rho^*},{B(U, \phi)}\rangle| + |\langle{\rho^*},{B(\phi, U)}\rangle| + |\langle{\rho^*},{\nabla G(U)\phi}\rangle| \bigr) \\
	&\leq \|\rho^*\|_{\hat{H}^{n-1}} \sup_{\|\phi\|_{\hat{H}^{-n}} \leq 1} \bigl( 2\|U\|_{\hat{H}^{-n+1}}\|\phi\|_{\hat{H}^{-n+1}} + |g|\|\phi\|_{\hat{H}^{-n+1}} \bigr) \\
	&\leq C\|\rho^*\|_{\hat{H}^{n-1}} \bigl( \|U\|_{\hat{H}^{-n+1}} + 1 \bigr)\leq C\|\rho^*\|_{\hat{H}^{n}} \bigl( \|U\|_{\hat{H}} + 1 \bigr).
\end{align*}
Then, combining \eqref{JFA-A.5} and \eqref{assum5.4:line2}, we deduce
$$
\mathbb{E} \sup_{t \in [T/2, T]} \|\partial_t \mathcal{J}^*_{t,T} \mathfrak{p}\|_{H^{n}}^p \leq C \exp \bigl( \eta V^{n+2}(U_0) \bigr) \|\mathfrak{p}\|^p.
$$
This completes the proof of the lemma.
\end{proof}
\begin{lemma}\emph{\cite[Lemma 6.2]{JFA}}\label{JFA-lemma 6.2}
Fix \( T > 0 \), \(\alpha \in (0,1]\) and an index set \(\mathcal{I} \). Consider a collection of random functions \( f_\phi \) taking values in \( C^{1,\alpha} ([T/2,T]) \) and indexed by \(\phi \in \mathcal{I} \). Define, for each \(\epsilon > 0\),
\begin{equation}\label{JFA-6.10}
	\Lambda_{\epsilon,\alpha} := \bigcup_{\phi \in \mathcal{I}} \Lambda_{\epsilon,\alpha}^\phi, \quad \text{where } \Lambda_{\epsilon,\alpha}^\phi := 
	\left\{ \sup_{t \in [T/2,T]} |f_\phi(t)| \leq \epsilon \quad \text{and} \sup_{t \in [T/2,T]} |\partial_tf_\phi(t)| > \epsilon^{\frac{\alpha}{2(1+\alpha)}}
	\right\}.
\end{equation}
Then, there is \(\epsilon_0 = \epsilon_0(\alpha, T)\) such that for each \(\epsilon \in (0, \epsilon_0)\)
\begin{equation}\label{JFA-6.11}
	\mathbb{P}(\Lambda_{\epsilon,\alpha}) \leq C \epsilon  \mathbb{E} \left( \sup_{\phi \in \mathcal{I}} \| f_\phi \|_{C^{1,\alpha}([T/2,T])}^{2/\alpha} \right).
\end{equation}
\end{lemma}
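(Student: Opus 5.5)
This is Lemma \ref{JFA-lemma 6.2}, a deterministic interpolation statement followed by Chebyshev's inequality. The plan is to show that on $\Lambda_{\epsilon,\alpha}$ the $C^{1,\alpha}$ norm of some $f_\phi$ must be large, namely at least of order $\epsilon^{-\alpha/2}$. Raising this to the power $2/\alpha$ then gives exactly the factor $\epsilon$.

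\textbf{Step 1 (deterministic interpolation).} Let $f\in C^{1,\alpha}([T/2,T])$ satisfy $\sup|f|\le\epsilon$, and write $K:=\|f\|_{C^{1,\alpha}}$. Pick $t_0$ with $|f'(t_0)|=\sup|f'|=:D$. For $|s-t_0|\le h$ with $[t_0,t_0\pm h]\subset[T/2,T]$, the mean value form of Taylor's theorem gives
\[
|f(t_0\pm h)-f(t_0)-f'(t_0)(\pm h)|\le Kh^{1+\alpha}.
\]
Hence $Dh\le 2\epsilon+Kh^{1+\alpha}$ for every $h\le T/4$; at least one side of $t_0$ has room of length $T/4$.

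Now assume $D>\epsilon^{\alpha/(2(1+\alpha))}$, and choose $h=\epsilon^{\theta}$ with $\theta$ tuned so that $2\epsilon$ is negligible against $Dh$. Taking $\theta=1-\frac{\alpha}{2(1+\alpha)}-\delta$ is not quite clean. Instead take $h:=4\epsilon/D$, so that $Dh-2\epsilon=Dh/2$. Then
\[
K\ge \tfrac{D}{2}h^{-\alpha}=\tfrac12 D^{1+\alpha}(4\epsilon)^{-\alpha}.
\]
Using the lower bound on $D$, $D^{1+\alpha}>\epsilon^{\alpha/2}$, and therefore $K\ge c\,\epsilon^{\alpha/2-\alpha}=c\,\epsilon^{-\alpha/2}$.

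This requires $h\le T/4$, i.e. $4\epsilon/D\le T/4$. That holds as soon as $4\epsilon^{1-\alpha/(2(1+\alpha))}\le T/4$, which is where $\epsilon_0(\alpha,T)$ comes from. I expect this step to be the main obstacle: the bookkeeping has to be set up so the exponent $\frac{\alpha}{2(1+\alpha)}$ produces exactly $K\gtrsim\epsilon^{-\alpha/2}$, with constants depending only on $\alpha$ and $T$.

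\textbf{Step 2 (union and Chebyshev).} By Step 1, on $\Lambda_{\epsilon,\alpha}=\bigcup_\phi\Lambda^\phi_{\epsilon,\alpha}$ we have
\[
\sup_{\phi}\|f_\phi\|_{C^{1,\alpha}}\ge c\,\epsilon^{-\alpha/2}.
\]
This inclusion holds pathwise, so the possibly uncountable index set causes no measurability issue beyond that of the supremum, which is implicit in the statement. Markov's inequality applied to $\sup_\phi\|f_\phi\|^{2/\alpha}_{C^{1,\alpha}}$ then yields
\[
\mathbb{P}(\Lambda_{\epsilon,\alpha})\le c^{-2/\alpha}\,\epsilon\,\mathbb{E}\Big(\sup_\phi\|f_\phi\|_{C^{1,\alpha}}^{2/\alpha}\Big),
\]
which is \eqref{JFA-6.11} with $C=c^{-2/\alpha}$.
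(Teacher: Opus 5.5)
Your proof is correct. The Taylor estimate with $h=4\epsilon/D$ gives $\|f_\phi\|_{C^{1,\alpha}}\ge 2^{-1-2\alpha}\epsilon^{-\alpha/2}$ on $\Lambda^\phi_{\epsilon,\alpha}$ once $4\epsilon^{(2+\alpha)/(2(1+\alpha))}\le T/4$, and Markov's inequality with exponent $2/\alpha$ then finishes, which is essentially the same approach as the paper. The paper gives no proof and only cites \cite[Lemma 6.2]{JFA}, whose argument, as I recall it (the paper doesn't reproduce it), is likewise an $L^\infty$--$C^{1,\alpha}$ interpolation followed by Chebyshev; you simply prove the interpolation step by hand.
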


\begin{remark}\label{JFA-remark6.3}
Observe that
\[
\Lambda_{\epsilon,\alpha}^c = \bigcap_{\phi \in \mathcal{I}} \left\{ 
\sup_{t \in [T/2,T]} |f_\phi(t)| > \epsilon 
\quad \text{or} \quad 
\sup_{t \in [T/2,T]} |\partial_tf_\phi(t)| \leq \epsilon^{\frac{\alpha}{2(1+\alpha)}}
\right\}.
\]
Thus, on \(\Lambda_{\epsilon,\alpha}^c\),
\begin{equation}\label{JFA-6.13}
\sup_{t \in [T/2,T]} |f_\phi(t)| < \epsilon 
\, \Longrightarrow \,
\sup_{t \in [T/2,T]} |\partial_tf_\phi(t)| \leq \epsilon^{\frac{\alpha}{2(1+\alpha)}} 
\end{equation}
for every \(\phi \in \mathcal{I}\).
\end{remark}

The following is a nonadapted version of Norris's lemma \cite{Nor86}. This result is essential for controlling Malliavin matrices and enables us to distinguish the influences of different Brownian motions. Given any multi-index \(\alpha := (\alpha_1, \ldots, \alpha_d) \in \mathbb{N}^d\) recall the standard notation \(W^\alpha := W_1^{\alpha_1} \cdots W_d^{\alpha_d}\).
\begin{theorem}\emph{\cite[Theorem~6.4]{JFA-56}}\label{JFA-Theorem6.4}
Fix \(M, T > 0\). Consider the collection \(\mathfrak{P}_M\) of \(M\)th degree of `Wiener polynomials' of the form
\[
F = A_0 + \sum_{|\alpha| \leq M} A_\alpha W^\alpha,
\]
where for each multi-index \(\alpha\), with \(|\alpha| \leq M\), \(A_\alpha : \Omega \times [0, T] \to \mathbb{R}\) is an arbitrary stochastic process. Then, for all \(\epsilon \in (0, 1)\) and \(\beta > 0\), there exists a measurable set \(\Omega_{\epsilon,M,\beta}\) with
\[
\mathbb{P}(\Omega_{\epsilon,M,\beta}^c) \leq C\epsilon,
\]
such that on \(\Omega_{\epsilon,M,\beta}\) and for every \(F \in \mathfrak{P}_M\)
\[
\sup_{t \in [0,T]} |F(t)| < \epsilon^\beta \implies 
\begin{cases}
	\text{either} & \mathop{\rm sup}\limits_{|\alpha| \leq M} \mathop{\rm sup}\limits_{t \in [0,T]} |A_\alpha(t)| \leq \epsilon^{\beta 3^{-M}}, \\
	\text{or} & \mathop{\rm sup}\limits_{|\alpha| \leq M} \mathop{\rm sup}\limits_{\substack{s \neq t \\ s,t \in [0,T]}} \frac{|A_\alpha(t) - A_\alpha(s)|}{|t-s|} \geq \epsilon^{-\beta 3^{-(M+1)}}.
\end{cases}
\]
\end{theorem}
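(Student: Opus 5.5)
The statement is a purely pathwise property of the Brownian motion $W$, and I would prove it that way. The processes $A_\alpha$ are arbitrary and not adapted, so the exceptional set $\Omega_{\epsilon,M,\beta}^c$ must be defined from $W$ alone; on its complement the implication should then follow deterministically, for every $F\in\mathfrak{P}_M$ simultaneously. The strategy is to freeze coefficients on short time intervals, reducing the problem to a polynomial with constant coefficients evaluated along a Brownian increment. Such a polynomial cannot stay small on an interval unless its coefficients are small, because a Brownian increment on that interval sweeps out a ball whose radius we control.

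\emph{Step 1 (good set).} Fix $h=\epsilon^{\gamma}$, with $\gamma>0$ to be chosen in terms of $\beta$ and $M$, and cover $[0,T]$ by intervals $I_i=[t_i,t_i+h]$. Let $\Omega_{\epsilon,M,\beta}$ be the set on which two things hold. First, $\sup_{t\le T}|W_t|\le \epsilon^{-\kappa}$ for a small $\kappa>0$. Second, for every $i$, the path $\{W_t-W_{t_i}: t\in I_i\}$ is ``spread'' at scale $r=h^{1/2}\epsilon^{\kappa}$: for every unit vector $e\in\mathbb{R}^d$ and every polynomial degree $\le M$, the increments come within distance $r/M$ of enough points (say $M+1$ per direction, on a grid) to determine a degree-$M$ polynomial. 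Gaussian tails bound the first failure probability by $C\epsilon$. The second failure probability is bounded using Brownian scaling together with a small-ball/support estimate for one interval, then a union bound over the $T/h$ intervals; the per-interval failure is super-polynomially small in $\epsilon^{-\kappa}$, so the total is still $\le C\epsilon$.

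\emph{Step 2 (freezing).} Suppose $\sup_t|F(t)|<\epsilon^\beta$ and the second alternative fails, i.e.\ every $A_\alpha$ is Lipschitz with constant $L\le \epsilon^{-\beta 3^{-(M+1)}}$. On $I_i$ replace $A_\alpha(t)$ by $A_\alpha(t_i)$. This costs at most $C hL\epsilon^{-M\kappa}$, so
\[
\Bigl|\sum_{|\alpha|\le M}A_\alpha(t_i)W_t^\alpha\Bigr|\le \epsilon^\beta+ChL\epsilon^{-M\kappa}\quad\text{on }I_i .
\]
Expanding $W_t=W_{t_i}+(W_t-W_{t_i})$ turns the left side into a constant-coefficient polynomial $P_i(W_t-W_{t_i})$. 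Its coefficients $c_\beta$ are related to the $A_\alpha(t_i)$ by a unipotent triangular map whose entries are bounded by $\epsilon^{-M\kappa}$, so the map and its inverse cost only such factors.

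\emph{Step 3 (polynomial lower bound and bookkeeping).} Norms on the finite-dimensional space of degree-$M$ polynomials are equivalent. By the spreading property, on the good set $\sup_{t\in I_i}|P_i(W_t-W_{t_i})|\ge c\, r^M\max|c_\beta|$. Inverting the triangular map then gives
\[
\max_\alpha|A_\alpha(t_i)|\le C r^{-M}\epsilon^{-2M\kappa}\bigl(\epsilon^\beta+hL\bigr).
\]
Lipschitz continuity extends this bound from the grid points $t_i$ to all $t$, at an extra cost $hL$. It remains to choose $\gamma$ so the right side is $\le\epsilon^{\beta3^{-M}}$ when $L\le\epsilon^{-\beta3^{-(M+1)}}$. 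Here $r^{-M}\sim\epsilon^{-\gamma M/2}$, so we need $\gamma M/2<\beta(1-3^{-M})$ together with $\gamma-\beta3^{-(M+1)}>\beta3^{-M}+\gamma M/2$. This is possible for $M=1$, and in general only after an induction on $M$. The induction peels off one degree at a time: it applies the degree-$(M-1)$ statement with $\epsilon^\beta$ replaced by $\epsilon^{\beta/3}$ to the lower-order part, after controlling the top-degree coefficients as above. Each step loses a factor $3$ in the exponent, which is where $3^{-M}$ and $3^{-(M+1)}$ come from.

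The main obstacle I expect is Step 1 together with this exponent balancing: making the spreading estimate quantitative and uniform over all intervals and directions, while keeping the failure probability $\le C\epsilon$ and the losses $\epsilon^{-\kappa}$ small enough for the induction to close. If the direct balancing fails, I would first fall back on the induction in $M$, using the classical one-dimensional Norris-type estimate (smallness of $\int A\,dW$-type terms forces smallness of $A$) at each degree.
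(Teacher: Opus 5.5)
The paper does not prove this statement; it quotes it from the cited reference. Judged on its own, your plan has a genuine gap exactly where you flagged trouble.

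\textbf{The induction step does not close.} You correctly find that freezing cannot work for $M\ge 2$. On an interval of length $h$, the degree-$M$ part of $P_i$ is seen only at size $r^M\approx h^{M/2}$, while freezing costs $hL$, and $h^{1-M/2}L\ge L$ for every $h\le 1$. Your induction step, however, controls ``the top-degree coefficients as above'' and applies the degree-$(M-1)$ statement to the rest. That reuses precisely the estimate that fails.

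The missing idea is to lower the degree by \emph{differentiating} rather than by removing the top degree. Write $P_t(x):=\sum_\alpha A_\alpha(t)x^\alpha$. The coefficients of the \emph{linear} terms of the frozen polynomial in the increment are $\partial_kP_{t_i}(W_{t_i})$. These are seen at scale $h^{1/2}$, which does beat the freezing error. On $[s,s+h]$ one has
\[
\bigl|\nabla P_s(W_s)\cdot(W_t-W_s)\bigr|\le 2\epsilon^\beta+C\,hL+C\sup|A|\,|W_t-W_s|^2 ,
\]
with constants involving powers of $\sup|W|\le\epsilon^{-\kappa}$. Combine this with a lower bound on $\sup_{t\in[s,s+h]}|\langle e,W_t-W_s\rangle|$ in the direction $e$ of $\nabla P_s(W_s)$, and take $h\approx\epsilon^{4\beta/3}$. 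Then $h^{-1/2}\epsilon^\beta=\epsilon^{\beta/3}$ and $h^{1/2}L\le\epsilon^{5\beta/9}$ when $L\le\epsilon^{-\beta3^{-(M+1)}}$. This gives $\sup_t|\nabla P_t(W_t)|\lesssim\epsilon^{\beta/3}$, up to $\epsilon^{-C\kappa}$ losses absorbed by adjusting $h$.

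Now $G_k(t):=\partial_kP_t(W_t)=\sum_\alpha\alpha_kA_\alpha(t)W_t^{\alpha-e_k}$ is a Wiener polynomial of degree $M-1$ with Lipschitz coefficients. The degree-$(M-1)$ statement with $\beta/3$ in place of $\beta$ therefore gives $|A_\alpha|\le\epsilon^{\beta3^{-M}}$ for $|\alpha|\ge1$, unless the Lipschitz constant exceeds $\epsilon^{-\beta3^{-(M+1)}}$. Finally, $A_0=F-\sum_{|\alpha|\ge1}A_\alpha W^\alpha$ bounds $A_0$. This is where the factors $3^{-M}$ and $3^{-(M+1)}$ come from. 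The quadratic remainder needs $\sup|A|\le1$, but this costs nothing: if $\sup|A|>1$, dividing all coefficients by it preserves both hypotheses, and the bounded case yields the contradiction $1\le\epsilon^{\beta3^{-M}}$.

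\textbf{The good set in Step 1 does not have the probability you claim when $d\ge2$ (here $d=4$).} There are two possible readings of ``spread'', and each breaks a different step.
\begin{itemize}
\item Suppose it means that $W_t-W_{t_i}$ passes within $r/M$ of a unisolvent grid at scale $r$ in every direction. A Brownian path in $\mathbb{R}^d$, $d\ge3$, hits a given ball of radius $r/M$ centred at distance $r$ from its start with probability at most $M^{2-d}$. So this requirement fails on each interval with probability bounded away from zero. In $d=2$ the failure probability is only logarithmically small in $\epsilon$. Either way, the union bound over $T/h$ intervals is hopeless.
\item Suppose instead it means that every one-dimensional projection $\langle e,W_t-W_{t_i}\rangle$ sweeps an interval of length $r$. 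This does hold off a super-polynomially small set, by $\mathbb{P}(\sup_{t\le h}|B_t|\le r)\le Ce^{-ch/r^2}$ and a net on the sphere. But then the deterministic bound $\sup_{I_i}|P_i|\ge c\,r^M\max|c_\beta|$ no longer follows. For example, a curve tracing the circle $|y-re_1|=r$ has every projection sweeping an interval of length $2r$ containing $0$, yet the polynomial $|y-re_1|^2-r^2$ vanishes on it.
\end{itemize}
The differentiation argument above needs only this one-dimensional roughness, together with $\sup_{t\in[s,s+h]}|W_t-W_s|\le h^{1/2}\epsilon^{-\kappa}$ and $\sup|W|\le\epsilon^{-\kappa}$. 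It therefore avoids any multivariate norming inequality along the Brownian path.
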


We plan to establish the probabilistic spectral bound for the Malliavin matrix by considering its $\hat{H}^n$-component and $T\Sigma$-component separately. To this end, we need to obtain Lie bracket vector fields satisfying the generalized Hörmander condition (see \cite[Definition 1.2]{JFA}). Following an inductive argument similar to those in \cite{HM06,JFA,HM11a}, we derive the corresponding implications on the $\hat{H}^n$-component and the $T\Sigma$-component. Below is the base case of the inductive argument.
\begin{proposition}\label{JFA-Lemma6.5}
For every \( 0 < \epsilon < \epsilon_0(T) \) and every \(\eta \in (0,1)\) there exist a set \(\Omega_{\epsilon,\Sigma}\) and \(C = C(\eta, T, \mathbf{p}_0)>0\) that is locally bounded in \( \mathbf{p}_0 \in \Sigma \), such that
\[
\mathbb{P}(\Omega_{\epsilon,\Sigma}^c) \leq C \exp(\eta V^n(U_0))\epsilon
\]
and on the set \(\Omega_{\epsilon,\Sigma}\)
\[
\langle\mathcal{M}_{T} \mathfrak{p}, \mathfrak{p}\rangle \leq \epsilon \|\mathfrak{p}\|^2 \implies \sup_{t \in [T/2,T]} |\langle\mathcal{J}_{t,T}^* \mathfrak{p}, \sigma_j^m \rangle| \leq \epsilon^{1/8} \|\mathfrak{p}\|,
\]
for each \( j \in \mathcal{Z} \), \( m \in \{0,1\} \), and every \( \mathfrak{p} \in \hat{H}^n\times T_{\mathbf{p}_T}\Sigma \). 
\end{proposition}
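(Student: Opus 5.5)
The plan is to follow the standard base case of the Hörmander-type induction, as in \cite[Lemma 6.5]{JFA} and \cite[Lemma 5.11]{NS}, using Lemma \ref{JFA-lemma 6.2} as the interpolation device. Fix $j\in\mathcal{Z}$, $m\in\{0,1\}$, and consider the random function
\[
f_{\mathfrak{p}}(t):=\langle \mathcal{J}^*_{t,T}\mathfrak{p},\sigma_j^m\rangle,\qquad t\in[T/2,T],\quad \|\mathfrak{p}\|=1 .
\]
By \eqref{recall1} and since the $\alpha_j^m$ are nonzero, the hypothesis $\langle\mathcal{M}_T\mathfrak{p},\mathfrak{p}\rangle\le\epsilon$ gives
\[
\int_{T/2}^T |f_{\mathfrak{p}}(t)|^2\,dt\le C\epsilon .
\]
This is an $L^2$ bound, and the aim is to upgrade it to a sup bound. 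The usual interpolation for that is
\[
\|f\|_{L^\infty}\le C\max\Bigl(\|f\|_{L^2}^{2/3}\|\partial_t f\|_{L^\infty}^{1/3},\ \|f\|_{L^2}\Bigr),
\]
so I need a moment bound on $\sup_{\|\mathfrak{p}\|=1}\sup_t|\partial_t f_{\mathfrak{p}}|$.

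To get that bound, apply Lemma \ref{JFA6.1} with the constant field $E^1\equiv\sigma_j^m$ and $E^2\equiv0$. The identity \eqref{JFA-6.8} gives
\[
\partial_t f_{\mathfrak{p}}=\bigl\langle \mathcal{J}^*_{t,T}\mathfrak{p},\,[F(U),\sigma_j^m]-\Theta_{\sigma_j^m}(\mathbf{p}_t)\bigr\rangle .
\]
Since $\sigma_j^m$ has zero vorticity component, the induced velocity vanishes and $\Theta_{\sigma_j^m}=0$. What remains is $\langle\mathcal{J}^*_{t,T}\mathfrak{p},Y_j^m(U)\rangle$, with $Y_j^m$ as in \eqref{JFA-5.9}. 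This expression is linear in $U$ with smooth coefficients, and $\mathcal{J}_{t,T}$ is bounded on $\hat{H}^n$ by \eqref{assum5.4:line2}. The moment estimates of Proposition \ref{priori-eatimates} then yield
\[
\mathbb{E}\sup_{\|\mathfrak{p}\|=1}\sup_{t\in[T/2,T]}|\partial_t f_{\mathfrak{p}}|^q\le C e^{\eta V^n(U_0)} .
\]
For the set, take $\Omega_{\epsilon,\Sigma}$ to be the event
\[
\sup_{\|\mathfrak{p}\|=1}\sup_t|\partial_t f_{\mathfrak{p}}|\le \epsilon^{-1/8},
\]
intersected over the finitely many $(j,m)$. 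Chebyshev's inequality with $q=8$ gives $\mathbb{P}(\Omega_{\epsilon,\Sigma}^c)\le C\epsilon e^{\eta V^n(U_0)}$. On this set the interpolation gives
\[
\sup_t|f_{\mathfrak{p}}|\le C\epsilon^{1/3}\epsilon^{-1/24}=C\epsilon^{7/24}\le \epsilon^{1/8}
\]
for $\epsilon<\epsilon_0(T)$. The general case $\|\mathfrak{p}\|\ne1$ then follows by homogeneity.

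The one delicate point is the uniformity over all $\mathfrak{p}$, including the $T\Sigma$-component, together with the local boundedness of constants in $\mathbf{p}_0$. I would handle this by taking the sup over $\mathfrak{p}$ inside the expectation in the derivative bound, as above, so that a single exceptional set serves every $\mathfrak{p}$ simultaneously. Duality and \eqref{assum5.4:line2} control $\|\mathcal{J}^*_{t,T}\|$, including its $T\Sigma$ part, with constants locally bounded in $\mathbf{p}_0$. Lemma \ref{JFA-lemma 6.2} can be used as an alternative to the elementary interpolation if a $C^{1,\alpha}$ bound is more convenient; that bound would come from \eqref{3.12}.
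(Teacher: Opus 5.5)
Your proposal is correct and is essentially the paper's argument. Both upgrade the smallness of $\langle\mathcal{M}_T\mathfrak{p},\mathfrak{p}\rangle$ to a supremum bound by interpolating against a time derivative whose only random ingredient is $\langle\mathcal{J}^*_{t,T}\mathfrak{p},L_t\sigma_j^m\rangle=-\langle\mathcal{J}^*_{t,T}\mathfrak{p},Y_j^m(U)\rangle$, with moments controlled through Assumption~\ref{assum5.4}.

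The difference is only in packaging. The paper applies Lemma~\ref{JFA-lemma 6.2} with $\alpha=1$ to the primitive $\sum_{j,m}(\alpha_j^m)^2\int_0^t\langle\sigma_j^m,\mathcal{J}^*_{r,T}\mathfrak{p}\rangle^2\,dr$. You instead interpolate $L^2$ against Lipschitz directly on $\langle\mathcal{J}^*_{t,T}\mathfrak{p},\sigma_j^m\rangle$ and use Chebyshev. Your route gives the stronger exponent $7/24$, so it absorbs the factor $|\alpha_j^m|^{-1}$ cleanly, a step the paper's final line passes over.
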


\begin{proof}
For any \(\mathfrak{p} \in \hat{H}^n\times T_{\mathbf{p}_T}\Sigma\) with \(\|\mathfrak{p}\| = 1\), recall that \eqref{recall1}, then define
	\[
	f_\mathfrak{p}(t) := \sum_{\substack{j \in \mathcal{Z} \\ m \in \{0,1\}}} (\alpha_j^m)^2 \int_0^t \langle \sigma_j^m, \mathcal{J}_{r,T}^* \mathfrak{p} \rangle^2  dr \leq \sum_{\substack{j \in \mathcal{Z} \\ m \in \{0,1\}}} (\alpha_j^m)^2 \int_0^T \langle \sigma_j^m, \mathcal{J}_{r,T}^* \mathfrak{p} \rangle^2  dr = \langle \mathcal{M}_{T} \mathfrak{p}, \mathfrak{p} \rangle.
	\]
Note that
\begin{equation*}
	\partial_tf_\mathfrak{p}(t) = \sum_{\substack{j \in \mathcal{Z} \\ m \in \{0,1\}}} (\alpha_j^m)^2 \langle \sigma_j^m, \mathcal{J}_{r,T}^* \mathfrak{p} \rangle^2, 
\end{equation*}
\begin{equation*}
	\partial_{tt}f_\mathfrak{p}(t) = 2 \sum_{\substack{j \in \mathcal{Z} \\ m \in \{0,1\}}} (\alpha_j^m)^2 \langle \sigma_j^m, \mathcal{J}_{r,T}^* \mathfrak{p} \rangle \langle \sigma_j^m, \partial_t \mathcal{J}_{r,T}^* \mathfrak{p} \rangle=2 \sum_{\substack{j \in \mathcal{Z} \\ m \in \{0,1\}}} (\alpha_j^m)^2 \langle \sigma_j^m, \mathcal{J}_{r,T}^* \mathfrak{p} \rangle \langle L_t\sigma_j^m, \mathcal{J}_{r,T}^* \mathfrak{p} \rangle.
\end{equation*}
Let \(\Omega_{\epsilon, \Sigma} := \Lambda_{\epsilon, 1}^c\), where \(\Lambda_{\epsilon, \alpha}\) is as in \eqref{JFA-6.10} with \(\mathcal{I} := \{\mathfrak{p} \in \hat{H}^n\times T_{\mathbf{p}_T}\Sigma : \|\phi\| = 1\}\), i.e.,
\begin{equation*}
	\Lambda_{\epsilon,1} = \bigcup_{\mathfrak{p} \in \mathcal{I}}  
	\left\{ \sup_{t \in [T/2,T]} |f_\mathfrak{p}(t)| \leq \epsilon \quad \text{and} \sup_{t \in [T/2,T]} |\partial_tf_\mathfrak{p}(t)| > \epsilon^{\frac{1}{4}}
	\right\}.
\end{equation*}
Then by Lemma \ref{JFA-lemma 6.2} with \(\alpha = 1\), one has
	\[
	\mathbb{P}(\Omega_{\epsilon, \Sigma}^c) \leq C \epsilon \sum_{\substack{j \in \mathcal{Z} \\ m \in \{0,1\}}} (\alpha_j^m)^4 \mathbb{E} \left( \sup_{\substack{t \in [T/2, T] \\ \|\mathfrak{p}\| = 1}} |\langle \sigma_j^m, \mathcal{J}_{r,T}^* \mathfrak{p} \rangle \langle L_t\sigma_j^m, \mathcal{J}_{r,T}^* \mathfrak{p} \rangle|^2 \right) .
	\]
Note that 
\[ \left| \langle L_t \sigma_j^m, \mathcal{J}_{r,T}^*  \mathfrak{p} \rangle \right| \leqslant \|L_t \sigma_j^m\| \|\mathcal{J}_{r,T}^* \mathfrak{p}\| \leqslant \frac{1}{2} \left( C \|L_t\|^2_{\hat{H}^{n+2}\to \hat{H}^n} + \|\mathcal{J}_{r,T}\|^2_{\hat{H}^{n}\to \hat{H}^n} \right)\|\mathfrak{p}\|. \]
Then by Assumption \ref{assum5.4}, we obtain
\[
\mathbb{P}(\Omega_{\epsilon, \Sigma}^c) \leq C\exp(\eta V^n(U_0)) \epsilon
\]
for any \(\epsilon < \epsilon_0 = \epsilon_0(T)\), where \(C = C(\eta, T,\mathbf{p}_0)\) that is locally bounded in \( \mathbf{p}_0 \in \Sigma \). Finally, on \(\Omega_{\epsilon, \Sigma}\) we have, cf. \eqref{JFA-6.13}, that
	\[
	\langle \mathcal{M}_{T} \mathfrak{p}, \mathfrak{p} \rangle \leq \epsilon \|\mathfrak{p}\|^2 \Rightarrow \sup_{t \in [T/2, T]} |\alpha_j^m||\langle \mathcal{J}_{t,T}^* \mathfrak{p}, \sigma_j^m \rangle| \leq \epsilon^{1/8} \|\mathfrak{p}\|,
	\]
for each \(j \in \mathcal{Z}\), \(m \in \{0,1\}\) and any \(\mathfrak{p} \in \hat{H}^n\times T_{\mathbf{p}_t}\Sigma\). Noted that \(\alpha_j^m \neq 0\), then the assertion of the lemma follows for \(\epsilon \leq \epsilon_0(T)\).
\end{proof}

Our objective is not only to span the velocity and temperature fields, as in \cite[Sections 5-6]{JFA}, but also to obtain information about the manifold $T\Sigma$. The following propositions constitute the inductive step of this subsection's main argument. Next, we turn to implications of the form $\sigma\to [\sigma,F]=Y$.

\begin{proposition}\label{JFA-Lemma6.6}
Fix any \( j \in \mathbb{Z}_+^2 \). For each \( 0 < \epsilon < \epsilon_0(T) \) and \( \eta \in (0,1) \), there exist a set \( \Omega_{\epsilon,j,Y} \) and a constant \( C = C(\eta,T,\mathbf{p}_0)>0\) that is locally bounded in \( \mathbf{p}_0 \in \Sigma \) such that
\[
	\mathbb{P}(\Omega_{\epsilon,j,Y}^c) \leq C|j|^{16} \exp(\eta V^n(U_0))\epsilon,
\]
and such that on the set \( \Omega_{\epsilon,j,Y} \), for each \( m \in \{0,1\} \), it holds that
\begin{equation}\label{JFA-6.14}
	\sup_{t \in [T/2,T]} |\langle \mathcal{J}_{t,T}^* \mathfrak{p}, \sigma_j^m \rangle| \leq \epsilon \|\mathfrak{p}\| \implies \sup_{t \in [T/2,T]} |\langle \mathcal{J}_{t,T}^* \mathfrak{p}, Y_j^m(U) \rangle| \leq \epsilon^{1/4} \|\mathfrak{p}\|. 
\end{equation}	
\end{proposition}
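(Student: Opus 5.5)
The plan follows the base case, Proposition \ref{JFA-Lemma6.5}, replacing the quadratic functional $f_\mathfrak{p}$ by the linear pairing itself. For $\mathfrak{p}$ with $\|\mathfrak{p}\|=1$ and $m\in\{0,1\}$, set
\[
f_{\mathfrak{p},m}(t):=\langle \mathcal{J}_{t,T}^*\mathfrak{p},\sigma_j^m\rangle .
\]
Apply Lemma \ref{JFA6.1} with $E^1\equiv\sigma_j^m$, which is constant and so independent of $\overline U$, and with $E^2\equiv0$. Since $\sigma_j^m$ has vanishing vorticity component, $\Theta_{\sigma_j^m}=0$. Computation \eqref{JFA-6.8} then gives
\[
\partial_t f_{\mathfrak{p},m}(t)=\langle \mathcal{J}_{t,T}^*\mathfrak{p},[F(U),\sigma_j^m]\rangle=\langle \mathcal{J}_{t,T}^*\mathfrak{p},Y_j^m(U)\rangle .
\]
Here $Y_j^m(U)=\nu_2|j|^2\sigma_j^m+B(U,\sigma_j^m)+(-1)^mgj_1\psi_j^{m+1}$, as in \eqref{JFA-5.9}. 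This vector field depends on the true solution $U$, not on $\overline U$, but it is affine in $U$. The identity holds pathwise because $\sigma_j^m$ is deterministic, so no Itô correction appears: $\mathcal{J}^*_{t,T}\mathfrak{p}$ is $C^1$ in $t$ by \eqref{L*}.

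Next, take $\Omega_{\epsilon,j,Y}:=\Lambda_{\epsilon,\alpha}^c$ from Lemma \ref{JFA-lemma 6.2}, with index set $\mathcal{I}=\{(\mathfrak{p},m):\|\mathfrak{p}\|=1\}$ and a suitable Hölder exponent $\alpha$. With $\alpha=1$ the threshold is $\epsilon^{1/4}$, which matches \eqref{JFA-6.14} exactly. Remark \ref{JFA-remark6.3} then yields the implication on $\Omega_{\epsilon,j,Y}$: whenever $\sup|f_{\mathfrak{p},m}|\le\epsilon$, we get $\sup|\partial_tf_{\mathfrak{p},m}|\le\epsilon^{1/4}$. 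Homogeneity in $\mathfrak{p}$ removes the normalization.

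It remains to bound $\mathbb{E}\sup_{\mathfrak{p},m}\|f_{\mathfrak{p},m}\|_{C^{1,\alpha}}^{2/\alpha}$, which with $\alpha=1$ is a second moment. The sup part of $\partial_tf$ is controlled by $\sup_t\|\mathcal{J}^*_{t,T}\|\,\|Y_j^m(U_t)\|_{\hat H^n}$. Since $\|Y_j^m(U)\|_{\hat H^n}\lesssim |j|^{2}(1+\|U\|_{\hat H^{n+1}})|j|^{n}$, the polynomial dependence on $|j|$ arises here. With $n=4$ and squaring, this gives a power $|j|^{16}$ after choosing exponents.

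The Lipschitz part of $\partial_t f$ is the main obstacle. $U$ is only $C^{1/2-}$ in time, and $\partial_t f$ involves $Y_j^m(U_t)$ paired with $\mathcal{J}^*_{t,T}\mathfrak{p}$. A Lipschitz bound would require differentiating $U$, which is impossible. I would therefore switch to $\alpha<1/2$: take $\alpha=1/3$, or work with the split $U=\overline U+\sigma_\theta W$. Estimate \eqref{JFA-6.9} reduces the problem to $\|\mathcal{J}^*\mathfrak{p}\|_{C^\alpha\hat H^n}$, which is bounded at the end of the proof of Lemma \ref{JFA6.1}, and to $\|B(U,\sigma_j^m)\|_{C^\alpha\hat H^n}\lesssim |j|^{n+1}\|U\|_{C^\alpha\hat H^{n}}$. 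The latter is finite with exponential-moment bounds: $\overline U$ is $C^{1,\alpha}$ and $W$ is $\alpha$-Hölder, using Proposition \ref{priori-eatimates} and Assumption \ref{assum5.4}.

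With $\alpha<1$, the threshold $\epsilon^{\alpha/(2(1+\alpha))}$ becomes worse than $\epsilon^{1/4}$. To handle this, I would apply the lemma at a rescaled level. Alternatively, I would first use the linear affine structure of $Y_j^m(U)=a+B(U,\sigma_j^m)$ to write $\partial_tf$ as a Wiener polynomial of degree one in $W$, with coefficients $A_0,A_\alpha$ built from $\overline U$ that are Lipschitz in time, and then invoke Theorem \ref{JFA-Theorem6.4}. The Lipschitz coefficients allow a $C^{1,1}$-type interpolation, so the exponent $1/4$ is recovered. The final estimate combines the moment bounds into $C|j|^{16}e^{\eta V^n(U_0)}\epsilon$, via Hölder's inequality and the super-Lyapunov control of $\|U\|_{\hat H^{n+1}}$.
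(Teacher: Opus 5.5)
Your proposal has a genuine gap. It comes from a missed structural fact rather than a technicality: the ``main obstacle'' you identify does not exist. In $Y_j^m(U)=\nu_2|j|^2\sigma_j^m+B(U,\sigma_j^m)+(-1)^mgj_1\psi_j^{m+1}$, the solution enters only through $B(U,\sigma_j^m)$, and $B(U,\cdot)$ sees $U$ only through the velocity $K*\omega$. The noise $\sigma_\theta\,dW$ acts only on the temperature component, so $B(\sigma_\theta W,\sigma_j^m)=0$ and $Y_j^m(U)=Y_j^m(\overline U)$, which is \eqref{JFA-6.15}. Equivalently, $\omega_t$ solves an equation with no noise, so $t\mapsto Y_j^m(U_t)$ is $C^1$. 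Hence $\partial_t f_{\mathfrak p,m}=\langle\mathcal J^*_{t,T}\mathfrak p,Y_j^m(\overline U)\rangle$ is itself differentiable. Lemma \ref{JFA6.1}, in the form \eqref{JFA-6.6} with $E^1=Y_j^m(\overline U)$, gives $\partial_{tt}f_{\mathfrak p,m}=\langle\mathcal J^*_{t,T}\mathfrak p,Z_j^m(U)-\Theta_{Y_j^m(\overline U)}(\mathbf p_t)\rangle$.

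This is what lets the paper apply Lemma \ref{JFA-lemma 6.2} with $\alpha=1$, which yields exactly the threshold $\epsilon^{1/4}$ and the probability bound $C\epsilon\,\mathbb E\sup_{\mathfrak p}\|f_{\mathfrak p,m}\|_{C^{1,1}}^2$. The factor $|j|^{16}$ then comes from \eqref{JFA-6.16}, $\|Z_j^m(U)\|_{\hat H^4}\le C|j|^{8}(1+\|U\|_{\hat H^6}^2)$, entering through $(\mathbb E\sup\|\cdot\|_{\hat H^4}^4)^{1/2}$. It does not come from your bound on $Y_j^m$, which gives only $|j|^{6}$ per factor.

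The workarounds you propose do not close the argument. With $\alpha=1/3$, Lemma \ref{JFA-lemma 6.2} gives the threshold $\epsilon^{1/8}$, and rescaling cannot recover $\epsilon^{1/4}$. Running the lemma at a level $\epsilon'$ with $(\epsilon')^{\alpha/(2(1+\alpha))}=\epsilon^{1/4}$ forces $\epsilon'=\epsilon^{(1+\alpha)/(2\alpha)}<\epsilon$, so it needs the hypothesis $\sup|f|\le\epsilon'$, which you do not have. A variant with $\alpha$ near $1/2$ and high moments could in principle reach exponent $1/4$, but only with a far larger power of $|j|$ than $16$.

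The Norris-type alternative addresses the wrong step. Theorem \ref{JFA-Theorem6.4} extracts smallness of the coefficients of a Wiener polynomial from smallness of the polynomial. It supplies no time regularity of $\partial_t f$, which is what the interpolation $\sup|f|\le\epsilon\Rightarrow\sup|\partial_t f|\le\epsilon^{1/4}$ requires. Moreover, $A_0+A_1W$ with Lipschitz coefficients is not Lipschitz unless $A_1\equiv0$. Had you carried out the decomposition you suggest, you would have found that the coefficient of $W^{k,l}$ is $\alpha_k^l\langle\mathcal J^*_{t,T}\mathfrak p,B(\sigma_k^l,\sigma_j^m)\rangle=0$. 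That is precisely the missing observation, and with it the proof reduces to the same $\alpha=1$ argument as in Proposition \ref{JFA-Lemma6.5}.
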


\begin{proof}
By expanding \( U = \overline{U} + \sigma W \), and by exploiting the fact that
$$
B(U,\tilde{U})=0,\, \text{if}\, U=(0,\theta), 
$$
we observe that
\begin{equation}\label{JFA-6.15}
Y_j^m(U) = Y_j^m(\overline{U}).
\end{equation}
Then for fixed \( m \in \{0,1\} \) and any \(\mathfrak{p} \in \mathcal{I} := \{\mathfrak{p} \in \hat{H}^n\times T_{\mathbf{p}_T}\Sigma : \|\mathfrak{p}\| = 1\}\), define \( f_\mathfrak{p}(t) := \langle \mathcal{J}_{t,T}^* \mathfrak{p}, \sigma_j^m \rangle \) and using \eqref{JFA-6.8} and \eqref{JFA-6.8}, we derive that 
\[
\partial_t f_\phi(t) = \langle \mathcal{J}_{t,T}^* \mathfrak{p}, [F(U), \sigma_j^m]-\Theta_{\sigma_j^m}(\mathbf{p}_t) \rangle = \langle \mathcal{J}_{t,T}^* \mathfrak{p}, Y_j^m(U)-\Theta_{\sigma_j^m}(\mathbf{p}_t) \rangle = \langle \mathcal{J}_{t,T}^* \mathfrak{p}, Y_j^m(\overline{U})-\Theta_{\sigma_j^m}(\mathbf{p}_t) \rangle.
\]
Observe that the vector fields on the manifold are induced by the velocity field, and since the noise under consideration acts exclusively on the temperature equation, it follows that essentially
\[
\partial_t f_\phi(t) = \langle \mathcal{J}_{t,T}^* \mathfrak{p}, Y_j^m(\overline{U}) \rangle.
\]
Let \(\Omega_{\epsilon,j,Y} := \Lambda_{\epsilon,1}^c\) with \(\alpha=1\). Once again, with \eqref{JFA-6.13}, we see that \eqref{JFA-6.14} holds on \(\Omega_{\epsilon,j,Y}\). On the other hand, by \eqref{JFA-6.11}, \eqref{JFA-6.6}, \eqref{JFA-6.15} and \eqref{JFA-5.11}, we have
\begin{align*}
\mathbb{P}(\Omega_{\epsilon,j,Y}^c) &\leq C \epsilon \mathbb{E} \left( \sup_{\phi \in \mathcal{I}} \sup_{t \in [T/2,T]} \left|\partial_t \langle \mathcal{J}_{t,T}^* \mathfrak{p}, Y_j^m(\overline{U}) \rangle\right|^2 \right)\\
&\leq C \epsilon \exp \left( \frac{\eta}{2}V^n(U_0) \right) \left(\mathbb{E} \sup_{t \in [T/2,T]} \| Z_j^m(U)-\Theta_{Y_j^m(\overline{U})} (\mathbf{p}_t)\|_{\hat{H}^n}^4\right)^{1/2}.
\end{align*}
Noted that 
\begin{equation}\label{JFA-6.16}
\sup_{t\in [T/2,T]}\|Z_j^m(U)\|_{\hat{H}^s}\le C|j|^{4+s}(1+\sup_{t\in [T/2,T]}\|U\|^2_{\hat{H}^{s+2}}),
\end{equation}
and
\begin{equation}\label{Y-estimate}
\sup_{t\in [T/2,T]}\|Y_j^m(U)\|_{\hat{H}^s}\le C|j|^{2+s}(1+\sup_{t\in [T/2,T]}\|U\|^2_{\hat{H}^{s+1}}),
\end{equation}
which follows from \eqref{JFA-5.9} and \eqref{JFA-5.11} by counting derivatives and applying the Hölder and Poincaré inequalities. Thus by \eqref{JFA-A.5} and Assumption \ref{assum5.4}, we have 
\begin{align*}
	\mathbb{P}(\Omega_{\epsilon,j,Y}^c) \leq C \epsilon |j|^{16} \exp \left( \frac{\eta}{2} V^n(U_0) \right) \left(\mathbb{E} \left( 1 + \sup_{t \in [T/2,T]} \| U \|_{\hat{H}^6}^8 \right) \right)^{1/2}\leq C \epsilon |j|^{16} \exp (\eta V^n(U_0))
\end{align*}
for any \(\epsilon < \epsilon^*(T)\), where \(C = C(\eta, T,\mathbf{p}_0)\) that is locally bounded in \( \mathbf{p}_0 \in \Sigma \). 
\end{proof}

We next establish implications corresponding the chain of brackets $Y\to Z\to [Z,\sigma]$.

\begin{proposition}\label{JFA-Lemma6.8}
Fix \( j \in \mathbb{Z}_+^2 \). For each \( 0 < \epsilon < \epsilon_0(T) \), and \( \eta \in (0,1) \) there exist a set \( \Omega_{\epsilon,j} \) and \(C = C(\eta, T,\mathbf{p}_0)\) that is locally bounded in \( \mathbf{p}_0 \in \Sigma \) such that
\begin{equation}\label{JFA-6.17}
	\mathbb{P}(\Omega_{\epsilon,j}^c) \leq C|j|^{90 \times 10} \exp(\eta V^{n+2}(U_0))\epsilon, 
\end{equation}
and on the set \( \Omega_{\epsilon,j} \), for each \( m \in \{0,1\} \), it holds that
\begin{align*}
	\sup_{t \in [T/2,T]} |\langle \mathcal{J}^*_{t,T}\mathfrak{p}, Y_j^m(U) \rangle| \leq \epsilon \|\mathfrak{p}\|
	\Longrightarrow 
	\begin{cases}
		\mathop{\rm sup}\limits_{t \in [T/2,T]} |\langle \mathcal{J}^*_{t,T}\mathfrak{p}, Z_j^m(\overline{U})-\Theta_{Y_j^m(\overline{U})} \rangle| \leq \epsilon^{1/30} \|\mathfrak{p}\|, \\
		\mathop{\rm sup}\limits_{k \in \mathcal{Z},l \in \{0,1\}} \mathop{\rm sup}\limits_{t \in [T/2,T]}|\langle \mathcal{J}^*_{t,T}\mathfrak{p}, [Z_j^m(U), \sigma_k^l] \rangle| \leq \epsilon^{1/30} \|\mathfrak{p}\|.
	\end{cases}
\end{align*}	
\end{proposition}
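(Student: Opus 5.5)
The argument follows the template of Propositions \ref{JFA-Lemma6.5} and \ref{JFA-Lemma6.6}. It has two stages, each producing an exceptional set, and $\Omega_{\epsilon,j}$ will be their intersection.

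\textbf{Stage 1: from $Y$ to $Z$.} For $\mathfrak{p}$ in the unit sphere $\mathcal{I}$, set $f_\mathfrak{p}(t):=\langle \mathcal{J}^*_{t,T}\mathfrak{p}, Y_j^m(\overline U)\rangle$, using \eqref{JFA-6.15} to replace $U$ by $\overline U$. By the remark following Lemma \ref{JFA6.1}, applied with $E^1=Y_j^m$ and $E^2=0$, we get
\[
\partial_t f_\mathfrak{p}=\langle \mathcal{J}^*_{t,T}\mathfrak{p}, Z_j^m(U)-\Theta_{Y_j^m(\overline U)}(\mathbf{p}_t)\rangle .
\]
Since $\overline U$ is only $C^{1,\alpha}$ in time for $\alpha<1/2$, I would take $\alpha=1/4$ (any $\alpha<1/2$ works) in Lemma \ref{JFA-lemma 6.2}. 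The required $C^{1,\alpha}$ moment of $f_\mathfrak{p}$ comes from \eqref{JFA-6.7}, combined with the bounds \eqref{JFA-6.16} and \eqref{Y-estimate} and with Assumption \ref{assum5.4}. Time-H\"older norms of $Z_j^m(U)$ also involve $\|U\|_{C^\alpha\hat H^{n}}$, which I would control through the exponential moments \eqref{Hn-eatimates1} one level up. This is the source of the factor $e^{\eta V^{n+2}}$ and of the polynomial loss $|j|^{C}$ in \eqref{JFA-6.17}. On the complement of $\Lambda_{\epsilon,\alpha}$, Remark \ref{JFA-remark6.3} gives $\sup|\langle\mathcal{J}^*\mathfrak{p},Z_j^m(U)-\Theta_{Y}\rangle|\le \epsilon^{1/10}$.

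\textbf{Stage 2: separating the Brownian dependence.} Here $Z_j^m(U)$ must be replaced by $Z_j^m(\overline U)$, and the bracket $[Z_j^m,\sigma_k^l]$ must be extracted. I would expand $U=\overline U+\sigma_\theta W$ in \eqref{JFA-5.11}. Since $Z_j^m$ is polynomial of degree at most two in $U$, we get
\[
\langle \mathcal{J}^*\mathfrak{p}, Z_j^m(U)\rangle=A_0+\sum_{k,l}A_{k,l}W^{k,l}+\sum A_{(k,l),(k',l')}W^{k,l}W^{k',l'} ,
\]
a Wiener polynomial of degree two in the sense of Theorem \ref{JFA-Theorem6.4}. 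Its coefficients are
\[
A_0=\langle \mathcal{J}^*\mathfrak{p}, Z_j^m(\overline U)-\Theta_Y\rangle, \qquad A_{k,l}=\alpha_k^l\langle\mathcal{J}^*\mathfrak{p},\nabla Z_j^m(\overline U)\sigma_k^l\rangle .
\]
Because $\nabla\sigma_k^l=0$, the coefficient $A_{k,l}$ equals $\alpha_k^l\langle\mathcal{J}^*\mathfrak{p},[Z_j^m(\overline U),\sigma_k^l]\rangle$ up to quadratic terms; those quadratic terms are themselves constant brackets $\alpha\alpha'\langle\mathcal{J}^*\mathfrak{p},[[Z,\sigma],\sigma']\rangle$. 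I would apply Theorem \ref{JFA-Theorem6.4} with $M=2$ and $\beta=1/10$ on the set $\Omega_{\epsilon,2,\beta}$.

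The "or" alternative of that theorem, a large Lipschitz constant of the coefficients, is excluded by a Markov/Chebyshev bound. The time derivatives of $A_\alpha$ involve $\partial_t\mathcal{J}^*$ and $\partial_t\overline U=F(U)$, both of which have moments bounded by $e^{\eta V^{n+2}}$ times a power of $|j|$, using Lemma \ref{JFA6.1} and \eqref{Hn-eatimates1}. This leaves the first alternative: all $|A_\alpha|\le\epsilon^{1/(10\cdot 9)}$. We then replace $[Z_j^m(U),\sigma_k^l]$ by $[Z_j^m(\overline U),\sigma_k^l]$ plus a term linear in $W$ with coefficient the already-controlled second bracket. Bounding $\sup|W|$ by $\epsilon^{-\delta}$ off a set of probability $\le\epsilon$ and adjusting exponents then yields $\epsilon^{1/30}$ for both conclusions.

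The main obstacle is the bookkeeping of the $C^{1,\alpha}$ and Lipschitz moments in both stages. These involve $\|U\|_{C^\alpha\hat H^{n+2}}$-type norms and the $\Theta$-terms on $T\Sigma$, whose time derivatives bring in $\nabla\Theta_{U_t}(\mathbf{p}_t)$. I would control them with \eqref{assum5.4:line1} and Sobolev embedding, since $\mathbf{u}\in H^5\subset C^2$. Tracking the powers of $|j|$ through three derivative counts and H\"older exponents $2/\alpha=8$ is what produces the large exponent $|j|^{900}$, and I would not try to optimize it.
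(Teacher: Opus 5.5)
Your Stage 1 matches the paper's, but Stage 2 has a genuine gap: it cannot produce the exponent $\epsilon^{1/30}$, nor the factor $|j|^{90\times 10}$, claimed in the statement. You treat $\langle \mathcal{J}^*_{t,T}\mathfrak{p}, Z_j^m(U)-\Theta_{Y_j^m(\overline U)}(\mathbf{p}_t)\rangle$ as a Wiener polynomial of degree $M=2$ and apply Theorem \ref{JFA-Theorem6.4} with $\beta=1/10$. As you computed yourself, this gives only $\sup|A_\alpha|\le \epsilon^{1/90}$. Excluding the Lipschitz alternative, now with threshold $\epsilon^{-1/270}$, by Markov needs a $270$th moment, so the power of $|j|$ is of order $270\times 10$ rather than $90\times 10$. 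Your last step absorbs a linear-in-$W$ correction using $\sup|W|\le\epsilon^{-\delta}$, which lowers the exponent further to $\epsilon^{1/90-\delta}$. ``Adjusting exponents'' can only make them worse, so it cannot bring you back to $1/30$.

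The missing idea is the structural identity \eqref{JFA-5.12}. The noise directions $\sigma_k^l$ lie purely in the temperature component, and $B(U,\tilde U)$ depends on $U$ only through $\omega$. Consequently $Z_j^m$ is affine in $\theta$ with a $U$-independent linear part, so $[Z_j^m(U),\sigma_k^l]$ is a constant vector field. The ``constant'' second brackets $[[Z_j^m,\sigma_k^l],\sigma_{k'}^{l'}]$ you mention are therefore identically zero. It follows that $[Z_j^m(U),\sigma_k^l]=[Z_j^m(\overline U),\sigma_k^l]$ exactly, and the expansion \eqref{JFA-6.20} is a Wiener polynomial of degree $M=1$.

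With $M=1$ and $\beta=1/10$, Theorem \ref{JFA-Theorem6.4} gives precisely $\epsilon^{1/30}$ for both coefficients, with no $\sup|W|$ truncation needed. The Lipschitz threshold becomes $\epsilon^{-1/90}$, which is why the paper's Markov step uses the $90$th moment and yields $|j|^{90\times 10}$. Your $M=2$ route proves a weaker-exponent version of the proposition. Those weaker exponents would then have to be carried through Proposition \ref{JFA-Lemma6.9} and the induction, and the result is not the statement as written.
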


\begin{proof}

For fixed \(m \in \{0,1\}\) and \(\mathfrak{p} \in \hat{H}^n\times T_{\mathbf{p}_T}\Sigma\) let \(f_\mathfrak{p}(t) := \langle \mathcal{J}^*_{t,T}\mathfrak{p}, Y_j^m(U) \rangle = \langle\mathcal{J}^*_{t,T}\mathfrak{p}, Y_j^m(\overline{U}) \rangle\) (cf.\eqref{JFA-6.15}) so that \(\partial_t f_\mathfrak{p}(t) = \langle \mathcal{J}^*_{t,T}\mathfrak{p}, [F(U), Y_j^m(\overline{U})]-\Theta_{Y_j^m(\overline{U})}(\mathbf{p}_t) \rangle = \langle \mathcal{J}^*_{t,T}\mathfrak{p}, Z_j^m(U)-\Theta_{Y_j^m({U})}(\mathbf{p}_t) \rangle\) (see \eqref{JFA-6.8}, \eqref{JFA-5.11}). Let \(\Omega_{\epsilon,j}^1 = \Lambda_{\epsilon, 1/4}^c(\alpha=1/4)\), where \(\Lambda_{\epsilon,\alpha}\) is as in \eqref{JFA-6.10} over with \(\mathcal{I} := \{\mathfrak{p} \in \hat{H}^n\times T_{\mathbf{p}_T}\Sigma : \|\mathfrak{p}\| = 1\}\). Then, on \(\Omega_{\epsilon,j}^1\) one has, in view of \eqref{JFA-6.13},
\begin{equation}\label{JFA-6.18}
\sup_{t \in [T/2,T]} |\langle\mathcal{J}^*_{t,T}\mathfrak{p}, Y_j^m(U) \rangle| \leq \epsilon \|\mathfrak{p}\| \Longrightarrow \sup_{t \in [T/2,T]} |\langle \mathcal{J}^*_{t,T}\mathfrak{p}, Z_j^m(U)-\Theta_{Y_j^m({U})}(\mathbf{p}_t) \rangle| \leq \epsilon^{1/10} \|\mathfrak{p}\|.
\end{equation}
By Lemma \ref{JFA-lemma 6.2} with \(\alpha = 1/4\) and \eqref{JFA-6.7}, \eqref{JFA-A.5}-\eqref{JFA-A.6}, and
$$
\pi_1Y_j^m(U)=(-1)^mgj_1\pi_1\psi_j^{m+1},
$$
where $\pi_1U:=w$ denotes the first vorticity component of $U$, the precise definition can be found in Lemma \ref{NS-lemmaA.17}, we have 
\begin{align*}
\mathbb{P}((\Omega_{\epsilon,j}^1)^c) &\leq C \epsilon \mathbb{E} \left( \sup_{\mathfrak{p} \in \mathcal{I}} \| \partial_t f \|_{C^{1/4}}^{8} \right)\\
&\leq C \epsilon \exp \left( \frac{\eta}{2} V^{n+2}(U_0) \right) \Bigg[ \left( \mathbb{E} \sup_{t \in [T/2,T]} \| Z_j^m(U)-\Theta_{Y_j^m(U)}(\mathbf{p}_t) \|_{\hat{H}^n}^{16} \right)^{1/2} \\
&\quad+ \left( \mathbb{E} \| Z_j^m(U)-\Theta_{Y_j^m(U)}(\mathbf{p}_t) \|_{C^{1/4}\hat{H}^n} ^{16} \right)^{1/2} \Bigg]\\
&\leq C \epsilon |j|^{64} \exp \left( \frac{\eta}{2} V^{n+2}(U_0) \right) \Bigg[ \left( \mathbb{E} (1 + \sup_{t \in [T/2,T]} \| U \|_{\hat{H}^{n+2}}^{32} ) \right)^{1/2} \\
&\quad+ \mathbb{E} \left( \| U \|_{C^{1/4}\hat{H}^{n+2}} ^{16} (1 + \sup_{t \in [T/2,T]} \| U \|_{\hat{H}^{n+2}}^{16} ) \right)^{1/2} \Bigg]\\
&\leq C \epsilon |j|^{64} \exp (\eta V^{n+2}(U_0)),
\end{align*}
where \(C = C(\eta, T,\mathbf{p}_0)\) that is locally bounded in \( \mathbf{p}_0 \in \Sigma \), and we used the bilinearity of \(Z\) with estimates like those leading to \eqref{JFA-6.16}. Next, by expanding \(U = \overline{U} + \sigma W\) we find
\begin{align}\label{JFA-6.20}
Z_j^m(U) = Z_j^m(\overline{U}) - \sum_{\substack{k \in \mathcal{Z}\\ l \in \{0,1\}}} \alpha_k^l [Z_j^m(U), \sigma_k^l] W^{k,l}.
\end{align}
Immediately available,
\begin{align}\label{JFA1-6.20}
	Z_j^m(U)-\Theta_{Y_j^m({U})}(\mathbf{p}_t) = Z_j^m(\overline{U})-\Theta_{Y_j^m(\overline{U})}(\mathbf{p}_t) - \sum_{\substack{k \in \mathcal{Z}\\ l \in \{0,1\}}} \alpha_k^l [Z_j^m(U), \sigma_k^l] W^{k,l}.
\end{align}
Given that
\begin{align}\label{JFA-5.12}
\left[ Z_j^m (U), \sigma_k^{m'} \right] = g \left( (-1)^{m + 1} j_1 B(\psi_j^{m + 1}, \sigma_k^{m'}) + (-1)^{m'} k_1 B(\psi_k^{m' + 1}, \sigma_j^m) \right),
\end{align} 
all of the second order terms in \eqref{JFA-6.20} of the form \( [ [Z(U), \sigma_k^l], \sigma_{k'}^{l'} ] W^{k,l} W^{k',l'} \) are zero and \([Z_j^m(U), \sigma_k^l] = [Z_j^m(\overline{U}), \sigma_k^l]\).

To estimate each of the terms in \eqref{JFA1-6.20}, base on the Theorem \ref{JFA-Theorem6.4}, for \( s \in \{0, 1\} \), \(\mathfrak{p} \in \hat{H}^n\times T_{\mathbf{p}_T}\Sigma\), we introduce 
\[
\mathcal{C}_s(\phi) := \max_{k \in \mathcal{Z}, l \in \{0, 1\}} \left\{ \left\| \langle \mathcal{J}^*_{t,T}\mathfrak{p}, Z_j^m(\overline{U})-\Theta_{Y_j^m(\overline{U})}(\mathbf{p}_t) \rangle \right\|_{C^s}, |\alpha_{k}^l| \left\| \langle \mathcal{J}^*_{t,T}\mathfrak{p}, [Z_j^m(U), \sigma_k^l] \rangle \right\|_{C^s} \right\}.
\]
By Theorem \ref{JFA-Theorem6.4}, let $M=1$, then there exists a set \(\Omega_\epsilon^\#\) such that \(\mathbb{P}((\Omega_\epsilon^\#)^c) < C\epsilon\), and on \(\Omega_\epsilon^\#\) we have
\begin{align}\label{JFA1-6.21}
\sup_{t \in [T/2, T]} \left| \langle \mathcal{J}^*_{t,T}\mathfrak{p}, Z_j^m(U)-\Theta_{Y_j^m({U})}(\mathbf{p}_t) \rangle \right| \leq \epsilon^{1/10} \Longrightarrow
\begin{cases}
	\text{either } \mathcal{C}_0(\mathfrak{p}) \leq \epsilon^{1/30}, \\
	\text{or } \mathcal{C}_1(\mathfrak{p}) \geq \epsilon^{-1/90}.
\end{cases} 
\end{align}
Recalling that \(\mathcal{I} = \{\mathfrak{p} \in \hat{H}^n\times T_{\mathbf{p}_T}\Sigma : \|\mathfrak{p}\| = 1\}\), let
\[
\Omega_{\epsilon,j}^2 := \bigcap_{\mathfrak{p} \in \mathcal{I}} \{ \mathcal{C}_1(\mathfrak{p}) < \epsilon^{-1/90} \} \cap \Omega_{\epsilon,j}^\#.
\]
By \eqref{JFA-6.18}, on the set \(\Omega_{\epsilon,j} := \Omega_{\epsilon,j}^1 \cap \Omega_{\epsilon,j}^2\) we obtain the desired conclusion for each \(\epsilon < \epsilon_0(T)\). Thus it remains to estimate  \(\Omega_{\epsilon,j}^c\). By the Markov inequality we have
\begin{align}\label{JFA-6.22}
\mathbb{P}(\Omega_{\epsilon,j}^c) &\leq \mathbb{P}((\Omega_{\epsilon,j}^1)^c) + \mathbb{P}((\Omega_{\epsilon,j}^\sharp)^c) + \mathbb{P} \left( \sup_{\mathfrak{p} \in \mathcal{I}} \mathcal{C}_1(\mathfrak{p}) \geq \epsilon^{-1/90} \right)
\notag\\
&\leq C |j|^{64} \exp(\eta V^{n+2}(U_0)) \epsilon + C \epsilon \mathbb{E} \left( \sup_{\mathfrak{p} \in \mathcal{I}} (\mathcal{C}_1(\mathfrak{p}))^{90} \right).
\end{align}
First, we note that by utilizing \eqref{JFA-6.6} and combining it with Assumption \ref{assum5.4}, \eqref{JFA-A.5}, and \eqref{JFA-6.16}-\eqref{Y-estimate}, we obtain
\begin{align}\label{JFA-6.23}
&\mathbb{E} \| \langle \mathcal{J}^*_{t,T}\mathfrak{p}, Z_j^m(\overline{U})-\Theta_{Y^j_m(\overline{U})}(\mathbf{p}_t) \rangle \|_{C^1([T/2,T]; \mathbb{R})}^{90}
\notag\\
&\leq C \exp(\frac{\eta}{2} V^n(U_0)) \left( \mathbb{E} \sup_{t \in [T/2,T]} \left\| \left[F(U),Z_j^m(\overline{U}) \right]+\Theta_{Z_j^m(\overline{U})}(\mathbf{p}_t)-\Theta_{[U_t,Y^j_m(\overline{U})]_x}(\mathbf{p}_t) \right\|_{\hat{H}^n}^{180} \right)^{1/2}
\notag\\
&\leq C \exp(\frac{\eta}{2} V^n(U_0)) |j|^{90 \times 10} \left( \mathbb{E} (1 + \| U \|_{\hat{H}^8}^{3 \times 180} ) \right)^{1/2}
\notag\\
&\leq C \exp(\eta V^n(U_0)) |j|^{90 \times 10},
\end{align}
where \(C = C(\eta, T,\mathbf{p}_0)\) that is locally bounded in \( \mathbf{p}_0 \in \Sigma \). Then, due to   \eqref{JFA-5.12} and similar applications of \eqref{JFA-6.6} and \eqref{JFA-A.5}, the estimate
\[
\mathbb{E} \| \langle \mathcal{J}^*_{t,T}\mathfrak{p}, [Z_j^m(U), \sigma_k^l] \rangle \|_{C^1([T/2,T]; \mathbb{R})}^{90} \leq C \exp(\eta V^n(U_0)) |j|^{90 \times 2}
\]
follows. Combining with \eqref{JFA-6.22}-\eqref{JFA-6.23}, we obtain \eqref{JFA-6.17}. This completes the proof of the proposition.
\end{proof}

The last fundamental theorem of iteration corresponds to the brackets of the form $Y\to Z\to [Z,Y]$. For fixed \( j \in \mathbb{Z}_+^2 \), define \( \mathcal{Z}_j \) as the union of \( j \) with the set of points in \( \mathbb{Z}_+^2 \) adjacent to \( j \), that is,
\[
\mathcal{Z}_j := \{ k \in \mathbb{Z}_+^2 : k = j \pm m \text{ for some } m \in \{0\} \cup \mathcal{Z} \}.
\]
\begin{proposition}\label{JFA-Lemma6.9}
Fix \( j \in \mathbb{Z}_+^2 \). For each \( 0 < \epsilon < \epsilon_0(T) |j|^{-2} \) and \( \eta \in (0,1) \) there exist \( C = C(\eta, T) \) and a measurable set \( \Omega_{\epsilon, j} \) with
\begin{align}\label{JFA-6.25}
\mathbb{P}((\Omega_{\epsilon, j})^c) \leq C |j|^{14 \times 5400} \exp(\eta V^{n+2}(U_0)) \epsilon, 
\end{align}
such that on the set \( \Omega_{\epsilon, j} \) it holds that, for every \( \mathfrak{p} \in \hat{H}^n\times T_{\mathbf{p}_T}\Sigma \),
\begin{align}\label{JFA-6.26}
	&\sum\limits_{\substack{i \in \mathcal{Z}_j \\ m \in \{0,1\}}} 
	\sup_{t \in [T/2,T]} 
	|\langle \mathcal{J}^*_{t,T}\mathfrak{p}, Y_i^m(U) \rangle| 
	\leq \epsilon \|\mathfrak{p}\| \notag\\
	&\Longrightarrow
	\begin{cases}
	\sum\limits_{\substack{k \in \mathcal{Z} \\ m, l \in \{0,1\}}} 
	\sup\limits_{t \in [T/2,T]} 
	|\langle \mathcal{J}^*_{t,T}\mathfrak{p}, [Z_j^m(U), Y_k^l(U)] \rangle| 
	\leq \epsilon^{1/3600} \|\mathfrak{p}\|,\\
	\sup\limits_{t \in [T/2,T]}\left|\left\langle \mathcal{J}^*_{t,T}\mathfrak{p},[F(\overline U),Z_j^m(\overline U)]-\Big(\Theta_{Z_j^m(\overline U)}(\mathbf{p}_t)+\Theta_{[U_t,Y_j^m(\overline U)]_x}(\mathbf{p}_t)\Big)\right\rangle\right| \leq \epsilon^{1/1800}\|\mathfrak{p}\|.
	\end{cases}
\end{align}
\end{proposition}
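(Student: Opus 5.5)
The argument runs in parallel with the proof of Proposition~\ref{JFA-Lemma6.8}. We feed the hypothesis at all neighbouring modes $i\in\mathcal{Z}_j$ into the two earlier implications, and then decompose the resulting time derivative as a Wiener polynomial.

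\textbf{Step 1.} We first apply Proposition~\ref{JFA-Lemma6.8} at the fixed mode $j$. On a set of probability at least $1-C|j|^{900}e^{\eta V^{n+2}(U_0)}\epsilon$, the smallness of $\sup_t|\langle\mathcal{J}^*_{t,T}\mathfrak{p},Y_j^m(U)\rangle|$ gives
\[
\sup_t|\langle \mathcal{J}^*_{t,T}\mathfrak{p}, Z_j^m(\overline U)-\Theta_{Y_j^m(\overline U)}\rangle|\le\epsilon^{1/30}\|\mathfrak{p}\|,
\qquad
\sup_t|\langle \mathcal{J}^*_{t,T}\mathfrak{p},[Z_j^m(U),\sigma_k^l]\rangle|\le\epsilon^{1/30}\|\mathfrak{p}\|.
\]
The neighbouring modes $i=j\pm k$ with $k\in\mathcal{Z}$ enter the hypothesis because, by \eqref{JFA-5.12}, the brackets $[Z_j^m,\sigma_k^l]$ are combinations of $B(\psi^{\cdot}_{j},\sigma^{\cdot}_k)$ and $B(\psi^{\cdot}_k,\sigma^{\cdot}_j)$. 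These are supported on the modes $j\pm k$, and their leading parts can be related to the $Y_i^{m}$, $i\in\mathcal{Z}_j$. I would only use this to keep the bounds consistent; the main mechanism is the next step.

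\textbf{Step 2.} Set $f_\mathfrak{p}(t):=\langle\mathcal{J}^*_{t,T}\mathfrak{p},Z_j^m(\overline U)-\Theta_{Y_j^m(\overline U)}(\mathbf{p}_t)\rangle$. By Lemma~\ref{JFA6.1} and \eqref{JFA-6.6}, applied with $E^1=Z_j^m(\overline U)$ and $F_{\overline U}=-Y_j^m(\overline U)$, its derivative is
\[
\partial_t f_\mathfrak{p}=\Big\langle \mathcal{J}^*_{t,T}\mathfrak{p},\,[F(U),Z_j^m(\overline U)]-\Theta_{Z_j^m(\overline U)}(\mathbf{p}_t)-\Theta_{[U_t,Y_j^m(\overline U)]_x}(\mathbf{p}_t)\Big\rangle .
\]
Take $\Omega^1:=\Lambda^c_{\epsilon^{1/30},1/4}$ from Lemma~\ref{JFA-lemma 6.2}. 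Its complement is bounded by $\epsilon^{1/30}$ times a $C^{1,1/4}$ moment, which is controlled through \eqref{JFA-6.7}, Assumption~\ref{assum5.4} and the polynomial bounds \eqref{JFA-6.16}--\eqref{Y-estimate} at one higher bracket order. This gives a factor $|j|^{O(1)}e^{\eta V^{n+2}}$. On $\Omega^1$ we obtain $\sup_t|\partial_t f_\mathfrak{p}|\le\epsilon^{1/300}\|\mathfrak{p}\|$.

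\textbf{Step 3.} Expand $U=\overline U+\sigma_\theta W$ inside $[F(U),Z_j^m(\overline U)]$. Since $F$ is quadratic,
\[
[F(U),Z]=[F(\overline U),Z]-\sum_{k,l}\alpha_k^l W^{k,l}\big[[F,\sigma_k^l],Z\big]+\tfrac12\sum\alpha\alpha' W W'\big[[[F,\sigma],\sigma'],Z\big].
\]
The first-order coefficient is $[Z_j^m(\overline U),Y_k^l(\overline U)]$, up to sign. The second-order coefficients vanish, because $B((0,\theta),\cdot)=0$ and $\sigma$ has only a $\theta$-component, exactly as noted after \eqref{JFA-5.12}. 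Hence $\partial_t f_\mathfrak{p}$ is a degree-one Wiener polynomial. Its constant term is the second quantity in \eqref{JFA-6.26}, and its coefficients are the $\langle\mathcal{J}^*\mathfrak{p},[Z_j^m,Y_k^l]\rangle$. We then apply Theorem~\ref{JFA-Theorem6.4} with $M=1$ and $\beta$ chosen so that $\epsilon^\beta=\epsilon^{1/300}$. The alternative that the Lipschitz norms of the coefficients are $\ge\epsilon^{-\beta/9}$ is excluded by a Markov inequality, using $C^1$ moment bounds proved as in \eqref{JFA-6.23}. These moments involve $\|U\|_{\hat H^{n+4}}$-type norms and a power of $|j|$, which together produce the $|j|^{14\times5400}$ factor. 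What remains is that all coefficients are $\le\epsilon^{\beta/3}$. After renaming exponents (powers $1/1800$ and $1/3600$) and summing over $m,k,l$, this gives \eqref{JFA-6.26}. The restriction $\epsilon<\epsilon_0|j|^{-2}$ absorbs the $|j|$-dependent constants in front of the small powers.

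\textbf{Main obstacle.} The hardest step is Step 3: identifying the Wiener-polynomial structure exactly. This requires checking that the manifold terms $\Theta_{\cdot}(\mathbf{p}_t)$ carry no $W$-dependence, which holds because the noise has no vorticity component and so $\Theta_{\sigma}=0$. It also requires checking that the constant term is genuinely the expression evaluated at $\overline U$. The second difficulty is tracking the moment exponents so that the probability bound has the stated form.
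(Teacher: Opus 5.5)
Your proof is correct in substance, but it takes a different route from the paper in the decisive step.

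\textbf{The paper's route.} It first applies the Jacobi identity $[Z_j^m,Y_k^l]=[[Z_j^m,F],\sigma_k^l]-[[Z_j^m,\sigma_k^l],F]$. By \eqref{JFA-5.12}, the second term is a combination of $Y_{j\pm k}$. It is therefore bounded deterministically by $C|j|\epsilon$ using the hypothesis on $\mathcal{Z}_j$; this is where $\epsilon<\epsilon_0|j|^{-2}$ enters. For the first term the paper proceeds as in your Steps 1--2. It then replaces $Z_j^m(\overline U)$ by $Z_j^m(U)$ on the set $\{\sup_t|W|\le\epsilon^{-1/2}\}$, again using the neighbouring $Y_{j\pm k}$. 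Finally it applies Theorem~\ref{JFA-Theorem6.4} to $[Z_j^m(U),F(U)]$, whose $W$-coefficients are $[[Z_j^m,F],\sigma_k^l]$.

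\textbf{Your route.} You expand $[F(U),Z_j^m(\overline U)]$ directly. Since $F$ is quadratic and $B(\sigma,\cdot)=0$, this expression is exactly affine in $W$, and the coefficient of $\alpha_k^lW^{k,l}$ is $[Z_j^m(\overline U),Y_k^l(\overline U)]$. One application of Norris's lemma then gives both conclusions at once, even with exponent $1/900$. This is shorter and avoids truncating $W$. It also shows that only the hypothesis at the mode $j$ itself is needed, via Proposition~\ref{JFA-Lemma6.8}. The neighbouring modes and the $|j|^{-2}$ restriction play no role in your argument, so your comments about them in Step 1 and at the end can be dropped. The paper's route, which follows \cite{JFA}, buys control of the intermediate bracket $[[Z_j^m,F],\sigma_k^l]$ along the way.

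\textbf{Two points to make explicit.}

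First, the statement concerns $[Z_j^m(U),Y_k^l(U)]$, so you need $[Z_j^m(\overline U),Y_k^l(\overline U)]=[Z_j^m(U),Y_k^l(U)]$. This holds for three reasons:
\begin{itemize}
\item $\nabla Z_j^m(U)\sigma=-[Z_j^m,\sigma]$ is constant by \eqref{JFA-5.12}, so $\nabla Z_j^m$ is invariant under shifts by $\sigma_\theta W$;
\item $Y_k^l(U)=Y_k^l(\overline U)$;
\item $B([Z_j^m,\sigma_{k'}^{l'}],\sigma_k^l)=0$, because $[Z_j^m,\sigma_{k'}^{l'}]$ has no vorticity component.
\end{itemize}

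Second, in Step 2, Lemma~\ref{JFA-lemma 6.2} applied at level $\epsilon^{1/30}$ only gives $\mathbb{P}((\Omega^1)^c)\lesssim\epsilon^{1/30}$. That is not the linear rate required in \eqref{JFA-6.25}. You need the higher-moment form of the lemma. On $\Lambda_{\delta,\alpha}$ one has $\sup_\phi\|f_\phi\|_{C^{1,\alpha}}\gtrsim\delta^{-\alpha/2}$. With $\delta=\epsilon^{1/30}$ and $\alpha=1/4$, Markov's inequality with the $240$th moment then yields $C\epsilon$, as in the paper's ``$8\times30$'' moment. The powers of $|j|$ produced this way, and those from your Norris step (threshold $\epsilon^{-1/2700}$, hence the $2700$th moment), stay within $|j|^{14\times5400}$.
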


\begin{proof}
We recalled that for $k\in \mathcal{Z},\,{m'}\in \{0,1\}$,
\begin{align*}
	[[F(U),Y_j^m(U)],Y_k^{m'}(U)] 
	&= [Z_j^m(U),Y_k^{m'}(U)] \\
	&= [[Z_j^m(U),F(U)],\sigma_k^{m'}] - [[Z_j^m(U),\sigma_k^{m'}],F(U)].
\end{align*}
Therefore, it suffices to find a set \(\Omega_{\epsilon,j}\) satisfying \eqref{JFA-6.25} such that on \(\Omega_{\epsilon,j}\),
\begin{align}\label{3.35.1}
\sum\limits_{\substack{i \in \mathcal{Z}_j \\ m \in \{0,1\}}} 
\sup_{t \in [T/2,T]} 
|\langle \mathcal{J}^*_{t,T}\mathfrak{p}, Y_i^m(U) \rangle| 
\leq \epsilon 
\end{align}
implies
\begin{equation}\label{JFA-6.27}
	\sum_{\substack{k \in \mathcal{Z} \\ m,l \in \{0,1\}}} 
	\sup_{t \in [T/2,T]} 
	|\langle \mathcal{J}^*_{t,T}\mathfrak{p}, [[Z_j^m(U), \sigma_k^l], F(U)] \rangle| 
	\leq \epsilon^{1/2}, 
\end{equation}
and
\begin{equation}\label{JFA-6.28}
	\sum_{\substack{k \in \mathcal{Z} \\ m,l \in \{0,1\}}} 
	\sup_{t \in [T/2,T]} 
	|\langle \mathcal{J}^*_{t,T}\mathfrak{p}, [[Z_j^m(U), F(U)], \sigma_k^l] \rangle| 
	\leq \epsilon^{1/1800}. 
\end{equation}	
We begin by proving the \eqref{JFA-6.27}. Noting that $j \pm k \in \mathcal{Z}_j$, then we obtain 
\begin{align}\label{JFA-6.29}
&|\langle \mathcal{J}^*_{t,T}\mathfrak{p}, [[Z_j^m(U), \sigma_k^l], F(U)] \rangle| \notag\\
&= |g(j^{\perp} \cdot k)| \cdot
\Big|\Big\langle \mathcal{J}^*_{t,T}\mathfrak{p}, \Big[\Big(\frac{j_1}{|j|^2}+\frac{k_1}{|k|^2}\Big) \sigma_j^{m+l+1} + (-1)^{l+1} \Big(\frac{j_1}{|j|^2}-\frac{k_1}{|k|^2}\Big) \sigma_j^{m+l+1}, F(U)\Big] \Big\rangle\Big|\notag\\ 
&\leq C |j| \left( |\langle \mathcal{J}^*_{t,T}\mathfrak{p}, Y_{j+k}^{m+l+1}(U) \rangle| 
+ |\langle \mathcal{J}^*_{t,T}\mathfrak{p}, Y_{j-k}^{m+l+1}(U) \rangle| \right) \notag\\
&\leq C |j| \epsilon \| \mathfrak{p} \| .
\end{align}
Then \eqref{JFA-6.27} follows for any \(\epsilon < (C |j|)^{-2}\).

Now we only need to prove that \eqref{JFA-6.28} can be derived from \eqref{3.35.1} on a suitable set. First, note that by Proposition \ref{JFA-Lemma6.8}, there exists a measurable set $\Omega^1_{\epsilon, j}$ such that 
$$
\mathbb{P}\big(({\Omega^1_{\epsilon,j}})^c\big) \leq C|j|^{90 \times 10} \exp(\eta V^{n+2}(U_0)) \epsilon,
$$
and on the set $\Omega^1_{\epsilon, j}$, for ecah $m\in \{0,1\}$ and each $\mathfrak{p} \in \mathcal{I}=\{\mathfrak{p}\in \hat{H}^n\times T_{\mathbf{p}_T}\Sigma:\|\mathfrak{p}\|=1\}$
\begin{align}\label{JFA-6.30}
	\sup_{t \in [T/2,T]} \left| \left\langle \mathcal{J}_{t,T}^{*}\mathfrak{p}, Y_j^{m}(U) \right\rangle \right| \leq \epsilon \| \mathfrak{p} \|\Rightarrow \sup_{t \in [T/2,T]} \left| \left\langle \mathcal{J}_{t,T}^{*}\mathfrak{p}, Z_j^{m}(\overline{U}) - \Theta_{Y_j^{m}(\overline{U})}(\mathbf{p}_t)\right\rangle \right| \leq \epsilon^{1/30} \| \mathfrak{p} \|.
\end{align}
Setting $f_{\mathfrak{p}}(t):=\langle \mathcal{J}_{t,T}^{*}\mathfrak{p}, Z_j^{m}(\overline{U}) - \Theta_{Y_j^{m}(\overline{U})}(\mathbf{p}_t)\rangle$, then $$\partial_tf_{\mathfrak{p}}(t)=\left\langle \mathcal{J}_{t,T}^{*}\mathfrak{p}, [F(U),Z_j^{m}(\overline{U})] - \left(\Theta_{Z_j^{m}(\overline{U})}(\mathbf{p}_t)+\Theta_{[U_t,Y_j^{m}(\overline{U})]_x}(\mathbf{p}_t)\right)\right\rangle.$$ 
Let \(\Omega_{\varepsilon,j}^2 := \Lambda_{\varepsilon^{1/30}, 1/4}^c\) with $\alpha=1/4$ as in Lemma \ref{JFA-lemma 6.2}. Thus on \(\Omega_{\varepsilon,j}^3 := \Omega_{\varepsilon,j}^1 \cap \Omega_{\varepsilon,j}^2\) we have for each \(m \in \{0,1\}\), and \(\mathfrak{p} \in \mathcal{I}\)
\begin{align}\label{JFA-6.31}
&\sup_{t \in [T/2,T]} |\langle \mathcal{J}_{t,T}^{*}\mathfrak{p}, Y_j^m(U) \rangle| \leq \varepsilon \notag\\
&\Rightarrow \sup_{t \in [T/2,T]} \left|\left\langle \mathcal{J}_{t,T}^{*}\mathfrak{p}, [F(U),Z_j^{m}(\overline{U})] - \left(\Theta_{Z_j^{m}(\overline{U})}(\mathbf{p}_t)+\Theta_{[U_t,Y_j^{m}(\overline{U})]_x}(\mathbf{p}_t)\right)\right\rangle\right| \leq \varepsilon^{1/300}.
\end{align}
Similarly as in the proof of Proposition \ref{JFA-Lemma6.8}, employing the same estimates we can obtain 
\[
\mathbb{P}((\Omega_{\varepsilon,j}^2)^c) \leq C \varepsilon \mathbb{E} \left( \sup_{\mathfrak{p} \in I} \|g'_\mathfrak{p}(t)\|_{C^{1/4}([T/2,T])}^{8 \times 30} \right) \leq C \varepsilon |j|^{240 \times 10} \exp(\eta V^{n+2}(U_0)).
\]
Subsequently, we aim to establish a result for $Z^m_j(U)$ similar to \eqref{JFA-6.31}, to facilitate the subsequent noise separation, yielding the form $\langle \mathcal{J}^*_{t,T}\mathfrak{p}, [[Z_j^m(U), F(U)], \sigma_k^l] \rangle$, for which we use the expansion \eqref{JFA-6.20}. Proceeding analogously to equation \eqref{JFA-6.29}, we can obtain 
\begin{align}\label{JFA-6.32}
\sup_{t \in [T/2,T]} |\langle \mathcal{J}_{t,T}^{*}\mathfrak{p}, [[Z_j^m(U), \sigma_k^l], F(U)]\rangle|\,|W^{k,l}(t)| \leq C|j|\epsilon\|\mathfrak{p}\| \sup_{t \in [T/2,T]} |W^{k,l}(t)|. 
\end{align}
Noted that \(\mathbb{E}\|W^{k,l}\|_{L^\infty} < \infty\), then by Markov inequality one has \(\mathbb{P}((\Omega_{\epsilon,k,l}^4)^c) \leq C \epsilon^{1/2}\), where \(\Omega_{\epsilon,k,l}^4 := \{\sup_{t \in [T/2,T]} |W^{k,l}(t)| \leq \epsilon^{-1/2}\}\). Combining equations \eqref{JFA-6.20}, \eqref{JFA-6.31}, and \eqref{JFA-6.32}, we know that on the set \(\Omega_{\epsilon,j}^5 := \Omega_{\epsilon,j}^3 \cap \Omega_{\epsilon,j}^4\), for any \(\epsilon < \epsilon_0(T)|j|^{-2}\) it holds that
\begin{align*}
&\sum_{\substack{i \in \mathcal{Z}_j \\ m \in \{0,1\}}} \sup_{t \in [T/2,T]} |\langle \mathcal{J}_{t,T}^{*}\mathfrak{p}, Y_i^m(U) \rangle| \leq \epsilon\\
&\Rightarrow \sum_{m \in \{0,1\}} \sup_{t \in [T/2,T]} \left|\left\langle \mathcal{J}_{t,T}^{*}\mathfrak{p}, [Z_j^m(U), F(U)]- \left(\Theta_{Z_j^{m}(\overline{U})}(\mathbf{p}_t)+\Theta_{[U_t,Y_j^{m}(\overline{U})]_x}(\mathbf{p}_t)\right) \right\rangle\right| \leq \epsilon^{1/600}.
\end{align*}
Proceeding analogously to the proof of Proposition \ref{JFA-Lemma6.8}, to separate the noise component, we expand \([Z_j^m(U), F(U)]\) with respect to \(U = \overline{U} + \sigma W\) and obtain
\begin{align*}
[Z_j^m(U), F(U)]&=\Bigg[Z_j^m(\overline{U})-\sum_{\substack{k \in \mathcal{Z}\\ l \in \{0,1\}}}\alpha_k^l[Z_j^m(\overline{U}),\sigma_k^l]W^{k,l},\,F(\overline{U})-\sum_{\substack{k \in \mathcal{Z}\\ l \in \{0,1\}}}\alpha_k^l[F(\overline{U}),\sigma_k^l]W^{k,l}\Bigg]\notag\\
&=[Z_j^m(\overline{U}),F(\overline{U})]-\sum_{\substack{k \in \mathcal{Z}\\ l \in \{0,1\}}}\alpha_k^l[[Z_j^m(\overline{U}),F(\overline{U})],\sigma_k^l]W^{k,l},
\end{align*}
In this sequence, the last equality is identical to that in equation \eqref{JFA-6.20}, where all higher-order terms vanish. It follows immediately that
\begin{align*}
	&[Z_j^m(U), F(U)]-\left(\Theta_{Z_j^{m}(\overline{U})}(\mathbf{p}_t)+\Theta_{[U_t,Y_j^{m}(\overline{U})]_x}(\mathbf{p}_t)\right)\\
	&=\left([Z_j^m(\overline{U}),F(\overline{U})]-\left(\Theta_{Z_j^{m}(\overline{U})}(\mathbf{p}_t)+\Theta_{[U_t,Y_j^{m}(\overline{U})]_x}(\mathbf{p}_t)\right)\right)-\sum_{\substack{k \in \mathcal{Z}\\ l \in \{0,1\}}}\alpha_k^l[[Z_j^m(\overline{U}),F(\overline{U})],\sigma_k^l]W^{k,l}.
\end{align*}
Then again we use Theorem \ref{JFA-Theorem6.4} to establish
\begin{align*}
&\sup_{t \in [T/2,T]} |\langle \mathcal{J}_{t,T}^{*}\mathfrak{p}, [Z_j^m(U), F(U)] \rangle| \leq \epsilon^{1/600}\|\mathfrak{p}\|\\
&\Longrightarrow
\begin{cases}
\sum\limits_{\substack{k \in \mathcal{Z}\\ l \in \{0,1\}}} \sup\limits_{t \in [T/2,T]} |\langle \mathcal{J}_{t,T}^{*}\mathfrak{p}, [[Z_j^m(\overline{U}), F(\overline{U})], \sigma_k^l] \rangle| \leq \epsilon^{1/1800}\|\mathfrak{p}\|,\\
\sup\limits_{t \in [T/2,T]}\left|\left\langle \mathcal{J}^*_{t,T}\mathfrak{p},[F(\overline U),Z_j^m(\overline U)]-\Big(\Theta_{Z_j^m(\overline U)}(\mathbf{p}_t)+\Theta_{[U_t,Y_j^m(\overline U)]_x}(\mathbf{p}_t)\Big)\right\rangle\right| \leq \epsilon^{1/1800}\|\mathfrak{p}\|
\end{cases}
\end{align*}
on a set \(\Omega_{\epsilon}^{6}\), where \(\Omega_{\epsilon}^{6}\) satisfies
\begin{align}\label{JFA-6.33}
\mathbb{P}\left((\Omega_{\epsilon}^{6})^{c}\right) \leq C|j|^{14 \times 5400} \exp(\eta V^{n}(U_0))\epsilon. 
\end{align}
Let \(\Omega_{\epsilon,j} := \Omega_{\epsilon,j}^{5} \cap \Omega_{\epsilon,j}^{6}\), and note that
\[
[[Z_{j}^{m}(\overline{U}), F(\overline{U})], \sigma_{k}^{l}] = [[Z_{j}^{m}(U), F(U)], \sigma_{k}^{l}].
\]
Thus, we complete the proof of the proposition.
\end{proof}
Subsequently, by inductively iterating the base steps above (Propositions \ref{JFA-Lemma6.5}–\ref{JFA-Lemma6.9}), we arrive at the following key proposition, which is crucial for establishing the probabilistic spectral bound of the Malliavin matrix.
\begin{proposition}\label{NS-prop5.18}
For any \( N \geq 2 \), there exist \( \gamma_N >0\) and \( \epsilon_1= \epsilon_1（(N,T)>0\) such that for every \( \epsilon \in (0, \epsilon_1) \), \( \eta \in (0,1) \) and \( \mathfrak{p} \in \hat{H}^n\times T_{\mathbf{p}_T}\Sigma \), there exists a set \( \Omega_{\epsilon,N}^* \) with
\[
\mathbb{P}\bigl((\Omega_{\epsilon,N}^*)^{c}\bigr) \leq CN^{q} \exp(\eta V^{n+2}(U_0)) \epsilon^{\gamma_{N}},
\]
where \( q>1,C = C(\eta, T, \mathbf{p}_0) >0\), locally bounded in $\mathbf{p}_0$, such that on the set \( \Omega_{\epsilon,N}^* \) one has
\begin{align*}
\langle \mathcal{M}_T\mathfrak{p}, \mathfrak{p} \rangle \leq \epsilon \|\mathfrak{p}\|^2 \Longrightarrow \max_{|j|\le N, m\in \{0,1\}}\sup_{t\in [T/2,T]}|\langle \mathcal{J}^*_{t,T}\mathfrak{p}, \sigma_j^m \rangle| \leq \epsilon^{\gamma_N} \|\mathfrak{p}\|,
\end{align*}
\begin{align*}
\langle \mathcal{M}_T\mathfrak{p}, \mathfrak{p} \rangle \leq \epsilon \|\mathfrak{p}\|^2 \Longrightarrow \max_{|j|\le N, m\in \{0,1\}}|\langle  \psi_j^m + J_{j,m}^N(U_T),\mathfrak{p} \rangle| \leq \epsilon^{\gamma_N} \|\mathfrak{p}\|,
\end{align*}
and 
\begin{align*}
	&\langle \mathcal{M}_T\mathfrak{p}, \mathfrak{p} \rangle \leq \epsilon \|\mathfrak{p}\|^2\\ &\Longrightarrow
	\max_{|j|\le N, m\in \{0,1\}}\left|\left\langle [F(\overline U_T),Z_j^m(\overline U_T)]-\Big(\Theta_{Z_j^m(\overline U_T)}(\mathbf{p}_T)+\Theta_{[U_T,Y_j^m(\overline U_T)]_x}(\mathbf{p}_T)\Big),\mathfrak{p}\right\rangle\right| \leq \epsilon^{\gamma_N} \|\mathfrak{p}\|,
\end{align*}
where $J_{j,m}^{{N}}(U):=Q_{{N}}J_{j,m}(U)$, and
\begin{equation}\label{Jm}
	J_{j,m}(U) = 
	\begin{cases} 
		(-1)^m \dfrac{\nu_2 \lvert j \rvert^2}{g j_1} \sigma_j^{m+1} + (-1)^m \dfrac{1}{g j_1} B(U, \sigma_j^{m+1}) & \text{if } j_1 \neq 0, \\[6pt]
		\dfrac{1 + \lvert j \rvert^2}{g^2 \lvert j \rvert^3} \bigl(-H_{j+e_1,e_1}^{0,0}(U) - H_{j+e_1,e_1}^{1,1}(U)\bigr) & \text{if } j_1 = 0,\ m = 0, \\[6pt]
		\dfrac{1 + \lvert j \rvert^2}{g^2 \lvert j \rvert^3} \bigl(-H_{j+e_1,e_1}^{0,1}(U) + H_{j+e_1,e_1}^{1,0}(U)\bigr) & \text{if } j_1 = 0,\ m = 1,
	\end{cases}
\end{equation}
here $e_1:=(1,0)$, \( U \mapsto H_{j,k}^{m,m'}(U) \) is affine and it is concentrated entirely in the \( \theta \) component.
\end{proposition}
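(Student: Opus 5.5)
The plan is an induction over wavenumbers, chaining Propositions \ref{JFA-Lemma6.5}, \ref{JFA-Lemma6.6}, \ref{JFA-Lemma6.8} and \ref{JFA-Lemma6.9} along a finite sequence of shells in $\mathbb{Z}_+^2$ starting from $\mathcal{Z}=\{(1,0),(0,1)\}$. The final set $\Omega_{\epsilon,N}^*$ will be the intersection of the finitely many good sets produced at each step. We run $\langle \mathcal{M}_T\mathfrak{p},\mathfrak{p}\rangle\le\epsilon\|\mathfrak{p}\|^2$ through the chain and let the exponent degrade to $\epsilon^{\gamma_N}$, with $\gamma_N$ a product of the exponents $1/8,1/4,1/30,1/3600,\dots$ over at most $O(N)$ steps. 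Since the number of indices $|j|\le N$ is $O(N^2)$ and each step costs $C|j|^{p}\exp(\eta V^{n+2}(U_0))\epsilon^{\text{exponent}}$, summing gives the bound $CN^q\exp(\eta V^{n+2}(U_0))\epsilon^{\gamma_N}$.

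\textbf{Base step.} Proposition \ref{JFA-Lemma6.5} gives smallness of $\sup_t|\langle\mathcal{J}^*_{t,T}\mathfrak{p},\sigma_k^l\rangle|$ for $k\in\mathcal{Z}$. Proposition \ref{JFA-Lemma6.6} then gives smallness of $Y_k^l(U)$ for $k\in\mathcal{Z}$.

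\textbf{Inductive step.} Suppose $Y_i^m$ is small for all $i$ in a set $S_r$, and $\mathcal{Z}_j\subset S_r$ for some $j$ on the boundary. Proposition \ref{JFA-Lemma6.9} gives smallness of $[Z_j^m,Y_k^l]$ for $k\in\mathcal{Z}$, and also the $T\Sigma$-type bracket quantity (the third conclusion). Using the explicit form of $[Z_j^m,Y_k^l]$, which by \eqref{JFA-5.12} and the computation \eqref{JFA-6.29} is a nonzero multiple of $g(j^\perp\cdot k)$ times combinations of $Y_{j\pm k}$ plus known terms, we solve for $Y_{j+k}^{m'}$ with $k\in\mathcal{Z}$. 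The coefficient $j^\perp\cdot k$ is nonzero for the appropriate $k=e_1$ or $e_2$, and $|j|\le N$ keeps its inverse polynomially bounded. This enlarges $S_r$ to $S_{r+1}$ and reaches every $|j|\le N$ after $O(N)$ steps.

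Next we convert smallness of $Y_j^m$ and $\sigma_j^m$ into the second implication. By \eqref{JFA-5.9}, $Y_j^m(U)=(-1)^mgj_1\psi_j^{m+1}+\nu_2|j|^2\sigma_j^m+B(U,\sigma_j^m)$. For $j_1\ne0$, dividing by $(-1)^mgj_1$ and relabeling $m$ gives exactly $\psi_j^m+J_{j,m}(U)$. The $\sigma$-parts $B(U,\sigma_j^{m+1})$ live in the $\theta$-component and must be projected by $Q_N$; the high modes are absorbed because their $\hat H^n$-pairing is controlled by $N$-dependent moments of $U_T$. For $j_1=0$ we instead use the combinations $H^{m,m'}_{j+e_1,e_1}$ of brackets of neighbouring $Y$'s, as in \eqref{Jm}.

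Finally we evaluate at $t=T$, using $\mathcal{J}_{T,T}^*=\mathrm{id}$, which turns the sup-in-time statements into statements about $\langle\cdot,\mathfrak{p}\rangle$ at $U_T$ and $\mathbf{p}_T$. One technical point remains: the first-type statements involve $U$ while the third involves $\overline U_T$. These agree because the noise enters only in $\theta$ and $B(\sigma,\cdot)=0$, while the brackets that do depend on $W$ are already controlled in Proposition \ref{JFA-Lemma6.9}.

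\textbf{Main obstacle.} The hard part is the bookkeeping of the probability of the bad sets, since the threshold $\epsilon<\epsilon_0|j|^{-2}$ and the constants all depend on $U$ through $V^{n+2}$. Each implication requires $\epsilon$ replaced by a power $\epsilon^{\gamma}$, and the inductive step passes through Proposition \ref{JFA-Lemma6.9} at every shell. I would define $\epsilon_1(N,T)$ small enough that all thresholds $\epsilon_0(T)|j|^{-2}$ hold for $|j|\le N$ at every iterated power. A further obstacle is the solution-dependence of the lower-order terms in $[Z_j,Y_k]$. When isolating $Y_{j+k}$, the extra terms $B(U,\cdot)$ must either themselves be combinations of already-small $Y_i$, or be controlled via Markov's inequality on $\sup_t\|U_t\|_{\hat H^{n+2}}$ at an $\epsilon^{-\delta}$ level. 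This extra Markov set is where the $\exp(\eta V^{n+2}(U_0))$ factor and the loss in $\gamma_N$ come from.
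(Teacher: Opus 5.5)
\textbf{There is a genuine gap in your inductive step, which is the core of the proof.}

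\emph{The step is circular.} Proposition \ref{JFA-Lemma6.9} assumes that $Y_i^m$ is small for every $i\in\mathcal{Z}_j$. The set $\mathcal{Z}_j$ contains $j+e_1$ and $j+e_2$ (with $e_2:=(0,1)$), and both lie in $\mathbb{Z}_+^2$ whenever $j$ does. So the hypothesis $\mathcal{Z}_j\subset S_r$ already asserts smallness of the $Y_{j+k}$ you are trying to deduce, and $S_r$ never grows. The induction cannot even start, since no $\mathcal{Z}_j$ is contained in $S_0=\mathcal{Z}=\{(1,0),(0,1)\}$.

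\emph{The algebra also runs the wrong way.} We have $[Z_j^m,Y_k^l]=[[Z_j^m,F],\sigma_k^l]-[[Z_j^m,\sigma_k^l],F]$, and only the second piece is a combination of $Y_{j\pm k}$. This is why \eqref{JFA-6.29} is used in the direction ``$Y_{j\pm k}$ small $\Rightarrow$ bracket small''. The first piece $[[Z_j^m,F],\sigma_k^l]$ is a new, $U$-dependent direction. So smallness of $[Z_j^m,Y_k^l]$ does not let you solve for $Y_{j+k}$.

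\emph{The missing mechanism is the second conclusion of Proposition \ref{JFA-Lemma6.8}.}
\begin{itemize}
\item Smallness of $Y_j^m$ gives smallness of $[Z_j^m,\sigma_k^l]$ for $k\in\mathcal{Z}$.
\item By \eqref{JFA-5.12}, this bracket is a constant ($U$-independent) combination of $\sigma_{j+k}$ and $\sigma_{j-k}$. Its coefficients are proportional to $g(j^\perp\cdot k)\bigl(\frac{j_1}{|j|^2}\pm\frac{k_1}{|k|^2}\bigr)$.
\item Varying $m,l\in\{0,1\}$ and $k\in\{e_1,e_2\}$ isolates the new $\sigma$-modes wherever these coefficients do not vanish.
\item Proposition \ref{JFA-Lemma6.6} then upgrades $\sigma_{j\pm k}$ to $Y_{j\pm k}$.
\end{itemize}
Iterating the cycle $\sigma\to Y\to[Z,\sigma]\to\sigma$ (e.g.\ $(1,0),(0,1)\to(1,\pm1)\to\cdots$) covers a neighbourhood of $\{|j|\le N\}$ in $O(N)$ rounds. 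This is the scheme of \cite[Section 6.3]{JFA} that the paper invokes.

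\emph{Only after that step come the $\psi$-directions and the $T\Sigma$-term.} For $j_1\neq0$ you read off $\psi_j^m$ from $Y_j$. For $j_1=0$ you apply Proposition \ref{JFA-Lemma6.9} at $j+e_1$, whose neighbours are now available; this is where $H^{m,m'}_{j+e_1,e_1}$ comes from. The same proposition also gives the third implication.

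\emph{A smaller correction on $J^N_{j,m}$.} The second implication keeps $J^N_{j,m}(U_T)=Q_NJ_{j,m}(U_T)$, so the high modes are not absorbed. What must be removed is the low-mode part $J_{j,m}(U_T)-J^N_{j,m}(U_T)$, which is a $U_T$-dependent combination of $\sigma_k$ with $|k|\le N$. You remove it using the first implication at $t=T$ together with a Markov bound on $\|U_T\|$.
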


\begin{remark} \label{E1}
Indeed, we can provide an explicit form for $\epsilon_1（(N,T)$ (see \cite[Proposition 4.4]{JFA}), namely 
$$\epsilon_1（(N,T):=\min\left\{1,{\left(\frac{C(T)}{N}\right)}^{q_1^N}\right\},$$ 
where $q_1>1$. Moreover, the explicit form of $\gamma_{N}$ is given by $\gamma_{N}:=q_2^{N}$, where $0<q_2<1$.
\end{remark}
\begin{proof}[Proof of Proposition \ref{NS-prop5.18}]
By utilizing Propositions \ref{JFA-Lemma6.5}–\ref{JFA-Lemma6.9} and Assumption \ref{assum5.4}, we complete the proof of the proposition by following the iterative inductive scheme outlined in \cite[Section 6.3]{JFA}.
\end{proof}
The following lemma provides the moment estimates for $J_{j,m}^{\tilde{N}}(U)$ required in the subsequent Proposition \ref{NS-prop5.21}.
\begin{lemma}\label{JFA-lemma5.6}
	For every integers \( N \), \(\tilde{N}\) with \(\tilde{N} \geq N > 0\), and every integer \( \sigma\geq s \geq 1 \) and \( U \in H^{\sigma+1} \)
	\begin{equation}\label{JFA-5.27}
		\|J_{j,m}^{\tilde{N}}(U)\|_{\hat{H}^s} \leq C \frac{N^{\sigma+3}}{\tilde{N}^{(\sigma-s)/2}} (1 + \|U\|_{\hat{H}^{\sigma+1}}) \quad (|j| \leq N, \, m \in \{0, 1\}),
	\end{equation}
	where \( C = C(s) \) is independent of \( N \), \(\tilde{N}\) and \( U \).
\end{lemma}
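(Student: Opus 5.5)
The bound rests on two facts: $J_{j,m}(U)$ is an affine, explicitly computable, essentially finite-mode object, and $Q_{\tilde N}$ is (as in \cite{JFA}) the projection onto Fourier modes with wavenumber $|k|\geq \tilde N$. High-mode smallness then comes from trading regularity for decay. If $V$ lies in the range of $Q_{\tilde N}$, then for $\sigma\ge s$
\[
\|Q_{\tilde N}V\|_{\hat H^s}\le \tilde N^{-(\sigma-s)}\|V\|_{\hat H^\sigma}\le \tilde N^{-(\sigma-s)/2}\|V\|_{\hat H^\sigma},
\]
using $\tilde N\ge1$. So the whole lemma reduces to the single estimate
\begin{equation*}
\|J_{j,m}(U)\|_{\hat H^{\sigma}}\le C N^{\sigma+3}(1+\|U\|_{\hat H^{\sigma+1}}),\qquad |j|\le N .
\end{equation*}
The factor $\tilde N^{-(\sigma-s)/2}$ given in the statement is weaker than what the projection actually yields. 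I would keep the stated weaker form to match how the lemma is later used, and it leaves slack if $Q_{\tilde N}$ is defined slightly differently.

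\textbf{Case $j_1\neq0$.} Here $J_{j,m}(U)=(-1)^m(gj_1)^{-1}\bigl(\nu_2|j|^2\sigma_j^{m+1}+B(U,\sigma_j^{m+1})\bigr)$ and $|j_1|\ge1$.
\begin{itemize}
\item For the first term, $\|\sigma_j^{m+1}\|_{\hat H^\sigma}\le C|j|^\sigma$, so it contributes at most $CN^{\sigma+2}$.
\item For the second term, $B(U,\sigma_j^{m+1})=(0,(K*\omega)\cdot\nabla\sigma_j^{m+1})$. Since $\sigma$ is an integer, the Leibniz rule and the algebra property of $H^\sigma$ (with $\sigma\ge1$) give
\[
\|(K*\omega)\cdot\nabla \sigma_j^{m+1}\|_{H^\sigma}\le C\|K*\omega\|_{H^\sigma}\|\nabla\sigma_j^{m+1}\|_{C^\sigma}\le C|j|^{\sigma+1}\|\omega\|_{H^{\sigma-1}} .
\]
This term therefore contributes $CN^{\sigma+1}\|U\|_{\hat H^{\sigma+1}}$.
\end{itemize}
Both pieces fit inside $CN^{\sigma+3}(1+\|U\|_{\hat H^{\sigma+1}})$.

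\textbf{Case $j_1=0$.} This is the step I expect to need the most care, because $H^{m,m'}_{j+e_1,e_1}$ is only described here as affine and concentrated in the $\theta$-component. I would recover its explicit form from \cite[Section 5]{JFA}: it is a linear combination, with coefficients polynomial in $|j|$, of terms such as $\sigma_{j+e_1\pm e_1}$ and $B(U,\sigma_{j+2e_1})$, obtained from brackets like $[Y_{j+e_1},Y_{e_1}]$ or $[[Z,\sigma],F]$. Each such term is bounded exactly as in the first case, with $|j+e_1|\le N+1\le 2N$, giving $\|H\|_{\hat H^\sigma}\le C N^{\sigma+2}(1+\|U\|_{\hat H^{\sigma+1}})$. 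The prefactor $(1+|j|^2)/(g^2|j|^3)$ is bounded since $|j|\ge1$, which gives the claim. The exponent $\sigma+3$ is meant to absorb the worst of these polynomial coefficients; if some coefficient in the explicit formula is larger than expected, that is where the power of $N$ would have to be checked.

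Combining either case with the projection inequality gives \eqref{JFA-5.27}. The constant depends only on $s$, through $\sigma$, the Leibniz constants, and $\nu_2$ and $g$.
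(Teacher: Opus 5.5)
Your proposal is correct and follows essentially the same route as the paper: a generalized Poincar\'e inequality for the high-mode projection $Q_{\tilde N}$, then the affine dependence of $J_{j,m}$ on $U$ together with a count of the powers of $|j|\le N$ produced by derivatives. You are more explicit than the paper when $j_1\neq 0$, and you rely on \cite{JFA} for the $H^{m,m'}_{j+e_1,e_1}$ terms when $j_1=0$, just as the paper does.

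Two small calibrations. First, the paper's $Q_{\tilde N}$ removes the first $\tilde N$ Laplace eigenmodes and uses $\lambda_{\tilde N}\sim\tilde N$, so the retained wavenumbers only satisfy $|k|\gtrsim\tilde N^{1/2}$; the factor $\tilde N^{-(\sigma-s)/2}$ is therefore exactly what the projection yields rather than slack, though your hedge covers this. Second, your product estimate needs only the Leibniz rule against the smooth factor $\nabla\sigma_j^{m+1}$, not an algebra property of $H^\sigma$, which in any case fails for $\sigma=1$ in two dimensions.
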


\begin{proof}
	Since the \(\tilde{N}\)th eigenvalue \(\lambda_{\tilde{N}} \sim \tilde{N}\), cf. \cite{long-3}, one has by the generalized Poincaré inequality that
	\begin{equation}
		\|J_{j,m}^{\tilde{N}}(U)\|_{\hat{H}^s} \leq C \frac{1}{\lambda_{\tilde{N}}^{(\sigma-s)/2}} \|J_{j,m}(U)\|_{\hat{H}^\sigma} \leq C \frac{1}{\tilde{N}^{(\sigma-s)/2}} \|J_{j,m}^{\tilde{N}}(U)\|_{\hat{H}^\sigma}.
	\end{equation}
	Noting that \( J_{j,m} \) is affine in \( U \), we obtain
	\begin{equation}
		\|J_{j,m}^{\tilde{N}}(U)\|_{\hat{H}^\sigma} \leq C (1 + \|U\|_{\hat{H}^{\sigma+1}}),
	\end{equation}
	where \( C = C(N,s) \). The exact dependence of the right hand side on \( N \) can be inferred from the fact that each derivative of \( J_{j,m}(U) \) can produce at most one factor of \( |j| \leq N \). 
\end{proof}

In fact, if we ignore the manifold direction, the first two implication steps alone would suffice to prove Theorem \ref{bounds-Malliavin}. It is important to note that only the final implication step involves the manifold direction. Our objective is to isolate the manifold component arising from this implication, thereby controlling the overlap of vectors with nontrivial components in the tangent space $T\Sigma$.

Note that the first two implications control only finitely many Fourier modes in each iteration, whereas $\overline{U}_T$ is typically supported on infinitely many Fourier modes.
Furthermore, unlike in \cite{NS}, the vector fields generated by Lie brackets are $\{\sigma_j^m\}$ and $\{\psi_j^m+ J_{j,m}^{\tilde{N}}(U_T)\}$, which now depend on the stochastic process $U_T$. In the proof, we employ a soft argument, using the first two implications for infinitely many $N$ to control the overlap of $\mathfrak{p}$ with all Fourier modes. This is achieved by constructing $h(\epsilon)$ and $\tilde{N}(\epsilon)$ to control $\|U_T\|_{\hat{H}^4}^2$. However, particular care must now be taken to ensure that the constructed $\tilde{N}(\epsilon)$ is well-defined within the context of Proposition \ref{NS-prop5.18} and to avoid circular reasoning. This requirement also results in worse integrability compared to the situation in \cite{NS}.
\begin{definition} 
We denote by $\pi_{\hat{H}^n} \colon \hat{H}^n \times T\Sigma \to \hat{H}^n$ the projection onto the $\hat{H}^n$ coordinate and $\pi_{\scriptscriptstyle{T\Sigma}} \colon \hat{H}^n \times T\Sigma \to T\Sigma$ the projection onto the $T\Sigma$ coordinate.
\end{definition} 

In the proposition below, we simultaneously control infinitely many Fourier modes of the $\hat{H}^n$ component of $\mathfrak{p}$, proving that with high probability its corresponding Sobolev norm does not become too large. 
Furthermore, we note that the rate $r(\epsilon)$ can be made arbitrarily slow. Consequently, the resulting bounds exhibit weak dependence on $\epsilon$.
\begin{proposition}\label{NS-prop5.21}
	For any $N\le \tilde{N}$, $T>0$, $\eta\in (0,1)$, and $\mathbf{p}_0\in \Sigma$, there exists $C(T,\eta,N,\mathbf{p}_0)>0$, locally bounded in $\mathbf{p}_0$, and a positive constant $\epsilon^*:=\epsilon^*(T,\eta,N,\mathbf{p}_0)$, such that for any $0<\epsilon<\epsilon^*$, 
\begin{align*}
	\mathbb{P}\left(\exists \mathfrak{p} \in \hat{H}^n\times T_{\mathbf{p}_T}\Sigma,\|\mathfrak{p}\|=1\wedge \langle \mathcal{M}_T\mathfrak{p}, \mathfrak{p} \rangle \leq \epsilon \wedge \|\pi_{\hat{H}^n}\mathfrak{p}\|_{\hat{H}^{n-2}}\ge g(\epsilon)\right)\le Cr(\epsilon)\exp(V^{n+2}(U_0)),
\end{align*}
where $g:(0,1)\to [0,1]$ satisfies $\lim_{\epsilon\to 0}g(\epsilon)=0$ locally uniformly in $\mathbf{p}_0$, and $r(\epsilon)$ is a nonnegative decreasing function vanishing as $\epsilon\to 0$, also locally uniformly in $\mathbf{p}_0$.
\end{proposition}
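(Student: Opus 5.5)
The plan is to follow the soft argument of \cite[Proposition 5.21]{NS}, with the $U_T$-dependence of the bracket directions handled by an energy cutoff. Fix $\mathfrak{p}$ with $\|\mathfrak{p}\|=1$ and write $\phi:=\pi_{\hat{H}^n}\mathfrak{p}$. By Proposition \ref{NS-prop5.18} applied at a level $\tilde N\ge N$, on $\Omega^*_{\epsilon,\tilde N}$ the bound $\langle\mathcal{M}_T\mathfrak{p},\mathfrak{p}\rangle\le\epsilon$ implies two things for all $|j|\le\tilde N$:
\[
|\langle\sigma_j^m,\mathfrak{p}\rangle|\le\epsilon^{\gamma_{\tilde N}},
\qquad
|\langle\psi_j^m+J^{\tilde N}_{j,m}(U_T),\mathfrak{p}\rangle|\le\epsilon^{\gamma_{\tilde N}}.
\]
The first inequality comes from the case $t=T$, where $\mathcal{J}_{T,T}^*=\mathrm{id}$.

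Since $\sigma_j^m$ and $\psi_j^m$ are orthogonal in $\hat{H}^n$ with norm $\sim|j|^n$, these bounds control the Fourier coefficients of $\phi$. The first gives directly $|\hat\phi^{\theta}_{j,m}|\lesssim\epsilon^{\gamma_{\tilde N}}$. For the vorticity coefficients we have
\[
|\hat\phi^{\omega}_{j,m}|\lesssim\epsilon^{\gamma_{\tilde N}}+|\langle J^{\tilde N}_{j,m}(U_T),\phi\rangle|.
\]
We bound the last term by $\|J^{\tilde N}_{j,m}(U_T)\|_{\hat{H}^n}\|\phi\|_{\hat{H}^n}$ and use Lemma \ref{JFA-lemma5.6} with $s=n$ and $\sigma=n+2$, recalling that $J^{\tilde N}=Q_{\tilde N}J$ is the projection onto modes above $\tilde N$ in the $\theta$ component. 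This gives
\[
|\langle J^{\tilde N}_{j,m}(U_T),\phi\rangle|\le C\,\tilde N^{-1}\,N'^{\,n+5}\,(1+\|U_T\|_{\hat{H}^{n+3}}),
\]
where $N'$ denotes the mode cutoff, which I take equal to $M$ below.

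Next split $\|\phi\|^2_{\hat{H}^{n-2}}$ into low modes $|j|\le M$ and high modes $|j|>M$. The high modes are bounded trivially by $M^{-4}\|\phi\|^2_{\hat{H}^n}\le M^{-4}$. The low modes contribute at most
\[
CM^{2n+2}\Bigl(\epsilon^{2\gamma_{\tilde N}}+\tilde N^{-2}M^{2n+10}(1+\|U_T\|_{\hat{H}^{n+3}})^2\Bigr).
\]
Now choose everything as functions of $\epsilon$. Take $M(\epsilon)\to\infty$ slowly and set $g(\epsilon):=2M(\epsilon)^{-2}$. Restrict to the event $\{\|U_T\|_{\hat{H}^{n+3}}\le h(\epsilon)\}$, with $h(\epsilon)\to\infty$. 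Take $\tilde N(\epsilon)$ large enough that $\tilde N^{-2}M^{4n+12}h^2\le M^{-4}/4$. Finally require that $\epsilon$ be so small relative to $\tilde N(\epsilon)$ that $M^{2n+2}\epsilon^{2\gamma_{\tilde N}}\le M^{-4}/4$ and $\epsilon<\epsilon_1(\tilde N,T)$. Under these choices, on the good event the inequality $\langle\mathcal{M}_T\mathfrak{p},\mathfrak{p}\rangle\le\epsilon$ forces $\|\phi\|_{\hat{H}^{n-2}}<g(\epsilon)$. The orthogonality of $\psi^m_j$ to the $\theta$-component directions is used to decouple the two families of estimates.

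The bad event has two parts. The first is the energy tail $\mathbb{P}(\|U_T\|_{\hat{H}^{n+3}}>h(\epsilon))\le C e^{-c h^{2/(n+5)}}e^{V^{n+2}(U_0)}$, by Chebyshev and \eqref{Hn-eatimates2}; here the positive smoothing time absorbs the mismatch of Sobolev indices. The second is $\mathbb{P}((\Omega^*_{\epsilon,\tilde N})^c)\le C\tilde N^{q}e^{\eta V^{n+2}(U_0)}\epsilon^{\gamma_{\tilde N}}$. Together these give
\[
r(\epsilon):=e^{-ch(\epsilon)^{2/(n+5)}}+\tilde N(\epsilon)^q\epsilon^{\gamma_{\tilde N(\epsilon)}}.
\]

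The main obstacle is making $\tilde N(\epsilon)$ consistent without circularity. Remark \ref{E1} shows that $\gamma_{\tilde N}=q_2^{\tilde N}$ and $\epsilon_1\sim(C/\tilde N)^{q_1^{\tilde N}}$ degenerate doubly exponentially in $\tilde N$. I would therefore first fix the sequences $M,h,\tilde N$ as functions of an auxiliary integer $k$. I would then define $\epsilon_k$ decreasing so fast that $\epsilon_k<\epsilon_1(\tilde N_k)$, $\tilde N_k^q\epsilon_k^{\gamma_{\tilde N_k}}\le 1/k$, and $M_k^{2n+2}\epsilon_k^{2\gamma_{\tilde N_k}}\le M_k^{-4}/4$. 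For $\epsilon\in[\epsilon_{k+1},\epsilon_k)$ we use the level-$k$ choices, which makes $g$ and $r$ decreasing step functions tending to $0$. Local uniformity in $\mathbf{p}_0$ follows because every constant involved is locally bounded in $\mathbf{p}_0$. This is where the arbitrarily slow rate $r(\epsilon)$ enters.
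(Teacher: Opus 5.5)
Your argument is correct and follows the same skeleton as the paper's proof. You apply Proposition \ref{NS-prop5.18} at a level $\tilde N$, use Lemma \ref{JFA-lemma5.6} to make $J^{\tilde N}_{j,m}(U_T)$ small on an energy cutoff $\{\|U_T\|\le h(\epsilon)\}$, bound the modes above the cutoff by the Sobolev gap, and build $r(\epsilon)$ from the energy tail plus $\tilde N^q\epsilon^{\gamma_{\tilde N}}$.

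Where you differ is the parameter scheme, and the difference matters. The paper keeps the low-mode cutoff $N$ fixed and tries to cancel the floor $N^{-4}$ with $\tilde N(\epsilon)=\lceil CN^{16}h(\epsilon)/(h(\epsilon)^{-1/2}-N^{-4})\rceil$, where $h=\log\log\log\epsilon^{-1}$. Since $h(\epsilon)\to\infty$, the denominator becomes negative once $h(\epsilon)>N^{8}$. With $N$ fixed, nothing in that argument controls the modes $|j|>N$ beyond the trivial bound $N^{-4}$, so $g(\epsilon)\to0$ is not actually obtained there. Your cutoff $M(\epsilon)\to\infty$, with $\tilde N\gtrsim M^{2n+8}h$, is exactly what removes that floor. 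The $N$ in the statement does not enter the event, so letting the cutoff depend on $\epsilon$ is legitimate.

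Your diagonal choice of $\epsilon_k$ uses only that $\gamma_{\tilde N}>0$ and $\epsilon_1(\tilde N,T)>0$ for each $\tilde N$. It therefore does not rely on the doubly exponential forms in Remark \ref{E1}, which the paper quotes without proof. The price is that your $g$ and $r$ are non-explicit step functions. The paper's explicit triple-logarithm route can be repaired along your lines: let $N(\epsilon)$ grow like a small power of $h(\epsilon)$ and take $\tilde N(\epsilon)\approx N(\epsilon)^{20}h(\epsilon)$. This keeps $\tilde N$ polynomial in $h$, so the computations around \eqref{JFA-4.13} still close and explicit rates survive.

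Two bookkeeping points:
\begin{itemize}
\item Put the mode-counting and norm-equivalence constants, and $(1+h)^2$ rather than $h^2$, into your conditions on $\tilde N_k,\epsilon_k$; alternatively take $g=C'M^{-2}$.
\item Like the paper, you need $\Omega^*_{\epsilon,\tilde N}$ to work simultaneously for all unit $\mathfrak{p}$. This holds by the uniform constructions in Propositions \ref{JFA-Lemma6.5}--\ref{JFA-Lemma6.9}, even though the quantifiers in Proposition \ref{NS-prop5.18} are stated loosely.
\end{itemize}
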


\begin{proof}
Fix $\eta\in (0,1)$, for any $N\le \tilde{N}$, define
$$ 
\langle\mathcal{Q}_{N,\tilde{N}}(U)\pi_{\hat{H}^n}\mathfrak{p},\pi_{\hat{H}^n}\mathfrak{p}\rangle
:=\sum_{\substack{\tilde{b}\in \mathcal{B}_{N,\tilde{N}}(U)}}|\langle\pi_{\hat{H}^n}\mathfrak{p},\tilde{b}(U)\rangle_{\hat{H}^{n-2}}|^2,
$$
where $\mathcal{B}_{N,\tilde{N}}(U):=\{\sigma_j^m,\psi_j^m+J_{j,m}^{\tilde{N}}(U):m\in\{0,1\},\,j\in \mathbb{Z}^2_+,\,|j|\le N\}$ and $J_{j,m}^{\tilde{N}}(U):=Q_{\tilde{N}}J_{j,m}(U)$.
By the first two implications of Proposition \ref{NS-prop5.18}, we have, on the set \( \Omega_{\epsilon,\tilde{N}}^* \),
$$
\langle \mathcal{M}_T\mathfrak{p}, \mathfrak{p} \rangle \leq \epsilon \Longrightarrow \langle\mathcal{Q}_{N,\tilde{N}}(U_T)\pi_{\hat{H}^n}\mathfrak{p},\pi_{\hat{H}^n}\mathfrak{p}\rangle\le \epsilon^{\gamma_{\tilde{N}}}
$$
with
\begin{align}\label{JFA-4.7}
\mathbb{P}\bigl((\Omega_{\epsilon,\tilde{N}}^*)^{c}\bigr) \leq C\tilde{N}^{q} \exp(\eta V^{n+2}(U_0)) \epsilon^{\gamma_{\tilde{N}}}.
\end{align}
Simultaneously, we observe that
\begin{align*}
&\|\pi_{\hat{H}^n}\mathfrak{p}\|^2_{\hat{H}^{n-2}}\\
&\le N^{-4}+\langle\mathcal{Q}_{N,\tilde{N}}(U)\pi_{\hat{H}^n}\mathfrak{p},\pi_{\hat{H}^n}\mathfrak{p}\rangle-\sum_{\substack{|j|\le N \\ m \in \{0,1\}}}2\left(\big\langle\psi_j^m,\pi_{\hat{H}^n}\mathfrak{p}\big\rangle_{\hat{H}^{n-2}}\big\langle J_{j,m}^{\tilde{N}},\pi_{\hat{H}^n}\mathfrak{p}\big\rangle_{\hat{H}^{n-2}}+\big|\big\langle J_{j,m}^{\tilde{N}},\pi_{\hat{H}^n}\mathfrak{p}\big\rangle_{\hat{H}^{n-2}}\big|^2\right)\\
&:=N^{-4}+\langle\mathcal{Q}_{N,\tilde{N}}(U)\pi_{\hat{H}^n}\mathfrak{p},\mathfrak{p}\rangle-Q.
\end{align*}
Noted that $Q\ge -\frac{1}{2}\|\pi_{\hat{H}^n}\Pi_{N}\mathfrak{p}\|^2_{\hat{H}^{n-2}}-\sum\limits_{\substack{|j|\le N,m \in \{0,1\}}}\|J_{j,m}^{\tilde{N}}(U_T)\|^2_{\hat{H}^{n-2}}$. Thus, on the set \( \Omega_{\epsilon,\tilde{N}}^* \) one has 
\begin{align*}
\frac{1}{2}\|\pi_{\hat{H}^n}\mathfrak{p}\|^2_{\hat{H}^{n-2}}&\le N^{-4}+\epsilon^{\gamma_{\tilde{N}}}+\sum\limits_{\substack{|j|\le N,m \in \{0,1\}}}\|J_{j,m}^{\tilde{N}}(U_T)\|^2_{\hat{H}^{n-2}} \\
&\le N^{-4}+\epsilon^{\gamma_{\tilde{N}}}+C\frac{N^{16}}{{\tilde{N}}}(1+\|U_T\|_{\hat{H}^4}^2),
\end{align*} 
where the last line employs Lemma \ref{JFA-lemma5.6} with $s=n,\,\sigma=n+1$. Note that $U_T$, being a solution to the equation \eqref{Boussinesq-w1}, is unlikely to admit bounds controlled by a deterministic constant. Thus, we modify the set $\Omega_{\epsilon,\tilde{N}}^*$ by intersecting it with a set $\Omega_{\epsilon,\tilde{N}}$ that quantifies bounds on $U_T$ via a function $h$, which becomes unbounded as $\epsilon\to 0$. It is important to note that caution is warranted during the construction of $h$ and $\tilde{N}$ to avoid circular reasoning. For each \(\epsilon \in (0, 1/e)\), let
\begin{equation} \label{JFA-4.10}
	h(\epsilon) := \log(\log(\log(\epsilon^{-1}))), \quad 
	\tilde{N}(\epsilon) := \left\lceil \frac{C N^{16} h(\epsilon)}{{h(\epsilon)}^{-1/2}-N^{-4}} \right\rceil,
\end{equation}
where \(\lceil x\rceil\) denotes the smallest integer larger or equal to \(x\). Then
\begin{equation}\label{JFA-4.11}
	C\frac{N^{16}}{{\tilde{N}(\epsilon)}} h(\epsilon) \leq {h(\epsilon)}^{-1/2}-N^{-4}.
\end{equation}
Observe that, there exists \(\epsilon^* > 0\) such that \(\epsilon < \epsilon_1(\tilde{N}(\epsilon))\) whenever \(\epsilon < \epsilon^*\), where \(\epsilon_1\) is defined as in the Remark \ref{E1}. Indeed, one can begin by supposing \(\epsilon^*< 1/e\) and observe that
\begin{align*}
	\limsup_{\epsilon \to 0^+} \epsilon \left( \frac{C(T)}{\tilde{N}(\epsilon)} \right)^{-q_1^{\tilde{N}(\epsilon)}} &\leq \limsup_{\epsilon \to 0^+} \epsilon h(\epsilon)^{3q_1^{\tilde{N}(\epsilon)}} = \limsup_{\epsilon \to 0^+} \epsilon \exp\left( 3q_1^{\tilde{N}(\epsilon)} \log(h(\epsilon)) \right)\\
	&\leq \limsup_{\epsilon \to 0^+} \epsilon \exp\left( \exp(h(\epsilon)^3) \right)\\
	&= \limsup_{h \to \infty} \exp(-\exp(\exp(h))) \exp\left( \exp(h^3) \right) = 0.
\end{align*} 
Thus for any $\epsilon<\epsilon^*$, $\Omega_{\epsilon,\tilde{N}(\epsilon)}^*$ (see Proposition \ref{NS-prop5.18}) is well defined. Then we set
\begin{align*}
	\Omega_\epsilon := {\Omega}_{\epsilon,U,h} \cap \Omega_{\epsilon,\tilde{N}(\epsilon)}^{*}, 
\end{align*}
where ${\Omega}_{\epsilon,U,h}:= \{1 + \|U_T\|_{\hat{H}^4}^2 \leq h(\epsilon)\}$. Then by \eqref{JFA-4.7}, \eqref{JFA-A.5} and the Markov inequality, we have 
\begin{align}\label{JFA-4.12}
	\mathbb{P}(\Omega_\epsilon^c) &\leq \mathbb{P}({\Omega}_{\epsilon,U,h}^c) + \mathbb{P}((\Omega_{\epsilon,\tilde{N}(\epsilon)}^{*})^c) \nonumber \leq C \left( \frac{1}{h(\epsilon)} + (\tilde{N}(\epsilon))^{q} \epsilon^{\gamma_{\tilde{N}(\epsilon)}} \right) \exp(\eta V^{n+2}(U_0)) \nonumber \\
	&=: r(\epsilon) \exp(\eta V^{n+2}(U_0)) 
\end{align}
whenever \(\epsilon < \epsilon^*\). since \(h(\epsilon) \to \infty\) as \(\epsilon \to 0^+\), the later quantity \(r(\epsilon)\) decays to zero as \(\epsilon \to 0^+\). Indeed, we have that
\begin{align}\label{JFA-4.13}
	\limsup_{\epsilon \to 0^+} \tilde{N}(\epsilon)^{q} \epsilon^{\gamma_{\tilde{N}}} &=\limsup_{\epsilon \to 0^+} \tilde{N}(\epsilon)^{q} \epsilon^{{q_2}^{\tilde{N}}}\leq \limsup_{\epsilon \to 0^+} h(\epsilon)^{3q} e^{\exp(-h(\epsilon)^3)}\notag\\
	&\leq \exp\left( \limsup_{\epsilon \to 0^+} \left(3q \log h(\epsilon) + \exp(-h(\epsilon)^3) \log \epsilon\right) \right) \notag\\
	&= \exp\left( \limsup_{h \to \infty} \left(3q \log(h) - \exp(-h^3) \exp(\exp(h))\right) \right) = 0. 
\end{align}

Finally, choosing $g(\epsilon)>2\left(\epsilon^{\gamma_{\tilde{N}}}+{h(\epsilon)}^{-1/2}\right)$ with $\lim_{\epsilon\to 0}g(\epsilon)=0$ locally uniformly in $\mathbf{p}_0$, the proposition follows.
\end{proof}
\begin{remark}
In fact, from the above proof we can give an explicit expression for $r$, namely $r(\epsilon)=C \left( \frac{1}{h(\epsilon)} + (\tilde{N}(\epsilon))^{q} \epsilon^{\gamma_{\tilde{N}(\epsilon)}} \right)$.
\end{remark}

Next, we use the above proposition-which controls all Fourier modes of the $\hat{H}^n$-component of $\mathfrak{p}$-together with the final implication from Proposition \ref{NS-prop5.18} and the Assumption \ref{assum5.5} to control the overlaps of $\mathfrak{p}$ with the $T\Sigma$ directions. It is also noteworthy that the decay rate $f(\epsilon)$ can similarly be made arbitrarily slow. In fact, $f(\epsilon)$ can be made slower than $r(\epsilon)$, exhibiting even weaker dependence on $\epsilon$.

\begin{proposition}\label{NS-prop5.22}
For any $\epsilon \in (0,1)$, $T>0$, $\eta\in (0,1)$, and $\mathbf{p}_0\in \Sigma$, there exists $C(T,\eta,N,\mathbf{p}_0)>0$, locally bounded in $\mathbf{p}_0$, and a positive constant $\epsilon^*:=\epsilon^*(T,\eta,N,\mathbf{p}_0)$, such that for any $0<\epsilon<\epsilon^*$, 
\[
\mathbb{P}
\bigg( 
\exists \mathfrak{p} \in \hat{H}^n \times T_{\mathbf{p}_T}\Sigma,\ 
\|\mathfrak{p}\| = 1\wedge\ 
\langle \mathcal{M}_T\mathfrak{p}, \mathfrak{p} \rangle \leq \epsilon\wedge\ 
\| \pi_{\scriptscriptstyle{T\Sigma}} \mathfrak{p} \| > g(\epsilon)
\bigg)
< f(\epsilon)\exp(\eta V^{n+2}(U_0)),
\]
where $g:(0,1)\to [0,1]$ satisfies $\lim_{\epsilon\to 0}g(\epsilon)=0$ locally uniformly in $\mathbf{p}_0$, and $f(\epsilon)$ is a nonnegative decreasing function vanishing as $\epsilon\to 0$, also locally uniformly in $\mathbf{p}_0$.
\end{proposition}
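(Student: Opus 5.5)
We plan to combine three ingredients on a single good event: the third implication of Proposition \ref{NS-prop5.18}, the $\hat{H}^{n-2}$ smallness of $\pi_{\hat{H}^n}\mathfrak{p}$ from Proposition \ref{NS-prop5.21}, and the quantitative spanning bound of Assumption \ref{assum5.5}. Fix a unit vector $\mathfrak{p}=(\varphi,v)$ with $\langle\mathcal{M}_T\mathfrak{p},\mathfrak{p}\rangle\le\epsilon$. For $|j|\le N$ and $m\in\{0,1\}$, write
$$V_j^m:=\Theta_{Z_j^m(\overline U_T)}(\mathbf{p}_T)+\Theta_{[U_T,Y_j^m(\overline U_T)]_x}(\mathbf{p}_T)\in T_{\mathbf{p}_T}\Sigma$$
and $E_j^m:=[F(\overline U_T),Z_j^m(\overline U_T)]\in\hat{H}^n$. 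By the definition of the inner product on $\hat{H}^n\times T\Sigma$,
$$\hat g(V_j^m,v)=\langle E_j^m,\varphi\rangle_{\hat{H}^n}-\big\langle E_j^m-V_j^m,\mathfrak{p}\big\rangle .$$
On $\Omega^*_{\epsilon,N}$, Proposition \ref{NS-prop5.18} bounds the second term by $\epsilon^{\gamma_N}$. The first term is handled by duality:
$$|\langle E_j^m,\varphi\rangle_{\hat{H}^n}|\le\|E_j^m\|_{\hat{H}^{n+2}}\|\varphi\|_{\hat{H}^{n-2}}.$$
On the event of Proposition \ref{NS-prop5.21}, this is at most $\|E_j^m\|_{\hat{H}^{n+2}}\,g_1(\epsilon)$, where $g_1$ denotes the function $g$ of that proposition.

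The next step controls $\|E_j^m\|_{\hat{H}^{n+2}}$. Counting derivatives as in \eqref{JFA-6.16}, we expect a bound
$$\|E_j^m\|_{\hat{H}^{n+2}}\le CN^{a}\big(1+\|\overline U_T\|_{\hat{H}^{n+6}}\big)^{3}$$
for some power $a$. We cut this off on a set $\{1+\|\overline U_T\|_{\hat{H}^{n+6}}^{3}\le h_1(\epsilon)\}$, where $h_1(\epsilon)\to\infty$ slowly enough that $N^a h_1(\epsilon)g_1(\epsilon)\to0$. The complement has probability at most $Ce^{\eta V^{n+2}(U_0)}/h_1(\epsilon)$ by Markov's inequality and the moment bounds \eqref{Hn-eatimates2}; here parabolic smoothing at time $T>0$ means only low norms of $U_0$ enter. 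Also $\overline U_T=U_T-\sigma_\theta W_T$, and $\sigma_\theta W_T$ is smooth. On the intersection of these events we obtain
$$\max_{j,m}|\hat g(V_j^m,v)|\le\epsilon^{\gamma_N}+CN^{a}h_1(\epsilon)g_1(\epsilon)=:\delta(\epsilon)\to0 .$$

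Finally we use the spanning condition. Set
$$K:=\Big[\inf_{\|w\|=1}\max_{j,m}|\hat g(V_j^m,w)|\Big]^{-1}.$$
If $v\neq0$, then $\max_{j,m}|\hat g(V_j^m,v)|\ge\|v\|/K$, hence $\|v\|\le K\delta(\epsilon)$. Assumption \ref{assum5.5}, in its compact form for $\Sigma^1$ and $\Sigma^P$ and its weighted form in general, gives $\mathbb{E}K^q\le Ce^{\eta V^n(U_0)}$. By Markov's inequality,
$$\mathbb{P}\big(K>\delta(\epsilon)^{-1/2}\big)\le C\delta(\epsilon)^{q/2}e^{\eta V^n(U_0)} .$$
Outside this set, $\|\pi_{T\Sigma}\mathfrak{p}\|\le\delta(\epsilon)^{1/2}=:g(\epsilon)$. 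Collecting the exceptional sets yields
$$f(\epsilon)=C\big(r(\epsilon)+N^{q}\epsilon^{\gamma_N}+h_1(\epsilon)^{-1}+\delta(\epsilon)^{q/2}\big).$$
This $f$ is decreasing up to taking its monotone envelope, vanishes as $\epsilon\to0$, and is locally uniform in $\mathbf{p}_0$ since all the constants are.

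The main obstacle is the duality step. The spanning vectors $V_j^m$ and the partner fields $E_j^m$ are random and unbounded, so $g_1$ (which may decay very slowly) must beat the growth of $\|E_j^m\|_{\hat{H}^{n+2}}$. This forces $h_1$ to be tied to $g_1$, e.g. $h_1=g_1^{-1/2}N^{-a}$, and the required regularity of $\overline U_T$ in $\hat{H}^{n+6}$ must be available. If that regularity is too costly, we would instead apply Proposition \ref{NS-prop5.21} with a larger Sobolev gap, using Lemma \ref{JFA-lemma5.6} with bigger $\sigma$, so that $E_j^m$ need only be bounded in $\hat{H}^{n}$-type norms controlled by $V^{n+2}$.
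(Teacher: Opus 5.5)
Your proposal is correct and follows essentially the same route as the paper: write $\hat g(V_j^m,v)=\langle E_j^m,\varphi\rangle_{\hat H^n}-\langle E_j^m-V_j^m,\mathfrak p\rangle$ and control the second term by the third implication of Proposition \ref{NS-prop5.18}; bound the first term by duality against the $\hat H^{n-2}$-smallness from Proposition \ref{NS-prop5.21}, together with a moment cutoff on $\overline U_T$; then conclude with Markov's inequality applied to the inverse spanning constant of Assumption \ref{assum5.5}. Your bookkeeping is in fact slightly cleaner than the paper's: you pair against $\|E_j^m\|_{\hat H^{n+2}}$, where the paper writes $\hat H^n$, and you correctly deduce $\|\pi_{T\Sigma}\mathfrak p\|\le\delta(\epsilon)^{1/2}$ from $K\le\delta(\epsilon)^{-1/2}$.
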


\begin{proof}
Fix \(\eta \in (0,1)\). Let \(N \ge 2\) be such that Assumption \ref{assum5.5} applies. By Proposition 5.18, there exists \(\gamma_{\tilde{N}} > 0\) such that
\[
\mathbb{P} \left( \exists \mathfrak{p},\ \|\mathfrak{p}\| = 1 \wedge \langle \mathcal{M}_T \mathfrak{p}, \mathfrak{p} \rangle \leq \varepsilon \wedge \max_{|j|\le N, m\in \{0,1\}}\left|\left\langle  [F(\overline U_T),Z_j^m(\overline U_T)]-\widehat{\Theta}_{U_T}(\mathbf{p}_T) ,\mathfrak{p}\right\rangle\right| \geq \varepsilon^\gamma \right) \leq C \epsilon^{\gamma_{\tilde{N}}} e^{\eta V^{n+2}(U_0)},
\]
where \(\widehat{\Theta}_{U_T}(\mathbf{p}_T) := \Theta_{Z_j^m(\overline U_T)}(\mathbf{p}_T)+\Theta_{[U_T,Y_j^m(\overline U_T)]_x}(\mathbf{p}_T)\) denotes the new direction on the tangent bundle manifold $T\Sigma$.
Thus for any $g(\epsilon)>2\epsilon^{\gamma_{\tilde{N}}}$, we have that
\begin{align}\label{NS-5.23}
	&\mathbb{P} \left( \exists \mathfrak{p},\ \|\mathfrak{p}\| = 1 \wedge \langle \mathcal{M}_T \mathfrak{p}, \mathfrak{p} \rangle \leq \epsilon \wedge \max_{|j|\le N, m\in \{0,1\}} |\langle \widehat{\Theta}_{U_T}(\mathbf{p}_T), \mathfrak{p} \rangle| \geq g(\epsilon) \right) \notag\\
	&\leq C \epsilon^{\gamma_{\tilde{N}}} e^{\eta V^{n+2}(U_0)} + \mathbb{P} \left( \exists \mathfrak{p},\ \|\mathfrak{p}\| = 1 \land \langle \mathcal{M}_T \mathfrak{p}, \mathfrak{p} \rangle \leq \epsilon \wedge \max_{|j|\le N, m\in \{0,1\}} |\langle [F(\overline U_T),Z_j^m(\overline U_T)], \mathfrak{p} \rangle| \geq g(\epsilon)/2 \right) \notag\\
	&= C \epsilon^{\gamma_{\tilde{N}}} e^{\eta V^{n+2}(U_0)} + \mathbb{P} \left( \exists \mathfrak{p},\ \|\mathfrak{p}\| = 1 \land \langle \mathcal{M}_T \mathfrak{p}, \mathfrak{p} \rangle \leq \epsilon \wedge \max_{|j|\le N, m\in \{0,1\}} 2|\langle [F(\overline U_T),Z_j^m(\overline U_T)], \mathfrak{p} \rangle| \geq g(\epsilon) \right) \notag\\
	&\leq C \epsilon^{\gamma_{\tilde{N}}} e^{\eta V^{n+2}(U_0)} + \mathbb{P} \left( \exists \mathfrak{p},\ \|\mathfrak{p}\| = 1 \land \langle \mathcal{M}_T \mathfrak{p}, \mathfrak{p} \rangle \leq \epsilon \wedge 2\max_{|j|\le N, m\in \{0,1\}}\|[F(\overline U_T),Z_j^m(\overline U_T)]\|_{\hat{H}^n} \|\mathfrak{p}\|_{\hat{H}^{n-2}} \geq g(\epsilon) \right),\notag\\
	&\leq C \epsilon^{\gamma_{\tilde{N}}} e^{\eta V^{n+2}(U_0)} + \mathbb{P} \left( \exists \mathfrak{p},\ \|\mathfrak{p}\| = 1 \land \langle \mathcal{M}_T \mathfrak{p}, \mathfrak{p} \rangle \leq \epsilon \wedge C(1+\|\overline{U}_T\|^3_{\hat{H}^6}) \|\mathfrak{p}\|_{\hat{H}^{n-2}} \geq g(\epsilon) \right).
\end{align}
For the second item in the last row, we have
\begin{align}\label{NS-5.24}
	&\mathbb{P} \left( \exists \mathfrak{p},\ \|\mathfrak{p}\| = 1 \land \langle \mathcal{M}_T \mathfrak{p}, \mathfrak{p} \rangle \leq \epsilon \wedge C(1+\|\overline{U}_T\|^3_{\hat{H}^6}) \|\mathfrak{p}\|_{\hat{H}^{n-2}} \geq g(\epsilon) \right) \notag\\
	&\leq \mathbb{P} \left( \exists \mathfrak{p},\ \|\mathfrak{p}\| = 1 \land \langle \mathcal{M}_T \mathfrak{p}, \mathfrak{p} \rangle \leq \epsilon \land \|\mathfrak{p}\|_{\hat{H}^{n-2}} \geq g(\epsilon)^{3/2} \right) + \mathbb{P} \left( C(1+\|\overline{U}_T\|^3_{\hat{H}^6}) > g(\epsilon)^{-1/2} \right) \notag\\
	&\leq \mathbb{P} \left( \exists \mathfrak{p},\ \|\mathfrak{p}\| = 1 \land \langle \mathcal{M}_T \mathfrak{p}, \mathfrak{p} \rangle \leq \epsilon \land \|\pi_{\hat{H}^n}\mathfrak{p}\|_{\hat{H}^{n-2}} \geq g(\epsilon)^{3/2} \right) + \mathbb{P} \left( C(1+\|\overline{U}_T\|^3_{\hat{H}^6}) > g(\epsilon)^{-1/2} \right) \notag\\
	&\leq C r(\epsilon) e^{\eta V^{n+2}(U_0)} + C g(\epsilon) e^{\eta V^{n+2}(U_0)} \notag\\
	&= C (r(\epsilon)+g(\epsilon)) e^{\eta V^{n+2}(U_0)}:=C f(\epsilon) e^{\eta V^{n+2}(U_0)},
\end{align}
where we choose \(g(\varepsilon)^{3/2}\) as in Proposition \ref{NS-prop5.21} and for the other term we use Chebyshev and Proposition \ref{priori-eatimates}. Thus combining \eqref{NS-5.23} and \eqref{NS-5.24}, we have
\[
\mathbb{P} \left( \exists \mathfrak{p},\ \|\mathfrak{p}\| = 1 \wedge \langle \mathcal{M}_T \mathfrak{p}, \mathfrak{p} \rangle \leq \epsilon \wedge \max_{|j|\le N, m\in \{0,1\}} |\langle \widehat{\Theta}_{U_T}(\mathbf{p}_T), \mathfrak{p} \rangle| \geq g(\epsilon) \right) \leq C f(\epsilon) e^{\eta V^{n+2}(U_0)}.
\]
Then, using Assumption \ref{assum5.5}, we compute
\begin{align*}
	&\mathbb{P}
	\bigg( 
	\exists \mathfrak{p} \in \hat{H}^n \times T_{\mathbf{p}_T}\Sigma,\ 
	\|\mathfrak{p}\| = 1,\ 
	\langle \mathcal{M}_T\mathfrak{p}, \mathfrak{p} \rangle \leq \epsilon,\ 
	\| \pi_{\scriptscriptstyle{T\Sigma}} \mathfrak{p} \| > {g(\epsilon)}^{3/2}
	\bigg) \\
	&\leq \mathbb{P} \left( \exists \mathfrak{p},\ \|\mathfrak{p}\| = 1 \land \langle \mathcal{M}_T\mathfrak{p}, \mathfrak{p} \rangle \leq \epsilon \land \max_{|j|\le N, m\in \{0,1\}} |\langle \widehat{\Theta}_{U_T}(\mathbf{p}_T), \mathfrak{p} \rangle| \geq g(\epsilon) \right) \\
	&\quad + \mathbb{P} \left( \exists \mathfrak{p},\ \|\mathfrak{p}\| = 1 \land \langle \mathcal{M}_T\mathfrak{p}, \mathfrak{p} \rangle \leq \epsilon \land \sup_{\substack{v \in T_{{\mathbf{p}}_T} \Sigma \\ \|v\|=1}} \left( \max_{|j|\le N, m\in \{0,1\}}  |\langle \widehat{\Theta}_{U_T}(\mathbf{p}_T), v \rangle| \right)^{-1} \geq g(\epsilon)^{-1/2} \right) \\
	&\leq C f(\epsilon) e^{\eta V^{n+2}(U_0)},
	\end{align*}
	thus we conclude, after redefining \(g(\epsilon)\) and \(f(\epsilon)\).
\end{proof}

The proof of Theorem \ref{bounds-Malliavin} now follows immediately from Propositions \ref{NS-prop5.21} and \ref{NS-prop5.22}.

\begin{proof}[Proof of Theorem \ref{bounds-Malliavin}]
We note that
\begin{align*}
	\mathbb{P} \left( \inf_{\|\mathfrak{p}\|=1,\,\mathfrak{p}\in \mathcal{S}_{\alpha,N}} \langle \mathcal{M}_T\mathfrak{p}, \mathfrak{p} \rangle < \epsilon \right) 
	&\leq \mathbb{P} \left( \exists \mathfrak{p},\ \|\mathfrak{p}\| = 1 \land \langle \mathcal{M}_T\mathfrak{p}, \mathfrak{p} \rangle \leq \epsilon \land \|\pi_{\scriptscriptstyle{T\Sigma}} \mathfrak{p}\| \geq \alpha/2 \right) \\
	&\quad +\mathbb{P} \left( \exists \mathfrak{p},\ \|\mathfrak{p}\| = 1 \land \langle \mathcal{M}_T\mathfrak{p}, \mathfrak{p} \rangle \leq \varepsilon \land \|\pi_{\hat{H}^n} \mathfrak{p}\|_{\hat{H}^{n-2}} \geq C^{-1} \alpha \right) \\
	&\leq \mathbb{P} \left( \exists \mathfrak{p},\ \|\mathfrak{p}\| = 1 \land \langle \mathcal{M}_T\mathfrak{p}, \mathfrak{p} \rangle \leq \epsilon \land \|\pi_{\scriptscriptstyle{T\Sigma}} \mathfrak{p}\| \geq g(\epsilon) \right) \\
	&\quad + \mathbb{P} \left( \exists \mathfrak{p},\ \|\mathfrak{p}\| = 1 \land \langle \mathcal{M}_T\mathfrak{p}, \mathfrak{p} \rangle \leq \epsilon \land \|\pi_{\hat{H}^n} \mathfrak{p}\|_{\hat{H}^{n-2}} \geq g(\epsilon) \right) \\
	&\leq Cf(\epsilon) e^{\eta V^{n+2} (U_0)} + C r(\epsilon) e^{\eta V^{n+2} (U_0)} \\
	&\leq C f(\epsilon) e^{\eta V^{n+2} (U_0)},
\end{align*}
where the second inequality holds for $\epsilon$ small enough and for the third inequality we use Proposition \ref{NS-prop5.21} and Proposition \ref{NS-prop5.22}. By redefining $f(\epsilon)$ we complete the proof.
\end{proof}
\section{Proof of Proposition \ref{prop:5.6} }\label{var-assum5.4-5}
In this section, we complete the proof of Proposition \ref{prop:5.6} by verifying that the Lagrangian, projective, tangent, and Jacobi processes satisfy Assumptions \ref{assum5.4} and \ref{assum5.5}. This proposition is frequently used throughout Section \ref{smoothing estimate} to establish the probabilistic spectral bound on a cone for the Malliavin covariance matrix. The detailed proofs will be provided in Subsections \ref{var-assum5.4} and \ref{var-assum5.5}.
\subsection{Linearization and second derivative of the extended system}\label{var-assum5.4}
In this subsection, we primarily focus on establishing estimates for the linearization and second derivatives of the extended system. This is mainly to verify Assumption \ref{assum5.4}. We note that it suffices to provide bounds for the linearization and second derivatives of the total extended system $(U_t,x_t,\tau_t,v_t,A_t)\in \hat{H}^4\times \mathbb{T}^2\times \mathbb{R}^2\times S^1\times \mathrm{SL}_2(\mathbb{R})$, since this ensures that the Lagrangian processes, projection processes, tangent processes, and matrix processes satisfy Assumption \ref{assum5.4}. It should be emphasized that this overall system is not hypoelliptic. Consequently, when applying hypoelliptic theory, we only consider a subset of these coordinates.

Indeed, it suffices to consider the simplified system $(U_t,x_t,\tau_t, A_t)$, without explicit consideration of the normalized tangent vector. This follows because estimates for the projective process are directly controlled by the tangent process (c.f. \cite{NS}).

Although our conclusion closely resembles that in \cite{NS}, the proof differs from \cite{NS} due to the presence of the buoyancy term $g\theta$. Specifically, in our proof, we must apply different weighting schemes to the velocity equation and the temperature equation, and also require appropriate use of inequality estimates during the analysis.

Fix initial data \((U_0, x_0, \tau_0, A_0)\) and time interval \(0 < s < t < T\). Given a unit random direction vector \((\psi_s^1,\psi_s^2, y_s, \zeta_s, B_s) \in H^4(\mathbb{T}^2) \times H^4(\mathbb{T}^2) \times \mathbb{R}^2 \times \mathbb{R}^2 \times T_{A_s}\mathrm{SL}_2(\mathbb{R})\) with \(\|(\psi_s^1,\psi_s^2, y_s, \zeta_s, B_s)\|=1\). Using the canonical embedding \(\mathrm{SL}_2(\mathbb{R}) \hookrightarrow \mathbb{R}^{2\times 2}\)-which induces the tangent space embedding \(T_{A_s}\mathrm{SL}_2(\mathbb{R}) \hookrightarrow \mathbb{R}^{2\times 2}\)-we construct the equations. Define the directional derivative \((\partial U_t, \partial x_t, \partial \tau_t, \partial A_t)\) of \((U_t, x_t, \tau_t, A_t)\) along \((\psi_s^1,\psi_s^2, y_s, \zeta_s, B_s)\), treating the system states as random functions of initial parameters \((U_s, x_s, \tau_s, A_s)\). And let $(\phi_t^1,\phi_t^2,z_t,\xi_t,C_t)$ denotes the second directional derivative of the same function in the same
direction. Here, we only need to control the diagonal part. Then, we have that
\begin{equation}\label{A1}
	\left(\psi_{t}^1,\psi_{t}^2,y_{t},\zeta_{t},B_{t}\right)=\mathcal{J}_{s,t}(\psi_{s}^1,\psi_{s}^2,y_{s},\zeta_{s},B_{s})
\end{equation}
and
\begin{equation}\label{A2}
	(\phi_{t}^1,\phi_{t}^2,z_{t},\xi_{t},C_{t})=\mathcal{J}^{2}_{s,t}\left((\phi_{s}^1,\phi_{s}^2,y_{s},\xi_{s},B_{s}),(\phi_{s}^1,\phi_{s}^2,y_{s},\xi_{s},B_{s})\right).
\end{equation}
Naturally, for linearization of the extended system, direct computation yields that
\begin{equation}\label{NS-A.7}
	\begin{cases}
		\dot{\psi}_{t}^1 = \nu_1\Delta\psi_{t}^1 - \nabla^{\perp}\Delta^{-1}\psi_{t}^1\cdot\nabla\omega_{t} - \nabla^{\perp}\Delta^{-1}\omega_{t}\cdot\nabla\psi_{t}^1+g\partial_1\psi_{t}^2 \\
		\dot{\psi}_{t}^2 = \nu_2\Delta\psi_{t}^2 - \nabla^{\perp}\Delta^{-1}\psi_{t}^2\cdot\nabla\theta_{t} - \nabla^{\perp}\Delta^{-1}\omega_{t}\cdot\nabla\psi_{t}^2 \\
		\dot{y}_{t} = y_{t}\cdot\nabla u_{t}(x_{t}) + \nabla^{\perp}\Delta^{-1}\psi_t^1(x_{t}) \\
		\dot{\zeta}_{t} = \zeta_{t}\cdot\nabla u_{t}(x_{t}) + \tau_{t}\otimes y_{t}:\nabla^{2}u_{t}(x_{t}) + \tau_{t}\cdot\nabla\nabla^{\perp}\Delta^{-1}\psi_{t}^1(x_{t}) \\
		\dot{B}_{t} = B_{t}\nabla u_{t}(x_{t}) + A_{t}y_{t}\cdot\nabla^{2}u_{t}(x_{t}) + A_{t}\nabla\nabla^{\perp}\Delta^{-1}\psi_{t}^1(x_{t}),
	\end{cases}
\end{equation}
where the initial data for these equations are given by \((\psi_{s}^1,\psi_{s}^2,y_{s},\zeta_{s},B_{s})\). For the second derivative, we also have
\begin{equation}\label{NS-A.8}
	\begin{cases}
		\dot{\phi}_{t}^1 = \nu_1\Delta\phi_{t}^1 - \nabla^{\perp}\Delta^{-1}\phi_{t}^1\cdot\nabla\omega_{t} - \nabla^{\perp}\Delta^{-1}\omega_{t}\cdot\nabla\phi_{t}^1 - 2\nabla^{\perp}\Delta^{-1}\psi_{t}^1\cdot\nabla\psi_{t}^1+g\partial_1\phi_{t}^2 \\
		\dot{\phi}_{t}^2 = \nu_2\Delta\phi_{t}^2 - \nabla^{\perp}\Delta^{-1}\phi_{t}^2\cdot\nabla\theta_{t} - \nabla^{\perp}\Delta^{-1}\omega_{t}\cdot\nabla\phi_{t}^2 - \nabla^{\perp}\Delta^{-1}\psi_{t}^2\cdot\nabla\psi_{t}^1-\nabla^{\perp}\Delta^{-1}\psi_{t}^1\cdot\nabla\psi_{t}^1\\
		\dot{z}_{t} = z_{t}\cdot\nabla u_{t}(x_{t}) + y_{t}\otimes y_{t}:\nabla^{2}u_{t}(x_{t}) + 2y_{t}\cdot\nabla\nabla^{\perp}\Delta^{-1}\psi_{t}^1(x_{t}) + \nabla^{\perp}\Delta^{-1}\phi_{t}^1(x_{t}) \\
		\dot{\xi}_{t} = \xi_{t}\cdot\nabla u_{t}(x_{t}) + \tau_{t}\nabla\nabla^{\perp}\Delta^{-1}\phi_{t}^1(x_{t}) + \tau_{t}\otimes z_{t}:\nabla^{2}u_{t}(x_{t}) + \tau_{t}\otimes y_{t}\otimes y_{t}:\nabla^{3}u_{t}(x_{t}) \\
		\qquad + 2\tau_{t}\otimes y_{t}:\nabla^{2}\nabla^{\perp}\Delta^{-1}\psi_{t}^1(x_{t}) + 2\zeta_{t}\otimes y_{t}:\nabla^{2}u_{t}(x_{t}) + 2p_{t}\cdot\nabla\nabla^{\perp}\Delta^{-1}\psi_{t}^1(x_{t}) \\
		\dot{C}_{t} = C_{t}\nabla u_{t}(x_{t}) + A_{t}\nabla\nabla^{\perp}\Delta^{-1}\phi_{t}^1(x_{t}) + A_{t}z_{t}\cdot\nabla^{2}u_{t}(x_{t}) + A_{t}y_{t}\otimes y_{t}:\nabla^{3}u_{t}(x_{t}) \\
		\qquad + 2B_{t}\nabla\nabla^{\perp}\Delta^{-1}\psi_{t}^1(x_{t}) + 2A_{t}y_{t}\cdot\nabla^{2}\nabla^{\perp}\Delta^{-1}\psi_{t}^1(x_{t}) + 2B_{t}y_{t}\cdot\nabla^{2}u_{t}(x_{t}),
	\end{cases}
\end{equation}
where these equations are given initial data 0.

Recalling \eqref{A1} and \eqref{A2}, to obtain bounds for the linearization and second derivative of the original nonlinear system, we need to estimate the bounds of $\psi_t$ and $\phi_t$ in $\hat{H}^4$-spaces. Before estimating the former, we first require bounds for $\psi_t$ in $\hat{H}$-spaces.
\begin{lemma}\label{NS-lemmaA.7}
	There exists \( C > 0 \) such that for \( t \geq s \),
	\[
	\|\psi_t\|_{\hat{H}} \leq \|\psi_s\|_{\hat{H}} \exp \left( C \int_s^t \|U_r\|_{\hat{H}^1}^{4/3}  dr \right).
	\]
\end{lemma}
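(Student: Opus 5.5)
We will prove the bound by a weighted $L^2$ energy estimate for the linearized system \eqref{NS-A.7}, restricted to its first two components $\psi_t=(\psi_t^1,\psi_t^2)$. These are decoupled from the manifold components $(y,\zeta,B)$. The plan is to show
\[
\frac{d}{dt}\|\psi_t\|_{\hat{H}}^2\le C\|U_t\|_{\hat{H}^1}^{4/3}\|\psi_t\|_{\hat{H}}^2
\]
and then apply Gr\"onwall's inequality. Taking square roots gives the claim, after adjusting $C$.

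Write $\|\psi\|_{\hat{H}}^2=\frac{\nu_1\nu_2}{g^2}\|\psi^1\|_{L^2}^2+\|\psi^2\|_{L^2}^2$. We pair the $\psi^1$ equation with $\frac{\nu_1\nu_2}{g^2}\psi^1$ and the $\psi^2$ equation with $\psi^2$. Note that the $\psi^2$ transport term should read $\nabla^\perp\Delta^{-1}\psi^1\cdot\nabla\theta$, since this is the linearization of $B$; we use that form. The advection terms $\nabla^\perp\Delta^{-1}\omega\cdot\nabla\psi^i$ vanish after integration by parts because $\mathbf{u}$ is divergence free. The dissipation produces $-\frac{\nu_1^2\nu_2}{g^2}\|\nabla\psi^1\|^2-\nu_2\|\nabla\psi^2\|^2$, which dominates $c\|\psi\|_{\hat H^1}^2$.

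The buoyancy term is handled by the weighting. We bound
\[
\frac{\nu_1\nu_2}{g^2}\,g\langle\partial_1\psi^2,\psi^1\rangle\le\frac{\nu_1\nu_2}{|g|}\|\psi^2\|_{L^2}\|\nabla\psi^1\|_{L^2},
\]
and then use Young's inequality to absorb half into $\frac{\nu_1^2\nu_2}{g^2}\|\nabla\psi^1\|^2$. The remainder is $\frac{\nu_2}{4}\|\psi^2\|^2\le\frac{\nu_2}{4}\|\nabla\psi^2\|^2$ by Poincar\'e, using mean zero and $\lambda_1=1$. This leaves a positive fraction of the dissipation, and it is exactly why the $\hat H$ norm carries the weight $\nu_1\nu_2/g^2$.

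The two stretching terms are
\[
\langle\nabla^\perp\Delta^{-1}\psi^1\cdot\nabla\omega,\psi^1\rangle,\qquad \langle\nabla^\perp\Delta^{-1}\psi^1\cdot\nabla\theta,\psi^2\rangle .
\]
We integrate by parts to move the gradient onto $\psi^i$, giving $-\langle \mathbf v\,\omega,\nabla\psi^1\rangle$ with $\mathbf v=\nabla^\perp\Delta^{-1}\psi^1$. Then we estimate
\[
|\langle \mathbf v\,\omega,\nabla\psi^1\rangle|\le\|\mathbf v\|_{L^4}\|\omega\|_{L^4}\|\nabla\psi^1\|_{L^2}.
\]
Here $\|\mathbf v\|_{L^4}\le C\|\mathbf v\|_{H^1}\le C\|\psi^1\|_{L^2}$, and by Ladyzhenskaya $\|\omega\|_{L^4}\le C\|\omega\|_{L^2}^{1/2}\|\omega\|_{H^1}^{1/2}$. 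That route gives growth $\|U\|_{H^1}^2$, not the claimed $4/3$ power.

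To get the sharper exponent we instead keep all derivatives on $U$ and interpolate on $\psi$:
\[
|\langle\mathbf v\cdot\nabla\omega,\psi^1\rangle|\le\|\mathbf v\|_{L^\infty}\|\nabla\omega\|_{L^2}\|\psi^1\|_{L^2}.
\]
But $\|\mathbf v\|_{L^\infty}$ is not controlled by $\|\psi^1\|_{L^2}$ alone, so this fails too.

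The plan that does work uses $\|\mathbf v\|_{L^\infty}\le C\|\psi^1\|_{L^2}^{1/2}\|\psi^1\|_{H^1}^{1/2}$ (Agmon or Gagliardo–Nirenberg, since $\mathbf v\in H^{2}$ when $\psi^1\in H^1$). This gives the bound
\[
C\|U\|_{\hat H^1}\|\psi\|_{\hat H}^{3/2}\|\psi\|_{\hat H^1}^{1/2}.
\]
Young's inequality with exponents $4$ and $4/3$ then yields $\delta\|\psi\|_{\hat H^1}^2+C_\delta\|U\|_{\hat H^1}^{4/3}\|\psi\|_{\hat H}^2$. The $\theta$ stretching term is treated identically with $\nabla\theta$ in place of $\nabla\omega$. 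Absorbing $\delta$ into the dissipation gives the differential inequality, and Gr\"onwall's inequality concludes.

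The main obstacle is precisely this stretching estimate with the $4/3$ exponent: choosing the Agmon-type interpolation for $\nabla^\perp\Delta^{-1}\psi^1$ so that only $\|U\|_{\hat H^1}$ appears. A secondary obstacle is bookkeeping the weights so that the buoyancy term is absorbed uniformly in $\nu_1,\nu_2,g$.
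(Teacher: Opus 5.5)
Your argument is correct and is essentially the paper's: a weighted $\hat{H}$ energy estimate in which the $\nu_1\nu_2/g^2$ weight lets the buoyancy term be absorbed by the dissipation, then a bound of both stretching terms by $C\|U\|_{\hat{H}^1}\|\psi\|_{\hat{H}}^{3/2}\|\psi\|_{\hat{H}^1}^{1/2}$, Young with exponents $4$ and $4/3$, and Gr\"onwall. One small slip in the buoyancy step is harmless: if you absorb only half of $\frac{\nu_1^2\nu_2}{g^2}\|\nabla\psi^1\|_{L^2}^2$, the Young remainder is $\frac{\nu_2}{2}\|\psi^2\|_{L^2}^2$, not $\frac{\nu_2}{4}\|\psi^2\|_{L^2}^2$. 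The only real difference is where the half-derivative interpolation sits: the paper uses the split $\|\nabla\omega\|_{L^2}\|\nabla^\perp\Delta^{-1}\psi^1\|_{L^4}\|\psi^1\|_{L^4}$ with Ladyzhenskaya on $\psi^1$, whereas you put $\nabla^\perp\Delta^{-1}\psi^1$ in $L^\infty$ via an Agmon-type bound; your reading of the $\psi^2$ stretching term as $\nabla^\perp\Delta^{-1}\psi^1\cdot\nabla\theta$ is also the right one.
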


\begin{proof}
	Employing the weighted estimation approach adopted for the temperature and vorticity equations in Appendix \ref{priori}, we can analogously derive
	\begin{align*}
		\frac{d}{dt} \frac{1}{2} \|\psi_t\|_{\hat{H}}^2 &\leq -\frac{1}{2} \|\psi_t\|_{D(A^{1/2})} + C_{\varkappa} \|\omega_t\|_{H^1} \|\psi_t^1\|_{W^{-1,4}} \|\psi_t^1\|_{L^4}+C\|\theta_t\|_{{H}^1}\|\psi_t^2\|_{W^{-1,4}}\|\psi_t^2\|_{L^4}\\
		&\leq -\frac{1}{2} \|\psi_t\|_{D(A^{1/2})}+ C \|U_t\|_{\hat{H}^1} \|\psi_t\|_{W^{-1,4}}\|\psi_t\|_{L^4}\\
		&\leq -\frac{1}{2} \|\psi_t\|_{D(A^{1/2})}+C\|U_t\|_{\hat{H}^1}\|\psi_t\|_{\hat{H}}^{3/2}\|\psi_t\|_{\hat{H}^1}^{1/2}\\
		&\leq C\|\|_{\hat{H}^1}^{4/3}\|\psi_t\|_{\hat{H}}^2,
	\end{align*}
	where $\|\cdot\|_{D(A^{1/2})}$ is defined as in \eqref{DA}. Then, applying the Gr\"{o}nwal inequality yields the conclusion.
\end{proof}

Now, we consider bounds for $\psi_t$ in $\hat{H}^4$-spaces. Taking derivatives of the equation for $(\psi_t^1,\psi_t^2)$, one has 
$$
\frac{d}{dt}\nabla^n\psi_t^1=\nu_1\Delta\nabla^n\psi_t^1-\sum_{j=0}^{n}\binom{n}{j}\left(\nabla^{n-j}\nabla^{\perp}\Delta^{-1}\omega_t\cdot\nabla\nabla^j\psi_t^1+\nabla^{\perp}\Delta^{-1}\nabla^j\psi_t^1\cdot\nabla\nabla^{n-j}\omega_t\right)+g\partial_1\nabla^n\psi_t^2
$$
and
$$
\frac{d}{dt}\nabla^n\psi_t^2=\nu_2\Delta\nabla^n\psi_t^2-\sum_{j=0}^{n}\binom{n}{j}\left(\nabla^{n-j}\nabla^{\perp}\Delta^{-1}\omega_t\cdot\nabla\nabla^j\psi_t^2+\nabla^{\perp}\Delta^{-1}\nabla^j\psi_t^2\cdot\nabla\nabla^{n-j}\theta_t\right).
$$
Correspondingly, to bound the second terms on the right-hand side of both equations above, we establish the following lemma.
\begin{lemma}\label{NS-lemmaA.10}
	For \( n \geq 1 \), there exists \( C(n) > 0 \) such that
	\begin{align}\label{NS-A.10}
		&\left| \int \nabla^n \psi_t^1 : \sum_{j=0}^{n}\binom{n}{j}\left(\nabla^{n-j}\nabla^{\perp}\Delta^{-1}\omega_t\cdot\nabla\nabla^j\psi_t^1+\nabla^{\perp}\Delta^{-1}\nabla^j\psi_t^1\cdot\nabla\nabla^{n-j}\omega_t\right)  dx \right|\notag\\
		&\leq \frac{\nu_1}{8} \| \psi_t^1 \|_{H^{n+1}}^2 + C \| \omega_t \|_{H^n}^{2n+2} \| \psi_t^1 \|_{L^2}^2
	\end{align}
	and 
	\begin{align}\label{A.11}
		&\left| \int \nabla^n  \psi_t^2 : \sum_{j=0}^{n}\binom{n}{j}\left(\nabla^{n-j}\nabla^{\perp}\Delta^{-1}\omega_t\cdot\nabla\nabla^j\psi_t^2+\nabla^{\perp}\Delta^{-1}\nabla^j\psi_t^2\cdot\nabla\nabla^{n-j}\theta_t\right)  dx \right|\notag\\
		&\leq \frac{\nu_2}{8} \| \psi_t^2 \|_{H^{n+1}}^2 + C \| \omega_t \|_{H^n}^{2n+2} \| \psi_t^2 \|_{L^2}^2.
	\end{align}
\end{lemma}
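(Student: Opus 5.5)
We will treat each of the two bounds term by term in the sum over $j$. The $j=0$ term of the transport part, $\int \nabla^n\psi^1_t : (\nabla^{\perp}\Delta^{-1}\omega_t\cdot\nabla)\nabla^n\psi_t^1\,dx$, vanishes: $\nabla^\perp\Delta^{-1}\omega_t$ is divergence free, and integrating by parts turns the integrand into a total derivative. The same holds for $\psi^2_t$. Every remaining term is a trilinear expression in which the derivatives are split between $\omega_t$ (or $\theta_t$) and $\psi_t$. We bound these by H\"older's inequality, using the two-dimensional Gagliardo--Nirenberg and Sobolev embeddings to place at most $n+1$ derivatives on $\psi_t$ and at most $n$ on $\omega_t$ (or $\theta_t$). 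The derivative gained from the Biot--Savart operator, $\|\nabla^\perp\Delta^{-1}f\|_{H^{s+1}}\le C\|f\|_{H^s}$, absorbs the extra derivative in the stretching terms.

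Concretely, for the term $\nabla^{n-j}\nabla^\perp\Delta^{-1}\omega_t\cdot\nabla\nabla^j\psi^1_t$ with $1\le j\le n-1$, we estimate it by
$$\|\nabla^n\psi_t^1\|_{L^4}\,\|\nabla^{n-j}\nabla^\perp\Delta^{-1}\omega_t\|_{L^4}\,\|\nabla^{j+1}\psi_t^1\|_{L^2}\le C\|\omega_t\|_{H^n}\,\|\psi_t^1\|_{H^{n+1/2}}\,\|\psi^1_t\|_{H^{j+1}}.$$
For the stretching term $\nabla^\perp\Delta^{-1}\nabla^j\psi_t^1\cdot\nabla\nabla^{n-j}\omega_t$, we first integrate by parts once when $j=0$, so that $\omega_t$ carries at most $n$ derivatives. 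We then pair it as $L^2\times L^\infty\times L^2$ or $L^4\times L^4\times L^2$, depending on $j$. In every case the result is bounded by $C\|\omega_t\|_{H^n}\|\psi^1_t\|_{H^{a}}\|\psi_t^1\|_{H^{b}}$ with $a,b\le n+1$ and $a+b\le 2n+1$.

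Next we interpolate each $\psi$-norm between $L^2$ and $H^{n+1}$:
$$\|\psi\|_{H^a}\le C\|\psi\|_{L^2}^{1-a/(n+1)}\,\|\psi\|_{H^{n+1}}^{a/(n+1)}.$$
The product is then at most $C\|\omega_t\|_{H^n}\|\psi\|_{L^2}^{2-\theta}\|\psi\|_{H^{n+1}}^{\theta}$ with $\theta=(a+b)/(n+1)\le (2n+1)/(n+1)<2$. Young's inequality with exponents $2/\theta$ and $2/(2-\theta)$ gives
$$\tfrac{\nu_1}{8}\|\psi\|_{H^{n+1}}^2+C\|\omega_t\|_{H^n}^{2/(2-\theta)}\|\psi\|_{L^2}^2.$$
In the worst case $\theta=(2n+1)/(n+1)$ the exponent is $2/(2-\theta)=2n+2$. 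Smaller exponents are dominated by $1+\|\omega_t\|_{H^n}^{2n+2}$, and the constant is handled by Poincar\'e, since $\|\omega_t\|_{H^n}$ is bounded below unless $\omega_t$ is small, in which case the estimate is trivial after absorbing. This yields \eqref{NS-A.10}.

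For \eqref{A.11}, the transport part is identical, with $\omega_t$ as the advecting field. In the stretching part, however, $\theta_t$ appears in place of $\omega_t$, so the same argument produces $\|\theta_t\|_{H^n}^{2n+2}$. The statement as written contains $\|\omega_t\|_{H^n}$ only, so I expect this to be the main obstacle: the literal right-hand side of \eqref{A.11} cannot control the $\theta_t$ contribution. I would resolve it by replacing $\|\omega_t\|_{H^n}$ with $\|U_t\|_{\hat H^n}$, since $\|\theta_t\|_{H^n}\le C\|U_t\|_{\hat H^n}$ by the weighted norm \eqref{2.3}. This is all that the subsequent Gr\"onwall argument needs, together with the exponential moment bounds of Proposition \ref{priori-eatimates}.

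The remaining care is in the endpoint cases $j=0$ and $j=n$. In those cases, one must check that the Sobolev exponents avoid the failing embedding $H^1\not\hookrightarrow L^\infty$, using $L^4$ pairings instead. This ensures that $a+b$ never reaches $2n+2$.
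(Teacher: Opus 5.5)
\paragraph{Assessment.} Your proposal is correct, and it takes essentially the same route as the paper's argument.

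\paragraph{On \eqref{NS-A.10}.} Your argument matches the paper's: the paper defers this bound to the Navier--Stokes estimate it adapts, and writes out the same computation for \eqref{A.11}. The steps are:
\begin{itemize}
\item H\"older and Sobolev on each trilinear term, with at most $n$ derivatives on the background field;
\item an integration by parts in the top-order stretching term;
\item interpolation of $\psi$ between $L^2$ and $H^{n+1}$;
\item Young's inequality with the extremal exponent $2n+2$.
\end{itemize}
You cancel the pure transport term rather than bounding it through $\|\nabla^\perp\Delta^{-1}\omega_t\|_{L^\infty}$, which is a harmless variation. In the statement's indexing this is the $j=n$ term, not $j=0$. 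For the lower exponents, your case distinction works, but there is a cleaner fix. Use $\|\psi\|_{H^b}\le\|\psi\|_{H^{b'}}$ for mean-zero $\psi$ and $b\le b'$ to raise every term to $a+b=2n+1$; each term then produces exactly $\|\omega_t\|_{H^n}^{2n+2}$.

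\paragraph{On \eqref{A.11}.} Your objection is correct: the inequality is false as stated. Set $\omega_t=0$. The left-hand side becomes $\bigl|\int\nabla^n\psi_t^2:\nabla^n(\nabla^\perp\Delta^{-1}\psi_t^2\cdot\nabla\theta_t)\,dx\bigr|$, which is linear in $\theta_t$ and not identically zero. For instance, with $n=1$, $\psi_t^2=\cos x_1+\cos 2x_2$ and $\theta_t=\sin x_1\sin 2x_2$, it equals $\bigl|\int\theta_t\,(\nabla^\perp\Delta^{-1}\psi_t^2\cdot\nabla)\Delta\psi_t^2\,dx\bigr|=15\pi^2/2$. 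The right-hand side does not involve $\theta_t$, so scaling $\theta_t$ breaks the inequality.

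The paper's proof fails at exactly this point. Its last step passes from $C\|U_t\|_{\hat H}^{1/(n+1)}\|U_t\|_{\hat H^{n+1}}^{(2n+1)/(n+1)}\|\psi_t^2\|_{L^2}^{1/(n+1)}\|\psi_t^2\|_{H^{n+1}}^{(2n+1)/(n+1)}$ to $\frac{\nu_2}{8}\|\psi_t^2\|_{H^{n+1}}^2+C\|\omega_t\|_{H^n}^{2n+2}\|\psi_t^2\|_{L^2}^2$ with no justification.

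Your corrected bound, with $\|U_t\|_{\hat H^n}^{2n+2}$ in place of $\|\omega_t\|_{H^n}^{2n+2}$, is the right statement. It is also all that Lemma \ref{NS-lemmaA.11} needs, since that lemma is phrased in terms of $\|U_r\|_{\hat H^{n+1}}^{2n+2}$ anyway.

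\paragraph{A further correction.} By Definition \ref{J}, the stretching term in the $\psi^2$ equation should involve $\nabla^\perp\Delta^{-1}\nabla^j\psi_t^1$, not $\psi_t^2$. Your argument handles this verbatim: the norms of $\psi_t^2$ on the right-hand side are replaced by $\|\psi_t\|_{\hat H^{n+1}}$ and $\|\psi_t\|_{\hat H}$.
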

\begin{proof}
	The proof of equation \eqref{NS-A.10} can be found in Lemma A.8 of \cite{NS}. To establish \eqref{A.11}, however, we require certain modifications to the proof of \cite[Lemma A.8]{NS}. Firstly, we have that
	\begin{align*}
		&\left| \int \nabla^n  \psi_t^2 : \sum_{j=0}^{n}\binom{n}{j}\left(\nabla^{n-j}\nabla^{\perp}\Delta^{-1}\omega_t\cdot\nabla\nabla^j\psi_t^2+\nabla^{\perp}\Delta^{-1}\nabla^j\psi_t^2\cdot\nabla\nabla^{n-j}\theta_t\right)  dx \right|\\
		&\leq \left| \int \nabla^n \psi_t^2 : \nabla^\perp \Delta^{-1} \psi_t^2 \cdot \nabla \nabla^n \theta_t  dx \right| + \left| \int \nabla^n \psi_t^2 : \nabla^\perp \Delta^{-1} \omega_t \cdot \nabla \nabla^n \psi_t^2  dx \right| \\
		&\quad+ C \| \omega_t \|_{H^n} \| \psi_t^2 \|_{W^{n,4}}^2+C\|\omega_t\|_{W^{n,4}}\|\psi_t^2\|_{H^n}\|\theta_t\|_{W^{n,4}}\\
		&\leq \left| \int \nabla^{n+1} \psi_t^2 : \nabla^\perp \Delta^{-1} \psi_t^2 \nabla^n \omega_t  dx \right| + \left| \int \nabla^n \psi_t^2 : \nabla^\perp \Delta^{-1} \omega_t \cdot \nabla \nabla^n \psi_t^2  dx \right| \\
		&\quad+ C \| \omega_t \|_{H^n} \| \psi_t^2 \|_{W^{n,4}}^2+C\|\omega_t\|_{W^{n,4}}\|\psi_t^2\|_{H^n}\|\theta_t\|_{W^{n,4}}\\
		&\leq \| \psi_t^2 \|_{H^{n+1}} \| \nabla^\perp \Delta^{-1} \psi_t^2 \|_{L^\infty} \| \theta_t \|_{H^n} + \| \psi_t^2 \|_{H^n} \| \nabla^\perp \Delta^{-1} \omega_t \|_{L^\infty} \| \psi_t^2\|_{H^{n+1}} \\
		&\quad+  C \| \omega_t \|_{H^n} \| \psi_t^2 \|_{W^{n,4}}^2+C\|\omega_t\|_{W^{n,4}}\|\psi_t^2\|_{H^n}\|\theta_t\|_{W^{n,4}}\\
		&\leq \| \psi_t^2 \|_{H^{n+1}} \| \psi_t^2 \|_{H^1} \| \theta_t \|_{H^n} + \| \psi_t^2 \|_{H^n} \| \omega_t \|_{H^1} \| \psi_t^2 \|_{H^{n+1}} \\
		&\quad+ C \| \omega_t \|_{H^n} \| \psi_t^2 \|_{W^{n,4}}^2+C\|\omega_t\|_{W^{n,4}}\|\psi_t^2\|_{H^n}\|\theta_t\|_{W^{n,4}}.
	\end{align*}
	Then note
	\[
	\| \psi_t^2 \|_{W^{n,4}}^2 \leq \| \psi_t^2 \|_{H^{n+1/2}}^2 \leq \| \psi_t^2 \|_{H^n} \| \psi_t^2 \|_{H^{n+1}} \leq \| \psi_t^2 \|_{L^2}^{\frac{1}{n+1}} \| \psi_t^2 \|_{H^{n+1}}^{\frac{2n+1}{n+1}}.
	\]
	Thus putting it together
	\begin{align*}
		&\left| \int \nabla^n  \psi_t^2 : \sum_{j=0}^{n}\binom{n}{j}\left(\nabla^{n-j}\nabla^{\perp}\Delta^{-1}\omega_t\cdot\nabla\nabla^j\psi_t^2+\nabla^{\perp}\Delta^{-1}\nabla^j\psi_t^2\cdot\nabla\nabla^{n-j}\theta_t\right)  dx \right|\\
		&\leq \| \psi_t^2 \|_{H^{n+1}} \| \psi_t^2 \|_{H^1} \| \theta_t \|_{H^n} + \| \psi_t^2 \|_{H^n} \| \omega_t \|_{H^1} \| \psi_t^2 \|_{H^{n+1}} \\
		&\quad+ C \| U_t \|_{\hat{H}^n} \| \psi_t^2 \|_{L^2}^{\frac{1}{n+1}}\| \psi_t^2 \|_{H^{n+1}}^{\frac{2n+1}{n+1}}+C\|\psi_t^2\|_{H^n}\|U_t\|_{\hat{H}}^{\frac{1}{n+1}}\| U_t \|_{\hat{H}^{n+1}}^{\frac{2n+1}{n+1}}\\
		&\le C \|U_t\|_{\hat{H}}^{\frac{1}{n+1}}\| U_t \|_{\hat{H}^{n+1}}^{\frac{2n+1}{n+1}}\| \psi_t^2 \|_{L^2}^{\frac{1}{n+1}}\| \psi_t^2 \|_{H^{n+1}}^{\frac{2n+1}{n+1}}\\
		&\le \frac{\nu_2}{8} \| \psi_t^2 \|_{H^{n+1}}^2 + C \| \omega_t \|_{H^n}^{2n+2} \| \psi_t^2 \|_{L^2}^2.
	\end{align*}
\end{proof}

We immediately obtain the following lemma.
\begin{lemma}\label{NS-lemmaA.11}
	For all \( n \in \mathbb{N} \), there exists \( C(n) > 0 \) such that
	\begin{equation}\label{NS-A.11}
		\|\psi_t\|_{\hat{H}^n}^2 + \frac{\kappa}{2} \int_s^t \|\psi_r\|_{\hat{H}^{n+1}}  dr \leq \|\psi_s\|_{\hat{H}^n} + C(t - s) \sup_{s \le r \le t} \left( \|U_r\|_{\hat{H}^{n+1}}^{2n+2} \|\psi_r\|_{\hat{H}}^{2} \right)
	\end{equation}
\end{lemma}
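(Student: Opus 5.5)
The plan is a standard $\hat{H}^n$ energy estimate for the linearized system \eqref{NS-A.7}, closed with Lemma \ref{NS-lemmaA.10} and Lemma \ref{NS-lemmaA.7}, using the weighted norm \eqref{2.3} to absorb the buoyancy coupling $g\partial_1\psi^2$.

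\textbf{Energy identity.} Apply $\nabla^n$ to the $(\psi^1,\psi^2)$ equations, as displayed just before Lemma \ref{NS-lemmaA.10}. Then pair with $\frac{\nu_1\nu_2}{g^2}\nabla^n\psi^1$ and $\nabla^n\psi^2$ respectively. The Laplacians produce the dissipation
\[
-\frac{\nu_1^2\nu_2}{g^2}\|\nabla^{n+1}\psi^1\|_{L^2}^2-\nu_2\|\nabla^{n+1}\psi^2\|_{L^2}^2 .
\]
Up to lower-order terms, which Poincaré controls on mean-zero functions, this is bounded above by $-\kappa\|\psi\|_{\hat{H}^{n+1}}^2$ in the weighted norm.

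\textbf{Transport terms.} The two sums in the equations are handled directly by \eqref{NS-A.10} and \eqref{A.11}. They contribute at most $\frac{\kappa}{8}\|\psi\|_{\hat H^{n+1}}^2+C\|U_t\|_{\hat H^n}^{2n+2}\|\psi_t\|_{\hat H}^2$. The weights only change constants here.

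\textbf{Buoyancy term (main difficulty).} This is the step that departs from \cite{NS}. The cross term is
\[
\frac{\nu_1\nu_2}{g^2}\int g\,\partial_1\nabla^n\psi^2:\nabla^n\psi^1\,dx .
\]
I would integrate by parts to move $\partial_1$ onto $\psi^1$. Young's inequality then bounds it by
\[
\frac{\nu_1^2\nu_2}{4g^2}\|\nabla^{n+1}\psi^1\|^2+\frac{\nu_2}{|g|^{2}}\|\nabla^n\psi^2\|^2\cdot C .
\]
The first piece is absorbed into the $\psi^1$ dissipation; the weight $\nu_1\nu_2/g^2$ is chosen precisely to make this balance, as in \eqref{JFA-2.4}. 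For the leftover $\|\psi^2\|_{H^n}^2$, I would interpolate between $L^2$ and $H^{n+1}$:
\[
\|\psi^2\|_{H^n}^2\le \delta\|\psi^2\|_{H^{n+1}}^2+C_\delta\|\psi^2\|_{L^2}^2 .
\]
Then absorb $\delta$ into the $\psi^2$ dissipation. This leaves a term $C\|\psi_t\|_{\hat H}^2$, which is harmless because $\|U\|_{\hat H^{n+1}}\gtrsim$ const is not available; the $C(t-s)\sup\|\psi_r\|^2_{\hat H}$ contribution is simply kept as an extra additive term, or merged into the stated one after enlarging by $1+\|U\|^{2n+2}$.

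\textbf{Closing.} Combining the three steps gives
\[
\frac{d}{dt}\|\psi_t\|_{\hat H^n}^2+\frac{\kappa}{2}\|\psi_t\|_{\hat H^{n+1}}^2\le C\big(1+\|U_t\|_{\hat H^{n+1}}^{2n+2}\big)\|\psi_t\|_{\hat H}^2 .
\]
Here $\|U\|_{\hat H^n}\le\|U\|_{\hat H^{n+1}}$. Integrating from $s$ to $t$ and bounding the integrand by its supremum yields \eqref{NS-A.11}. The supremum $\sup_{r}\|\psi_r\|_{\hat H}$ can afterwards be controlled by Lemma \ref{NS-lemmaA.7}.
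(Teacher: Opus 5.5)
Your skeleton (a weighted $\hat H^n$ energy identity, Lemma \ref{NS-lemmaA.10} for the transport terms, then integration in time) is what the paper means by ``we immediately obtain''. You are also right to read \eqref{NS-A.11} with squares on $\|\psi_s\|_{\hat H^n}$ and inside the time integral.

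The gap is in the buoyancy step. Interpolating $\|\psi^2\|_{H^n}^2\le\delta\|\psi^2\|_{H^{n+1}}^2+C_\delta\|\psi^2\|_{L^2}^2$ leaves a remainder $C_\delta\|\psi_t\|_{\hat H}^2$ that carries no power of $U$. So the right-hand side you actually prove is $C(t-s)\sup_{r}\bigl(1+\|U_r\|_{\hat H^{n+1}}^{2n+2}\bigr)\|\psi_r\|_{\hat H}^2$. Since $\|U_r\|_{\hat H^{n+1}}$ has no lower bound (as you note yourself), this is strictly weaker than \eqref{NS-A.11}. Keeping the extra term, or ``enlarging by $1+\|U\|^{2n+2}$'', changes the statement rather than proving it. The weaker form would still suffice for the moment bounds of Assumption \ref{assum5.4}, but it is not the lemma.

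The missing point is that the weight $\varkappa=\nu_1\nu_2/g^2$, together with Poincar\'e for mean-zero functions, absorbs the coupling completely, with no lower-order leftover. This is exactly the mechanism of \eqref{U-n-1} in the appendix. In your $\frac12\frac{d}{dt}$ normalization, split Young symmetrically:
\[
\varkappa|g|\,\bigl|\langle \partial_1\nabla^n\psi^2,\nabla^n\psi^1\rangle\bigr|
\le \varkappa|g|\,\|\nabla^{n}\psi^2\|_{L^2}\|\nabla^{n+1}\psi^1\|_{L^2}
\le \frac{\varkappa\nu_1}{2}\|\nabla^{n+1}\psi^1\|_{L^2}^2+\frac{\nu_2}{2}\|\nabla^{n+1}\psi^2\|_{L^2}^2 .
\]
This uses $\varkappa g^2/\nu_1=\nu_2$ and $\|\nabla^n\psi^2\|_{L^2}\le\|\nabla^{n+1}\psi^2\|_{L^2}$, and the result is exactly half the dissipation. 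With your $\frac14$ split, the coefficient on $\|\nabla^n\psi^2\|_{L^2}^2$ comes out as exactly $\nu_2$, i.e.\ the whole $\psi^2$ dissipation. That is presumably what pushed you to interpolate.

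The surviving half easily absorbs the $\frac{\nu_i}{8}\|\psi^i\|_{H^{n+1}}^2$ terms from Lemma \ref{NS-lemmaA.10} and still leaves $\frac{\kappa}{2}\|\psi_t\|_{\hat H^{n+1}}^2$ in $\frac{d}{dt}\|\psi_t\|_{\hat H^n}^2$. The only remainder is then the one from Lemma \ref{NS-lemmaA.10}, namely $C\|U_t\|_{\hat H^n}^{2n+2}\|\psi_t\|_{\hat H}^2\le C\|U_t\|_{\hat H^{n+1}}^{2n+2}\|\psi_t\|_{\hat H}^2$. This is homogeneous in $U$, so integrating from $s$ to $t$ gives \eqref{NS-A.11} as stated.
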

Consequently, for the original nonlinear system, synthesizing with the a priori estimates from Proposition \ref{priori-eatimates}, we obtain
	$$
	\mathbb{E} \sup_{0 \leq s \leq t \leq T} \|{ \mathcal{J}_{s,t} }\|_{\hat{H}^{n} \to \hat{H}^n}^q \leq Ce^{\eta V^n(U_0)} 
	$$
holds for base process.

Then, as done in \cite[Appendix A.2]{NS}, we can get control on \( A_t \), \( \tau_t \) and the first derivatives $y_t,\zeta_t,B_t$.
\begin{lemma}There exists \( C > 0 \) such that
	\begin{align*}
		|A_t| &\leq \exp \left( C \int_s^t \|\omega_r\|_{H^{5/4}}  dr \right) |A_s|,\\
		|\tau_t| &\leq \exp \left( C \int_s^t \|\omega_r\|_{H^{5/4}}  dr \right) |\tau_s|,\\
		|y_t| &\leqslant C e^{t - s} \exp\left( C \int_s^t \|\omega_r\|_{H^{5/4}} \,\mathrm{d}r \right) \left( |y_s| + \sup_{s \leqslant r \leqslant t} \|\psi_r^1\|_{H^1} \right),\\
		|\zeta_t| &\leqslant C e^{t - s} \exp\left( C \int_s^t \|\omega_r\|_{H^{5/4}} \,\mathrm{d}r \right) \left( |\zeta_s| + \sup_{s \leqslant r \leqslant t} \left( \|\psi_r^1\|_{H^2}^{3/2} + \|\omega_r\|_{H^3}^3 + |y_r|^3 \right) \right),\\
		|B_t| &\leqslant C e^{t - s} \exp\left( C \int_s^t \|\omega_r\|_{H^{5/4}} \,\mathrm{d}r \right) \left( |B_s| + \sup_{s \leqslant r \leqslant t} \left( |A_r|^3 + |y_r|^3 + \|\omega_r\|_{H^3}^3 + \|\psi_r^1\|_{H^2}^{3/2} \right) \right).
	\end{align*}
\end{lemma}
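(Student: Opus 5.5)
Each of the five bounds will come from the ODE it satisfies, read off the fourth line of the Jacobian/tangent dynamics and from \eqref{NS-A.7}, followed by Gr\"onwall's inequality. For $A_t$ and $\tau_t$ the equations are linear and homogeneous: $\dot A_t=A_t\nabla u_t(x_t)$ and $\dot\tau_t=\tau_t\cdot\nabla u_t(x_t)$. This gives
\[
\frac{d}{dt}|A_t|\le |\nabla u_t(x_t)|\,|A_t|\le \|\nabla u_t\|_{L^\infty}|A_t|,
\]
and the same for $\tau_t$. Since $\mathbf{u}=\nabla^\perp\Delta^{-1}\omega$, we have $\|\nabla u\|_{L^\infty}\le C\|u\|_{H^{2+1/4}}\le C\|\omega\|_{H^{5/4}}$ by the Sobolev embedding $H^{1+\delta}(\mathbb{T}^2)\hookrightarrow L^\infty$ with $\delta=1/4$. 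Gr\"onwall then yields the first two inequalities directly.

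The derivative $y_t$ solves the inhomogeneous linear equation $\dot y_t=y_t\cdot\nabla u_t(x_t)+\nabla^\perp\Delta^{-1}\psi^1_t(x_t)$. I use Duhamel's formula, with the propagator bounded by the same exponential factor. The forcing term satisfies $|\nabla^\perp\Delta^{-1}\psi^1_r(x_r)|\le C\|\psi^1_r\|_{H^1}$, again by Sobolev embedding, since $\nabla^\perp\Delta^{-1}\psi^1\in H^2\hookrightarrow L^\infty$. Integrating in time produces a factor $(t-s)\le e^{t-s}$, which gives the bound for $y_t$.

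The estimates for $\zeta_t$ and $B_t$ follow the same Duhamel scheme. The difference is that the forcing now contains products, namely $\tau_t\otimes y_t:\nabla^2u_t(x_t)$, $\tau_t\cdot\nabla\nabla^\perp\Delta^{-1}\psi^1_t(x_t)$, and the analogous terms with $A_t$ in place of $\tau_t$. These are bounded as follows.
\begin{itemize}
\item $|\nabla^2u|_{L^\infty}\le C\|\omega\|_{H^3}$, and $|\nabla\nabla^\perp\Delta^{-1}\psi^1|_{L^\infty}\le C\|\psi^1\|_{H^2}$.
\item Each product is split by Young's inequality, $abc\le C(a^3+b^3+c^3)$ and $ab\le C(a^3+b^{3/2})$. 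This produces exactly the terms $|A_r|^3+|y_r|^3+\|\omega_r\|_{H^3}^3+\|\psi^1_r\|_{H^2}^{3/2}$ in the statement.
\end{itemize}

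For $\zeta_t$ the forcing contains $\tau_r$ rather than $A_r$, whereas the stated bound has no $|\tau_r|^3$ term. I expect this mismatch to be the main obstacle. To handle it, I would first try normalizing $|\tau_s|=1$, as in the projective setting, and absorbing $|\tau_r|\le \exp(C\int\|\omega\|_{H^{5/4}})$ into the exponential prefactor. If that fails, I would adjust the constants as in \cite[Appendix A.2]{NS}, which the lemma's statement mirrors.
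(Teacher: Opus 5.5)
Your method matches what the paper relies on (it defers to \cite[Appendix A.2]{NS}): Gr\"onwall or Duhamel along the linear ODEs in \eqref{NS-A.7}, with $\|\nabla u\|_{L^\infty}\lesssim\|\omega\|_{H^{5/4}}$, Sobolev embedding for the forcing, and Young's inequality with exponents $3$ and $3/2$. It correctly gives the bounds for $A_t$, $\tau_t$, $y_t$ and $B_t$. Your Duhamel factor $(t-s)\le e^{t-s}$ plays the role of the $e^{t-s}$ that the energy-type argument gets from Young's inequality in $\frac{d}{dt}|\cdot|^2$.

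\textbf{The gap is the $\zeta_t$ bound.} Your proposed fix cannot work, because the inequality as printed is false. Absorbing $|\tau_r|\le|\tau_s|\exp(C\int_s^r\|\omega\|_{H^{5/4}})$ into the exponential still leaves a forcing bounded only by $|y_r|\,\|\omega_r\|_{H^3}+\|\psi^1_r\|_{H^2}$. This is first order in $(y_r,\psi^1_r)$, so it cannot be dominated by $|y_r|^3+\|\omega_r\|_{H^3}^3+\|\psi^1_r\|_{H^2}^{3/2}$ when these are small, and there is no additive constant on the right to absorb it.

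Concretely, the estimate is a pathwise consequence of \eqref{NS-A.7}, so it would have to hold along $U\equiv0$ on $[s,t]$. Take $|\tau_s|=1$, $y_s=\zeta_s=0$ and $\psi_s=(\delta\phi,0)$. Then $x_r\equiv x_s$, $\tau_r\equiv\tau_s$, $\psi^1_r=\delta e^{\nu_1(r-s)\Delta}\phi$ and $|y_r|\lesssim\delta$, while
\[
\zeta_t=\delta\int_s^t\tau_s\cdot\nabla\nabla^\perp\Delta^{-1}e^{\nu_1(r-s)\Delta}\phi(x_s)\,dr\asymp\delta
\]
for, e.g., $\phi=\cos(k\cdot(x-x_s))$ with $\tau_s\cdot k\neq0$. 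The printed right-hand side, however, is $O(\delta^{3/2})$. By continuous dependence on the path, the same failure persists for nearby paths of the stochastic system.

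The provable statement includes $\sup_{s\le r\le t}|\tau_r|^3$ inside the bracket, and this is exactly what your Young's inequality step produces. It parallels the $|A_r|^3$ term in the $B_t$ bound and the $|\tau_r|^3$ term the paper keeps for $\xi_t$ in Lemma \ref{NS-lemmaA.14}. You should state and prove that corrected version, and it is all that is needed downstream for Assumption \ref{assum5.4}: $|\tau_r|^3\le|\tau_s|^3\exp\bigl(3C\int_s^r\|\omega\|_{H^{5/4}}\bigr)$ has all moments by \eqref{C-alpha}, locally uniformly in the initial point.
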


Next, we consider estimating the second-order derivative of the extended system. Similarly, we first bound $\phi_t:=(\phi_t^1,\phi_t^2)$ in the space $\hat{H}$.
\begin{lemma}\label{A.10}
	There exists \( C > 0 \) such that
	\[
	\|\phi_t\|_{\hat{H}} \leq C(t - s) \exp\left(C \int_s^t \|U_r\|_{\hat{H}^1}^{4/3}  dr\right) \sup_{s \leq r \leq t} \|\psi_r\|_{\hat{H}^1}^2.
	\]
\end{lemma}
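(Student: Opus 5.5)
The plan is to run an $\hat{H}$ energy estimate on the second-derivative equation for $(\phi_t^1,\phi_t^2)$ in \eqref{NS-A.8}, treating the quadratic terms in $\psi$ as a forcing. The homogeneous part has exactly the structure of the linearized equation \eqref{NS-A.7}, which was handled in Lemma \ref{NS-lemmaA.7}. I will therefore reuse that estimate, add the forcing contribution, and close with a Gr\"onwall/Duhamel argument, using that $\phi_s=0$.

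\textbf{Step 1 (energy identity).} Pair the $\phi^1$ equation with $\frac{\nu_1\nu_2}{g^2}\phi^1$ and the $\phi^2$ equation with $\phi^2$, matching the weighted norm \eqref{JFA-4.4}. The dissipative terms, the transport terms, the terms $\nabla^\perp\Delta^{-1}\phi^i\cdot\nabla\omega_t$ and $\nabla^\perp\Delta^{-1}\phi^i\cdot\nabla\theta_t$, and the buoyancy coupling $g\partial_1\phi^2$ are estimated exactly as in the proof of Lemma \ref{NS-lemmaA.7}. The weighting absorbs the cross term through Young's inequality and the Poincar\'e inequality. This gives
\[
\frac{d}{dt}\frac12\|\phi_t\|_{\hat{H}}^2\le -\frac14\|\phi_t\|_{D(A^{1/2})}^2+C\|U_t\|_{\hat{H}^1}^{4/3}\|\phi_t\|_{\hat{H}}^2+|\langle \phi_t, \mathcal{N}(\psi_t,\psi_t)\rangle|,
\]
where $\mathcal{N}$ collects the quadratic terms $2\nabla^\perp\Delta^{-1}\psi^1\cdot\nabla\psi^1$ and $\nabla^\perp\Delta^{-1}\psi^i\cdot\nabla\psi^1$.

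\textbf{Step 2 (forcing term).} Bound each quadratic term by H\"older's inequality together with $\|\nabla^\perp\Delta^{-1}\psi\|_{L^\infty}\le C\|\psi\|_{H^1}$, which holds in two dimensions since $\nabla^\perp\Delta^{-1}\psi\in H^2$. This gives $\|\mathcal{N}(\psi_t,\psi_t)\|_{L^2}\le C\|\psi_t\|_{\hat{H}^1}^2$. Hence the forcing contributes at most $C\|\phi_t\|_{\hat{H}}\|\psi_t\|_{\hat{H}^1}^2$. Alternatively, one can integrate by parts to put the derivative on $\phi$ and absorb it into the dissipation; either route suffices.

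\textbf{Step 3 (Gr\"onwall).} Drop the dissipation and divide by $\|\phi_t\|_{\hat{H}}$, using the standard $\epsilon$-regularization $\sqrt{\|\phi_t\|_{\hat{H}}^2+\epsilon}$ to avoid division by zero. This yields
\[
\frac{d}{dt}\|\phi_t\|_{\hat{H}}\le C\|U_t\|_{\hat{H}^1}^{4/3}\|\phi_t\|_{\hat{H}}+C\|\psi_t\|_{\hat{H}^1}^2 .
\]
Duhamel's formula with $\phi_s=0$ then gives
\[
\|\phi_t\|_{\hat{H}}\le C\int_s^t \exp\Big(C\int_r^t\|U_a\|_{\hat{H}^1}^{4/3}da\Big)\|\psi_r\|_{\hat{H}^1}^2\,dr ,
\]
and bounding the integrand by its supremum over $[s,t]$ produces the stated estimate.

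\textbf{Main obstacle.} The difficulty lies in Step 1, in handling the coupled buoyancy term and the term $\nabla^\perp\Delta^{-1}\phi^2\cdot\nabla\theta_t$. These must be controlled by $\|U_t\|_{\hat{H}^1}^{4/3}$ uniformly, using the correct weights. The plan is to copy the interpolation chain from Lemma \ref{NS-lemmaA.7}:
\[
\|\cdot\|_{W^{-1,4}}\|\cdot\|_{L^4}\le \|\cdot\|_{\hat{H}}^{3/2}\|\cdot\|_{\hat{H}^1}^{1/2},
\]
followed by Young's inequality with exponents $4/3$ and $4$.
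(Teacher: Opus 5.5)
Your proof is correct. Steps 1 and 2 set up the same weighted $\hat{H}$ energy estimate as the paper: the buoyancy term is absorbed by the weight $\nu_1\nu_2/g^2$ together with Poincar\'e, and the $\phi$-linear terms are handled by the $W^{-1,4}\times L^4$ interpolation and Young's inequality with exponents $4/3$ and $4$, as in Lemma \ref{NS-lemmaA.7}.

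The difference is in how you close the estimate.
\begin{itemize}
\item The paper splits the quadratic forcing by Young's inequality into $\tilde C\|\phi_t\|_{\hat{H}}^2+C\|\psi_t\|_{\hat{H}^1}^4$. It then applies Gr\"onwall to $\|\phi_t\|_{\hat{H}}^2$, starting from $\frac{d}{dt}\frac12\|\phi_t\|_{\hat{H}}^2\le C(\|U_t\|_{\hat{H}^1}^{4/3}+1)\|\phi_t\|_{\hat{H}}^2+C\|\psi_t\|_{\hat{H}^1}^4$.
\item You bound the forcing directly in $L^2$ via $\|\nabla^\perp\Delta^{-1}\psi\|_{L^\infty}\le C\|\psi\|_{H^1}$. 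You then apply a linear Gr\"onwall to $\|\phi_t\|_{\hat{H}}$ itself, with the $\sqrt{\|\phi_t\|_{\hat{H}}^2+\epsilon}$ regularization.
\end{itemize}

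Your route is the one that gives the lemma exactly as stated, with the factor $t-s$ and no additive constant in the exponent. The paper's inequality only yields $\|\phi_t\|_{\hat{H}}\le C(t-s)^{1/2}e^{C(t-s)}\exp\bigl(C\int_s^t\|U_r\|_{\hat{H}^1}^{4/3}dr\bigr)\sup_{s\le r\le t}\|\psi_r\|_{\hat{H}^1}^2$. This is weaker than the stated bound when $t-s$ is small. It is still enough for Assumption \ref{assum5.4}, where only a bound with a $T$-dependent constant on $[0,T]$ is needed.

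What your route costs is that it needs the forcing controlled in $L^2$, which holds here because of the two-dimensional embedding $H^2\subset L^\infty$. The Young-splitting route only needs the forcing controlled against the dissipation.

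For the same reason, your side remark that one could ``alternatively integrate by parts and absorb into the dissipation; either route suffices'' is not accurate. After integrating by parts the forcing is paired with $\nabla\phi$, so you cannot divide by $\|\phi_t\|_{\hat{H}}$. You are forced back to Young's inequality and the quartic term $\|\psi\|_{\hat{H}^1}^4$, and then you only get the factor $(t-s)^{1/2}$ rather than $t-s$.
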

\begin{proof}
	Using the equation for \(\phi_t\), we compute as in Lemma \ref{NS-lemmaA.7},
	\begin{align*}
		\frac{d}{dt} \frac{1}{2} \|\phi_t\|_{\hat{H}}^2 &\leq-\frac{1}{2}\|\phi_t\|_{D(A^{1/2})}^2 +C_{\varkappa}\|\omega_t\|_{H^1}\|\phi_t^1\|_{W^{-1,4}}\|\phi_t^1\|_{L^4}+C_{\varkappa}\|\phi_t^1\|_{L^4}\|\psi_t^1\|_{W^{-1,4}}\|\psi_t^1\|_{H^1}\\
		&\quad+C\|\theta_t\|_{H^1}\|\phi_t^2\|_{W^{-1,4}}\|\phi_t^2\|_{L^4}+C\|\phi_t^2\|_{L^4}\|\psi_t^2\|_{W^{-1,4}}\|\psi_t^2\|_{H^1}+C\|\phi_t^2\|_{L^4}\|\psi_t^1\|_{W^{-1,4}}|\psi_t^2\|_{H^1}\\
		&\le -\frac{1}{2}\|\phi_t\|_{D(A^{1/2})}^2+C\Big[\|\omega_t\|_{H^1}\|\phi_t^1\|_{L^2}^{3/2}\|\phi_t^1\|_{H^1}^{1/2}+\|\psi_t^1\|_{H^1}\|\psi_t^1\|_{L^2}\|\phi_t^1\|_{H^1}^{1/2}\|\phi_t^1\|_{L^2}^{1/2}\\
		&\quad\quad+\|\theta_t\|_{H^1}\|\phi_t^2\|_{L^2}^{3/2}\|\phi_t^2\|_{H^1}^{1/2}+\|\psi_t^2\|_{H^1}\|\psi_t^2\|_{L^2}\|\phi_t^2\|_{H^1}^{1/2}\|\phi_t^2\|_{L^2}^{1/2}+\|\psi_t^2\|_{H^1}\|\psi_t^1\|_{L^2}\|\phi_t^2\|_{H^1}^{1/2}\|\phi_t^2\|_{L^2}^{1/2}\Big]\\
		&\le C\|U_t\|_{\hat{H}^1}^{4/3}\|\phi_t\|_{\hat{H}}^2+C\|\psi_t^1\|_{H^1}^2\|\psi_t^1\|_{L^2}^2+\tilde{C}_1\|\phi_t^1\|_{L^2}^2\\
		&\quad+C\|\psi_t^2\|_{H^1}^2\|\psi_t^2\|_{L^2}^2+\tilde{C}_2\|\phi_t^2\|_{L^2}^2+C\|\psi_t^2\|_{H^1}^2\|\psi_t^1\|_{L^2}^2+\tilde{C}_3\|\phi_t^2\|_{L^2}^2\\
		&\le C(\|U_t\|_{\hat{H}^1}^{4/3}+1)\|\phi_t\|_{\hat{H}}^2+C\|\psi_t\|_{\hat{H}^1}^4,
	\end{align*}
	and so conclude by Gr\"onwall inequality, using that \(\phi_s = 0\).
\end{proof}
\begin{lemma}\label{NS-lemmaA.13}
	For all $n > 1$, there exists $C(n) > 0$ such that
	\begin{align*}
		\|\phi_t\|_{\hat{H}^n}\le C(t-s)\left(\sup\limits_{s\le r\le t}\|U_r\|_{\hat{H}^n}^{4n+4}+\|\phi_r\|_{\hat{H}}^2+\|\psi_r\|_{\hat{H}^n}^2\right).
	\end{align*}
\end{lemma}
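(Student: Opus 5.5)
The plan is an $\hat{H}^n$ energy estimate for the second-derivative field $\phi_t=(\phi_t^1,\phi_t^2)$, modeled on Lemmas \ref{NS-lemmaA.10}–\ref{NS-lemmaA.11}, with the quadratic forcing treated as a source term. Apply $\nabla^n$ to the $\phi$-equations in \eqref{NS-A.8}. Pair the vorticity component with $\frac{\nu_1\nu_2}{g^2}\nabla^n\phi_t^1$ and the temperature component with $\nabla^n\phi_t^2$, using the weighting of \eqref{2.3}. This gives
\begin{align*}
\frac{1}{2}\frac{d}{dt}\|\phi_t\|_{\hat{H}^n}^2 \le -\nu_1\frac{\nu_1\nu_2}{g^2}\|\phi^1_t\|_{H^{n+1}}^2-\nu_2\|\phi_t^2\|_{H^{n+1}}^2 + I_{\rm lin}+I_{\rm buoy}+I_{\rm src}.
\end{align*}
Here $I_{\rm lin}$ collects the transport terms that are linear in $\phi$. $I_{\rm buoy}$ is the coupling $g\partial_1\nabla^n\phi^2$ in the vorticity equation. $I_{\rm src}$ collects the quadratic terms $\nabla^\perp\Delta^{-1}\psi^i\cdot\nabla\psi^1$.

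\textbf{Step 1 (linear terms).} Lemma \ref{NS-lemmaA.10} applies verbatim with $\psi$ replaced by $\phi$. It bounds $I_{\rm lin}$ by $\frac{\kappa}{8}\|\phi_t\|_{\hat{H}^{n+1}}^2+C\|U_t\|_{\hat{H}^n}^{2n+2}\|\phi_t\|_{\hat{H}}^2$.

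\textbf{Step 2 (buoyancy).} Integrate by parts once and use Young's inequality. This gives $|I_{\rm buoy}|\le \frac{\nu_1\nu_2}{|g|}\|\phi^1_t\|_{H^{n+1}}\|\phi^2_t\|_{H^{n}}$. The weight $\nu_1\nu_2/g^2$ lets this be absorbed into half of each dissipation term, up to a lower-order remainder. Interpolating $\|\phi^2\|_{H^n}\le \|\phi^2\|_{L^2}^{1/(n+1)}\|\phi^2\|_{H^{n+1}}^{n/(n+1)}$ turns that remainder into a multiple of $\|\phi\|_{\hat{H}}^2$.

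\textbf{Step 3 (sources).} Move one derivative onto $\nabla^n\phi$ and use the algebra property of $H^n$ ($n>1$) together with $\|\nabla^\perp\Delta^{-1}\psi\|_{H^{n}}\le C\|\psi\|_{H^{n-1}}$. This gives $|I_{\rm src}|\le C\|\phi_t\|_{\hat{H}^{n+1}}\|\psi_t\|_{\hat{H}^n}^2\le \frac{\kappa}{8}\|\phi_t\|_{\hat{H}^{n+1}}^2+C\|\psi_t\|_{\hat{H}^n}^4$.

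\textbf{Step 4 (integration).} Combining the three steps yields
\begin{align*}
\frac{d}{dt}\|\phi_t\|_{\hat{H}^n}^2\le C\|U_t\|_{\hat{H}^n}^{2n+2}\|\phi_t\|_{\hat{H}}^2+C\|\phi_t\|_{\hat{H}}^2+C\|\psi_t\|_{\hat{H}^n}^4 .
\end{align*}
Integrate from $s$ to $t$ using $\phi_s=0$; no Grönwall step is needed because the right side involves only the $\hat H$-norm of $\phi$. Apply Young's inequality, $\|U\|^{2n+2}\|\phi\|_{\hat H}^2\le \|U\|^{4n+4}+\|\phi\|_{\hat H}^4$, and take square roots. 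This gives the stated form, possibly after renaming powers inside the supremum. The result is $\|\phi_t\|_{\hat H^n}\le C(t-s)\sup_r(\|U_r\|^{4n+4}_{\hat H^n}+\|\phi_r\|_{\hat H}^2+\|\psi_r\|^2_{\hat H^n})$, using $\sqrt{(t-s)X}\le C(t-s)X$ when the bracket is at least $1$. If needed, the loss of a constant is compensated by enlarging $C(n)$, and $\|\phi_r\|_{\hat H}$ is then controlled via Lemma \ref{A.10}.

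\textbf{Main obstacle.} I expect Step 2 to be the hardest: it must close without derivative loss. The term $\nabla^\perp\Delta^{-1}\phi^2\cdot\nabla\theta$ in the temperature equation also mixes components, so the weighted norm and the interpolation must be balanced. I would first try the same weight $\nu_1\nu_2/g^2$ used in Appendix \ref{priori}.
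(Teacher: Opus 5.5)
Your Steps 1--3 are the energy estimate the paper intends (its proof only says it proceeds as in Lemmas \ref{NS-lemmaA.10}--\ref{NS-lemmaA.11}). Integrating from $s$ with $\phi_s=0$ then legitimately gives
\[
\|\phi_t\|_{\hat H^n}^2\le C(t-s)\sup_{s\le r\le t}\Big(\|U_r\|_{\hat H^n}^{4n+4}+\|\phi_r\|_{\hat H}^4+\|\phi_r\|_{\hat H}^2+\|\psi_r\|_{\hat H^n}^4\Big).
\]
Step 2 is not really an obstacle: with the weight $\nu_1\nu_2/g^2$, one integration by parts and Poincar\'e, the buoyancy term is absorbed by half of each dissipation term, as in \eqref{U-n-1}.

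The gap is the final conversion to the printed form. Taking square roots produces a factor $\sqrt{t-s}$. The inequality $\sqrt{(t-s)X}\le C(t-s)X$ holds only when $(t-s)X\ge C^{-2}$, so ``the bracket is at least $1$'' is not sufficient. It is also not available, since the bracket has no additive constant. The same issue arises when you turn $\|U\|^{2n+2}$ back into $\|U\|^{4n+4}$ and $\|\phi\|_{\hat H}$ into $\|\phi\|_{\hat H}^2$. Enlarging $C(n)$ or invoking Lemma \ref{A.10} does not change the power of $t-s$.

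This step cannot be repaired, because the $\sqrt{t-s}$ rate is sharp. Take $U_s=0$ with a noise path keeping $U$ negligibly small on $[s,t]$, and set $\psi_s^2=0$, $\psi_s^1=\cos x_1+K^{-n}\cos(Kx_2)$.
\begin{itemize}
\item The cross term $\nabla^\perp\Delta^{-1}\cos x_1\cdot\nabla(K^{-n}\cos Kx_2)$ is a source in the $\phi^1$ equation of amplitude $\sim K^{1-n}$ at frequency $(1,K)$.
\item This source survives for a time $\sim K^{-2}$, so at $t=s+K^{-2}$ one gets $\|\phi_t\|_{\hat H^n}\gtrsim K^{-1}=\sqrt{t-s}$.
\item The right-hand side of the displayed statement is only $O(K^{-2})$.
\end{itemize}

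So the statement as printed cannot hold for small $t-s$. The printed form has an unsquared left side, a linear factor $t-s$, and squares in the bracket. What your computation establishes, and what the energy method behind the paper's one-line proof actually yields, is the squared bound above. That version is all that is needed downstream, since this lemma only feeds the polynomial moment bound \eqref{assum5.4:line4} for $\mathcal{J}^2_{s,t}$. You should stop after the integration and state that inequality, rather than forcing the printed exponents.
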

\begin{proof}
	The proof proceeds similarly to that of Lemmas \ref{NS-lemmaA.10} and \ref{NS-lemmaA.11}.
\end{proof}
\begin{lemma}\emph{\cite[Lemma A.14]{NS}}\label{NS-lemmaA.14}
	There exists \( C > 0 \) such that
	\begin{align*}
		|z_t| &\leq Ce^{t-s} \exp\left(C \int_{s}^{t} \|\omega_r\|_{H^{5/4}}  dr\right) \times \left( \sup_{s \leq r \leq t} |y_r|^4 + \|\omega_r\|_{H^{3}}^2 + \|\phi_r\|_{\hat{H}^{2}}^{4/3} + \|\psi_r\|_{\hat{H}^{1}} \right) \\
		|\xi_t| &\leq Ce^{t-s} \exp\left(C \int_{s}^{t} \|\omega_r\|_{H^{5/4}}  dr\right) \\
		&\quad\times \left( \sup_{s \leq r \leq t} \|\phi_r\|_{\hat{H}^{2}}^2 + |z_r|^3 + |y_r|^6 + \|\omega_r\|_{H^{4}}^3 + \|\psi_r\|_{\hat{H}^{3}}^3 + |\zeta_r|^3 + |\tau_r|^3 + 1 \right) \\
		|C_t| &\leq Ce^{t-s} \exp\left(C \int_{s}^{t} \|\omega_r\|_{H^{5/4}}  dr\right) \\
		&\quad\times \left( \sup_{s \leq r \leq t} \|\phi_r\|_{\hat{H}^{2}}^2 + |A_r|^3 + |B_r|^3 + |z_r|^3 + |y_r|^6 + \|\omega_r\|_{H^{4}}^3 + \|\psi_r\|_{\hat{H}^{3}}^2 + 1 \right)
	\end{align*}
\end{lemma}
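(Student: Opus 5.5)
The plan is to treat each of the three second-derivative quantities $z_t,\xi_t,C_t$ as the solution of a linear inhomogeneous ODE of the form $\dot X_t = X_t\cdot\nabla u_t(x_t) + R_t$, with $X_s=0$ (the second-derivative equations \eqref{NS-A.8} carry zero initial data). This is exactly the structure already exploited for $A_t,\tau_t,y_t,\zeta_t,B_t$ in the preceding lemma. The first step is a Gr\"onwall argument for the homogeneous part, using the Sobolev embedding $H^{5/4}\hookrightarrow W^{1,\infty}$ fails marginally in 2D. I will instead use the Brezis--Gallouet-type or $H^{1+\delta}\hookrightarrow$ Lipschitz-type bound $|\nabla u_r(x_r)|\le \|\nabla\nabla^\perp\Delta^{-1}\omega_r\|_{L^\infty}\le C\|\omega_r\|_{H^{5/4}}$, which is valid since $\nabla u\in H^{5/4}\subset C^{0,1/4}$. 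This gives
\[
|X_t|\le \exp\Big(C\int_s^t\|\omega_r\|_{H^{5/4}}dr\Big)\int_s^t |R_r|\,dr .
\]
Bounding $\int_s^t|R_r|dr\le (t-s)\sup_r|R_r|\le Ce^{t-s}\sup_r|R_r|$ then produces the prefactor in the statement.

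The second step is to bound the forcing $R_r$ pointwise by Sobolev embeddings, followed by Young's inequality to split products into sums of powers. For $z_t$ the forcing is $y\otimes y:\nabla^2u + 2y\cdot\nabla\nabla^\perp\Delta^{-1}\psi^1+\nabla^\perp\Delta^{-1}\phi^1$. I will use:
\begin{itemize}
\item $\|\nabla^2 u\|_{L^\infty}\le C\|\omega\|_{H^3}$;
\item $\|\nabla\nabla^\perp\Delta^{-1}\psi^1\|_{L^\infty}\le C\|\psi\|_{\hat H^{2}}$, or the sharper $\hat H^1$-type control as in \cite[Lemma A.14]{NS};
\item $\|\nabla^\perp\Delta^{-1}\phi^1\|_{L^\infty}\le C\|\phi\|_{\hat H^{2}}$.
\end{itemize}
Young's inequality then gives, for example, $|y|^2\|\omega\|_{H^3}\le |y|^4+\|\omega\|_{H^3}^2$ and $|y|\,\|\psi\|\lesssim |y|^4+\|\psi\|^{4/3}$; the remaining terms are absorbed by adjusting exponents, reproducing the listed powers. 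For $\xi_t$ and $C_t$ the same procedure applies, with the additional terms $\nabla^3u$ (bounded by $\|\omega\|_{H^4}$), $\nabla^2\nabla^\perp\Delta^{-1}\psi^1$ (bounded by $\|\psi\|_{\hat H^3}$), and products involving $\tau,\zeta,z,A,B$. For instance, $|\tau||y|^2\|\omega\|_{H^4}\le |\tau|^3+|y|^6+\|\omega\|_{H^4}^3$ by Young's inequality with exponents $(3,3,3)$. The bound for $z$ is obtained first and then fed as the $|z_r|^3$ term into the bounds for $\xi$ and $C$.

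The only point where the Boussinesq structure could enter is through $\phi$ and $\psi$, which now have two components. However, the Lagrangian forcing terms involve only $\psi^1,\phi^1$, i.e.\ the velocity induced by the vorticity component, so the argument of \cite[Lemma A.14]{NS} applies verbatim with $\|\psi^1\|\le\|\psi\|_{\hat H}$-norms.

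The main obstacle is purely bookkeeping: choosing the Young's-inequality exponents so that every product collapses to exactly the powers displayed. In particular, the weights $\nu_1\nu_2/g^2$ in the $\hat H^s$ norms only change constants. I would follow the term-by-term matching of \cite[Lemma A.14]{NS} and verify that no new $\theta$-dependent term appears in $R_r$.
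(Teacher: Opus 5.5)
Your overall route is the right one. The paper gives no proof of its own: it imports \cite[Lemma A.14]{NS}, which is legitimate because the Lagrangian forcing in \eqref{NS-A.8} involves only $\psi^1,\phi^1$, as you note. Your argument is the one behind that lemma. Each of $z_t,\xi_t,C_t$ solves $\dot X=X\nabla u_t(x_t)+R_t$ with $X_s=0$. Gr\"onwall gives the factor $\exp(C\int_s^t\|\omega_r\|_{H^{5/4}}dr)$, because $\nabla u=\nabla\nabla^\perp\Delta^{-1}\omega$ is a zeroth-order transform of $\omega$ and $H^{5/4}(\mathbb{T}^2)\hookrightarrow L^\infty$, so no Brezis--Gallou\"et argument is needed; then $t-s\le e^{t-s}$. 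For $\xi_t$ and $C_t$ your Sobolev--Young bookkeeping does close against the displayed right-hand sides, provided you use the additive $1$ for the low-degree terms. For example, $|A|\,\|\phi\|_{\hat H^2}\le |A|^3+\|\phi\|_{\hat H^2}^2+1$ and $|\zeta|\,\|\psi\|_{\hat H^2}\le|\zeta|^3+\|\psi\|_{\hat H^3}^3+1$. Use exponents $(3,6,2)$ for $|A|\,|y|\,\|\psi\|_{\hat H^3}$; this gives the listed $\|\psi_r\|_{\hat H^3}^2$ in the bound for $C_t$.

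The gap is in the estimate for $z_t$.
\begin{itemize}
\item The term $2y\cdot\nabla\nabla^\perp\Delta^{-1}\psi^1(x_t)$ applies a zeroth-order operator to $\psi^1$, so its supremum requires $\psi^1\in H^{1+\delta}$. There is no ``sharper $\hat H^1$-type control'', since $H^1(\mathbb{T}^2)\not\hookrightarrow L^\infty$.
\item Your own Young step $|y|\,\|\psi\|_{\hat H^2}\lesssim |y|^4+\|\psi\|_{\hat H^2}^{4/3}$ puts the exponent $4/3$ on $\psi$.
\item The linear forcing $\nabla^\perp\Delta^{-1}\phi^1(x_t)$ contributes a first power $\|\phi_r\|_{\hat H^1}$. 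This cannot be absorbed into $\|\phi_r\|_{\hat H^2}^{4/3}$, because the $z$-bound has no additive constant.
\end{itemize}
So your claim that adjusting exponents ``reproduces the listed powers'' is false for $z_t$. The displayed bound has $\phi$ and $\psi$ interchanged relative to \eqref{NS-A.8}, where $\psi$ is the first variation and $\phi$ the second. What your argument actually proves is
\[
|z_t|\le Ce^{t-s}\exp\Big(C\int_s^t\|\omega_r\|_{H^{5/4}}\,dr\Big)\sup_{s\le r\le t}\Big(|y_r|^4+\|\omega_r\|_{H^3}^2+\|\psi_r\|_{\hat H^2}^{4/3}+\|\phi_r\|_{\hat H^1}\Big).
\]
You should state and prove this corrected form rather than assert the displayed one. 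Lemmas \ref{NS-lemmaA.11} and \ref{NS-lemmaA.13} control $\psi$ and $\phi$ in every $\hat H^n$, so the corrected bound works equally well for the moment estimates in Assumption \ref{assum5.4}.
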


Thus, we also can get control on  the second derivatives $z_t,\xi_t,C_t$ by Lemma \ref{A.10}-\ref{NS-lemmaA.14}. Finally, by utilizing these bounds and Proposition \ref{priori-eatimates}, and following the proof in \cite[Appendix A.2]{NS}, we can verify that the Lagrangian, projective, tangent, and Jacobian processes all satisfy Assumption \ref{assum5.4}.
\subsection{Span on manifolds for the projective and Jacobian processes}\label{var-assum5.5}
In this subsection, we primarily focus on verifying Assumption \ref{assum5.5} and integrating this with the results from the preceding subsection to complete the proof of Proposition \ref{prop:5.6}. The proof strategy presented here largely follows the approach in \cite[Appendix A.3]{NS}. However, whereas the setting in \cite{NS} only required consideration of the simple operators $\Theta_{e_k}$, a key difference in our manuscript is that the newly introduced vector fields on the manifold exhibit significantly higher complexity and depend on the stochastic vector field $U_T$. This necessitates a more refined treatment when verifying the spanning condition and selecting suitable vector fields, requiring careful balancing of two critical aspects: (1) choosing an appropriate $N>0$ by using properties of the trigonometric basis, and (2) selecting an appropriate $\eta\in (0,1)$ for estimates, taking into account the bounds on $U_T$.

The following lemma establishes the nondegeneracy of the two-point process $(\mathbf{u}_t,\theta_t,\mathbf{x}_t,\mathbf{y}_t)$ (an additional result), while the non-degeneracy for the Lagrangian process readily follows as a direct consequence.
\begin{lemma}[Nondegeneracy for two-point process]\label{NS-lemmaA.17}
	There exist constants \( C, N > 0 \) such that for all \( (x, y) \in \mathbb{T}^2 \times \mathbb{T}^2 \) and all \( (a, b) \in \mathbb{R}^2 \times \mathbb{R}^2 \),
	\[
	|a|+|b| \leq C |x-y|^{-1}\mathbb{E}\Big[\max_{|j|\le N,m\in\{0,1\}}\big|\hat{g}\big(\Theta_{Z_j^m(\overline U_T)+[U_T,Y_j^m(\overline U_T)]_x}^2(x,y),(a,b)\big)\big|\Big].
	\]
\end{lemma}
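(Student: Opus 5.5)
The idea is to separate the deterministic leading part of the manifold vector field $\Theta^2_{Z_j^m(\overline U_T)+[U_T,Y_j^m(\overline U_T)]_x}$ from the parts that depend on $U_T$. The expectation is then bounded below by restricting to an event on which $U_T$ is small. On that event only the deterministic shear-type fields survive, and for these the two-point spanning property becomes an explicit trigonometric computation, in the spirit of \cite[Appendix A.3]{NS}.

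\emph{Step 1 (structure of the field).} Recall that for the two-point process $\Theta^2_{\mathbf{u}}(x,y)=(\mathbf{u}(x),\mathbf{u}(y))$, where $\mathbf{u}$ is the velocity induced by the vorticity ($\pi_1$) component of the field. From \eqref{JFA-5.11} and \eqref{JFA-5.9}, the vorticity component of $Z_j^m(\overline U)+[U,Y_j^m(\overline U)]_x$ splits into two parts. The first is the deterministic term $(-1)^m(\nu_1+\nu_2)gj_1|j|^2\pi_1\psi_j^{m+1}$. The second is a remainder $R_j^m(U,\overline U)$, made of terms at least linear in $U$ or $\overline U$ (for instance $(-1)^mgj_1B(\psi_j^{m+1},U)$, $B(U,B(U,\sigma_j^m))$ and $B(F(\overline U),\sigma_j^m)$). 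Counting derivatives as in \eqref{JFA-6.16}, I expect
\[
\|\nabla^\perp\Delta^{-1}\pi_1R_j^m\|_{C^0}\le C(N)\big(\|U_T\|_{\hat H^6}+\|\overline U_T\|_{\hat H^6}\big)\big(1+\|U_T\|_{\hat H^6}+\|\overline U_T\|_{\hat H^6}\big)^2 .
\]
The velocity of the deterministic part is explicit: $v_j^m(x):=c_j\,\tfrac{j^\perp}{|j|^2}\,\mathrm{trig}_{m}(j\cdot x)$, with $c_j=(\nu_1+\nu_2)gj_1|j|^2\neq0$ whenever $j_1\neq0$.

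\emph{Step 2 (reduction to a small-energy event).} Let $\Phi(x,y,a,b):=\max_{|j|\le N,\,j_1\ne0,\,m}|a\cdot v_j^m(x)+b\cdot v_j^m(y)|$. For $|a|+|b|=1$ and any $\delta>0$,
\[
\mathbb E\big[\max_{j,m}|\hat g(\Theta^2,(a,b))|\big]\ge \mathbb P(E_\delta)\big(\Phi(x,y,a,b)-C(N)\delta\big),
\qquad E_\delta:=\big\{\|U_T\|_{\hat H^6}+\|\overline U_T\|_{\hat H^6}\le\delta\big\}.
\]
Two facts are needed here. First, $\mathbb P(E_\delta)\ge p_\delta>0$; this comes from the approximate controllability to $0$ used for Proposition \ref{irreducibility}, which for $\overline U$ requires controlling $\sigma_\theta W_T$ to be small, and $p_\delta$ is uniform for $U_0$ in bounded sets. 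Second, the lower bound $\Phi\ge c|x-y|$ proved in Step 3. Combining them, I choose $\delta=c|x-y|/(2C(N))$. I expect that a cleaner choice is to fix $\delta$ once from the bound $\Phi\ge c\,d(x,y)$ with $d\le\pi\sqrt2$, taking the remaining degeneracy into the factor $|x-y|^{-1}$. Both ways, the target inequality follows with $C$ depending on $N$ and the data.

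\emph{Step 3 (deterministic two-point spanning).} I need $\Phi(x,y,a,b)\ge c\,|x-y|\,(|a|+|b|)$ for a suitable finite $N$, using for example $j\in\{(1,0),(1,1),(1,-1),(1,2),(2,1)\}$ with both $m$. Away from the diagonal, $\Phi>0$: if it vanished, then $a\cdot v(x)=-b\cdot v(y)$ for all these shear fields, and independence of the functions $\cos(j\cdot x),\sin(j\cdot x)$ at two distinct points, together with the two independent directions $j^\perp$, forces $a=b=0$. Positivity then becomes quantitative by compactness of $\{|x-y|\ge r\}\times\{|a|+|b|=1\}$. Near the diagonal, write $y=x+h$ and expand
\[
a\cdot v(x)+b\cdot v(y)=(a+b)\cdot v(x)+b\cdot\big(\nabla v(x)h\big)+O(|b||h|^2).
\]
It suffices to show that the vectors $\big(v_j^m(x),\,\nabla v_j^m(x)\hat h\big)\in\mathbb R^4$ span uniformly in $x$ and in the unit vector $\hat h$. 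Then $\Phi\gtrsim |a+b|+|b||h|\gtrsim|h|(|a|+|b|)$. This spanning reduces to the nonvanishing of an explicit $4\times4$ trigonometric determinant, using that $\nabla(\mathrm{trig}(j\cdot x))\hat h=(j\cdot\hat h)\,\mathrm{trig}'(j\cdot x)$.

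I expect the main obstacle to be this near-diagonal uniform spanning, since $j\cdot\hat h$ can vanish for particular $\hat h$. I would first handle that by choosing enough wavevectors $j$ that for every $\hat h$ at least two non-parallel $j$ have $j\cdot\hat h\neq0$. A secondary difficulty is keeping $p_\delta$ uniform, which I would settle via the controllability estimates of Section \ref{Approximate controllability}.
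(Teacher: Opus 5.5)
Your route differs from the paper's. The paper writes out the fields $\mathcal V_j^m$ of \eqref{V}, pairs them with $(a,b)$ for a few explicit modes ($j=(1,0)$ and $j=(R_1,R_1)$), and tries to absorb the $U_T$-dependent terms through the moment bound \eqref{JFA-A.5}. You instead remove those terms by restricting to a small-energy event $E_\delta$. That is the more sensible mechanism, since a moment bound does not make the random terms small relative to $|a|+|b|$. But Step 2 as written does not produce a constant uniform in $(x,y)$.

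With $|a|+|b|=1$, your deterministic bound is only $\Phi\ge c|x-y|$, while your perturbation bound $C(N)\delta$ does not vanish as $y\to x$. If you take $\delta=c|x-y|/(2C(N))$, the final constant is $2/(c\,p_\delta)$ with $p_\delta=\mathbb{P}(E_\delta)$. Now $p_\delta\to0$ as $\delta\to0$, because $E_\delta\subset\{\|\sigma_\theta W_T\|_{\hat H^6}\le\delta\}$ and $W_T$ is a nondegenerate Gaussian. So you obtain $|x-y|^{-1}p_{c|x-y|}^{-1}$, not $C|x-y|^{-1}$. Fixing $\delta$ once does not rescue the same estimate, since $\Phi-C(N)\delta<0$ as soon as $|x-y|<C(N)\delta/c$.

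The missing idea is to bound the random part with the same two-point degeneracy as the deterministic part. Writing $y=x+h$ and $R$ for the velocity of the remainder, on $E_\delta$ one has
\[
|a\cdot R(x)+b\cdot R(x+h)|\le |a+b|\,\|R\|_{C^0}+|b|\,|h|\,\|R\|_{C^1}\le C(N)\,\delta\,\big(|a+b|+|b|\,|h|\big).
\]
This has the same form as your Step 3 lower bound $\Phi\gtrsim|a+b|+|b|\,|h|$, up to the $O(|b||h|^2)$ Taylor error. A single $\delta$, fixed by the near-diagonal spanning constant and the off-diagonal compactness constant, then works for all $(x,y,a,b)$.

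Even after that repair, you still need $p_\delta>0$ for this fixed $\delta$ at a fixed time $T$ and arbitrary $U_0$, and the paper does not supply it. Proposition \ref{V2-thm5.2} starts from data in $B_{\varepsilon'}(\mathbf 0)$. The decay in Step 1 of Section \ref{Approximate controllability} brings the solution near $0$ only after a time depending on $\|U_0\|$. For fixed $T$ and large $\|U_0\|$ you would need fixed-time approximate controllability of the vorticity into a $\delta$-ball under temperature-only forcing, which is a separate input. Your claim that $p_\delta$ is uniform on bounded sets of $U_0$ is therefore unsupported.

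A smaller point concerns Step 3. The off-diagonal positivity argument needs care when $x-y\in\pi\mathbb{Z}^2$, where all phases agree up to sign. With only your listed modes it fails: for $x-y=(\pi,0)$ and $a=b=(2,1)$, every odd-$j_1$ mode cancels and $(2,1)^\perp\cdot a=0$, so $\Phi=0$. You need two non-parallel wavevectors in each parity class of $j\cdot(x-y)/\pi$, for instance by adding $(2,-1)$.
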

\begin{proof}
	Without loss of generality, we suppose that \( |x_1 - y_1| \geq |x_2 - y_2| \), \( |a| \leq |b| \), \( x = 0 \).  
	First, we present the specific form of the velocity field under consideration. Let $\pi_1:\hat{H}^n\to H^n(\mathbb{T}^2)$ denote the projection onto the vorticity coordinate and $\pi_2:\hat{H}^n\to H^n(\mathbb{T}^2)$ denote the projection onto the temperature coordinate, i.e., for any $U_t\in \hat{H}^n$, $\pi_1U_t=\omega_t$, $\pi_2U_t=\theta_t$. To facilitate subsequent presentation, we first provide the explicit mathematical form of the vector field components pertaining to $\overline{U}$ as in \eqref{overline-U}, along with their governing equations. Let $\overline{\omega}_t:=\pi_1\overline{U}_t$ and $\overline{\theta}_t:=\pi_2\overline{U}_t={\theta}_t-\tilde\sigma_\theta W$, where 
	$$\tilde\sigma_\theta W:=\sum_{k\in \mathcal{Z},l\in\{0,1\}}\alpha_k^l\tilde\sigma_k^l(x)W^{k,l}(t):=\sum_{k\in \mathcal{Z},l\in\{0,1\}}\alpha_k^l\pi_2\sigma_k^l(x)W^{k,l}(t).$$ 
	Then they satisfy
	\begin{equation} \label{Boussinesq-overline-w}
		\begin{aligned}
			\left\{
			\begin{array}{lr}
				d\overline{\omega}+(\mathbf{u}\cdot \nabla\omega-\nu_1\Delta\omega)dt=g\partial_x\overline{\theta} dt,\quad\overline{\omega}_0=\omega_0,\\
				d\overline{\theta}+(\mathbf{u}\cdot \nabla\theta-\nu_2\Delta\theta)dt=0,\quad\overline{\theta}_0=\theta_0.
			\end{array}\right.
		\end{aligned}
	\end{equation}
	Thus, we obtain that $\omega_t=\overline{\omega}_t+g\partial_x(\tilde\sigma_\theta W)$. Recalling the form of $Z_j^m(\overline U_T)$ in \eqref{JFA-5.11}, we have
	$$
	\pi_1Z_j^m(\overline U_T)=(-1)^m\big[(\nu_1+\nu_2)gj_1|j|^2\widetilde{\psi}_j^{m+1}+gj_1(K *\widetilde{\psi}_j^{m+1})\cdot \nabla\overline\omega_t+(K *\overline\omega_t)\cdot gj_1\nabla\widetilde{\psi}_j^{m+1}\big],
	$$
	where $\widetilde{\psi}_j^{m+1}:=\pi_1{\psi}_j^{m+1}$. The corresponding velocity field is then given by
	\begin{align}\label{Z}
		\nabla^{\perp}\Delta^{-1}\pi_1Z_j^m(\overline U_T)=-(\nu_1+\nu_2)gj_1j^{\perp}\widetilde{\psi}_j^{m}-gj_1\frac{j^{\perp}}{|j|^2}\nabla\overline{u}_T\cdot\widetilde{\psi}_j^{m}+(-1)^mgj_1\frac{j^{\perp}\cdot j^{\top}}{|j|^2}\widetilde{\psi}_j^{m+1}\cdot \overline{u}_T.
	\end{align}
	We now consider the vector field associated with $[U_T,Y_j^m(\overline U_T)]_x$. Recalling the form of $Y_j^m$ given in \eqref{JFA-5.9}, the definition of the Lie bracket yields the corresponding vector field representation for $[U_T,Y_j^m(\overline U_T)]_x$ as follows:
	\begin{align}\label{Y}
		(-1)^mgj_1\frac{j^{\perp}\cdot j^{\top}}{|j|^2}\widetilde{\psi}_j^{m+1}\cdot u_T-\nabla u_T\cdot gj_1\frac{j^{\perp}}{|j|^2}\widetilde{\psi}_j^{m},
	\end{align}
	where $u_T$ and $\nabla u_T$ also satisfy
	\begin{align*}
		u_T&=\overline{u}_T+g\sum_{k\in \mathcal{Z},l\in\{0,1\}}k_1\frac{k^{\perp}}{|k|^2}\alpha_k^l\tilde\sigma_k^l(x)W^{k,l},\\
		\nabla u_T&=\nabla\overline{u}_T+g\sum_{k\in \mathcal{Z},l\in\{0,1\}}(-1)^{l+1}k_1\frac{k^{\perp}\cdot k^{\top}}{|k|^2}\alpha_k^l\tilde\sigma_k^{l+1}(x)W^{k,l}.
	\end{align*}
	Combining \eqref{Z} and \eqref{Y}, we derive the vector field 
	$\mathcal{V}_j^m$ associated with $Z_j^m(\overline U_T)+[U_T,Y_j^m(\overline U_T)]_x$.
	\begin{align}\label{V}
		\mathcal{V}_j^m:=(-1)^mgj_1\frac{j^{\perp}\cdot j^{\top}}{|j|^2}\widetilde{\psi}_j^{m+1}\cdot (u_T+\overline{u}_T)-gj_1(\nabla u_T+\nabla\overline{u}_T)\frac{j^{\perp}}{|j|^2}\widetilde{\psi}_j^{m}-(\nu_1+\nu_2)gj_1j^{\perp}\widetilde{\psi}_j^{m}.
	\end{align}
	Let $\mathcal{V}_N\subseteq H^5(\mathbb{T}^2)$ be defined by
	$$
	\mathcal{V}_N:=\mathop{\rm span}\{\mathcal{V}_j^m:|j|\le N, m\in\{0,1\}\}
	$$ 
	for some $N\ge 2$ to be determined. Here, we assume that 
	$$
	\mathbb{E}\Big[\max_{|j|\le N,m\in\{0,1\}}\big|\hat{g}\big(\Theta_{Z_j^m(\overline U_T)}^2(0,y)+\Theta^2_{[U_T,Y_j^m(\overline U_T)]_x}(0,y),(a,b)\big)\big|\Big]\le 1
	$$
	such that for arbitrary $\tilde{u}\in \mathcal{V}_N$,
	it satisfies 
	\begin{equation}\label{NS-A.32}
		\mathbb{E}\Big[\big|\hat{g}\big(\Theta_{\tilde{u}}^2(0,y),(a,b)\big)\big|\Big]\le C\mathbb{E}\|\tilde{u}\|_{H^5(\mathbb{T}^2)}.
	\end{equation}
	To conclude, it suffices then to show 
	\[
	|b| < C |y|^{-1}.
	\]
	Then, we prove this for different cases of $y$. For $|y|\ge \frac{3\pi}{4}$, it is clear one can choose $N_1$ sufficiently large so that
	\begin{equation*}
		|b| \leq C\mathbb{E}\Big[\max_{|j|\le N_1,m\in\{0,1\}}\big|\hat{g}\big(\Theta_{\mathcal{V}_j^m}^2(x,y),(a,b)\big)\big|\Big]	\le C|y|^{-1}\mathbb{E}\Big[\max_{|j|\le N_1,m\in\{0,1\}}\big|\hat{g}\big(\Theta_{\mathcal{V}_j^m}^2(x,y),(a,b)\big)\big|\Big]\le C|y|^{-1}
	\end{equation*}
	by \eqref{JFA-A.5}. Thus, we now restrict consideration to the case $|y|\le \frac{3\pi}{4}$. Setting $j=(1,0)$, $m=1$, and by using \eqref{NS-A.32}, we obtain
	\begin{equation}\label{b2}
		g\mathbb{E}\Big|\nabla_2v_T^1(y)(b_1+b_2)\sin(y_1)+b_2\cos(y_1)v_T^1(y)+(\nu_1+\nu_2)b_2\sin(y_1)-a_2v_T^1(0)\Big|\le C\mathbb{E}\|\mathcal{V}_{(1,0)}^1\|_{H^5(\mathbb{T}^2)},
	\end{equation}
	where $v_T=(v_T^1,v_T^1):=u_T+\overline{u}_T$. Here we review the specific form of \eqref{JFA-A.5} as follows:
	$$
	\mathbb{E}\Big(\sup\limits_{t\in [T/2,T]}\|U_t\|^p_{\hat{H}^s}\Big)\le C\exp(\eta\|U_0\|_{\hat{H}}^2).
	$$
	Based on this, suitable $\eta_1(a,b)$ can be found such that for any $\eta\in(0,\eta_1(a,b))$, the following holds:
	$$
	|b_2\sin(y_1)|\le Ce^{\eta\|U_0\|_{\hat{H}}^2}(1+|y|)\le C.
	$$
	Thus $\frac{3\pi}{4}\ge |y_1|\ge \frac{|y|}{3}$ implies $|\sin(y_1)|\ge C^{-1}|y|$, which when combined with \eqref{b2}  immediately yields 
	$$
	|b_2| < C |y|^{-1}.
	$$
	If $|y_2|\ge \sqrt{1-(\frac{5}{9R_1})}|y|$, where $R_1$ is a constant to be determined later. We set $j=(R_1,R_1)$, $m=1$, then we also have that
	\begin{align*}
		&\frac{g}{2}\mathbb{E}\Big|(v_T^1+v_T^2)\cos(R_1y_1+R_1y_2)(b_1-b_2)-\frac{1}{R_1}\sin(R_1y_1+R_1y_2)\big((\nabla_2v_T^1-\nabla_1v_T^1)b_1+(\nabla_2v_T^2-\nabla_1v_T^2)b_2\big)\notag\\
		&+2R_1^2(\nu_1+\nu_2)\sin(R_1y_1+R_1y_2)(b_1-b_2)+(v_T^1+v_T^2)(a_1-a_2)\Big|\le C.
	\end{align*}
	Similarly, there exists a suitable $\eta_2(a,b,R_1)$ such that for any $\eta\in(0,\eta_2(a,b,R_1))$, the following holds:
	\begin{align}\label{b1}
		|\sin(R_1y_1+R_1y_2)(b_1-b_2)|\le C.
	\end{align}
	Then there must exist $R_1$ such that
	\begin{align*}
		|\cos(R_1y_1)\sin(R_1y_2)b_1|\le C.
	\end{align*}
	Noted that $|R_1y_1|\le \frac{5\pi}{12}$, then $|\cos(R_1y_1)|\ge \cos(\frac{5\pi}{12})>0$. Thus 
	\begin{align*}
		|\sin(R_1y_2)b_1|\le C.
	\end{align*}
	By analogous extension, we obtain that
	$$
	|b_1| < C |y|^{-1}.
	$$
	We now examine the final case where $|y_2|\le \sqrt{1-(\frac{5}{9R_1})}|y|$, in which we similarly establish \eqref{b1}. Then since we’ve already seen that $|b_2| < C |y|^{-1}$, we either have $|b_1| < C |y|^{-1}$ or 
	$$
	|b_1| < C|\sin(R_1y_1+R_1y_2)|^{-1}\le C |R_1y|^{-1}\le C |y|^{-1},
	$$
	where we use that on the set $\Big\{y:|y|\le \frac{3\pi}{4},|y_2|\le \sqrt{1-(\frac{5}{9R_1})}|y|\Big\}$, we have that $|\sin(R_1y_1+R_1y_2)|^{-1}\ge C^{-1}|R_1y|$ for some $C>0$. Finally, setting $N=\max\{N_1,R_1\}$ and $\eta^*:=\min\{\eta_1(a,b),\eta_2(a,b,R_1)\}$, for every 
	$\eta\in (0,\eta^*)$ the following holds:
	$$
	|b|\le C|y|^{-1}.
	$$
	This establishes the proof of the lemma.
\end{proof}
\begin{lemma}[Nondegeneracy for tangent process]\label{NS-lemmaA.18}
	There exist constants \( C > 0 \) such that for all \( (x, \tau) \in \mathbb{T}^2 \times \mathbb{R}^2 \) and all \( (y, \zeta) \in \mathbb{R}^2 \times \mathbb{R}^2 \), 
	\[
	|(y,\zeta)|\leq C (1+|\tau|^{-1})\mathbb{E}\Big[\max_{|j|\le 2,m\in\{0,1\}}\big|\hat{g}\big(\Theta_{Z_j^m(\overline U_T)+[U_T,Y_j^m(\overline U_T)]_x}^T(x,\tau),(y,\zeta)\big)\big|\Big].
	\]
\end{lemma}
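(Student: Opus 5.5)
The plan is to follow the argument for the two-point process in Lemma \ref{NS-lemmaA.17}, adapted to \cite[Lemma A.18]{NS}, using the explicit form of the velocity fields $\mathcal{V}_j^m$ from \eqref{V}. For the tangent process, $\Theta^T_{\tilde u}(x,\tau)=(\tilde u(x),\tau\cdot\nabla\tilde u(x))$. Hence for $\tilde u=\mathcal{V}_j^m$ we must bound $|(y,\zeta)|$ by the expectation of
\[
\max_{|j|\le 2,m}\big|y\cdot\mathcal{V}_j^m(x)+\zeta\cdot(\tau\cdot\nabla\mathcal{V}_j^m(x))\big|.
\]
By translation invariance of the trigonometric basis (shifting $x$ only permutes $\widetilde\psi_j^0,\widetilde\psi_j^1$ up to sign), we may take $x=0$. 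We normalize so that the expectation on the right is at most $1$. The goal is then to show $|y|\le C$ and $|\zeta|\le C(1+|\tau|^{-1})$.

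\textbf{Step 1: the deterministic leading part.} Split $\mathcal{V}_j^m$ into the deterministic term $-(\nu_1+\nu_2)gj_1j^\perp\widetilde\psi_j^m$ and the solution-dependent terms, which are linear in $v_T=u_T+\overline u_T$ and $\nabla v_T$. Each solution-dependent term carries the prefactor $gj_1/|j|^2$.

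For the modes $j\in\{(1,0),(1,1),(1,-1),(2,0),(2,1),\dots\}$ with $|j|\le2$, we have $j_1\neq0$. At $x=0$, $\widetilde\psi_j^0=1$ and $\widetilde\psi_j^1=0$, while the gradients of these functions are $\mp j\,\widetilde\psi_j^{m+1}$. Choosing $m$ appropriately, the deterministic part contributes
\begin{itemize}
\item $(\nu_1+\nu_2)gj_1\,y\cdot j^\perp$ from the $\Theta$-component, and
\item $(\nu_1+\nu_2)gj_1(\tau\cdot j)(\zeta\cdot j^\perp)$ from the gradient component.
\end{itemize}
The directions $j^\perp$ for $j=(1,0),(1,1),(1,-1)$ span $\mathbb{R}^2$. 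For a given $\tau\neq0$, at least two of the pairings $\tau\cdot j$ are of size at least $c|\tau|$ and belong to non-parallel $j$'s. These are exactly the finite-dimensional linear-algebra facts used in \cite{NS}.

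\textbf{Step 2: control the random perturbations.} As in the proof of Lemma \ref{NS-lemmaA.17}, the solution-dependent terms are bounded in $C^1$ by $C(1+\sup_t\|U_t\|_{\hat H^4}+|W|)$. By \eqref{JFA-A.5} and Gaussian moments of $W$, their expectation is at most $Ce^{\eta\|U_0\|^2_{\hat H}}$. For $\eta$ small (the same device as $\eta_1,\eta_2$ there), this gives
\[
\mathbb{E}\,|\text{random part paired with }(y,\zeta)|\le C(|y|+|\tau||\zeta|)\,\delta_0 ,
\]
where $\delta_0$ can be made small relative to the deterministic coefficient. If needed, we exploit the freedom to pick modes for which the $j_1/|j|^2$ factor is dominated by $(\nu_1+\nu_2)j_1$.

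With this bound, the triangle inequality $\mathbb{E}|a+X|\ge|a|-\mathbb{E}|X|$ yields
\[
|y\cdot j^\perp|\le C,\qquad |\tau\cdot j|\,|\zeta\cdot j^\perp|\le C
\]
for the chosen modes, modulo an absorbed term. Combining the modes gives $|y|\le C$ and $|\tau||\zeta|\le C$, hence $|(y,\zeta)|\le C(1+|\tau|^{-1})$.

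\textbf{Main obstacle.} The hard step is making the absorption in Step 2 legitimate. The random terms are not uniformly small; they are only bounded in expectation by $e^{\eta V}$, which is not a constant. Following the paper's handling in Lemma \ref{NS-lemmaA.17}, I would first try to choose $\eta$ small and use the explicit structure: the perturbation enters with the coefficient $gj_1/|j|^2$ against the deterministic $(\nu_1+\nu_2)gj_1|j|$-size term. I would also combine the $m=0$ and $m=1$ equations so that the contributions of $v_T(0)$ appear in identical form and can be eliminated by differencing. If this fails, a fallback is to pass to conditional expectations on the event $\{\|U_T\|_{\hat H^4}\le R\}$, whose probability is bounded below.
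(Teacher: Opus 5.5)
Your route is the paper's: reduce to $x=0$, normalize the right-hand side to be at most $1$, split $\mathcal{V}_j^m$ in \eqref{V} into the deterministic part $-(\nu_1+\nu_2)gj_1j^\perp\widetilde\psi_j^m$ plus terms linear in $v_T,\nabla v_T$, read $y$ off the $m=0$ functionals and $\zeta$ off the $m=1$ functionals, and absorb the random terms via \eqref{JFA-A.5} ``for $\eta$ small''. Your finite-dimensional bookkeeping is more careful than the paper's. The paper takes $|y|\le C$ from the single mode $(1,-1)$, $m=0$, whose deterministic part only sees $y_1+y_2$, and it divides by pairings such as $\tau_2-\tau_1$ that can vanish; your pigeonhole over $(1,0),(1,1),(1,-1)$ is what is needed.

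\textbf{The gap is Step 2, exactly where you suspect.} ``Following Lemma \ref{NS-lemmaA.17}'' cannot close it, because the paper's proof rests on the same device (``there exists $\eta_1(\tau,\zeta)$ such that for any $\eta\in(0,\eta_1)$, $|y|\le C$''), and that device is not an argument. In \eqref{JFA-A.5}, $\eta$ only parametrizes the upper bound $C(\eta,s,T,p)e^{\eta\|U_0\|_{\hat{H}}^2}$ for the fixed number $\mathbb{E}\sup_t\|U_t\|^p_{\hat{H}^s}$. Shrinking $\eta$ does not shrink $\mathbb{E}|v_T(0)|$, $\mathbb{E}|\nabla v_T(0)|$, $\mathbb{E}|\nabla^2 v_T(0)|$ or the Brownian contributions. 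These are fixed by $U_0$ and $T$ and need not be small compared with $\nu_1+\nu_2$. So $\delta_0$ cannot be made small, and $\mathbb{E}|a+X|\ge|a|-\mathbb{E}|X|$ yields nothing. Differencing does not help either. At $x=0$ the random terms of the $m=0$ and $m=1$ functionals multiply different unknowns and involve different derivatives of $v_T$; for example, $\nabla v_T(0)$ pairs with $y$ when $m=0$ but with $\zeta$ when $m=1$. So they never appear in identical form.

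\textbf{Your conditioning fallback is the right repair, but only with a \emph{small} threshold.} Put $\hat\tau=\tau/|\tau|$ and $\tilde\zeta=|\tau|\zeta$, and let $\ell_{j,m}$ and $\ell^{D}_{j,m}$ denote the pairing and its deterministic part.
\begin{itemize}
\item At a general $x$, the pair $(\ell^D_{j,0},\ell^D_{j,1})$ is the pair at $x=0$ rotated by the angle $j\cdot x$. With your pigeonhole this gives $\max_{|j|\le2,m}|\ell^D_{j,m}|\ge c_*(|y|+|\tilde\zeta|)$ uniformly in $x\in\mathbb{T}^2$ and $\hat\tau\in S^1$. This also removes the reliance on translation invariance, which fails for the law of $U_T$ with fixed $U_0$.
\item The random part satisfies $|\ell_{j,m}-\ell^D_{j,m}|\le C_0\|v_T\|_{C^2}(|y|+|\tilde\zeta|)$.
\item On $E:=\{\|v_T\|_{C^2}\le c_*/(2C_0)\}$ one gets $\max|\ell_{j,m}|\ge\frac{c_*}{2}(|y|+|\tau||\zeta|)$, which handles the $y$--$\zeta$ cross terms jointly.
\item Hence $\mathbb{E}\max|\ell_{j,m}|\ge\frac{c_*}{2}\mathbb{P}(E)(|y|+|\tau||\zeta|)\ge\frac{c_*}{2}\mathbb{P}(E)(1+|\tau|^{-1})^{-1}|(y,\zeta)|$.
\end{itemize}
The missing ingredient is $\mathbb{P}(E)>0$. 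This is a small-ball (support) property of the law of $v_T$ near $0$. It must come from a controllability-type argument as in Section \ref{Approximate controllability}, not from moment bounds. With a large threshold, Chebyshev gives positive probability, but the perturbation is then no longer dominated. The resulting $C$ depends on $(U_0,T)$ through $\mathbb{P}(E)$.
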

\begin{proof} 
	By exploiting translation invariance, we may assume without loss of generality that $x=0$. Let $\mathcal{V}_2\subseteq H^5(\mathbb{T}^2)$ be defined by
	$$
	\mathcal{V}_2:=\mathop{\rm span}\{\mathcal{V}_j^m:|j|\le 2. m\in\{0,1\}\}
	$$ 
	Assume that 
	$$\mathbb{E}\Big[\max_{|j|\le 2,m\in\{0,1\}}\big|\hat{g}\big(\Theta_{Z_j^m(\overline U_T)+[U_T,Y_j^m(\overline U_T)]_x}^T(0,\tau),(y,\zeta)\big)\big|\Big]\le 1,$$
	such that for any ${\tilde{u}}\in \mathcal{V}_2$, we have that 
	\begin{equation}\label{NS-A.33}
		\mathbb{E}\Big[\big|\hat{g}\big(\Theta_{\tilde{u}}^T(0,\tau),(y,\zeta)\big)\big|\Big]\le C\mathbb{E}\|\tilde{u}\|_{H^5(\mathbb{T}^2)}.
	\end{equation}
	Fix $j=(1,-1),m=0$, and by using \eqref{NS-A.33}, we obtain
	\begin{align*}
		\mathbb{E}\Big[\big|\hat{g}\big(\Theta_{\mathcal{V}_{(1,-1)}^0}^T(0,\tau),(y,\zeta)\big)\big|\Big]
		&=\mathbb{E}\Big[\big|\mathcal{V}_{(1,-1)}^0(0)\cdot y+\tau\cdot \nabla\mathcal{V}_{(1,-1)}^0(0)\cdot\zeta\big|\Big]\\
		&=g\mathbb{E}\Big[\big|(\nu_1+\nu_2)(y_1+y_2)+(\tau_1+\tau_2)(\zeta_1-\zeta_2)(v_T^1(0)+v_T^2(0))\\
		&\quad\quad\quad
		-\zeta_1\nabla_1(\nabla_1+\nabla_2)v_T(0)\cdot \tau-\zeta_2\nabla_2(\nabla_1+\nabla_2)v_T(0)\cdot \tau\big|\Big]\\
		&\leq C\mathbb{E}\|\mathcal{V}_{(1,-1)}^0\|_{H^5(\mathbb{T}^2)}.
	\end{align*}
	Combining with \eqref{JFA-A.5}, we see that there exists $\eta_1(\tau,\zeta)>0$ such that for any $\eta\in (0,\eta_1(\tau,\zeta))$, it holds that
	$$
	|y|\le C\le C(|\tau|^{-1}+1).
	$$
	We now turn to term $\zeta$. 
	Setting $j=(1,1)$ and $m=1$, analogous computation yields 
	\begin{align*}
		\mathbb{E}\Big[\big|\hat{g}\big(\Theta_{\mathcal{V}_{(1,1)}^1}^T(0,\tau),(y,\zeta)\big)\big|\Big]
		&= -\frac{g}{2}\mathbb{E}\Big[\Big|-(v_T^1(0)+v_T^2(0))(y_1+y_2)+(\zeta_1+\zeta_2)\big((\nabla_2-\nabla_1)v_T(0)\cdot\tau\\
		&\quad\quad\quad+(\nu_1+\nu_2)(\tau_2-\tau_1)\big)+\zeta\cdot\nabla(v_T^1(0)+v_T^2(0))\Big|\Big]\\
		&\leq C.
	\end{align*}
	Therefore, similarly, there exists $\eta_2(\zeta)>0$ such that for any $\eta\in (0,\eta_2(\zeta))$, the following holds, 
	\begin{align}\label{zeta1}
		|\zeta_1+\zeta_2|\le C|\tau|^{-1}\le C(|\tau|^{-1}+1).
	\end{align}
	Then, setting $j=(1,-1)$ and $m=1$, subsequent computation yields 
	\begin{align*}
		\mathbb{E}\Big[\big|\hat{g}\big(\Theta_{\mathcal{V}_{(1,-1)}^1}^T(0,\tau),(y,\zeta)\big)\big|\Big]
		&= \frac{g}{2}\mathbb{E}\Big[\Big|(v_T^1(0)+v_T^2(0))(y_1+y_2)+(\zeta_1+\zeta_2)(\tau_1+\tau_2)(\nabla_1+\nabla_2)(v_T^1(0)+v_T^2(0))\\
		&\quad\quad\quad+(\zeta_1-\zeta_2)\Big((\tau_1+\tau_2)(\nu_1+\nu_2)+(\nabla_1+\nabla_2)(\tau\cdot v_T(0))\Big)\Big|\Big]\\
		&\leq C.
	\end{align*}
	Thus, we have 
	\begin{align}\label{zeta2}
		|\zeta_1-\zeta_2|\le C|\tau|^{-1}\le C(|\tau|^{-1}+1).
	\end{align}
	Synthesizing \eqref{zeta1} and \eqref{zeta2}, we derive that 
	$$
	|\zeta_1|\le C(|\tau|^{-1}+1) \,\,\,\, \text{and} \,\,\,\, |\zeta_2|\le C(|\tau|^{-1}+1).
	$$
	Therefore, incorporating the estimates from all preceding cases, there exists $\eta^*:=\min\{\eta_1,\eta_2\}$ such that for any $\eta\in (0,\eta^*)$, it holds that 
	$$
	|(y,\zeta)|\le C(|\tau|^{-1}+1).
	$$
	Combining these bounds, we have now completed the proof.
\end{proof}

We now address the nondegeneracy of the Jacobi process. 
We now proceed to discuss the nondegeneracy of the Jacobian process. Based on the following observation, it suffices to consider the case where $A={\mathrm{I}}$ for the Jacobian process:
\begin{lemma}\label{NS-lemmaA.20}
	Suppose that for some \( C, N > 0 \) with \( N \geq 1 \), we have for all \( x \in \mathbb{T}^2 \) and \((y, B) \in \mathbb{R}^2 \times T_{\mathrm{I}}\mathrm{SL}(2, \mathbb{R}) \) that  
	\[
	|(y, B)| \leq C \mathbb{E}\Big[\max_{|j| \leq N,m\in\{0,1\}} \Big|\hat{g}\big( \Theta^J_{Z_j^m(\overline U_T)+[U_T,Y_j^m(\overline U_T)]_x}(x, I), (y, B) \big)\Big|\Big].
	\]  
	Then for all \((x, A) \in \mathbb{T}^2 \times \mathrm{SL}(2, \mathbb{R}) \) and \((y, B) \in \mathbb{R}^2 \times T_{\mathrm{A}}\mathrm{SL}(2, \mathbb{R}) \) we have that  
	\[
	|(y, B)| \leq C |A| \mathbb{E}\Big[\max_{|j| \leq N,m\in\{0,1\}} \Big|\hat{g}\big( \Theta^J_{Z_j^m(\overline U_T)+[U_T,Y_j^m(\overline U_T)]_x}(x, A), (y, B) \big)\Big|\Big].
	\] 
\end{lemma}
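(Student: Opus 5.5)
The plan is to reduce the general base point $A$ to the identity, using the right-invariance of the Jacobian vector field $\Theta^J_{\mathbf{u}}(x,A)=(\mathbf{u}(x),A\nabla\mathbf{u}(x))$ under right multiplication in $\mathrm{SL}_2(\mathbb{R})$. Put $R_A:\mathrm{SL}_2(\mathbb{R})\to \mathrm{SL}_2(\mathbb{R})$, $R_A(M):=AM$ (left multiplication by $A$; the same argument works for the other convention). Its differential at $\mathrm{I}$ is $dR_A(\mathrm{I})B' = AB'$, which maps $T_{\mathrm{I}}\mathrm{SL}_2(\mathbb{R})$ isomorphically onto $T_A\mathrm{SL}_2(\mathbb{R})$. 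For any velocity field $\tilde u$ in the span of the fields $\mathcal{V}_j^m$ we then have
\[
\Theta^J_{\tilde u}(x,A)=\big(\tilde u(x),A\nabla \tilde u(x)\big)=(\mathrm{id}\times dR_A(\mathrm{I}))\,\Theta^J_{\tilde u}(x,\mathrm{I}).
\]
The point is that this identity holds pathwise, i.e.\ for every realization of $U_T,\overline U_T$. This matters because the fields $Z_j^m(\overline U_T)+[U_T,Y_j^m(\overline U_T)]_x$ are random. Linearity of $\tilde u\mapsto\Theta^J_{\tilde u}$ lets us apply the identity to each such field inside the expectation.

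The metric $\hat g$ is the one induced by the canonical embedding $\mathrm{SL}_2(\mathbb{R})\hookrightarrow\mathbb{R}^{2\times2}$, i.e.\ the Frobenius/Euclidean inner product on the matrix component. Given $(y,B)\in\mathbb{R}^2\times T_A\mathrm{SL}_2(\mathbb{R})$, we pull the pairing back to the identity:
\[
\hat g\big(\Theta^J_{\tilde u}(x,A),(y,B)\big)= \tilde u(x)\cdot y+\langle A\nabla\tilde u(x),B\rangle = \tilde u(x)\cdot y+\langle \nabla\tilde u(x),A^{T}B\rangle .
\]
Since $\nabla\tilde u(x)$ is trace-free, only the trace-free part $B':=\Pi_0(A^TB)$ of $A^TB$ matters, where $\Pi_0$ is the orthogonal projection onto trace-free matrices, which is exactly $T_{\mathrm{I}}\mathrm{SL}_2(\mathbb{R})$. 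Hence the pairing equals $\hat g(\Theta^J_{\tilde u}(x,\mathrm{I}),(y,B'))$ with $B'\in T_{\mathrm{I}}\mathrm{SL}_2$.

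Applying the hypothesis at $\mathrm{I}$ to $(y,B')$ gives
\[
|(y,B')|\le C\,\mathbb{E}\big[\max_{|j|\le N,m}|\hat g(\Theta^J(x,A),(y,B))|\big].
\]
It remains to bound $|(y,B)|$ by $C|A|\,|(y,B')|$. For $y$ this is trivial since $|A|\ge1$ on $\mathrm{SL}_2(\mathbb{R})$. For $B$, write $B=AX$ with $X:=A^{-1}B\in T_{\mathrm{I}}\mathrm{SL}_2$, which is trace-free because $B\in T_A\mathrm{SL}_2$. Then $A^TB=A^TAX$, and the quantity to control is $|B|\le|A|\,|X|$ against $|\Pi_0(A^TAX)|$.

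This last comparison is the main obstacle. The map $X\mapsto\Pi_0(A^TAX)$ on trace-free matrices is invertible, but its inverse may cost more than one factor of $|A|$. The first thing to try is to diagonalize $A^TA=Q\,\mathrm{diag}(\lambda,\lambda^{-1})Q^T$ and compute the $3\times3$ inverse explicitly, expecting a bound $|X|\lesssim |A|\,|\Pi_0(A^TAX)|$, possibly up to a constant. If that fails, switch to the left-invariant trivialization: define the metric on $T_A\mathrm{SL}_2$ by $|B|_A:=|A^{-1}B|$, or pair with $A^{-T}$ instead of $A^T$. A single operator-norm bound $|A^{-1}|=|A|$ then gives the factor $C|A|$ directly, and the lemma follows for every $(x,A)$.
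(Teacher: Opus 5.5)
Your reduction to the identity is correct. Pathwise, and for each $j,m$,
$\hat g(\Theta^J_{\tilde u}(x,A),(y,B))=\tilde u(x)\cdot y+\langle\nabla\tilde u(x),\Pi_0(A^TB)\rangle$.
So the hypothesis, applied to the deterministic vector $(y,\Pi_0(A^TB))$, controls $|(y,\Pi_0(A^TB))|$. The lemma therefore reduces to the linear-algebra estimate $|B|\le C|A|\,|\Pi_0(A^TB)|$ for $B\in T_A\mathrm{SL}_2(\mathbb{R})$.

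This is where the proposal breaks down. Writing $B=AX$ and using $|B|\le|A|\,|X|$ already spends the single factor of $|A|$ you are allowed. So even the bound you hope for, $|X|\lesssim|A|\,|\Pi_0(A^TAX)|$, would only give $|A|^2$. That bound is also false. For $A=\mathrm{diag}(s,s^{-1})$ and $X=E_{21}$ (the matrix unit in position $(2,1)$), one has $\Pi_0(A^TAX)=s^{-2}E_{21}$. Inverting $X\mapsto\Pi_0(A^TAX)$ therefore costs $|A|^2$, and your route yields only $|A|^3$.

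The fallback does not repair this. Both the pairing and the norm $|(y,B)|$ are dictated by the fixed metric $\hat g$, so you cannot decide to pair with $A^{-T}$. Replacing $\hat g$ by a left-invariant metric proves a different statement, one needing no factor $|A|$ at all. That suggests the embedded (Frobenius) metric you chose is the intended one. Combining the Frobenius norm of $B$ with a left-invariant pairing is not a consistent reading of $\hat g$.

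The missing step is a direct comparison of $|B|$ with $|\Pi_0(A^TB)|$, without passing through $X$. Take a singular value decomposition $A=U\,\mathrm{diag}(s,s^{-1})V^T$ with $U,V$ orthogonal and $s=|A|\ge 1$. Under $B\mapsto U^TBV$ (which conjugates $A^TB$ by $V$), the Frobenius norm, $\Pi_0$, and the tangency condition $\mathrm{tr}(A^{-1}B)=0$ are all preserved. You may therefore assume $A=\mathrm{diag}(s,s^{-1})$, and tangency then forces
\[
B=\begin{pmatrix} -s^2w & q\\ r & w\end{pmatrix},\qquad |\Pi_0(A^TB)|^2=\tfrac12 w^2\big(s^3+s^{-1}\big)^2+s^2q^2+s^{-2}r^2 .
\]
Hence $s^2|\Pi_0(A^TB)|^2\ge (s^4+1)w^2+q^2+r^2=|B|^2$, that is, $|B|\le|A|\,|\Pi_0(A^TB)|$. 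This is sharp: take $B=s^{-1}E_{21}$. Combined with $|y|\le|A|\,|y|$ and your first two steps, this gives the lemma after enlarging $C$ by a numerical factor. The paper itself gives no argument here beyond a pointer to Appendix A.3.3 of Cooperman--Rowan, so this computation is exactly what needs to be supplied.
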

\begin{proof}
The above lemma may be proved analogously following the approach in \cite[Appendix A.3.3]{NS}.
\end{proof}
We now complete the proof of nondegeneracy for the Jacobian process by combining the aforementioned lemma with a judicious selection of vector fields.
\begin{lemma}[Nondegeneracy for Jacobian process]\label{NS-lemmaA.21}
	For all \((x, A) \in \mathbb{T}^2 \times \mathrm{SL}_2(\mathbb{R})\) and \((y, B) \in \mathbb{R}^2 \times T_{\mathrm{A}}\mathrm{SL}_2(\mathbb{R})\) , we have
	\[
	|(y,B)| \leq C |A|\mathbb{E}\Big[\max_{|j| \leq N,m\in\{0,1\}} \Big|\hat{g}\big( \Theta^J_{Z_j^m(\overline U_T)+[U_T,Y_j^m(\overline U_T)]_x}(x, A), (y, B) \big)\Big|\Big].
	\]
\end{lemma}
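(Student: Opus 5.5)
By Lemma \ref{NS-lemmaA.20}, it suffices to prove the estimate at $A=\mathrm{I}$. That is, we look for constants $C,N$ such that for every $x\in\mathbb{T}^2$ and every $(y,B)\in\mathbb{R}^2\times T_{\mathrm{I}}\mathrm{SL}_2(\mathbb{R})$,
\[
|(y,B)|\le C\,\mathbb{E}\Big[\max_{|j|\le N,m\in\{0,1\}}\big|\hat g\big(\Theta^J_{\mathcal{V}_j^m}(x,\mathrm{I}),(y,B)\big)\big|\Big],
\]
where $\mathcal{V}_j^m$ is the velocity field \eqref{V} associated with $Z_j^m(\overline U_T)+[U_T,Y_j^m(\overline U_T)]_x$. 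Once this holds, Lemma \ref{NS-lemmaA.20} yields the stated bound with the factor $|A|$.

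By translation invariance we take $x=0$. We normalize the right-hand side to be at most $1$, exactly as in Lemmas \ref{NS-lemmaA.17}--\ref{NS-lemmaA.18}, so that \eqref{NS-A.33} is available. Since $\Theta^J_{\tilde u}(0,\mathrm{I})=(\tilde u(0),\nabla\tilde u(0))$, we have
\[
\hat g\big(\Theta^J_{\tilde u}(0,\mathrm{I}),(y,B)\big)=\tilde u(0)\cdot y+\nabla\tilde u(0):B .
\]
Here $B$ ranges over the three-dimensional space of traceless matrices, with entries $B_{11}=-B_{22}$, $B_{12}$, $B_{21}$. The task is therefore to recover the five unknowns $y_1,y_2,B_{11},B_{12},B_{21}$ from finitely many fields $\mathcal{V}_j^m$ with $|j|\le 2$.

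\textbf{Step 1 (the $y$ components).} Take $j=(1,\pm1)$, $m=0$. Then $\widetilde\psi_j^0(0)=1$ and $\widetilde\psi_j^1(0)=0$, so the deterministic term $-(\nu_1+\nu_2)gj_1j^\perp$ in $\mathcal{V}_j^0(0)$ contributes $\pm(\nu_1+\nu_2)g\,j^\perp\cdot y$. The two vectors $(1,\pm1)^\perp$ span $\mathbb{R}^2$. The remaining terms involve $v_T=u_T+\overline u_T$ and $\nabla v_T$ evaluated at $0$. For these we use the moment bound \eqref{JFA-A.5}, choosing $\eta$ small as in the previous lemmas, to get $|y|\le C(1+|B|)$ up to error terms that are linear in $|B|$ with random coefficients.

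\textbf{Step 2 (the $B$ components).} Take $j\in\{(1,0),(1,1),(1,-1)\}$, $m=1$. Now $\widetilde\psi_j^1(0)=0$, so $\mathcal V_j^1(0)$ has no deterministic part. On the other hand,
\[
\nabla\big(-(\nu_1+\nu_2)gj_1j^\perp\widetilde\psi_j^1\big)(0)=-(\nu_1+\nu_2)gj_1\,j^\perp\otimes j ,
\]
which pairs with $B$ to give $-(\nu_1+\nu_2)gj_1\,(j^\perp)^\top Bj$. For the three choices of $j$ above, the linear functionals $B\mapsto (j^\perp)^\top Bj$ are, up to sign, $B_{21}$, $B_{21}-B_{12}+2B_{11}$ and $B_{21}+B_{12}-2B_{11}$ (to be checked). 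These are linearly independent on traceless matrices. Inverting the resulting $3\times3$ system bounds $|B|$, again modulo the random terms containing $v_T(0)$ and $\nabla v_T(0)$.

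\textbf{Main obstacle.} The hard part is controlling those random cross terms, which come from $\nabla v_T\,\widetilde\psi_j$ and $\widetilde\psi_j^{m+1}\cdot v_T$. The lower bounds from Steps 1 and 2 come from deterministic terms with fixed constants proportional to $\nu_1+\nu_2$, but the cross terms are multiplied by the same unknowns, so they could in principle cancel the deterministic contributions. My first attempt is the device used in Lemmas \ref{NS-lemmaA.17}--\ref{NS-lemmaA.18}: bound $\mathbb{E}|v_T(0)|+\mathbb{E}|\nabla v_T(0)|$ through \eqref{JFA-A.5} together with the factor $e^{\eta\|U_0\|^2_{\hat H}}$, and take $\eta$ small so that these terms are absorbed. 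If absorption fails, I would use the reverse triangle inequality inside the expectation together with the fact that $v_T(0)$ contains the explicit Gaussian part $gk_1\frac{k^\perp}{|k|^2}\alpha_k^l\tilde\sigma_k^l W^{k,l}$. Because this part is Gaussian, the probability that the random coefficient cancels the deterministic one is small, and this gives a lower bound on $\mathbb{E}|\cdot|$ that is uniform in $(y,B)$ on the unit sphere. Combining Steps 1 and 2 then gives $|(y,B)|\le C$, and Lemma \ref{NS-lemmaA.20} concludes the proof with $N=2$.
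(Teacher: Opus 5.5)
Your reduction ($A=\mathrm{I}$ via Lemma \ref{NS-lemmaA.20}, $x=0$, normalizing the right-hand side to $1$, then reading $y$ and $B$ off a few low modes) is the paper's route. For $B$, your choice of modes is actually better than the paper's. Write $B_{11}=-B_{22}=a$, $B_{12}=b$, $B_{21}=c$ and take $j^\perp=(-j_2,j_1)$. With your convention, the viscous part of $\mathcal V_j^1$ pairs with $B$ through $(j^\perp)^\top Bj=cj_1^2-2aj_1j_2-bj_2^2$. So $j=(1,0),(1,1),(1,-1)$ give $c$, $c-2a-b$, $c+2a-b$, which are independent. By contrast, the paper's third relation, taken from $j=(1,0)$, $m=0$ in \eqref{100}, has no deterministic coefficient on $a,b$ at all. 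Your own list needs fixing: $B_{21}-B_{12}+2B_{11}$ and $B_{21}+B_{12}-2B_{11}$ sum to $2B_{21}$, so as written they are dependent.

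The genuine gap is the step you yourself flag, and neither of your remedies closes it. Absorbing the random terms through \eqref{JFA-A.5} with small $\eta$ is exactly what the paper does, but it yields no smallness. \eqref{JFA-A.5} is only an upper bound $\mathbb{E}\sup_t\|U_t\|_{\hat H^s}^p\le C(\eta)e^{\eta\|U_0\|_{\hat H}^2}$. No choice of $\eta$ makes $\mathbb{E}|v_T(0)|$, $\mathbb{E}|\nabla v_T(0)|$, $\mathbb{E}|\nabla^2 v_T(0)|$ small compared with $(\nu_1+\nu_2)|g|$; these are fixed quantities determined by $U_0$, $T$ and the noise. From $\mathbb{E}|\ell^{\mathrm{det}}(y,B)+\ell^{\mathrm{rand}}(y,B)|\le 1$ you only get $|\ell^{\mathrm{det}}(y,B)|\le 1+\mathbb{E}\|\ell^{\mathrm{rand}}\|\,|(y,B)|$, which does not close. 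Letting the threshold for $\eta$ depend on the unknowns, as in the proofs of Lemmas \ref{NS-lemmaA.17}--\ref{NS-lemmaA.18}, is also incompatible with a constant uniform in $(y,B)$.

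The Gaussian fallback relies on a decomposition that is not available. Since $\overline U=U-\sigma_\theta W$ and $\sigma_\theta$ has only temperature components, $\pi_1\overline U=\pi_1 U$; this is precisely why $Y_j^m(U)=Y_j^m(\overline U)$ in \eqref{JFA-6.15}. Hence $u_T=\overline u_T$, and $v_T=2u_T$ contains no additive $W_T$ term. The formula you borrow from the proof of Lemma \ref{NS-lemmaA.17} is not consistent with \eqref{overline-U}. Even if such a term existed, it would be correlated with the path-dependent remainder. So the claim that a Gaussian coefficient is unlikely to cancel would still need a conditional, Malliavin-type nondegeneracy argument.

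What is missing is to make the random coefficients small on an event rather than in mean. Take $E_\delta=\{\|U_T\|_{\hat H^4}\le\delta\}$, with $\delta$ small relative to $(\nu_1+\nu_2)|g|$ and to the smallest singular value of your (invertible) deterministic $5\times 5$ system. On $E_\delta$ one has pointwise $\max_{j,m}|\hat g(\Theta^J_{\mathcal V_j^m}(x,\mathrm{I}),(y,B))|\ge c_0|(y,B)|$, so the expectation is at least $c_0\,\mathbb{P}(E_\delta)\,|(y,B)|$. Positivity of $\mathbb{P}(E_\delta)$ comes from dissipation and the support/controllability arguments of Section \ref{Approximate controllability}. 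This also works at any $x$ without invoking translation invariance, which does not hold in law for fixed $U_0$. However, the resulting constant depends on $U_0$ and $T$ through $\mathbb{P}(E_\delta)$, and that dependence has to be made explicit.
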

\begin{proof}
	It suffices to establish the case $A={\mathrm{I}}$. Without loss of generality, we similarly assume $x=0$. 
	To establish the desired conclusion, it suffices to prove that 
	$$
	|y|+|B|\le C.
	$$
	Employing methods closely parallel to those in Lemmas \ref{NS-lemmaA.17} and \ref{NS-lemmaA.18}, we deduce the existence of $N$ and $\eta_1$ such that for every $\eta\in(0,\eta_1)$, the relation $|y|\le C$ holds. We therefore restrict our attention to estimating the $B$. Observing that $B\in T_I\mathrm{SL}_2(\mathbb{R})$, we thus have $\mathrm{Tr}(B)=0$, and $B$ takes the following form:
	\[
	B = \begin{pmatrix} a & b \\ c & -a \end{pmatrix}.
	\]
	Let $\mathcal{V}_N\subseteq H^5(\mathbb{T}^2)$ be defined by
	$$
	\mathcal{V}_N:=\mathop{\rm span}\{\mathcal{V}_j^m:|j|\le N. m\in\{0,1\}\},
	$$ 
	where $N$ is the parameter selected during the aforementioned estimation of $y$. Suppose that 
	$$
	\mathbb{E}\Big[\max_{|j| \leq N,m\in\{0,1\}} \Big|\hat{g}\big( \Theta^J_{Z_j^m(\overline U_T)+[U_T,Y_j^m(\overline U_T)]_x}(x, A), (y, B) \big)\Big|\Big]\le 1,
	$$
	such that for any $\tilde{u}\in \mathcal{V}_N$, we have that 
	\begin{equation}\label{B}
		\mathbb{E}\Big[\max_{|j| \leq N,m\in\{0,1\}} \Big|\hat{g}\big( \Theta^J_{\tilde{u}}(x, A), (y, B) \big)\Big|\Big]\le C\mathbb{E}\|\tilde{u}\|_{H^5(\mathbb{T}^2)}.
	\end{equation}
	Proceeding, we estimate various bounds for matrix $B$ by selecting appropriate vector fields. Setting $j=(1,0)$ and $m=1$, we compute that
	\begin{align*}
		\mathbb{E}\Big[\big|\hat{g}\big(\Theta_{\mathcal{V}_{(1,0)}^0}^J(0,I),(y,B)\big)\big|\Big]
		&=\mathbb{E}\Big[\big|\mathcal{V}_{(1,0)}^0(0)\cdot y+\mathrm{Tr}( (\nabla\mathcal{V}_{(1,0)}^0(0))^\top\cdot B)\big|\Big]\\
		&=g\mathbb{E}\Big[\big|y_2v_T^1(0)+c\nabla_2v_T^2(0)+(\nu_1+\nu_2)c-a\nabla_{22}v_T^2(0)
		\big|\Big]\\
		&\leq C\mathbb{E}\|\mathcal{V}_{(1,0)}^0\|_{H^5(\mathbb{T}^2)}.
	\end{align*}
	Similarly, application of \eqref{JFA-A.5} yields the existence of $\eta_2(<\eta_1)$ such that for any $\eta\in(0,\eta_2)$,
	$$
	|c|\le C.
	$$
	Next, we assign $j=(1,0)$, $m=0$ and $j=(1,1)$, $m=1$ respectively, and we obtain that 
	\begin{align}\label{100}
		\mathbb{E}\Big[\big|\hat{g}\big(\Theta_{\mathcal{V}_{(1,0)}^1}^J(0,I),(y,B)\big)\big|\Big]
		&= g\mathbb{E}\Big[\Big|-\nabla_2v_T^1(0)y_1-(\nabla_2v_T^2(0)+\nu_1+\nu_2)y_2\notag\\
		&\quad\quad\quad+a(\nabla_{22}v_T^2(0)-\nabla_{21}v_T^1(0))-b\nabla_{22}v_T^1(0)+c(v_T^1(0)-\nabla_{21}v_T^2(0))\Big|\Big]\notag\\
		&\leq C,
	\end{align}
	and 
	\begin{align}\label{111}
		\mathbb{E}\Big[\big|\hat{g}\big(\Theta_{\mathcal{V}_{(1,1)}^1}^J(0,I),(y,B)\big)\big|\Big]
		&= g\mathbb{E}\Big[\Big|(v_T^1(0)+v_T^2(0))(y_1-y_2)+(\nu_1+\nu_2)(c-2a-b)\notag\\
		&\quad\quad\quad+(c-b)(\nabla_1v_T^1(0)+\nabla_2v_T^2(0))\Big|\Big]\notag\\
		&\leq C.
	\end{align}
	Combining this with $|c|\le C$, we immediately see that there exists $\eta^*(<\eta_2)$ such that for any $\eta\in (0,\eta^*)$, both $|a-b|\le C$ and $|c-2a-b|\le C$ hold. Thus for any $\eta\in (0,\eta^*)$, we have that
	$$
	|c|+|a-b|+|c-2a-b|\le C.
	$$
	Using the above bound, we immediately obtain $|B|\le C$. Thereby completing the proof of the lemma.
\end{proof}
\section{Approximate controllability of nonlinear Lagrangian flow}\label{Approximate controllability}
In this section, we primarily establish the approximate controllability condition $(C)$, which facilitates the exclusion of two types of almost surely continuous invariant structures. This constitutes part of the proof of Theorem \ref{posivie-lambda}. Furthermore, by combining the approximate controllability condition with the dissipativity of the Boussinesq equation, we complete the proof of weak irreducibility,namely Proposition \ref{irreducibility}. 

The proof of part (b) of condition $(C)$ is divided into two stages. First, we achieve controllability by identifying a suitable smooth control that steers the solution of the controlled system from the initial state to the target state within time $T(>0)$. Subsequently, we characterize the closeness between the solution of the controlled system \eqref{Con-2}-\eqref{Con-ini1} and that of the original system \eqref{Con-1}-\eqref{Con-ini} at time $T$, thereby establishing the approximation property. Combining these two components proves part (b) of condition $(C)$. Similar arguments are then provided for part (a).

As noted in the Introduction, for the Boussinesq system under study, the stochastic forcing acts directly only on the temperature equation and subsequently influences the velocity equation via the buoyancy term $\mathbf{g}\theta$. Consequently, the smooth controls we construct act directly solely on the temperature equation while indirectly achieving controllability of the velocity equation.
This differs fundamentally from the $Qh(t)$-type control constructed in \cite{NS}, as our smooth control necessarily exhibits spatial dependence (i.e., takes the form $\sum_{\substack{k \in \hat{\mathcal{Z}}, l \in \{0,1\}}}\sigma_k^l(x)h_k^l(x,t)$). This $x$-dependence will introduce additional complications when establishing the approximation property (c.f. Proposition \ref{V2-thm5.2}).

Recall that our motivation for constructing such smooth controls stems from the analysis of the nonlinear term in the Navier-Stokes equations presented in \cite{JEMS}. The underlying idea is to construct them based on shear flows and cellular flows, which nullify the nonlinear term. Despite the Boussinesq equations being coupled with a temperature equation compared to the Navier-Stokes equations, they retain the same form of nonlinear term. Consequently, we can likewise employ similar shear flows and cellular flows to construct smooth controls. Furthermore, it is evident that for a class of dissipative equations possessing an Euler-type nonlinearity, shear flows and cellular flows can be used to facilitate the achievement of approximate controllability.

To be precise, the aim of this section is to verify the approximate controllability of the following extended nonlinear system:
\begin{equation}\label{Con-1}
	\begin{aligned}
		d\mathbf{u}+(\nabla^\top\Delta^{-1}\mathbf{u}\cdot \nabla\mathbf{u}-\nu_1\Delta\mathbf{u})dt&=g\nabla^\top\Delta^{-1}\partial_x\theta dt,\\
		d\theta+(\mathbf{u}\cdot \nabla\theta-\nu_2\Delta\theta)dt&=\nabla^\top\Delta^{-1}\sigma_\theta dW,\\
		\dot{x}(t) &= \mathbf{u}(x(t)), \\
		\dot{v}(t) &= v\cdot \nabla \mathbf{u}(x(t))\cdot v^\top v^\top \\
		\dot{A}(t) &= A(t)\nabla \mathbf{u}(x(t)),
	\end{aligned}
\end{equation}
supplemented with the initial condition
\begin{align}\label{Con-ini}
	\mathbf{u}(0)=\mathbf{u}_0,\, \,\theta(0)=\theta_0,\,\, A(0)=\mathrm{Id}_{\mathbb{R}^2}.
\end{align}
Here, we give the corresponding control system for the equations \eqref{Con-1}-\eqref{Con-ini} as follows:
\begin{equation}\label{Con-2}
	\begin{aligned}
		d\mathbf{u}^{h}+(\nabla^\top\Delta^{-1}\mathbf{u}^{h}\cdot\nabla\mathbf{u}^{h}-\nu_1\Delta\mathbf{u}^{h})dt&=g\nabla^\top\Delta^{-1}\partial_x\theta^{h} dt,\\
		d\theta^{h}+(\mathbf{u}^{h}\cdot \nabla\theta^{h}-\nu_2\Delta\theta^{h})dt&=\nabla^\top\Delta^{-1}\sigma_\theta hdt,\\
		\dot{x}^{h}(t) &= \mathbf{u}^{h}(x^{h}(t)), \\
		\dot{v}^{h}(t) &= v^{h}\cdot \nabla \mathbf{u}^{h}(x^{h}(t))\cdot {(v^{h})}^\top {(v^{h})}^\top \\
		\dot{A}^{h}(t) &= A^{h}(t)\nabla \mathbf{u}^{h}(x^{h}(t)),
	\end{aligned}
\end{equation}
supplemented with the initial condition
\begin{align}\label{Con-ini1}
	\mathbf{u}^{h}(0)=\mathbf{u}_0,\, \,\theta^{h}(0)=\theta_0,\,\, A^{h}(0)=\mathrm{Id}_{\mathbb{R}^2},
\end{align}
where $\sigma_\theta h:=\sum_{\substack{k \in \hat{\mathcal{Z}}, l \in \{0,1\}}}\sigma_k^l(x)h_k^l(x,t)\in C_t^{\infty}(\mathbb{R}_+;L^2)$ is a smooth control, $\hat{\mathcal{Z}}$ is symmetric and $(1,0),\,(0,1),\,(0,2),$ $(1,1),\,(-1,1)\in \hat{\mathcal{Z}}$, as will be verified in Proposition \ref{V2-thm5.1}.
\begin{proof}[Verification of (b) for approximate controllability]
	The proof is divided into three steps.\\
	\textbf{Step 1 (decay of $(\mathbf{u},\theta)$).} The dissipative nature of the Boussinesq equation implies that when $\sigma_\theta h=0$, the solution $U_t^0$ decays exponentially to zero in $\hat{H}^4$. Indeed, observe that $\frac{d}{dt}\|U_t^0\|_{\hat{H}}\le -\frac{\kappa}{2}\|U_t^0\|_{D(A^{1/2})}$. Then, interpolating between this inequality and the growth bound established by repeatedly applying Proposition \ref{priori-eatimates}, one obtains the desired conclusion that it decays to zero. \\
	\textbf{Step 2 (controllability).} Moving $x_0$ to $x'$ and $v_0$ to $v'$ by Proposition \ref{V2-thm5.1}. Thus, there exists a smooth control \(\sigma_\theta h(x,t)\) such that $\big(\mathbf{u}^h_{1},\theta^h_{1}, x^h_{1},v^h_{1}\big)=(\mathbf{0},0,x',v')$.\\
	\textbf{Step 3 (approximate).} By virtue of the stability, we can derive that $\mathbf{u}_1$ and $\mathbf{u}_1^h$, $\theta_1$ and $\theta_1^h$ are approximated in the $L^\infty([0,1];H^5(\mathbb{T}^2))$-norm, $x_1$ and $x_1^h$, $v_1$ and $v_1^h$ are approximated in the $\mathbb{R}^2$-norm. Thereby verifying the approximate controllability condition (b) by Proposition \ref{V2-thm5.2}.
\end{proof}

Similarly, we can verify that the approximate controllability condition (a) holds by Proposition \ref{V2-thm5.3}.
The precise formulations and proofs of Propositions \ref{V2-thm5.1}–\ref{V2-thm5.3} are presented next.


\begin{proposition}[Controllability  of nonlinear Lagrangian flow]\label{V2-thm5.1}
For any initial data \((\mathbf{0},0, x_0,v_0,\mathrm{Id}_{\mathbb{R}^2})\\ \in H^5(\mathbb{T}^2)\times H^5(\mathbb{T}^2)\times\mathbb{T}^2 \times S^1 \times \mathrm{SL}_2(\mathbb{R}) \), and any target \((x', v') \in \mathbb{T}^2 \times S^1\), there exists and a smooth control \(\sigma_\theta h^x(x,t)\in C^{\infty}([0,1];L^2)\) such that  
	\begin{equation*}
		\big(\mathbf{u}^h_{1},\theta^h_{1}, x^h_{1},v^h_{1}\big)=(\mathbf{0},0,x',v').
	\end{equation*}
Furthermore, $\sigma_\theta h^x(x,t)$ can be chosen to depend smoothly on $x$, $x'$, $v$, $v'$ and supported only in frequencies $|k|_\infty \le 2$.
\end{proposition}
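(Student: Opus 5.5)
Starting from $(\mathbf 0,0)$, the plan is to build the control in the reverse direction. We first prescribe a smooth velocity path $\mathbf u^h(t)$ and a temperature path $\theta^h(t)$ that solve the controlled Boussinesq system with zero nonlinearity. Only afterwards do we read off the forcing $\sigma_\theta h$ from the temperature equation. The key structural fact is that shear flows $\mathbf u = a(t)(\sin x_2,0)$, $a(t)(\cos x_2,0)$ (and their $x_1$-analogues), as well as cellular flows built from a single eigenmode of $-\Delta$ (e.g. modes with $|k|^2=2$ such as $(1,1),(-1,1)$), satisfy $\mathbf u\cdot\nabla\omega=0$. For such $\mathbf u$ the vorticity equation becomes linear: $\partial_t\omega-\nu_1\Delta\omega=g\partial_1\theta$. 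Given a prescribed $\omega(t)$ with these modes, we therefore choose $\theta(t)$ so that $g\partial_1\theta=\partial_t\omega-\nu_1\Delta\omega$. This is possible for any mode $k$ with $k_1\neq0$. For pure $x_2$-shears ($k_1=0$) we cannot invert $\partial_1$ directly. Instead we insert a short auxiliary stage using the $(1,1),(-1,1)$ cellular modes, whose combination generates the needed direction; this is why $(0,2),(1,1),(-1,1)\in\hat{\mathcal Z}$. The temperature equation then gives
\[
\sigma_\theta h=\partial_t\theta+\mathbf u\cdot\nabla\theta-\nu_2\Delta\theta .
\]
This expression is smooth, but it is spatially inhomogeneous because of the transport term $\mathbf u\cdot\nabla\theta$. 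It has frequencies $|k|_\infty\le2$ and hence can be written in the form $\sum_{k\in\hat{\mathcal Z}}\sigma_k^l(x)h_k^l(x,t)$. All profiles $a(t)$ will be compactly supported in subintervals of $(0,1)$, so that $(\mathbf u^h,\theta^h)$ starts and ends at $(\mathbf 0,0)$.

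The Lagrangian part is handled by concatenating stages on disjoint time intervals, in the spirit of \cite[Section 7]{JEMS}.
\begin{enumerate}
\item[(i)] A horizontal shear $a(t)(\sin x_2+\text{phase},0)$ moves $x$ along the $x_1$ direction by $\int a$ times a nonzero value. We adjust the phase (choosing between the $\sin$ and $\cos$ modes) so that this value is nonzero at the current $x_2$.
\item[(ii)] A vertical shear then moves $x_2$, yielding $x^h=x'$.
\item[(iii)] To rotate $v$ while fixing $x'$, we use a cellular flow centered so that $x'$ is a hyperbolic stagnation point (or an elliptic point, which gives pure rotation). Then $\dot x=0$, and $v$ obeys $\dot v = v\cdot\nabla\mathbf u\,v^\perp v^\perp$ with constant $\nabla\mathbf u(x')$. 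A rotational (elliptic) cell flow rotates $v$ at a constant angular speed, so an appropriate $\int a$ gives $v^h=v'$.
\end{enumerate}
Each stage is time-rescaled smoothly and separated by intervals with $a\equiv0$, at which $(\mathbf u,\theta)=0$ and the particle is stationary.

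Smooth dependence on $x_0,x',v_0,v'$ comes from the fact that the required integrals $\int a$ and the phases are smooth functions of these data; near degenerate configurations we avoid singular choices by selecting among finitely many mode options via a smooth partition of unity. Boundedness of the controls in $C^\infty([0,1];L^2)$ is immediate from the explicit formula.

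I expect the main obstacle to be the compatibility step. We must ensure that the chosen $\theta$ is itself nonlinearity-compatible, or at least that the added terms $\mathbf u\cdot\nabla\theta$ stay within frequencies $|k|_\infty\le2$, and that the $k_1=0$ shear directions can genuinely be produced through buoyancy. The approach I would try first is a direct computation: with $\mathbf u$ a single-mode shear or cell and $\theta$ in the modes $(1,0),(1,1),(-1,1)$, I would verify mode by mode which products appear. If a required direction proves unreachable, the fallback is to realize the vertical displacement using only $k_1\neq0$ cellular flows, combined with their stagnation-point geometry.
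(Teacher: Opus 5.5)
\textbf{Gap: the $x_1$-translation stage is never actually constructed.}

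Your overall scheme is the paper's. You prescribe flows with $\mathbf u\cdot\nabla\omega=0$, read $\theta$ off the vorticity equation and $\sigma_\theta h$ off the temperature equation, translate the particle with shears, and then rotate $v$ with a cellular flow that has a stagnation point at $x'$.

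You also correctly diagnose the real difficulty, which the paper's own proof does not overcome. Buoyancy enters only as $g\partial_1\theta$, so a periodic $\theta$ cannot feed vorticity modes with $k_1=0$. The paper's first stage is the shear $f_{1/4}(t)(\cos(y_2-b_0),0)$, whose vorticity lies in mode $(0,1)$. It uses $\theta^h=\frac1g(f'_{1/4}+\nu_1 f_{1/4})\,x_1\sin(x_2-b_0)$, which is not a function on $\mathbb T^2$. Its final cellular flow has vorticity $\cos(y_1-a_1)+\cos(y_2-b_1)$, again with a $(0,1)$ component, and again with a non-periodic term $x_1\cos(x_2-b_1)$ in $\theta^h$.

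Your proposal does not close this gap either:
\begin{itemize}
\item Your stage (i) is still a horizontal shear.
\item The ``auxiliary stage'' is never specified. A simultaneous superposition of the $(1,1)$ and $(-1,1)$ modes is a cellular flow, which does not carry the particle horizontally.
\item Your fallback is mislabelled. It is the $x_1$-displacement that would need $k_1=0$, not the vertical one. The vertical shear $a(t)(0,\cos(x_1-c))$ has vorticity in mode $(1,0)$, and $\theta=\frac{a'+\nu_1 a}{g}\cos(x_1-c)$ works directly.
\end{itemize}

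\textbf{How to fill it.} Use each diagonal mode separately, as a shear. For $k\in\{(1,1),(1,-1)\}$, prescribe $\omega=c(t)\cos(k\cdot x+\phi)$ with $c\in C_c^\infty$ of a subinterval. Then
\begin{itemize}
\item $\mathbf u=\frac{c(t)}{2}\sin(k\cdot x+\phi)\,(-k_2,k_1)$ and $\theta=\frac{c'+2\nu_1 c}{g}\sin(k\cdot x+\phi)$;
\item since $\mathbf u\perp k$ while $\nabla\omega$ and $\nabla\theta$ are parallel to $k$, both transport terms vanish, so $\sigma_\theta h=\partial_t\theta-\nu_2\Delta\theta$ lies in the single mode $k$;
\item $k\cdot x$ is conserved along trajectories, so choosing $\phi$ with $\sin(k\cdot x_0+\phi)=1$ moves the particle by $\frac12\int c\,dt$ along $(-1,1)$, respectively $(1,1)$;
\item $\nabla\mathbf u$ vanishes along that path, so $v$ and $A$ are untouched.
\end{itemize}
Two such stages reach any $x'$.

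For the rotation stage, take the stream function $s(t)\cos(x_1-a_1)\cos(x_2-b_1)$ with $x'=(a_1,b_1)$. Then $\omega=-2\psi$, and $x'$ is a stagnation point where $\nabla\mathbf u(x')$ is $s(t)$ times the rotation generator, so $v$ turns at rate $s(t)$. Here $\theta=-\frac{2(s'+2\nu_1 s)}{g}\sin(x_1-a_1)\cos(x_2-b_1)$ and $\mathbf u\cdot\nabla\theta=-\frac{s(s'+2\nu_1 s)}{g}\sin(2x_2-2b_1)$. This is exactly where the $(0,2)$ mode enters, and every control stays in $|k|_\infty\le 2$.

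\textbf{On smooth dependence.} Drop the partition-of-unity idea: averaging controls does not preserve exact endpoints for a nonlinear system. Instead, centre each flow at the current particle position. Then amplitudes and phases are explicit, locally smooth functions of $(x_0,x',v_0,v')$. Only local smoothness is possible anyway, since displacements on $\mathbb T^2$ lift only locally.
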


\begin{proof}
	We begin by considering the displacement of particle positions. Observing that the particle position $x\in \mathbb{T}^2$ is two-dimensional, we address its horizontal and vertical components separately. Now, we begin by proving that the horizontal component of $x$ can be moved to the target state. Let $x_0=(a_0,b_0),\, x'=(a_1,b_1)$. For $t\in(0,1/4)$, suppose the velocity field is given by the shear flow 
	\[
	\mathbf{u}_{t}^h(y)=f_{1/4}(t)\begin{pmatrix}
		\cos(y_{2}-b_{0}) \\ 
		0
	\end{pmatrix},
	\]
	where $f_{1/4}\in C_c^\infty(0,1/4)$ and $\int_0^{1/4} f_{1/4}(t)dt=a_1-a_0$. Then $\mathbf{u}_{t}^h(y)$ satisfies the following properties:\\
	(a) $\mathbf{u}_0=\mathbf{u}_{1/4}=\mathbf{0},\, \theta_0=\theta_{1/4}=0$,\\
	(b) the solution $x_t^h$ of 
	\[
	\begin{cases} 
		\dot{x}^h(t) = \mathbf{u}^h(t, x^h(t)), \\ 
		x^h(0) = x_0 
	\end{cases}
	\]
	satisfies $x^h_{1/4}=(a_1,b_0)$,\\
	(c) $\theta^h(t, x)=\frac{1}{g}(f'_{1/4}(t)+\nu_1f_{1/4}(t))x_1\sin(x_2-b_0)$,\\
	(d) $\big(\mathbf{u}^h(t, x),\theta^h(t, x)\big)$ is a solution of the Boussinesq system \eqref{Boussinesq} with control $\sigma_\theta h^1(t,x)$ given explicitly by
	$$
	\sigma_\theta h^1(t,x)=\frac{1}{g}\big[(f''_{1/4}(t)+\nu_1f'_{1/4}(t))x_1\sin(x_2-b_0)+(f'_{1/4}(t)+\nu_1f_{1/4}(t))(\frac{1}{2}\sin(2x_2-2b_0)+\nu_2x_2\sin(x_2-b_0))\big].
	$$
It follows immediately that $\sigma_\theta h^1(t,x)\in C^{\infty}([0,1/4];L^2)$ and is supported only on frequencies $|k|_\infty \le 2$.
	
	To verify that control $\sigma_\theta h^1$ does not affect the matrix flow $A_t$, we note that the vector field $\mathbf{u}^h(t, x)$ defined above satisfies
	\[
	\nabla\mathbf{u}^h(t,x) = f_{1/4}(t) \begin{pmatrix} 
		0 & -\sin(x_2 - b_0) \\ 
		0 & 0 
	\end{pmatrix},
	\]
	then we obtain that
	$$
	\dot{A}^h_t=\nabla\mathbf{u}^h(t,x^h(t))=0,
	$$
	which yields that $\dot{A}^h_t=\mathrm{Id}_{\mathbb{R}^2},\, t\in [0,1/4]$.
	
	Next, while holding fixed both the horizontal component of particle positions and the matrix flow, we displace the vertical component via shear flow. Similarly, for $t\in(1/4,1/2)$, suppose the velocity field is given by the shear flow 
	\[
	\mathbf{u}_{t}^h(y)=f_{1/2}(t)\begin{pmatrix}
		0 \\ 
		\cos(y_{1}-a_{1})
	\end{pmatrix},
	\]
	where $f_{1/2}\in C_c^\infty(1/4,1/2)$ and $\int_{1/4}^{1/2} f_{1/2}(t)dt=b_1-b_0$. Then $\mathbf{u}_{t}^h(y)$ satisfies the following properties:\\
	(a) $\mathbf{u}_{1/4}=\mathbf{u}_{1/2}=\mathbf{0},\, \theta_{1/4}=\theta_{1/2}=0$,\\
	(b) the solution $x^h_t$ of 
	\[
	\begin{cases} 
		\dot{x}^h(t) = \mathbf{u}^h(t, x^h(t)), \\ 
		x^h(1/4) = (a_1,b_0) 
	\end{cases}
	\]
	satisfies $x_{1/2}=(a_1,b_1)$,\\
	(c) $\theta^h(t, x)=\frac{1}{g}(f'_{1/2}(t)+\nu_1f_{1/2}(t))\cos(x_1-a_1)$,\\
	(d) $\big(\mathbf{u}^h(t, x),\theta^h(t, x)\big)$ is a solution of the Boussinesq system \eqref{Boussinesq} with control $\sigma_\theta h^2(t,x)$ given explicitly by
	$$
	\sigma_\theta h^2(t,x)=\frac{1}{g}\big[(f''_{1/2}(t)+\nu_1f'_{1/2}(t))\cos(x_1-a_1)-(f'_{1/2}(t)+\nu_1f_{1/2}(t))\nu_2\cos(x_1-a_1)\big].
	$$
	It follows immediately that $\sigma_\theta h^2(t,x)\in C^{\infty}([1/4,1/2];L^2)$ and is supported only on the frequencies $|k|_\infty \le 1$. By the same approach, we can verify that control $\sigma_\theta h^2$ does not affect the matrix flow $A_t,\, t\in [1/4,1/2]$.
	
	Subsequently, we extend the aforementioned control scheme to address the projective vector field while preserving particle positions invariant. The methodology remains analogous, with the key distinction that we now employ a cellular flow to construct the velocity field. Assuming we have previously displaced $v_0$ to a new position, let the updated value be denoted as $v_{1/2}^h$. For $t\in(1/2,1)$, suppose the velocity field is given by the cellular flow 
	\[
	\mathbf{u}_{t}^h(y)=f_{1}(t)\begin{pmatrix}
		-\sin(y_2-b_1) \\ 
		\sin(y_{1}-a_{1})
	\end{pmatrix},
	\]
	where $f_{1}\in C_c^\infty(1/2,1)$ and $\int_{1/2}^{1} f_{1}(t)dt=\angle(v',v_{1/2}^h)$. Then $\mathbf{u}_{t}^h(y)$ satisfies the following properties:\\
	(a) $\mathbf{u}_{1/2}=\mathbf{u}_{1}=\mathbf{0},\, \theta_{1/2}=\theta_{1}=0$,\\
	(b) the solution $v^h_t$ of 
	\[
	\begin{cases} 
		\dot{v}^h(t) = v^{h}\cdot \nabla \mathbf{u}^{h}(x^{h}(t))\cdot {(v^{h})}^\top {(v^{h})}^\top, \\ 
		v^h(1/2) = v_{1/2}^h 
	\end{cases}
	\]
	satisfies $v^h_{1}=v'$,\\
	(c) $\theta^h(t, x)=\frac{1}{g}(f'_{1}(t)+\nu_1f_{1}(t))\big(-x_1\cos(x_2-b_1)+\sin(x_1-a_1)\big)$,\\
	(d) $\big(\mathbf{u}^h(t, x),\theta^h(t, x)\big)$ is a solution of the Boussinesq system \eqref{Boussinesq} with control $\sigma_\theta h^v(t,x)$ given explicitly by
	\begin{align*}
		\sigma_\theta h^v(t,x)&=\frac{1}{g}\big[(f''_{1}(t)+\nu_1f'_{1}(t))(-x_1\cos(x_2-b_1)+\sin(x_1-a_1))\\
		&\quad+f_{1}(t)(f'_{1}(t)+\nu_1f_{1}(t))\big(\cos(x_1-b_1)-\cos(x_1-a_1)+x_1\sin(x_1-a_1)\big)\sin(x_2-b_1)\\
		&\quad+\nu_2(f'_{1}(t)+\nu_1f_{1}(t))(-\sin(x_1-a_1)+x_1\cos(x_2-b_1))\big].
	\end{align*}
	It follows immediately that $\sigma_\theta h^v(t,x)\in C^{\infty}([1/2,1];L^2)$ and is supported only on frequencies $|k|_\infty \le 2$. It can be ascertained at this stage that the matrix flow remains unchanged. This completes the proof of the proposition.
\end{proof}

\begin{proposition}[Stability]\label{V2-thm5.2}
	For all \( t > 0 \) and \( \varepsilon > 0 \), there exists \( \varepsilon' > 0 \) such that for all \((x, v), (x', v') \in \mathbb{T}^2 \times S^{1}\) and all \( \mathbf{u} \in B_{\varepsilon'}( \mathbf{0}) \),
	\[
	\mathbb{P}\bigl((\mathbf{u}_t, \theta_t, x_t) \in B_{\varepsilon}(\mathbf{0}) \times B_{\varepsilon}(0) \times B_{\varepsilon}(x')\bigm| (\mathbf{u}(0),\theta(0),x(0))=(\mathbf{u}_0, \theta_0, x_0)\bigr) > 0,
	\]
	\[
	\mathbb{P}\bigl((\mathbf{u}_t, \theta_t, x_t, v_t) \in B_{\varepsilon}(\mathbf{0}) \times B_{\varepsilon}(0) \times B_{\varepsilon}(x') \times B_{\varepsilon}(v')\bigm| (\mathbf{u}(0),\theta(0),x(0),v(0))=(\mathbf{u}_0, \theta_0, x_0, v_0)\bigr) > 0.
	\]
\end{proposition}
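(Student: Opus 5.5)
The plan combines three ingredients: the exact controllability of Proposition \ref{V2-thm5.1}, continuity of the solution map of \eqref{Con-1} with respect to the driving path, and the Stroock--Varadhan support property of Brownian motion.

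Fix $t>0$ and $\varepsilon>0$. Since Proposition \ref{V2-thm5.1} is stated on $[0,1]$, first rescale time, or wait for part of the interval with zero control. During such a waiting phase Step 1 of the verification of (b) applies: the uncontrolled dynamics started from $(\mathbf{u}_0,\theta_0)\in B_{\varepsilon'}(\mathbf{0})$ stays small in $H^5$, and the Lagrangian and projective components move only a little. This reduces the problem to the case of zero fluid initial data, up to an error controlled by $\varepsilon'$.

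\textbf{Step 1 (continuity of the control-to-state map).} Write the noisy system as the controlled system \eqref{Con-2} driven by the rough forcing $\sigma_\theta\dot W$. Remove the noise by subtracting the Ornstein--Uhlenbeck (stochastic convolution) term $\Gamma_t=\int_0^t e^{-A(t-s)}\sigma_\theta\,dW_s$. Then set $\widetilde U=U-\Gamma$ and $\widetilde U^h=U^h-\Gamma^h$, where $\Gamma^h=\int_0^t e^{-A(t-s)}\sigma_\theta h\,ds$. The differences $\widetilde U-\widetilde U^h$ and $\Gamma-\Gamma^h$ satisfy a parabolic equation with Lipschitz-type nonlinearity on bounded sets. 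Energy estimates in $\hat H^4$, as in Lemmas \ref{NS-lemmaA.7}--\ref{NS-lemmaA.11}, together with Gronwall's inequality, then give
\[
\sup_{s\le t}\|U_s-U_s^h\|_{\hat H^4}\le C\bigl(R,\|U_0-U_0^h\|_{\hat H^4}\bigr)\,\sup_{s\le t}\|\Gamma_s-\Gamma_s^h\|_{\hat H^6}
\]
on the event where all norms are at most $R$. The paths $x,v$ solve ODEs driven by $\mathbf{u}$, and $\mathbf{u}\in C^2$ uniformly on that event, so their differences are bounded by $e^{Ct}\sup_s\|\mathbf{u}_s-\mathbf{u}^h_s\|_{C^1}$ by Gronwall. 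A subtlety is that Proposition \ref{V2-thm5.1} produces controls $\sigma_\theta h$ with $x$-dependent coefficients supported on the modes $|k|_\infty\le2$, while the noise acts only on $\mathcal Z$. I would handle this by taking $\hat{\mathcal Z}$-valued controls and working with the enlarged noise $\hat\sigma_\theta$. If one insists on the original $\mathcal Z$, one must instead argue with saturation, using the bracket directions $Y,Z$ of Section \ref{smoothing estimate}: by Agrachev--Sarychev-type arguments, a control on $\mathcal Z$ can approximate in $L^\infty H^4$ the effect of a control on the larger set of modes.

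\textbf{Step 2 (support).} Since $\Gamma^h$ is a smooth deterministic path and $\Gamma$ is a Gaussian process in $C([0,t];\hat H^6)$ whose Cameron--Martin space contains $\Gamma^h$, the event $\{\sup_{s\le t}\|\Gamma_s-\Gamma_s^h\|_{\hat H^6}<\delta\}$ has positive probability for every $\delta>0$. Intersect it with $\{\sup_{s\le t}\|U_s\|_{\hat H^5}\le R\}$; this keeps positive probability once $R$ is large, because of the moments in Proposition \ref{priori-eatimates} and the fact that on the small-tube event the norms are controlled by those of the control trajectory. Choosing $\delta$ and $\varepsilon'$ small enough, Step 1 and Proposition \ref{V2-thm5.1} (which gives $(\mathbf{u}^h_t,\theta^h_t,x^h_t,v^h_t)=(\mathbf{0},0,x',v')$) yield both displayed probabilities. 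Uniformity in $(x,v),(x',v')$ follows from the compactness of $\mathbb T^2\times S^1$ and the smooth dependence of $h^x$ on these parameters.

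The main obstacle is Step 1 with the mismatch of control modes, namely the $x$-dependent control versus the degenerate noise on $\mathcal Z$. I would first try to absorb it by interpreting $\sigma_\theta h$ as an element of the range of the noise after enlarging the forced set, and otherwise fall back on the saturation argument.
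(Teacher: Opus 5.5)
\textbf{There is a genuine gap in Step 2.} Your outline follows the same route as the paper: the mild formulation \eqref{Delta_t}, then making the stochastic convolution shadow the convolution of the control via Girsanov/support. But the step that carries all the weight fails as written.

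You assert that $\Gamma^h=\int_0^{\cdot}e^{-(\cdot-s)A}\sigma_\theta h\,ds$ lies in the Cameron--Martin space of $\Gamma$. That space is $\{\int_0^{\cdot}e^{-(\cdot-s)A}\sigma_\theta k(s)\,ds:\ k\in L^2([0,t];\mathbb{R}^{4})\}$. Since $e^{-sA}$ acts diagonally on Fourier modes, this space, and the whole support of the law of $\Gamma$, consists of paths whose only nonzero components are the temperature modes $\cos x_1,\sin x_1,\cos x_2,\sin x_2$. The control of Proposition \ref{V2-thm5.1} has components along $\sin 2x_2$, $\cos 2x_2$, $x_1\sin x_2$ and $x_1\cos x_2$. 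Hence $\{\sup_{s\le t}\|\Gamma_s-\Gamma^h_s\|_{\hat H^6}<\delta\}$ is a null event for small $\delta$, and your Step 1 bound gives nothing.

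Neither of your two remedies closes this.
\begin{itemize}
\item Enlarging the noise to $\hat{\mathcal Z}$ proves a statement about a different system, not the one in Theorem \ref{L1}.
\item The saturation route is only named, and it is exactly the missing argument. The brackets $Y_j^m,Z_j^m$ of Section \ref{smoothing estimate} encode infinitesimal non-degeneracy of the Malliavin matrix. They do not give the finite-time statement that a control valued in $\mathrm{span}\{\sigma_j^m: j\in\mathcal Z\}$ reproduces, in $C([0,t];\hat H^5)$, the effect of a control on more modes. That statement has to be proved separately for this system.
\end{itemize}
The paper does not supply it either. Its Girsanov step treats $h^3(s)x_1\cos x_2\,ds$ and $h^5(s)\sin(2x_2)\,ds$ inside $\Delta_t$ as absorbable drifts. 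They are not: the first would require the $x$-dependent shift $dW^3-\alpha_3^{-1}h^3(s)x_1\,ds$, which is not a shift of a Brownian motion, and the second lies outside the noise range altogether.

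\textbf{The controlled trajectories themselves are not admissible.} This deeper problem is one that no choice of $\hat{\mathcal Z}$ repairs. The functions $x_1\sin(x_2-b_0)$ and $x_1\cos(x_2-b_1)$ in Proposition \ref{V2-thm5.1} are not functions on $\mathbb{T}^2$: their periodic extensions jump across $x_1=0$, so they carry infinitely many Fourier modes and are not even in $H^1$.

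This is forced by the structure of the equation. In \eqref{Boussinesq-w} the buoyancy enters as $g\partial_1\theta$, whose Fourier coefficients vanish on every mode with $j_1=0$. Along shear and cellular flows the nonlinearity $\mathbf u\cdot\nabla\omega$ vanishes identically. So the $j_1=0$ vorticity of the horizontal shear $(\cos(x_2-b_0),0)$, and of the cellular flows in Propositions \ref{V2-thm5.1} and \ref{V2-thm5.3}, obeys a homogeneous heat equation. It stays zero whatever periodic temperature control is used.

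A correct argument needs two things.
\begin{itemize}
\item[(i)] Controls built from flows whose vorticity lives only on modes with $j_1\neq0$. Examples are the vertical shear $(0,\cos(x_1-a))$, the diagonal shears $\nabla^\perp\cos(x_1\pm x_2-c)$, and the steady Euler flow $\nabla^\perp(\cos(x_1-a)\cos(x_2-b))$, whose elliptic and hyperbolic stagnation points rotate $v$ and stretch $A$.
\item[(ii)] A genuine Agrachev--Sarychev-type saturation argument for forcing on the temperature modes $(1,0),(0,1)$. This is needed because among the flows in (i) only the vertical shear is driven by temperature modes in the range of $\sigma_\theta$, and vertical shears alone cannot move particles horizontally.
\end{itemize}
Only after (ii) can you apply your Step 1 continuity estimate and the support argument, to the approximating $\mathcal Z$-valued control.
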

\begin{proof}
	Note that Proposition \ref{V2-thm5.1} has already established that the solution of the control equation can steer the initial state $(x_0,v_0)$ to the target state $(x',v')$. Therefore, we need only consider the stability of the system with respect to external forces. Here, $\sigma_\theta h^x(x_1,t)$ denotes the smooth control found in Proposition \ref{V2-thm5.1} corresponding to the desired endpoints $x_0,\, x'$, given explicitly by:
	\begin{align*}
		\sigma_\theta h^x(x_1,t)&=[(f_{1/4}''(t)+\nu_1f_{1/4}'(t))\cos b_0+\nu_2(f_{1/4}'(t)+\nu_1f_{1/4}(t))\cos b_0]\cdot x_1\sin x_2\\
		&\quad-[(f_{1/4}''(t)+\nu_1f_{1/4}'(t))\sin b_0+\nu_2(f_{1/4}'(t)+\nu_1f_{1/4}(t))\sin b_0]\cdot x_1\cos x_2\\
		&\quad +\frac{1}{2}(f_{1/4}'(t)+\nu_1f_{1/4}(t))\cos(2b_0)\cdot \sin(2x_2)-\frac{1}{2}(f_{1/4}'(t)+\nu_1f_{1/4}(t))\sin(2b_0)\cdot \cos(2x_2)\\
		&\quad +[(f_{1/2}''(t)+\nu_1f_{1/2}'(t))\cos a_1-\nu_2(f_{1/2}'(t)+\nu_1f_{1/2}(t))\cos a_1]\cdot \cos x_1\\
		&\quad +[(f_{1/2}''(t)+\nu_1f_{1/2}'(t))\sin a_1-\nu_2(f_{1/2}'(t)+\nu_1f_{1/2}(t))\sin a_1]\cdot \sin x_1.
	\end{align*}
	Note that $\sigma_\theta h^x(x_1,t)$ can in fact be expressed as the following:
	$$
	\sigma_\theta h^x(x_1,t)=\sum_{\substack{j \in \hat{\mathcal{Z}}-\{(0,1)\}, m \in \{0,1\}}} \sigma_j^m(x)h_j^m(t)+ \sum_{m \in \{0,1\}}\sigma_{(0,1)}^m(x)h_{(0,1)}^m(t)x_1,
	$$
	where $h_j^m(t)$ may be constantly zero $\sigma_j^m(x)$ is defined as in \eqref{temp-base}. Furthermore, we denote
	\begin{align*}
		\sigma_\theta h^x(x_1,t)&=h_{(0,1)}^1(t)x_1\sin x_2+h_{(0,1)}^0(t)x_1\cos x_2+h_{(0,2)}^1(t)\sin(2x_2)\\
		&\quad+h_{(0,2)}^0(t)\cos(2x_2)+h_{(1,0)}^0(t)\cos x_1+h_{(1,0)}^1(t)\sin x_1.
	\end{align*}
	Now, the first step is to prove that for all $\varepsilon$, there holds
	\begin{align}\label{JEMS-7.2}
		\mathbf{P}\bigl(\|U_t - U_t^h\|_{L^\infty([0,1];\hat{H}^4)} \lesssim \varepsilon\bigr) > 0,
	\end{align}
	where $U_t^h$ is the solution to the control equation \eqref{Con-1}-\eqref{Con-ini} after applying the vorticity transformation. Then from the mild form
	\begin{align}\label{Delta_t}
		U_t - U_t^h=\int_0^te^{-(t-s)A}\big[\big(B(U_s^h)-B(U_s)\big)-\big(GU_s^h-GU_s\big)\big]ds+\Gamma_t-\int_0^te^{-(t-s)A}\sigma_\theta h^x(x_1,s)ds,
	\end{align}
	where $\Gamma_t:=\int_0^te^{-A(t-s)}\sigma_\theta dW_s$. Note that this stochastic convolution $\Gamma_t$ is only supported on $|k|_\infty\le 1$, while the smooth control is only supported on 
	$|k|_\infty\le 2$. For all $\varepsilon>0$, we have that
	\begin{align}\label{JEMS-7.3}
		\mathbf{P}\Bigl( \sup_{t \in (0,1)} \Bigl\| \Gamma_t - \int_0^t e^{-(t-s)A} \sigma_\theta h^x(x_1,s) \,\mathrm{d}s \Bigr\|_{L^\infty([0,1];\hat{H}^4)} < \varepsilon \Bigr) > 0.	
	\end{align}
	Indeed, observe that
	\begin{align*}
		\Gamma_t=\int_0^t e^{-(t-s)A}(\alpha_1\cos x_1\,\mathrm{d}W^1_s+\alpha_2\sin x_1\,\mathrm{d}W^2_s+\alpha_3\cos x_2\,\mathrm{d}W^3_s+\alpha_4\sin x_2\,\mathrm{d}W^4_s),
	\end{align*}
	then, one has 
	\begin{align*}
		\Delta_t:&=\Gamma_t - \int_0^t e^{-(t-s)A} \sigma_\theta h^x(x_1,s) \,\mathrm{d}s\\
		&=\int_0^t e^{-(t-s)A}\alpha_1\cos x_1(\mathrm{d}W^1-h^1(s)\mathrm{d}s)+\alpha_2\sin x_1(\mathrm{d}W^2-h^2(s)\mathrm{d}s)\\
		&\quad+\big(\alpha_3\cos x_2\mathrm{d}W^3-h^3(s)x_1\cos x_2\mathrm{d}s\big)+\big(\alpha_4\sin x_2\mathrm{d}W^4-h^4(s)x_1\sin x_2\mathrm{d}s\big)\\
		&\quad-\sin(2y_2)h^5(s)\mathrm{d}s-\cos(2y_2)vh^6(s)\mathrm{d}s,
	\end{align*}
	where
	\begin{align*}
		h^1(s)&:=\frac{\cos a_1}{\alpha_1 g}\big((f''_{1/2}(s)+\nu_1f'_{1/2}(s))-\nu_2(f'_{1/2}(s)+\nu_1f_{1/2}(s))\big),\\
		h^2(s)&:=\frac{\sin a_1}{\alpha_2 g}\big((f''_{1/2}(s)+\nu_1f'_{1/2}(s))-\nu_2(f'_{1/2}(s)+\nu_1f_{1/2}(s))\big),\\
		h^3(s)&:=-\frac{\sin b_0}{g}\big((f''_{1/4}(s)+\nu_1f'_{1/4}(s))+\nu_2(f'_{1/4}(s)+\nu_1f_{1/4}(s))\big),\\
		h^4(s)&:=-\frac{\cos b_0}{g}\big((f''_{1/4}(s)+\nu_1f'_{1/4}(s))+\nu_2(f'_{1/4}(s)+\nu_1f_{1/4}(s))\big),\\
		h^5(s)&:=\frac{1}{2g}(f'_{1/4}(s)+\nu_1f_{1/4}(s))\cos(2b_0),\\
		h^6(s)&:=-\frac{1}{2g}(f'_{1/4}(s)+\nu_1f_{1/4}(s))\sin(2b_0).
	\end{align*}
	Note that $h^i\in C^{\infty}([0,1];L^2)$ and $x\in \mathbb{T}^2$. It then follows from the Girsanov theorem, the smoothing properties of the heat semigroup, and the regularity of the stochastic convolution that there exists a probability measure $\mathbb{Q}$ such that for any $p\ge 1$,
	$$
	\mathbb{E}^{\mathbb{Q}}\big[\sup_{t\in[0,1]}\|\Delta_t\|^p_{\hat{H}^4}\big]\le C(p).
	$$
	By employing Markov's inequality and the positivity of the Wiener measure, we obtain equation \eqref{JEMS-7.3}. Next, by employing the generalized Gagliardo-Nirenberg inequality, parabolic regularity, and combined with equation \eqref{Delta_t}, we obtain that \eqref{JEMS-7.2}. 
	
	Next, considering the approximation between particle positions $x_t$ and $x_t^h$, projection flow $v_t$ and $v_t^h$, we take trajectories of sample $w$ for which \eqref{JEMS-7.2} holds. We then obtain
	$$
	\frac{\mathrm{d}}{\mathrm{d}t}(x_t^h - x_t) = \mathbf{u}_t^h(x_t^h) - \mathbf{u}_t(x_t) = \bigl(\mathbf{u}_t^h(x_t^h) - \mathbf{u}_t^h(x_t)\bigr) + \bigl(\mathbf{u}_t^h(x_t) - \mathbf{u}_t(x_t)\bigr),
	$$
	and 
	\begin{align*}
		\frac{\mathrm{d}}{\mathrm{d}t}(v_t^h - v_t) &= v_t^{h}\cdot \nabla \mathbf{u}_t^{h}(x^{h}_t)\cdot {(v_t^{h})}^\top {(v_t^{h})}^\top - v_t\cdot \nabla \mathbf{u}_t(x_t)\cdot {(v_t)}^\top {(v_t)}^\top \\
		&= \bigl(v_t^{h}\cdot \nabla \mathbf{u}_t^{h}(x^{h}_t)\cdot {(v_t^{h})}^\top {(v_t^{h})}^\top -v_t^{h}\cdot \nabla \mathbf{u}_t^{h}(x_t)\cdot {(v_t^{h})}^\top {(v_t^{h})}^\top \bigr) \\
		&\quad+ \bigl(v_t^{h}\cdot \nabla \mathbf{u}_t^{h}(x_t)\cdot {(v_t^{h})}^\top {(v_t^{h})}^\top-v_t\cdot \nabla \mathbf{u}^{h}_t(x_t)\cdot {(v_t)}^\top {(v_t)}^\top\bigr)\\
		&\quad+\bigl(v_t\cdot \nabla \mathbf{u}^{h}_t(x_t)\cdot {(v_t)}^\top {(v_t)}^\top-v_t\cdot \nabla \mathbf{u}_t(x_t)\cdot {(v_t)}^\top {(v_t)}^\top\bigr).
	\end{align*}
	Finally, proceeding similarly to \cite[Lemma 7.3]{JEMS}, we apply the stability properties of $U_t$ to establish the conclusion. This completes the proof.
\end{proof}
\begin{proposition}[Approximate controllability of matrix flow]\label{V2-thm5.3}
	For all \( M > 0 \) and \( \varepsilon > 0 \),
	\begin{equation}\label{eq:7.4}
		\mathbf{P}\Bigl(\bigl(\mathbf{u}_1, \theta_1,x_1, A_1\bigr) \in B_\varepsilon(\mathbf{0}) \times B_\varepsilon(0) \times \bigl\{A \in \mathrm{SL}_d(\mathbb{R}) : \lvert A \rvert > M\bigr\} \,\bigm|\, \bigl(\mathbf{u}_0, \theta_0, x_0, A_0\bigr) = (\mathbf{0},0, 0, \mathrm{Id}_{\mathbb{R}^2})\Bigr) > 0.
	\end{equation}
\end{proposition}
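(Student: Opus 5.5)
The plan mirrors the verification of condition (b): a deterministic controllability step, followed by a stability/Girsanov step that transfers it to the stochastic system. Throughout, $(\mathbf{u}_0,\theta_0,x_0,A_0)=(\mathbf{0},0,0,\mathrm{Id})$.

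\textbf{Controlled trajectory.} Take the cellular flow
\[
\mathbf{u}^h_t(y)=f(t)\bigl(\sin y_2,\ \sin y_1\bigr)^\top ,\qquad f\in C_c^\infty(0,1),
\]
centred at the particle position $0$. This flow vanishes at $0$, so $x^h_t\equiv 0$. Its gradient at the origin is $f(t)\begin{pmatrix}0&1\\1&0\end{pmatrix}$, a constant symmetric traceless matrix $S$. Since $S$ commutes with itself along the path, $A^h_1=\exp\bigl(\int_0^1 f\,dt\;S\bigr)$. This matrix has eigenvalues $e^{\pm\int f}$, so choosing $\int_0^1 f\,dt=\log(2M)$ gives $|A^h_1|>M$.

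A shear-free stretching flow is used here rather than the rotation flow of Proposition \ref{V2-thm5.1}, because rotation alone cannot produce large $|A|$. The flow satisfies $\mathbf{u}\cdot\nabla\omega=0$: its vorticity is $f(\cos y_1-\cos y_2)$, which is proportional to the stream function. Hence the vorticity equation reduces to $g\partial_1\theta^h=\partial_t\omega^h-\nu_1\Delta\omega^h$. I would solve this for $\theta^h$ explicitly, as in item (c) of the shear case, which may again require a term like $x_1\cos y_2$ to invert $\partial_1$ on the $\cos y_2$ mode. I would then read off the control from the temperature equation, $\sigma_\theta h=\partial_t\theta^h+\mathbf{u}^h\cdot\nabla\theta^h-\nu_2\Delta\theta^h$. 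Because $f$ has compact support, $U^h$ vanishes at $t=0$ and $t=1$, so $(\mathbf{u}^h_1,\theta^h_1,x^h_1)=(\mathbf{0},0,0)$. I would also check that the control is smooth in time and supported on finitely many modes ($|k|_\infty\le 2$).

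\textbf{Stability.} Next I would repeat Proposition \ref{V2-thm5.2}. Writing $U-U^h$ in mild form, I would use Girsanov on the modes hit by the noise together with positivity of Wiener measure. This gives, with positive probability, the analogue of \eqref{JEMS-7.3}, namely $\sup_t\|\Gamma_t-\int_0^te^{-(t-s)A}\sigma_\theta h\,ds\|_{\hat H^4}<\delta$. Feeding this into parabolic regularity and Gronwall then yields \eqref{JEMS-7.2}, i.e. $\|U-U^h\|_{L^\infty([0,1];\hat H^4)}\lesssim\delta$. By Sobolev embedding, $\|\mathbf{u}-\mathbf{u}^h\|_{L^\infty_tC^2}\lesssim\delta$.

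From the ODE comparison, $|x_t-x^h_t|\lesssim\delta e^{C}$, since $\nabla\mathbf{u}^h$ is bounded. For the matrices, $\frac{d}{dt}(A_t-A^h_t)=(A_t-A^h_t)\nabla\mathbf{u}_t(x_t)+A^h_t\bigl(\nabla\mathbf{u}_t(x_t)-\nabla\mathbf{u}^h_t(x^h_t)\bigr)$. Gronwall gives $|A_1-A^h_1|\lesssim\delta e^{C(M)}$. Choosing $\delta$ small relative to $\varepsilon$ and $M$ keeps $|A_1|>M$ and places $(\mathbf{u}_1,\theta_1,x_1)$ in the required $\varepsilon$-balls, which proves \eqref{eq:7.4}.

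\textbf{Main obstacle.} The hardest step is the one flagged after Proposition \ref{V2-thm5.1}. The control is spatially dependent and may contain non-periodic-looking factors such as $x_1\cos x_2$, while the noise only acts on $\cos x_i,\sin x_i$. The Girsanov shift therefore cannot absorb the whole control. I would first try to pick $\theta^h$ so that the control lies in the span of modes with $|k|_\infty\le2$, via the bracket structure: the brackets $Y,Z$ show that those modes are reachable through the nonlinearity. If that fails, I would treat the unmatched part by a small-time averaging (fast oscillation) argument, generating the missing directions through the quadratic term $B$.
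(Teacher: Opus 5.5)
Your plan is the same as the paper's: the paper pins the cellular flow $f_A(t)(\sin(y_2-b),\sin(y_1-a))$ at the particle with $\int_0^1 f_A=\log M$, then defers to the stability argument of Proposition~\ref{V2-thm5.2}. However, the step you call the main obstacle is a genuine gap, and it already breaks the controlled trajectory.

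\textbf{The controlled trajectory does not exist.} The vorticity of $f(t)(\sin y_2,\sin y_1)$ is $f(\cos y_1-\cos y_2)$, which has a component on the mode $(0,1)$. Buoyancy enters the vorticity equation only as $g\partial_1\theta$. For any periodic $\theta$ this term has no $k_1=0$ Fourier modes, and along your flow $\mathbf{u}^h\cdot\nabla\omega^h=0$. Projecting the vorticity equation onto $\cos y_2$ therefore gives $f'+\nu_1 f=0$, hence $f\equiv 0$ for $f\in C_c^\infty(0,1)$. So no admissible $\theta^h$ exists; the factor $x_1\cos y_2$ is not a function on $\mathbb{T}^2$. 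This part can be repaired by pinning a different hyperbolic cell at the particle, e.g. the stream function $f(t)\sin y_1\sin y_2$. Then $\nabla\mathbf{u}^h(0)=f\,\mathrm{diag}(-1,1)$, the vorticity lives on the modes $(1,\pm1)$, the nonlinearity still vanishes, and $\theta^h=\frac{2}{g}(f'+2\nu_1 f)\cos y_1\sin y_2$ is periodic.

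\textbf{The Girsanov step fails, and no choice of flow fixes it.} A Girsanov shift only replaces $dW$ by $dW-h(t)\,dt$ with $h(t)\in\mathbb{R}^4$. It therefore absorbs only forcing of the form $\sum_{j\in\mathcal{Z},m}\alpha_j^m\sigma_j^m h^{j,m}(t)$, i.e.\ the four functions $\cos x_i,\sin x_i$ with time-only coefficients. Support in $|k|_\infty\le 2$ is not enough. The convolution $\Gamma_t$ stays in that four-dimensional span, and $e^{-tA}$ is diagonal in Fourier space. Hence any component of $\sigma_\theta h$ on another mode appears deterministically in $\Gamma_t-\int_0^t e^{-(t-s)A}\sigma_\theta h\,ds$, and your analogue of \eqref{JEMS-7.3} fails for small $\delta$. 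For the repaired flow, the required forcing $\partial_t\theta^h+\mathbf{u}^h\cdot\nabla\theta^h-\nu_2\Delta\theta^h$ lives on the modes $(1,\pm1)$ and $(0,2)$; the transport term alone is $\frac{1}{g}f(f'+2\nu_1 f)\sin 2y_2$. None of these modes is forced.

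The bracket computations with $Y_j^m$ and $Z_j^m$ feed the Malliavin argument, but they do not by themselves give approximate controllability to a prescribed target. What is missing is an Agrachev--Sarychev-type saturation step. You must show that forcing on these unforced modes can be approximated by controls in the four forced directions, propagated through the buoyancy coupling $G$ and the nonlinearity $B$. The convergence of $\mathbf{u}$ must be strong enough (e.g.\ in $L^\infty_tC^2_x$) to pass to the limit in the ODEs for $x_t$ and $A_t$. Only then does your Girsanov/Gronwall step apply. Your fallback names this idea but does not carry it out, and it is the core of the proof. The paper's own sketch does not supply it either: its stability argument treats terms such as $x_1\sin x_2$ and $\sin 2x_2$ as if Girsanov could absorb them.
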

\begin{proof}
	The control and approximation procedures are analogous to those in Propositions \ref{V2-thm5.1}-\ref{V2-thm5.2}. Here we adopt the corresponding cellular flow as the velocity field,
	$$
	\mathbf{u}^h_t = f_A \biggl( \begin{pmatrix} \sin(y_2 - b) \\ \sin(y_1 - a) \end{pmatrix} \biggr),
	$$
	where $f_A\in C_c^\infty(0,1)$ and $\int_0^1f_A(s)ds=\log M$. The proof is concluded by analogous selection of the smooth control combined with stability arguments.
\end{proof}

We now succinctly outline the proofs of weak irreducibility for the base, Lagrangian, and projective processes.
\begin{proof}[Proof of Proposition \ref{irreducibility}]
By utilizing the dissipativity of the Boussinesq equation described in Step 1 of verifying condition $(b)$ for approximate controllability $(C)$, along with the approximate controllability itself, we may complete the proof of this proposition following the classical irreducibility argument in \cite[Section 7]{JEMS}.
\end{proof}
\appendix

\section{Moment bounds on stochastic Boussinesq equations}\label{priori}
In this section, we establish the a priori estimates stated in Proposition \ref{priori-eatimates} for the stochastic Boussinesq system and present several moment estimates previously established in \cite{JFA}.
For completeness and for the reader’s convenience, we provide detailed arguments here, though we do not emphasize their originality. In contrast to the stochastic Navier-Stokes equations and other nonlinear stochastic partial differential equations with dissipative (parabolic) structures, the process of performing energy estimates for the Boussinesq equations requires compensating for the buoyancy term $g\partial_x\theta$. 
We handle the temperature and momentum equations with different weighting strategies. As above the dependence on physical parameter in constants is suppressed in what follows. Denote
\[
\varkappa := \frac{\nu_1 \nu_2}{g^2}.
\]
Then, cf. \eqref{JFA-2.4} and \eqref{2.3},
\begin{equation}\label{A.1}
	\|U\|_{\hat{H}}^2 = \varkappa \|\omega\|_{L^2}^2 + \|\theta\|_{L^2}^2, \quad \|U\|_{\hat{H}^1}^2 = \varkappa \|\nabla \omega\|_{L^2}^2 + \|\nabla \theta\|_{L^2}^2. 
\end{equation}
Also recall that our domain is $\mathbb{T}^2 = \mathbb{R}^2 / (2\pi\mathbb{Z}^2),$ and therefore the Poincaré inequality takes the form $\|U\|_{\hat{H}} \leq \|U\|_{\hat{H}^1}$. We first recall a standard exponential martingale estimate we will be using often.

\begin{lemma}\label{A.2}
	Let \( M_t \) a continuous \( L^2 \)-martingale. Then for all \( 0 \leq \eta \leq A \),
	\[
	\mathbb{E} \exp \left( \sup_{t \geq 0} \eta M_t - \eta A \langle M, M \rangle_t \right) \leq 2 \mathbb{E} e^{\eta M_0}.
	\]
\end{lemma}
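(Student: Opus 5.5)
The plan is to reduce to the classical exponential martingale inequality (the Doob maximal inequality applied to the exponential supermartingale).

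\textbf{Step 1: the exponential supermartingale.} First we may assume $\eta>0$, since the case $\eta=0$ is trivial ($1\le 2$). For such $\eta$, set
\[
\mathcal{E}_t:=\exp\Bigl(2\eta M_t-2\eta^2\langle M,M\rangle_t\Bigr).
\]
By It\^o's formula, $\mathcal{E}_t$ is a nonnegative continuous local martingale, hence a supermartingale by Fatou's lemma. Therefore
\[
\mathbb{P}\Bigl(\sup_{t\ge0}\mathcal{E}_t\ge \lambda\,\Big|\,\mathcal{F}_0\Bigr)\le \frac{e^{2\eta M_0}}{\lambda},
\]
by the maximal inequality for nonnegative supermartingales. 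This holds conditionally on $\mathcal{F}_0$, or after localizing by stopping times and passing to the limit.

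\textbf{Step 2: comparison.} Since $\eta\le A$, we have $2\eta^2\langle M,M\rangle_t\le 2\eta A\langle M,M\rangle_t$. Hence
\[
\exp\Bigl(2\bigl(\eta M_t-\eta A\langle M,M\rangle_t\bigr)\Bigr)\le \mathcal{E}_t .
\]
Writing $X:=\sup_t\exp(\eta M_t-\eta A\langle M,M\rangle_t)$, it follows that $\mathbb{P}(X\ge s\mid\mathcal{F}_0)\le s^{-2}e^{2\eta M_0}$.

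\textbf{Step 3: integrating the tail.} We now integrate the tail:
\[
\mathbb{E}[X\mid\mathcal{F}_0]=\int_0^\infty\mathbb{P}(X\ge s\mid\mathcal F_0)\,ds\le \int_0^\infty \min\bigl(1,s^{-2}e^{2\eta M_0}\bigr)\,ds .
\]
The integrand equals $1$ up to $s=e^{\eta M_0}$, and the tail beyond that point contributes $\int_{e^{\eta M_0}}^\infty s^{-2}e^{2\eta M_0}\,ds=e^{\eta M_0}$. So the right-hand side equals $2e^{\eta M_0}$. Taking expectations gives the claim.

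\textbf{Main obstacle.} The only delicate point is justifying the supermartingale maximal bound when $M$ is only a local $L^2$ martingale and $M_0$ is random. I would handle this by stopping at $\tau_n=\inf\{t:|M_t|+\langle M,M\rangle_t\ge n\}$, working conditionally on $\mathcal{F}_0$, and letting $n\to\infty$ by monotone convergence, since the supremum over $[0,\tau_n]$ increases to the full supremum.
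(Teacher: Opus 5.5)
Your proof is correct. Since $\eta\le A$, the square of the quantity inside the supremum is dominated by $\exp\bigl(2\eta M_t-2\eta^2\langle M,M\rangle_t\bigr)$; the maximal inequality for this nonnegative exponential supermartingale (applied conditionally on $\mathcal{F}_0$) gives the tail bound $\min(1,s^{-2}e^{2\eta M_0})$, and integrating that tail yields exactly $2e^{\eta M_0}$.

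The paper gives no proof of this lemma at all; it only recalls it as a standard exponential martingale estimate, and your argument is precisely that standard one. The constant $2$ is even attained, for instance for Brownian motion with $\eta=A$.
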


Firstly, we show an easier version of \eqref{L2-eatimates}.
\begin{lemma}\label{A.3}
	There exists \( C > 0 \) such that for all \( \eta \leq C^{-1} \),
	\[
	\mathbb{E} \exp \left( \eta \| U_t \|_{\hat{H}}^2 \right) \leq C \exp \left( \eta e^{-C^{-1}t} \| U_0 \|_{\hat{H}}^2 \right).
	\]
\end{lemma}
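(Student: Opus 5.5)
The natural route is an Itô computation for $\|U_t\|_{\hat H}^2$ in the weighted norm \eqref{A.1}, followed by a dissipation-versus-quadratic-variation argument and Lemma \ref{A.2}.

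\textbf{Step 1: the energy identity.} Apply Itô's formula to $\varkappa\|\omega_t\|_{L^2}^2+\|\theta_t\|_{L^2}^2$ using \eqref{Boussinesq-w}. The transport terms vanish by incompressibility. This leaves
\[
d\|U_t\|_{\hat H}^2=\big(-2\varkappa\nu_1\|\nabla\omega\|_{L^2}^2-2\nu_2\|\nabla\theta\|_{L^2}^2+2\varkappa g\langle\omega,\partial_1\theta\rangle+\|\sigma_\theta\|^2\big)dt+dM_t,
\]
where $dM_t=2\langle\theta_t,\sigma_\theta dW_t\rangle$.

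\textbf{Step 2: absorbing the buoyancy term.} This is the only delicate point, and it is exactly why the weight $\varkappa=\nu_1\nu_2/g^2$ was chosen. Integrating by parts, $2\varkappa g\langle\omega,\partial_1\theta\rangle=-2\varkappa g\langle\partial_1\omega,\theta\rangle$. Young's inequality then bounds it by
\[
\varkappa\nu_1\|\nabla\omega\|_{L^2}^2+\frac{\varkappa g^2}{\nu_1}\|\theta\|_{L^2}^2=\varkappa\nu_1\|\nabla\omega\|_{L^2}^2+\nu_2\|\theta\|_{L^2}^2.
\]
Since $\|\theta\|_{L^2}\le\|\nabla\theta\|_{L^2}$ by Poincaré on mean-zero functions, we are left with
\[
d\|U\|_{\hat H}^2\le\big(-\kappa\|U\|_{\hat H^1}^2+\|\sigma_\theta\|^2\big)dt+dM_t.
\]
If the constant is too tight, I would instead split Young's inequality with a factor $1/2$ and accept a smaller multiple $c\kappa$ of the dissipation; only $c>0$ matters.

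\textbf{Step 3: exponential moment.} Fix $\gamma=c\kappa/2$. Then
\[
d\big(e^{\gamma t}\|U\|_{\hat H}^2\big)\le e^{\gamma t}\big(-\gamma\|U\|_{\hat H^1}^2+\|\sigma_\theta\|^2\big)dt+e^{\gamma t}dM_t,
\]
and the quadratic variation satisfies $d\langle e^{\gamma\cdot}M\rangle_t\le 4\|\sigma_\theta\|^2e^{2\gamma t}\|U_t\|_{\hat H}^2dt$. Integrating gives
\[
\|U_t\|_{\hat H}^2\le e^{-\gamma t}\|U_0\|_{\hat H}^2+C+e^{-\gamma t}\int_0^t e^{\gamma s}dM_s-\gamma e^{-\gamma t}\int_0^t e^{\gamma s}\|U_s\|_{\hat H}^2ds.
\]
The last term dominates $A\,e^{-\gamma t}\langle\int_0^\cdot e^{\gamma s}dM_s\rangle_t$ for $A\sim\gamma/(4\|\sigma_\theta\|^2)$. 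For a fixed $t$, apply Lemma \ref{A.2} to the martingale $N_r=\eta e^{-\gamma t}\int_0^re^{\gamma s}dM_s$ and use $e^{\gamma s}\le e^{\gamma t}$ on $[0,t]$. For $\eta\le C^{-1}$ this yields
\[
\mathbb E\exp(\eta\|U_t\|_{\hat H}^2)\le 2e^{C\eta}\exp(\eta e^{-\gamma t}\|U_0\|_{\hat H}^2).
\]
This is the claim with $C^{-1}=\gamma$.
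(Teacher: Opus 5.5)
Your proof is correct. The paper states Lemma~\ref{A.3} without proof; the nearest argument is its proof of \eqref{L2-eatimates}, and you use the same ingredients: the $\varkappa$-weighted energy identity, an equivalent Young--Poincar\'e absorption of the buoyancy term (your constants work as they stand, so the factor-$1/2$ fallback is not needed), and Lemma~\ref{A.2}. The difference is that your integrating factor $e^{\gamma t}$ produces the decay $e^{-\gamma t}\|U_0\|_{\hat H}^2$ directly, whereas the paper's sup-estimate imports that decay from Lemma~\ref{A.3} itself.

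Two pieces of bookkeeping need fixing:
\begin{itemize}
\item The dissipation term dominates $A\,e^{-2\gamma t}\langle\int_0^\cdot e^{\gamma s}\,dM_s\rangle_t$, not $A\,e^{-\gamma t}\langle\int_0^\cdot e^{\gamma s}\,dM_s\rangle_t$. The former is what your normalized martingale $N$ actually requires; the latter bound fails for large $t$.
\item $C^{-1}=\gamma$ alone is not enough. You need $C^{-1}\le\min\{\gamma,\gamma/(4\|\sigma_\theta\|^2)\}$ so that Lemma~\ref{A.2} applies, and $C\ge 2e^{\|\sigma_\theta\|^2/\gamma}$ to absorb the prefactor.
\end{itemize}
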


\begin{proof}[Proof of \eqref{L2-eatimates}]
For any \( f \in \hat{H}^1 \) denote
\begin{equation}\label{DA}
	\|f\|_{D(A^{1/2})}^2 := \varkappa\nu\|\nabla f_1\|_{L^2}^2 + \mu\|\nabla f_2\|_{L^2}^2.
\end{equation}
By It\^o's formula, the process \( U_t=(\omega_t,\theta_t) \) satisfies:
\begin{equation}\label{2}
	\|\omega_t\|_{L^2}^2-\|\omega_s\|_{L^2}^2=-2\nu_1\int_s^t\|\omega_r\|_{H^1}^2dr+2\int_s^t\langle g\partial_x\theta,\omega_r\rangle dr,
\end{equation}
\begin{equation}\label{3}
	\|\theta_t\|_{L^2}^2-\|\theta_s\|_{L^2}^2=-2\nu_2\int_s^t\|\theta_r\|_{H^1}^2dr+2\int_s^t\langle \sigma_\theta,\theta\rangle dW_r+\|\sigma_\theta\|^2(t-s).
\end{equation}
Now we weight differently the equations, multiplying \eqref{2} by \(\varkappa\) and adding to \eqref{3} we obtain
\begin{equation*}
	\|U_t\|_{\hat{H}}^2-\|U_s\|_{\hat{H}}^2=-2\int_s^t\|U_r\|_{D(A^{\frac{1}{2}})}^2dr+2\varkappa\int_s^t\langle g\partial_x\theta,\omega_r\rangle dr+2\int_s^t\langle \sigma_\theta,\theta\rangle dW_r+\|\sigma_\theta\|^2(t-s).
\end{equation*}	
Since by the Poincaré inequality
\begin{align*}
2\varkappa g|\langle \partial_x\theta, \omega\rangle| 
&\leq \nu_1\varkappa\|\omega\|_{L^2}^2 + \nu_2\|\nabla\theta\|_{L^2}^2 \leq \nu_1\varkappa\|\nabla\omega\|_{L^2}^2 + \nu_2\|\nabla\theta\|_{L^2}^2\\
&=\|U\|_{D(A^{1/2})}^2,\\
\kappa\|U\|^2 &\leq \|U\|_{D(A^{1/2})}^2,\quad \langle \sigma_\theta,\theta\rangle\le \|\sigma_\theta\|\|U\|_{\hat{H}}.
\end{align*}
Then for any $\eta>0$, we have
\begin{align*}
	\eta\|U_t\|_{\hat{H}}^2+\frac{\eta\kappa}{2}\int_s^t\|U_r\|_{\hat{H}^1}^2dr-\eta(t-s)\|\sigma_\theta\|^2
	&\le \eta\|U_s\|_{\hat{H}}^2+2\eta\int_s^t\langle \sigma_\theta,\theta\rangle dW_r-\frac{\eta\kappa}{2}\int_s^t\|U_r\|_{\hat{H}^1}^2dr\\
	&\le M_t^s-C^{-1}{\langle M^s,M^s\rangle}_t,
\end{align*}
where $$M_t^s:=\eta\|U_t\|_{\hat{H}}^2+2\eta\int_s^t\langle \sigma_\theta,\theta\rangle dW_r$$
and we compute 
\begin{align*}
	\langle M^s, M^s \rangle_t \leqslant C \int_s^t \|U_r\|_{\hat{H}}^2 \, \mathrm{d}r \leqslant C\frac{\eta\kappa}{2} \int_s^t \|U_r\|_{\hat{H}^1}^2 \, \mathrm{d}r.
\end{align*}
Thus Lemma \ref{A.2} gives 
$$
\mathbb{E} \exp\left(\eta\bigg( \sup_{t \geq s} \|U_t\|_{\hat{H}}^2 + \frac{\kappa}{2} \int_s^t \|U_r\|_{\hat{H}^1}^2 \, \mathrm{d}r - C(t-s)\bigg)\right) \leqslant 2\mathbb{E} \exp\left(\eta\|U_s\|_{\hat{H}}^2\right).
$$
Using Lemma \ref{A.3} to bound the right-hand side and rearranging somewhat, we conclude.
\end{proof}

We now consider controlling higher derivatives of $(\omega_t,\theta_t)$. Differentiating the equation \eqref{Boussinesq-w} for $(\omega_t,\theta_t)$, we see
\begin{align}\label{4}
	\frac{\mathrm{d}}{\mathrm{d}t} \nabla^n \omega_t = \nu \Delta \nabla^n \omega_t- \Delta^{-1} \nabla^{\perp} \omega_t \cdot \nabla \nabla^n \omega_t - \sum_{j=0}^{n-1} \binom{n}{j} \Delta^{-1} \nabla^{\perp} \nabla^{n-j} \omega_t \cdot \nabla \nabla^j \omega_t+g\partial_x\nabla^n \theta_t,
\end{align}
\begin{align}\label{5}
	\frac{\mathrm{d}}{\mathrm{d}t} \nabla^n \theta_t = \nu \Delta \nabla^n \theta_t- \Delta^{-1} \nabla^{\perp} \omega_t \cdot \nabla \nabla^n \theta_t - \sum_{j=0}^{n-1} \binom{n}{j} \Delta^{-1} \nabla^{\perp} \nabla^{n-j} \omega_t \cdot \nabla \nabla^j \theta_t+ \nabla^n \sigma_\theta \mathrm{d}W.
\end{align}

Before continuing, we note the following bound from \cite[Lemma A.3]{NS}. It is noteworthy that we have rectified a typographical error present in \cite{NS} in this context.
\begin{lemma}\label{A.4}
	Let \( f \in C^{\infty}(\mathbb{T}^2) \), then there exists \( C(n) > 0 \) such that  
	\[
	2 \sum_{j=0}^{n-1} \binom{n}{j} \int |\nabla^n f| \, |\Delta^{-1} \nabla^\perp \nabla^{n-j} f| \, |\nabla \nabla^j f| \, dx \leqslant \frac{1}{2} \|f\|_{H^{n+1}} + C \|f\|_{L^2}^{2n+4}.
	\]
\end{lemma}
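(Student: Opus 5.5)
The plan is an energy estimate with a high–low splitting of the sum, closed by Gagliardo–Nirenberg interpolation and Young's inequality. Since the left-hand side has no cancellation structure to exploit beyond the $j=n$ term having been removed, I bound each summand $j\in\{0,\dots,n-1\}$ separately. By Hölder,
\[
\int |\nabla^n f|\,|\Delta^{-1}\nabla^\perp\nabla^{n-j}f|\,|\nabla\nabla^j f|\,dx \le \|\nabla^n f\|_{L^{p_1}}\|\Delta^{-1}\nabla^\perp\nabla^{n-j}f\|_{L^{p_2}}\|\nabla^{j+1}f\|_{L^{p_3}},
\]
with exponents chosen according to $j$. Note that $\Delta^{-1}\nabla^\perp\nabla^{n-j}$ is a Fourier multiplier of order $n-j-1$, so this factor behaves like $\nabla^{n-j-1}f$ by boundedness of Riesz transforms on $L^p$, $1<p<\infty$. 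The three factors therefore carry $n$, $n-j-1$ and $j+1$ derivatives, for a total of $2n$ derivatives distributed over three factors in two dimensions.

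For the intermediate range, where no factor is at the top or zero order, I would take $p_1=2$ and $p_2=p_3=4$. I then use the two-dimensional Gagliardo–Nirenberg inequality in the form $\|\nabla^k f\|_{L^4}\le C\|f\|_{L^2}^{1-\theta_k}\|f\|_{H^{n+1}}^{\theta_k}$ with $\theta_k=(k+1/2)/(n+1)$, together with $\|\nabla^n f\|_{L^2}\le\|f\|_{L^2}^{1/(n+1)}\|f\|_{H^{n+1}}^{n/(n+1)}$. Summing the exponents of $\|f\|_{H^{n+1}}$ gives
\[
\frac{n+(n-j-1+1/2)+(j+1+1/2)}{n+1}=\frac{2n+1}{n+1}<2.
\]
Each term is thus bounded by $C\|f\|_{L^2}^{a}\|f\|_{H^{n+1}}^{(2n+1)/(n+1)}$, where $a=3-\frac{2n+1}{n+1}=\frac{n+2}{n+1}$. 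The endpoint cases $j=0$, where the middle factor carries $n-1$ derivatives, and $j=n-1$, where the last factor carries $n$ derivatives, fit the same count. If $L^4$ causes trouble for a factor that sits at the top order, I would put that factor in $L^\infty$ via Agmon's inequality instead; the exponent bookkeeping is unchanged.

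Young's inequality with exponents $\frac{2(n+1)}{2n+1}$ and $2(n+1)$ then gives
\[
C\|f\|_{L^2}^{\frac{n+2}{n+1}}\|f\|_{H^{n+1}}^{\frac{2n+1}{n+1}}\le \tfrac{1}{2n}\|f\|_{H^{n+1}}^2+C\|f\|_{L^2}^{2n+4},
\]
since $\frac{n+2}{n+1}\cdot 2(n+1)=2n+4$. This explains the exponent $2n+4$ in the statement. Summing over the $n$ values of $j$ yields $\tfrac12\|f\|_{H^{n+1}}^2+C\|f\|_{L^2}^{2n+4}$. The left side of the bound should therefore read $\|f\|_{H^{n+1}}^2$; this is presumably the typographical correction the paper alludes to.

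The main obstacle is the careful bookkeeping of Gagliardo–Nirenberg exponents at the endpoints, where some $\theta_k$ may be required to be at most $1$ or the inequality needs the zero-mean Poincaré form on $\mathbb{T}^2$. The $j=0$ term also needs care: the $L^p$ boundedness of the nonlocal operator $\Delta^{-1}\nabla^\perp\nabla^{n}$ must be justified, and it holds since mean-zero functions avoid the zero mode. The total derivative count being strictly less than $2(n+1)$ guarantees that all these cases close.
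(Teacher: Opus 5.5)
Your argument is correct and follows the route the paper relies on. The paper cites \cite[Lemma A.3]{NS} for this lemma, and in its proof of Lemma~\ref{NS-lemmaA.10} it uses the same H\"older ($L^2\times L^4\times L^4$), $\dot H^{1/2}\subset L^4$, interpolation and Young bookkeeping, which gives total exponent $\frac{2n+1}{n+1}<2$ on $\|f\|_{H^{n+1}}$ and hence the power $2n+4$. You are also right that the $H^{n+1}$ term must be squared (it sits on the right-hand side, not the left as you wrote): as printed the inequality is false, since for $f=a\cos(Nx_1)$ with $a$ fixed and large and $N\to\infty$ the left side equals $c_n a^3N^{2n}$ while the right side is $O(aN^{n+1}+a^{2n+4})$, so the missing square is an error in the paper's own statement rather than the correction it alludes to.
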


We now proceed to establish an easier version of \eqref{Hn-eatimates1}, analogous to Lemma \ref{A.3}. To facilitate this, we first introduce an auxiliary result pivotal to the subsequent analysis.
\begin{lemma}\emph{\cite[Lemma5.1]{JFA-55}}\label{A-55}
	Let \( U \) be a real-valued semimartingale  
	\[
	\mathrm{d}U(t, w) = F(t, w) \, \mathrm{d}t + G(t, w) \, \mathrm{d}B(t, w),
	\]  
	where \( B \) is a standard Brownian motion. Assume that there exist a process \( Z \) and positive constants \( b_1, b_2, b_3 \), with \( b_2 > b_3 \), such that  
	\[
	F(t, w) \leq b_1 - b_2 Z(t, w), \quad U(t, w) \leq Z(t, w), \quad G(t, w)^2 \leq b_3 Z(t, w) \quad \text{a.s.}
	\]  
	Then, the bound  
	\[
	\exp\left( U(t) + \frac{b_2 e^{-b_2 t/4}}{4} \int_0^t Z(s) \, \mathrm{d}s \right) \leq \frac{b_2 \exp(2b_1/b_2)}{b_2 - b_3} \exp\left(U(0)e^{-(b_2/2)t}\right)
	\]  
	holds for any \( t \geq 0 \).
\end{lemma}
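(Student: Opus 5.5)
The plan is to reduce Lemma \ref{A-55} to a scalar Itô computation combined with the exponential martingale bound of Lemma \ref{A.2}. Fix a parameter $\lambda>0$ (to be taken $\lambda = e^{-b_2t/2}$-type time-dependent weight) and apply Itô's formula to $V(s):=e^{\gamma(s)}U(s)$ with a deterministic decreasing weight $\gamma$, or more directly to $\exp(\alpha(s)U(s))$ with $\alpha(s):=e^{-(b_2/2)(t-s)}\in(0,1]$. The choice $\alpha(t)=1$, $\alpha(0)=e^{-b_2t/2}$ is dictated by the form of the right-hand side, where $U(0)$ appears multiplied by $e^{-(b_2/2)t}$.

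\textbf{Step 1 (drift bound).} Writing $d(\alpha U)=(\alpha' U+\alpha F)\,ds+\alpha G\,dB$, I will use $U\le Z$, $\alpha'=(b_2/2)\alpha$ and $F\le b_1-b_2Z$ to get $\alpha'U+\alpha F\le \alpha b_1-\frac{b_2}{2}\alpha Z$. Then the Itô correction for the exponential, $\frac12\alpha^2G^2\le\frac12 \alpha b_3 Z$, is absorbed because $b_3<b_2$. This leaves a negative multiple $-\frac{b_2-b_3}{2}\alpha Z$, and, since $\alpha(s)\ge e^{-b_2t/2}$, a portion $\frac{b_2}{4}e^{-b_2t/4}\int_0^tZ$ can be kept on the left (perhaps after splitting $\alpha Z$ in half). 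I will use $\alpha\ge e^{-b_2 t/4}$ on the relevant range, or adjust exponents. I expect this bookkeeping of constants, and matching exactly the stated $e^{-b_2t/4}/4$, to be the main nuisance.

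\textbf{Step 2 (martingale control).} Set $M_s=\int_0^s\alpha G\,dB$, so that $\langle M\rangle_s\le b_3\int_0^s\alpha^2Z$. Then
\[
\alpha(t)U(t)+c\int_0^t Z\le \alpha(0)U(0)+b_1\int_0^t\alpha+ M_t-\frac{b_2-b_3}{2b_3}\langle M\rangle_t\cdot(\text{const}).
\]
Exponentiating, I will apply Lemma \ref{A.2}, or the supermartingale property of $\exp(M-\frac12\langle M\rangle)$ after raising to a power $p=b_2/b_3>1$ and using Hölder. This yields the prefactor $\frac{b_2}{b_2-b_3}$, which arises naturally from the Hölder/Young conjugate exponents.

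\textbf{Step 3.} The deterministic term satisfies $b_1\int_0^t\alpha\,ds\le 2b_1/b_2$, which gives the factor $\exp(2b_1/b_2)$. Taking expectations, which I read as implicit in the statement's left side, completes the argument.
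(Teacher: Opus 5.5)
The paper gives no proof of this lemma; it only cites it. You are right that an $\mathbb{E}$ is missing on the left-hand side. The genuine gap is in your Step~1, and it cannot be repaired.

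With $\alpha(s)=e^{-(b_2/2)(t-s)}$, use $U\le Z$, $\alpha\le 1$ and $Z\ge 0$ (the last follows from $G^2\le b_3Z$). Then the drift of $\alpha U$ plus the It\^o correction is at most $\alpha b_1-\frac{b_2-b_3}{2}\alpha Z$. So what survives on the left is $\frac{b_2-b_3}{2}\int_0^t e^{-\frac{b_2}{2}(t-s)}Z(s)\,ds$, i.e.\ a coefficient of order $e^{-b_2t/2}$ in front of $\int_0^tZ$.

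Your claim that $\alpha\ge e^{-b_2t/4}$ ``on the relevant range'' is false for every $s<t/2$. The alternative weight $\alpha(s)=e^{-(b_2/4)(t-s)}$ does produce $\frac{b_2}{4}e^{-b_2t/4}\int_0^tZ$. But it replaces $U(0)e^{-b_2t/2}$ by $U(0)e^{-b_2t/4}$ and $e^{2b_1/b_2}$ by $e^{4b_1/b_2}$.

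This trade-off is not bookkeeping: the inequality as stated here is false. Take $G\equiv 0$, $b_1=b_2=1$, $b_3=\tfrac12$, $F=1-U$, $Z=U$ and $U(0)=u_0>1$. Then $U(s)=1+(u_0-1)e^{-s}$ and all hypotheses hold. The logarithm of the claimed bound reads
\[
U(t)+\tfrac14e^{-t/4}\int_0^tU(s)\,ds\le \log 2+2+u_0e^{-t/2}.
\]
At $t=8$ the left side grows like $0.034\,u_0$ and the right side like $0.018\,u_0$. For $u_0=1001$ one gets roughly $35.4$ versus $21.0$.

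More generally, for $G\equiv0$ and $u_0\to\infty$, the bound would require $x^4+\frac{x}{4}(1-x^4)\le x^2$ with $x=e^{-b_2t/4}$. This fails whenever $x<2-\sqrt3$.

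What your method does prove needs neither Lemma~\ref{A.2} nor H\"older. For fixed $t$, the process $\exp\bigl(\alpha(s)U(s)+\frac{b_2-b_3}{2}\int_0^s\alpha Z\,dr-b_1\int_0^s\alpha\,dr\bigr)$, $s\in[0,t]$, is a nonnegative local supermartingale. Indeed, its drift equals the process times a factor bounded by $\frac{b_3}{2}\alpha(\alpha-1)Z\le 0$. Hence
\[
\mathbb{E}\exp\Bigl(U(t)+\tfrac{b_2-b_3}{2}\int_0^te^{-\frac{b_2}{2}(t-s)}Z(s)\,ds\Bigr)\le e^{2b_1/b_2}\exp\bigl(U(0)e^{-b_2t/2}\bigr).
\]
This is the statement with $\frac{b_2}{4}e^{-b_2t/4}$ replaced by $\frac{b_2-b_3}{2}e^{-b_2t/2}$, and with the better prefactor $1$.

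Your remark that H\"older with $p=b_2/b_3$ ``naturally'' yields $b_2/(b_2-b_3)$ is also off. Lemma~\ref{A.2} gives a factor $2$. The factor $b_2/(b_2-b_3)$ would instead come from integrating a tail bound $\mathbb{P}(X>K)\le e^{-(b_2/b_3)K}$. With the supermartingale argument no such factor is needed.

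The $\int Z$ term plays no role in the paper's only application (Lemma~\ref{A.5}), where it is discarded immediately. So the corrected form loses nothing there.
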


The easier version of \eqref{Hn-eatimates1} as follows.
\begin{lemma}\label{A.5}
There exists \( C(n) > 0 \) such that for all \( 0 \leq \eta \leq C^{-1} \) and all \( t \geq 0 \),
	\begin{equation*}
		\mathbb{E} \exp \left( \eta \| U_t \|_{\hat{H}^n}^{\frac{2}{n+2}} \right) \leq C \exp \left( e^{-C^{-1} t} \eta \| U_0 \|_{\hat{H}^n}^{\frac{2}{n+2}} + C \eta \| U_0 \|_{\hat{H}}^2 \right).
	\end{equation*}
\end{lemma}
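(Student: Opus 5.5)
The plan is to apply the semimartingale exponential bound of Lemma \ref{A-55} to the process
\[
X_t:=\eta\,\mathcal{E}_n(U_t)^{\frac{1}{n+2}},\qquad \mathcal{E}_n(U):=\varkappa\|\nabla^n\omega\|_{L^2}^2+\|\nabla^n\theta\|_{L^2}^2 ,
\]
which is equivalent to $\|U\|_{\hat H^n}^2$. Since $\mathcal{E}_n^{1/(n+2)}$ is not $C^2$ at the origin, I would work with $(1+\mathcal{E}_n)^{1/(n+2)}$ and absorb the constant at the end.

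First, apply It\^o's formula to $\mathcal{E}_n(U_t)$, using the differentiated equations \eqref{4}--\eqref{5}. The transport terms $\Delta^{-1}\nabla^\perp\omega\cdot\nabla\nabla^n(\cdot)$ vanish after integration because the velocity is divergence free. The commutator sums must then be bounded.
\begin{itemize}
\item In the $\omega$ equation, Lemma \ref{A.4} applies directly.
\item In the $\theta$ equation, the mixed terms $\int \nabla^n\theta\,\Delta^{-1}\nabla^\perp\nabla^{n-j}\omega\cdot\nabla\nabla^j\theta$ are handled by the same Gagliardo--Nirenberg interpolation as in the proof of Lemma \ref{A.4} (compare Lemma \ref{NS-lemmaA.10}).
\end{itemize}
Together these give a bound of the form $\tfrac{\kappa}{4}\|U\|_{\hat H^{n+1}}^2+C\|U\|_{\hat H}^{2n+4}$.

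The buoyancy cross term $2\varkappa g\langle\partial_x\nabla^n\theta,\nabla^n\omega\rangle$ is absorbed exactly as in the proof of \eqref{L2-eatimates}. The weight $\varkappa=\nu_1\nu_2/g^2$ is chosen so that Young's inequality bounds it by $\nu_1\varkappa\|\nabla^{n+1}\omega\|^2+\nu_2\|\nabla^{n+1}\theta\|^2$. This uses only half of the dissipation once the weights are split with a small constant, together with Poincar\'e on mean-zero functions. The It\^o correction is $\|\nabla^n\sigma_\theta\|^2$, a constant since only finitely many modes are forced. The martingale term has quadratic variation bounded by $C\|U\|_{\hat H^n}^2$.

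Next, pass to the power $p=\tfrac1{n+2}$. By It\^o again,
\[
d\mathcal{E}_n^{p}=p\,\mathcal{E}_n^{p-1}\,d\mathcal{E}_n+\tfrac{p(p-1)}{2}\,\mathcal{E}_n^{p-2}\,d\langle\mathcal{E}_n\rangle ,
\]
and the second-order term is nonpositive. This gives a drift of the form
\[
\mathcal{E}_n^{p-1}\bigl(-c\|U\|_{\hat H^{n+1}}^2+C\|U\|_{\hat H}^{2n+4}+C\bigr).
\]
The key observation is that $p-1=-\tfrac{n+1}{n+2}$ is exactly the exponent that makes the bad term harmless. By interpolation, $\|U\|_{\hat H^n}^2\le \|U\|_{\hat H}^{2/(n+1)}\|U\|_{\hat H^{n+1}}^{2n/(n+1)}$, and Young's inequality then gives
\[
\mathcal{E}_n^{p-1}\|U\|_{\hat H}^{2n+4}\lesssim \|U\|_{\hat H}^{2}\cdot(\text{small})+\delta\,\mathcal{E}_n^{p-1}\|U\|_{\hat H^{n+1}}^2 .
\]
Since $\mathcal{E}_n^{p-1}\|U\|_{\hat H^{n+1}}^2\ge \mathcal{E}_n^{p}$ up to constants by Poincar\'e, the drift of $X$ is at most $C\eta\bigl(1+\|U_t\|_{\hat H}^2\bigr)-c\,\eta Z_t$ with $Z_t\gtrsim X_t$. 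The quadratic variation of $X$ is $\lesssim \eta^2\mathcal{E}_n^{2p-1}\le C\eta^2 Z_t$, which is much smaller than $Z_t$ when $\eta\le C^{-1}$.

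Finally, the drift term $C\eta\|U_t\|_{\hat H}^2$ is not a constant $b_1$, so Lemma \ref{A-55} cannot be applied verbatim. I would instead apply it to the augmented process $X_t+\eta K\|U_t\|_{\hat H}^2$ with $K$ large. The $L^2$ energy identity from the proof of \eqref{L2-eatimates} supplies a dissipation $-\eta K\tfrac{\kappa}{2}\|U\|_{\hat H^1}^2$, which dominates $C\eta\|U\|_{\hat H}^2$. With $Z:=X+\eta K\|U\|_{\hat H^1}^2$, Lemma \ref{A-55} yields
\[
\mathbb{E}\exp(X_t)\le C\exp\Bigl(e^{-C^{-1}t}\bigl(\eta\|U_0\|_{\hat H^n}^{2/(n+2)}+\eta K\|U_0\|_{\hat H}^2\bigr)\Bigr)\le C\exp\Bigl(e^{-C^{-1}t}\eta\|U_0\|_{\hat H^n}^{2/(n+2)}+C\eta\|U_0\|_{\hat H}^2\Bigr),
\]
where the last step uses $e^{-C^{-1}t}\le 1$. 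This is the claim.

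I expect the main obstacle to be the power-counting step. Lemma \ref{A.4} and its mixed analogue must produce exactly the $\|U\|_{\hat H}^{2n+4}$ growth, with the mixed $\omega$--$\theta$ commutators and the buoyancy term giving no worse power, so that the exponent $\tfrac{2}{n+2}$ closes the estimate.
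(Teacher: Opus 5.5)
Your argument breaks at the power-counting step that you yourself single out. The inequality
\[
\mathcal{E}_n^{p-1}\|U\|_{\hat H}^{2n+4}\lesssim \|U\|_{\hat H}^{2}\cdot(\text{small})+\delta\,\mathcal{E}_n^{p-1}\|U\|_{\hat H^{n+1}}^2,\qquad p=\tfrac{1}{n+2},
\]
is false. Take $U=\lambda\phi$ with $\phi$ a fixed lowest Fourier mode and let $\lambda\to\infty$. Then $\|U\|_{\hat H}\sim\|U\|_{\hat H^n}\sim\|U\|_{\hat H^{n+1}}\sim\lambda$, so the left side is of order $\lambda^{2n+4-\frac{2(n+1)}{n+2}}$. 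The right side is only $O(\lambda^2)$; the dissipation term in it is of order $\lambda^{2/(n+2)}$. Interpolation does not help here, since it only gives $\|U\|_{\hat H^{n+1}}\ge\|U\|_{\hat H^n}^{(n+1)/n}\|U\|_{\hat H}^{-1/n}\sim\lambda$.

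In this regime $\mathcal{E}_n^{p-1}\sim\|U\|_{\hat H}^{-2(n+1)/(n+2)}$. So after It\^o, the remainder from the commutator bound has size $\|U\|_{\hat H}^{2n+2+2/(n+2)}$, not $\|U\|_{\hat H}^2$. Your augmentation by $\eta K\|U\|_{\hat H}^2$ cannot absorb it, because its dissipation $\eta K\kappa\|U\|_{\hat H^1}^2$ is only of order $\eta K\lambda^2$. Nor can you add a higher power of $\|U\|_{\hat H}$ outside the root, since the target bound allows only $\exp(C\eta\|U_0\|_{\hat H}^2)$. The matching $(\|U\|_{\hat H}^{2n+4})^{1/(n+2)}=\|U\|_{\hat H}^2$ you are relying on works only when the root is applied to the quantity itself, not through the pointwise chain-rule factor $\mathcal{E}_n^{p-1}$.

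That is exactly what the paper does. It applies Lemma~\ref{A-55} to $\eta\|U_t\|_{\hat H^n}^2$ itself, carrying the remainder along, which gives \eqref{+}. Only afterwards, with $\eta$ replaced by $\eta^{n+2}$, does it take the $(n+2)$-th root inside the exponent, using subadditivity of $x\mapsto x^{1/(n+2)}$; this turns $\|U_t\|_{\hat H}^{2n+4}$ into $\|U_t\|_{\hat H}^2$. It then concludes with Cauchy--Schwarz, \eqref{+} and the $L^2$ exponential bound.

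If you want to keep a single Lyapunov function, put the high power inside the root. Set $Y:=\|U\|_{\hat H^n}^2+K\|U\|_{\hat H}^{2n+4}$. It\^o's formula for $\|U\|_{\hat H}^{2n+4}$ gives drift at most $-c\|U\|_{\hat H}^{2n+4}+C$. So for $K$ large, $dY\le(-cY+C_K)\,dt+dM$ with $d\langle M\rangle\lesssim\|U\|_{\hat H}^2+K^2\|U\|_{\hat H}^{4n+6}$. Then $\Phi:=(1+Y)^{1/(n+2)}$ has drift at most $-c\Phi+C$ and quadratic variation at most $C_K\Phi$, so Lemma~\ref{A-55} applies to $\eta\Phi$. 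Finally $\Phi(U_0)\le 1+\|U_0\|_{\hat H^n}^{2/(n+2)}+K^{1/(n+2)}\|U_0\|_{\hat H}^2$ yields the lemma.

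This variant also avoids the paper's step of treating the random quantity $C\eta\|U_t\|_{\hat H}^{2n+4}$ as the constant $b_1$ in Lemma~\ref{A-55}. The rest of your proposal is fine: the transport cancellation, the absorption of the buoyancy term with the weight $\varkappa$, the concavity of the root, and the bound on the martingale part.
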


\begin{proof}
For any $f\in \hat{H}^{n+1}$ denote
$$
\|f\|_{D^n(A^{1/2})}^2 := \varkappa\nu\|\nabla f_1\|_{H^n}^2 + \mu\|\nabla f_2\|_{H^n}^2.
$$
By It\^o's formula and \eqref{4}-\eqref{5}, we have
\begin{align} \label{A.6}
	\mathrm{d}{\|\omega_t\|}_{H^n}^2=&-2\nu_1{\|\omega_t\|}_{H^{n+1}}^2\mathrm{d}t-2\sum_{j=0}^{n-1}\binom{n}{j}\int\nabla^n \omega_t:\Delta^{-1} \nabla^{\perp} \nabla^{n-j} \omega_t \cdot \nabla \nabla^j \omega_t\mathrm{d}x \mathrm{d}t \notag \\
	&+2g\int\nabla^n \omega_t:\partial_x\nabla^n \theta_t\mathrm{d}x \mathrm{d}t,
\end{align}
and
\begin{align} \label{A.7}
	\mathrm{d}{\|\theta_t\|}_{H^n}^2=&-2\nu_2{\|\theta_t\|}_{H^{n+1}}^2\mathrm{d}t-2\sum_{j=0}^{n-1}\binom{n}{j}\int\nabla^n \theta_t:\Delta^{-1} \nabla^{\perp} \nabla^{n-j} \omega_t \cdot \nabla \nabla^j \theta_t\mathrm{d}x \mathrm{d}t \notag \\
	&+2\sum_{\substack{j \in \mathcal{Z} \\ m \in \{0,1\}}}\alpha_j^m\left(\int\nabla^n \theta_t:\nabla^n\sigma_j^m\mathrm{d}x\right)\mathrm{d}W^{j,m}+{\|\sigma_\theta\|}^2\mathrm{d}t.
\end{align}
Multiplying \eqref{A.6} by \(\varkappa\) and adding to \eqref{A.7} we obtain
\begin{align}\label{U-n}
	\mathrm{d}\left(\eta{\|U_t\|}_{\hat{H}^n}^2\right)=&\eta\left(-2{\|U_t\|}_{D^n(A^{1/2})}^2+2g\varkappa\int\nabla^n \omega_t:\partial_x\nabla^n \theta_t\mathrm{d}x+{\|\sigma_\theta\|}^2\right)\mathrm{d}t \notag\\
	&-2\sum_{j=0}^{n-1}\binom{n}{j}\eta\varkappa\int\nabla^n \omega_t:\Delta^{-1} \nabla^{\perp} \nabla^{n-j} \omega_t \cdot \nabla \nabla^j \omega_t\mathrm{d}x \mathrm{d}t \notag\\
	&-2\sum_{j=0}^{n-1}\binom{n}{j}\int\nabla^n \theta_t:\Delta^{-1} \nabla^{\perp} \nabla^{n-j} \omega_t \cdot \nabla \nabla^j \theta_t\mathrm{d}x \mathrm{d}t \notag\\
	&+2\sum_{\substack{j \in \mathcal{Z} \\ m \in \{0,1\}}}\alpha_j^m\left(\int\nabla^n \theta_t:\nabla^n\sigma_j^m\mathrm{d}x\right)\mathrm{d}W^{j,m}.
\end{align}
By the Poincaré inequality,
\begin{align}\label{U-n-1}
	2g\varkappa|\langle \nabla^n \omega_t, \partial_x\nabla^n \theta_t\rangle| \leq \|U\|_{D^n(A^{1/2})}^2,\notag\\
	\kappa\|U\|_{\hat{H}^n}^2 \leq \|U\|_{D^n(A^{1/2})}^2,\quad \langle \nabla^n \theta_t,\nabla^n\sigma_j^m\rangle\le \|\sigma_\theta\|\|U\|_{\hat{H}^n}.
\end{align}
Note that
\begin{align*}
	\int|\nabla^n \omega_t||\Delta^{-1} \nabla^{\perp} \nabla^{n-j} \omega_t||\nabla \nabla^j \omega_t|\mathrm{d}x\le C{\|\omega\|}_{W^{n,4}}{\|\omega\|}_{W^{n-k,p}}{\|\omega\|}_{W^{k,q}}
\end{align*}
and 
\begin{align*}
	\int|\nabla^n \theta_t||\Delta^{-1} \nabla^{\perp} \nabla^{n-j} \omega_t||\nabla \nabla^j \theta_t|\mathrm{d}x\le C{\|\theta\|}_{W^{n,4}}{\|\omega\|}_{W^{n-k,p}}{\|\theta\|}_{W^{k,q}},
\end{align*}
where 
\begin{align}\label{pq1}
q=\frac{2n-k}{4n}\,\,\text{and}\,\, p=\frac{n+k}{4n},
\end{align}
such that
\begin{align}\label{pq2}
\frac{1}{4}+\frac{1}{p}+\frac{1}{q}=1
\end{align}
Then 
\begin{align*}
	&\Big|2\sum_{j=0}^{n-1}\binom{n}{j}\eta\varkappa\int\nabla^n \omega_t:\Delta^{-1} \nabla^{\perp} \nabla^{n-j} \omega_t \cdot \nabla \nabla^j \omega_t\mathrm{d}x \mathrm{d}t\\
	\quad&+2\sum_{j=0}^{n-1}\binom{n}{j}\int\nabla^n \theta_t:\Delta^{-1} \nabla^{\perp} \nabla^{n-j} \omega_t \cdot \nabla \nabla^j \theta_t\mathrm{d}x \mathrm{d}t\Big|\\
	\quad&\le C{\|U\|}_{\widehat{W}^{n,4}}{\|U\|}_{\widehat{W}^{n-k,p}}{\|U\|}_{\widehat{W}^{k,q}},
\end{align*}
where $p$ and $q$ satisfy equations \eqref{pq1}-\eqref{pq2}, and 
$$
\widehat{W}^{s,q}:=\left\{U:={(\omega,\theta)}^T\in {W^{s,q}(\mathbb{T}^2)}^2:\int_{\mathbb{T}^2}\omega dx=\int_{\mathbb{T}^2}\theta dx=0\right\} \quad \text{for any } s,q \geq 0.
$$
Subsequently, following the proof methodology of \cite[Lemma A.3]{NS}, we derive:
\begin{align}\label{NSF}
	&\Big|2\sum_{j=0}^{n-1}\binom{n}{j}\eta\varkappa\int\nabla^n \omega_t:\Delta^{-1} \nabla^{\perp} \nabla^{n-j} \omega_t \cdot \nabla \nabla^j \omega_t\mathrm{d}x \mathrm{d}t\notag\\
	\quad&+2\sum_{j=0}^{n-1}\binom{n}{j}\int\nabla^n \theta_t:\Delta^{-1} \nabla^{\perp} \nabla^{n-j} \omega_t \cdot \nabla \nabla^j \theta_t\mathrm{d}x \mathrm{d}t\Big|\notag\\
	\quad&\le \frac{\kappa}{2}\|U\|_{\hat{H}^{n+1}} + C_\kappa \|U\|_{\hat{H}}^{2n+4}.
\end{align}
Then, combining equations \eqref{U-n}-\eqref{U-n-1}, we obtain:
$$
\mathrm{d}\left(\eta{\|U_t\|}_{\hat{H}^n}^2\right)\le \eta\left((\frac{\kappa}{2}-1){\|U_t\|}_{D^n(A^{\frac{1}{2}})}^2+C_\kappa \|U_t\|_{\hat{H}}^{2n+4}+{\|\sigma_\theta\|}^2\right)\mathrm{d}t+2\eta\langle\nabla^n\sigma_\theta,\nabla^n\theta\rangle\mathrm{d}W.
$$
Thus, We have for \( Z_t := \frac{\eta}{\kappa} \|U_t\|_{D^n(A^{1/2})}^2 \) and \( V_t := \eta \|U_t\|_{\hat{H}^n}^2 \) that \( V \leq Z \), and 
\begin{align*}
\eta\left((\frac{\kappa}{2}-1){\|U_t\|}_{D^n(A^{\frac{1}{2}})}^2+C_\kappa \|U_t\|_{L^2}^{2n+4}+{\|\sigma_\theta\|}^2\right)
&\le \eta C_\kappa \|U_t\|_{L^2}^{2n+4}+\eta \|\sigma_\theta\|^2-\frac{\kappa(2-\kappa)}{2}Z_t,\\
4{\eta}^2{\big|\langle\nabla^n\sigma_\theta,\nabla^n\theta\rangle\big|}^2\le 4{\eta}^2{\|\sigma_\theta\|}^2{\|U_t\|}_{\hat{H}^n}^2&\le 4{\eta}{\|\sigma_\theta\|}^2\cdot \frac{\eta}{\kappa}{\|U_t\|}_{D^n(A^{\frac{1}{2}})}^2.
\end{align*}
Combining Lemma \ref{A.3}, \eqref{noise-form}, and Lemma \ref{A-55}, we conclude that for any $\eta<\frac{\kappa(2-\kappa)}{8\|\sigma_\theta\|^2}(\kappa<2)$, the following holds:
\begin{align*}
&\mathbb{E} \exp \left( \eta \| U_t \|_{\hat{H}^n}^2 + \frac{\kappa(2-\kappa)}{8} e^{-\frac{\kappa(2-\kappa)}{8}t} \int_0^t Z_s \, ds \right) \\
&\leq \frac{\kappa(2-\kappa)}{\kappa(2-\kappa)-8\eta\|\sigma_\theta\|^2}\mathbb{E}\exp\left(\frac{4\eta\left(C_\kappa\|U_t\|_{L^2}^{2n+4}+\|\sigma_\theta\|^2\right)}{\kappa(2-\kappa)}\right) \exp \left( \eta e^{-\frac{\kappa(2-\kappa)}{4}t} \| U_0 \|_{\hat{H}^n}^2 \right).
\end{align*}
Note that $\kappa Z_s = \eta \| U_s \|_{D^n(A^{1/2})}^2 \geq \eta \kappa \| U_s \|_{\hat{H}^{n+1}}^2$, then we get
\begin{align*}
	&\mathbb{E} \exp \left( \eta \| U_t \|_{\hat{H}^n}^2 + \eta\frac{\kappa(2-\kappa)}{8} e^{-\frac{\kappa(2-\kappa)}{8}t} \int_0^t \|U_s\|_{\hat{H}^{n+1}}^2 \, ds \right) \\
	&\leq \frac{\kappa(2-\kappa)}{\kappa(2-\kappa)-8\eta\|\sigma_\theta\|^2}\mathbb{E}\exp\left(\frac{4\eta\left(C_\kappa\|U_t\|_{\hat{H}}^{2n+4}+\|\sigma_\theta\|^2\right)}{\kappa(2-\kappa)}\right) \exp \left( \eta e^{-\frac{\kappa(2-\kappa)}{4}t} \| U_0 \|_{\hat{H}^n}^2 \right).
\end{align*}
Note that this immediately implies
\begin{align}\label{+}
	\mathbb{E} \exp \left(\eta \| U_t \|_{\hat{H}^n}^2-e^{-\frac{\kappa(2-\kappa)}{4}t}\eta\| U_0 \|_{\hat{H}^n}^2-C\eta\|U_t\|_{\hat{H}}^{2n+4} \right)_+\le C,
\end{align}
where \( x_+ := x \lor 0 \). Our goal is now to use \eqref{L2-eatimates} to remove the term involving \(\| U_t \|_{\hat{H}} \). To that end, we compute
\begin{align}\label{AA.3}
	&\mathbb{E} \exp \left( \eta \| U_t \|_{\hat{H}^{n}}^{\frac{2}{n+2}} \right) \notag \\
	&\leq \mathbb{E} \exp \left( \left( \eta^{n+2} \| U_t \|_{\hat{H}^n}^2 - e^{-\frac{\kappa(2-\kappa)}{4} t} \eta^{n+2} \| U_0 \|_{\hat{H}^n}^2 - C \eta^{n+2} \| U_t \|_{\hat{H}}^{2n+4} \right)_+^{\frac{1}{n+2}} \right) \notag \\
	&\quad \times \exp \left( e^{-C^{-1}t} \eta \| U_0 \|_{\hat{H}^n}^{\frac{2}{n+2}} + C \eta \| U_t \|_{\hat{H}}^{2} \right) \notag\\
	&\leq C \left( \mathbb{E} \exp \left( 2\eta^{n+2} \| U_t \|_{\hat{H}^n}^2 - 2e^{-\frac{\kappa(2-\kappa)}{4} t} \eta^{n+2} \| U_0 \|_{\hat{H}^n}^2 - C \eta^{n+2} \| U_t \|_{\hat{H}}^{2n+4} \right) \right)^{1/2} \notag\\
	&\quad \times \exp \left( e^{-C^{-1}t} \eta \| U_0 \|_{\hat{H}^n}^{\frac{2}{n+2}} \right) \left( \mathbb{E} \exp \left( C \eta \| U_t \|_{\hat{H}}^{2} \right) \right)^{1/2} \notag \\
	&\leq C \exp \left( e^{-C^{-1}t} \eta \| U_0 \|_{\hat{H}^n}^{\frac{2}{n+2}} \right) \left( \mathbb{E} \exp \left( C \eta \| U_t \|_{\hat{H}}^{2}\right) \right)^{1/2},
\end{align}
where we use \eqref{+}. Subsequently utilizing \eqref{L2-eatimates}, we obtain 
\begin{align}\label{AA.4}
	\mathbb{E} \exp \left( C \eta \| U_t \|_{\hat{H}}^{2}\right) \le Ce^{Ct}\exp(C\eta\|U_0\|_{\hat{H}}^2).
\end{align}
Combining \eqref{AA.3} and \eqref{AA.4}, we conclude.
\end{proof}

\begin{proof}[Proof of \eqref{Hn-eatimates1}]
By It\^o's formula, \eqref{4}-\eqref{5}, \eqref{NSF} and then Integrating, we see that
\[
	\| U_t \|_{\hat{H}^n}^2 + \frac{\kappa}{2} \int_s^t \| U_r \|_{\hat{H}^{n+1}}^2 \, dr - C \int_s^t \| U_r \|_{\hat{H}}^{2n+4} \, dr - C(t-s) - \| U_s \|_{\hat{H}^n}^2 \leq M_t^s - \frac{1}{C} \langle M^s, M^s \rangle_t,
\]
where
\[
	M_t^s := \int_s^t \langle\nabla^n \theta_r,\nabla^n\sigma_\theta \rangle dW_r.
\]
Analogously, we establish the conclusion.
\end{proof}
The \eqref{Hn-eatimates2} can be proven by employing \eqref{L2-eatimates} and \eqref{Hn-eatimates1} with techniques analogous to those in \cite{NS}.


Finally, we present several moment estimates previously established in \cite[Lemma.A.1]{JFA}, which are frequently employed throughout the manuscript.
\begin{lemma}
There exists $\eta^*>0$ such that:\\
(i) For any $s \geq 0$, $p \geq 2$, and $\eta \in (0, \eta^*]$ there exists $C = C(\eta, s, T, p)$ such that
\begin{align}\label{JFA-A.5}
\mathbb{E} \left( \sup_{t \in [T/2, T]} \|U(t)\|_{\hat{H}^s}^p \right) \leq C \exp\bigl( \eta \|U_0\|_{\hat{H}}^2 \bigr). 
\end{align}
(ii) For any $p \geq 2$, $s \geq 0$, $\eta > 0$, and $T > 0$ there is $C = C(s, T, p, \eta)$ such that
\begin{align}\label{JFA-A.6}
\mathbb{E} \left( \|U\|_{C^{1/4}([T/2, T], \hat{H}^s)}^p \right) \leq C \exp\bigl( \eta \|U_0\|_{\hat{H}}^2 \bigr). 
\end{align}
\end{lemma}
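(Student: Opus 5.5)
The plan is to prove the two moment bounds (i) and (ii) directly from the exponential moment estimates of Proposition \ref{priori-eatimates}, with parabolic smoothing handling the loss of regularity at positive times.

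\textbf{Step 1: part (i) for fixed $s$.} Apply \eqref{Hn-eatimates2} with $n=s$ (rounding $s$ up to an integer if needed; lower norms are then controlled by Poincaré) and with the time $s$ in that estimate replaced by $T/2$. For $\eta_0\le C^{-1}$ this gives
\[
\mathbb{E}\exp\Big(\eta_0\sup_{T/2\le r\le T}\|U_r\|_{\hat{H}^s}^{\frac{2}{s+2}}\Big)\le C e^{C(T/2)^{-1}}e^{CT}\exp\big(C\eta_0\|U_0\|_{\hat{H}}^2\big).
\]
Since $x^p\le C(p,s,\eta_0)\exp(\eta_0 x^{2/(s+2)})$ for $x\ge0$, taking $x=\sup_{t\in[T/2,T]}\|U_t\|_{\hat{H}^s}$ yields \eqref{JFA-A.5} with exponent $C\eta_0$. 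Given a target $\eta\in(0,\eta^*]$, choose $\eta_0=\eta/C$; this is admissible once $\eta^*\le C\cdot C^{-1}$. Note that the constant in front of $\|U_0\|_{\hat H}^2$ in \eqref{Hn-eatimates2} depends on $n$, so I would let $\eta_0$ depend on $s$ in the same way. The polynomial-versus-stretched-exponential constant then absorbs the dependence on $\eta$.

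\textbf{Step 2: part (ii).} Write the mild or differential form on $[T/2,T]$ as $U=\overline U+\sigma_\theta W$, as in \eqref{overline-U}. Here $\sigma_\theta W$ is a finite-mode Brownian motion, so it has finite $C^{1/4}$ moments of every order in any $\hat{H}^s$, with no dependence on $U_0$. For $\overline U$, use $\partial_t\overline U=F(U)$ together with
\[
\|F(U)\|_{\hat{H}^s}\le C\big(\|U\|_{\hat{H}^{s+2}}+\|U\|_{\hat{H}^{s+1}}^2\big).
\]
This gives
\[
\|\overline U\|_{C^{1/4}([T/2,T];\hat H^s)}\le T^{3/4}\sup_{[T/2,T]}\|F(U)\|_{\hat H^s},
\]
whose $p$-th moment is bounded by part (i) applied at regularity $s+2$ and moment order $2p$. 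Because (i) holds for every $\eta\le\eta^*$, we obtain (ii) for small $\eta$. For general $\eta>0$ the bound follows trivially, since $e^{\eta\|U_0\|^2}$ is increasing in $\eta$.

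\textbf{Main obstacle.} The delicate point is the loss of regularity: the initial datum lies only in $\hat H$, yet we need $\hat H^{s}$ bounds for arbitrarily large $s$. This is exactly what the $e^{Cs^{-1}}$ smoothing factor in \eqref{Hn-eatimates2} provides. Its proof in turn relies on the $\hat H^1$-integrated control in \eqref{L2-eatimates} and an iteration in $n$ over a sequence of intermediate times. If \eqref{Hn-eatimates2} were unavailable, I would instead derive it by induction on $n$. At each stage I would pick a random time in $[T/4,T/2]$ where $\|U\|_{\hat{H}^{n}}$ is controlled by $\int\|U\|_{\hat H^{n}}^2$, then restart from that time using \eqref{Hn-eatimates1}.
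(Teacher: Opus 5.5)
Your argument is correct. It is also genuinely different from what the paper does: the paper gives no proof of this lemma and simply imports it from \cite[Lemma A.1]{JFA}, whereas you show it is a corollary of estimates already stated in the paper.

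\textbf{Part (i).} This follows from \eqref{Hn-eatimates2} on the window $[T/2,T]$, combined with $x^p\le C(p,s,\eta_0)\exp(\eta_0x^{2/(s+2)})$ and the rescaling $\eta_0=\eta/C$. The same $C(n)$ governs both the admissible range $\eta_0\le C^{-1}$ and the exponent $C\eta_0\|U_0\|_{\hat H}^2$. So the choice $\eta^*=1$ works uniformly in $s$.

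\textbf{Part (ii).} This follows from the splitting \eqref{overline-U}. The drift part is controlled through $\partial_t\overline U=F(U)$, bounded via (i) at regularity $s+2$. The finite-mode Brownian part $\sigma_\theta W$ is handled by Kolmogorov's criterion.

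\textbf{Circularity and trade-off.} There is no circularity: the appendix derives \eqref{L2-eatimates}--\eqref{Hn-eatimates2} without using \eqref{JFA-A.5}. Your route gives a derivation internal to the paper. It also yields stretched-exponential rather than merely polynomial moments of $\sup_{[T/2,T]}\|U\|_{\hat H^s}$, and it makes plain that (i) holds for every $\eta>0$. The cost is that all the analytic content now sits in \eqref{Hn-eatimates2}, whose proof the paper only asserts by analogy with \cite{NS}. The paper's citation, by contrast, rests on a result established in \cite{JFA}.

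\textbf{Your fallback.} The fallback in your closing paragraph is not needed, and as written it would need repair. A time in $[T/4,T/2]$ at which $\|U\|_{\hat H^n}$ is below its time average is not a stopping time. So you cannot restart the process there and apply \eqref{Hn-eatimates1} without either an adapted choice of time or a pathwise version of the energy inequality.
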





  
  \section*{Declarations}
  
  \noindent{Availability of data:} No new data were generated or analysed in support of this search.
  
  \noindent{Conflict of interests:} 
  The authors declare that there are no conflict of interests, we do not have any possible conflicts of interest.
  
  \noindent{Funding:} The manuscript is supported by National Natural Science Foundation of China (No. 12571189).


\end{document}